\def\elden#1{{\color{purple}#1}}
\def\vova#1{{\color{olive}#1}}
\newcommand{\raisemath}[1]{\mathpalette{\raisem@th{#1}}}
\newcommand{\raisem@th}[3]{\raisebox{#1}{$#2#3$}}
\newcommand\widecurlywedgeA[1]{%
\raisemath{-2pt}{\smash{\stackrel{\vstretch{0.5}{\hstretch{2.0}{\curlywedge}}}{#1}}}
}
\newcommand\widecurlywedge[1]{%
  \mathchoice
  {\displaystyle{\widecurlywedgeA{#1}}}
  {\textstyle{\widecurlywedgeA{#1}}}
  {\scriptstyle{\widecurlywedgeA{#1}}}
  {\scriptscriptstyle{\widecurlywedgeA{#1}}}
}
\setlist[enumerate,1]{label={(\arabic*)},itemsep=\parskip} 
\setlist[itemize,1]{itemsep=\parskip} 
\newlist{thmlist}{enumerate}{2}
\setlist[thmlist,1]{label={\em(\roman*)},ref={(\roman*)},%
  itemsep=\parskip,leftmargin=*,align=left}
\setlist[thmlist,2]{label={\em(\alph*)},ref={(\alph*)},%
  itemsep=\parskip,leftmargin=*,align=left,topsep=0.1cm}
\newlist{remlist}{enumerate}{2}
\setlist[remlist,1]{label={(\roman*)},ref={(\roman*)},itemsep=\parskip,%
  leftmargin=*,align=left}
\setlist[remlist,2]{label={(\alph*)},ref={(\alph*)},itemsep=\parskip,%
  leftmargin=*,align=left,topsep=0.1cm}
\let\c@equation\c@subsubsection
\newtheorem{cor}[subsubsection]{Corollary}
\newtheorem{lem}[subsubsection]{Lemma}
\newtheorem{prop}[subsubsection]{Proposition}
\newtheorem{thm}[subsubsection]{Theorem}
\newtheorem*{claim*}{Claim}
\theoremstyle{definition}
\newtheorem{defn}[subsubsection]{Definition}
\newtheorem{quest}[subsubsection]{Question}
\newtheorem{rem}[subsubsection]{Remark}
\newtheorem{constr}[subsubsection]{Construction}
\newtheorem{exam}[subsubsection]{Example}
\renewcommand{\eqref}[1]{(\ref{#1})}
\newcommand{\nc}{\newcommand}
\nc{\renc}{\renewcommand}
\nc{\ssec}{\subsection}
\nc{\sssec}{\subsubsection}
\nc{\on}{\operatorname}
\nc{\term}[1]{#1\xspace}
\DeclareMathSymbol{A}{\mathalpha}{operators}{`A}
\DeclareMathSymbol{B}{\mathalpha}{operators}{`B}
\DeclareMathSymbol{C}{\mathalpha}{operators}{`C}
\DeclareMathSymbol{D}{\mathalpha}{operators}{`D}
\DeclareMathSymbol{E}{\mathalpha}{operators}{`E}
\DeclareMathSymbol{F}{\mathalpha}{operators}{`F}
\DeclareMathSymbol{G}{\mathalpha}{operators}{`G}
\DeclareMathSymbol{H}{\mathalpha}{operators}{`H}
\DeclareMathSymbol{I}{\mathalpha}{operators}{`I}
\DeclareMathSymbol{J}{\mathalpha}{operators}{`J}
\DeclareMathSymbol{K}{\mathalpha}{operators}{`K}
\DeclareMathSymbol{L}{\mathalpha}{operators}{`L}
\DeclareMathSymbol{M}{\mathalpha}{operators}{`M}
\DeclareMathSymbol{N}{\mathalpha}{operators}{`N}
\DeclareMathSymbol{O}{\mathalpha}{operators}{`O}
\DeclareMathSymbol{P}{\mathalpha}{operators}{`P}
\DeclareMathSymbol{Q}{\mathalpha}{operators}{`Q}
\DeclareMathSymbol{R}{\mathalpha}{operators}{`R}
\DeclareMathSymbol{S}{\mathalpha}{operators}{`S}
\DeclareMathSymbol{T}{\mathalpha}{operators}{`T}
\DeclareMathSymbol{U}{\mathalpha}{operators}{`U}
\DeclareMathSymbol{V}{\mathalpha}{operators}{`V}
\DeclareMathSymbol{W}{\mathalpha}{operators}{`W}
\DeclareMathSymbol{X}{\mathalpha}{operators}{`X}
\DeclareMathSymbol{Y}{\mathalpha}{operators}{`Y}
\DeclareMathSymbol{Z}{\mathalpha}{operators}{`Z}
\nc{\sA}{\ensuremath{\mathcal{A}}\xspace}
\nc{\sB}{\ensuremath{\mathcal{B}}\xspace}
\nc{\sC}{\ensuremath{\mathcal{C}}\xspace}
\nc{\sD}{\ensuremath{\mathcal{D}}\xspace}
\nc{\sE}{\ensuremath{\mathcal{E}}\xspace}
\nc{\sF}{\ensuremath{\mathcal{F}}\xspace}
\nc{\sG}{\ensuremath{\mathcal{G}}\xspace}
\nc{\sH}{\ensuremath{\mathcal{H}}\xspace}
\nc{\sI}{\ensuremath{\mathcal{I}}\xspace}
\nc{\sJ}{\ensuremath{\mathcal{J}}\xspace}
\nc{\sK}{\ensuremath{\mathcal{K}}\xspace}
\nc{\sL}{\ensuremath{\mathcal{L}}\xspace}
\nc{\sM}{\ensuremath{\mathcal{M}}\xspace}
\nc{\sN}{\ensuremath{\mathcal{N}}\xspace}
\nc{\sO}{\ensuremath{\mathcal{O}}\xspace}
\nc{\sP}{\ensuremath{\mathcal{P}}\xspace}
\nc{\sQ}{\ensuremath{\mathcal{Q}}\xspace}
\nc{\sR}{\ensuremath{\mathcal{R}}\xspace}
\nc{\sS}{\ensuremath{\mathcal{S}}\xspace}
\nc{\sT}{\ensuremath{\mathcal{T}}\xspace}
\nc{\sU}{\ensuremath{\mathcal{U}}\xspace}
\nc{\sV}{\ensuremath{\mathcal{V}}\xspace}
\nc{\sW}{\ensuremath{\mathcal{W}}\xspace}
\nc{\sX}{\ensuremath{\mathcal{X}}\xspace}
\nc{\sY}{\ensuremath{\mathcal{Y}}\xspace}
\nc{\sZ}{\ensuremath{\mathcal{Z}}\xspace}
\nc{\bA}{\ensuremath{\mathbf{A}}\xspace}
\nc{\bB}{\ensuremath{\mathbf{B}}\xspace}
\nc{\bC}{\ensuremath{\mathbf{C}}\xspace}
\nc{\bD}{\ensuremath{\mathbf{D}}\xspace}
\nc{\bE}{\ensuremath{\mathbf{E}}\xspace}
\nc{\bF}{\ensuremath{\mathbf{F}}\xspace}
\nc{\bG}{\ensuremath{\mathbf{G}}\xspace}
\nc{\bH}{\ensuremath{\mathbf{H}}\xspace}
\nc{\bI}{\ensuremath{\mathbf{I}}\xspace}
\nc{\bJ}{\ensuremath{\mathbf{J}}\xspace}
\nc{\bK}{\ensuremath{\mathbf{K}}\xspace}
\nc{\bL}{\ensuremath{\mathbf{L}}\xspace}
\nc{\bM}{\ensuremath{\mathbf{M}}\xspace}
\nc{\bN}{\ensuremath{\mathbf{N}}\xspace}
\nc{\bO}{\ensuremath{\mathbf{O}}\xspace}
\nc{\bP}{\ensuremath{\mathbf{P}}\xspace}
\nc{\bQ}{\ensuremath{\mathbf{Q}}\xspace}
\nc{\bR}{\ensuremath{\mathbf{R}}\xspace}
\nc{\bS}{\ensuremath{\mathbf{S}}\xspace}
\nc{\bT}{\ensuremath{\mathbf{T}}\xspace}
\nc{\bU}{\ensuremath{\mathbf{U}}\xspace}
\nc{\bV}{\ensuremath{\mathbf{V}}\xspace}
\nc{\bW}{\ensuremath{\mathbf{W}}\xspace}
\nc{\bX}{\ensuremath{\mathbf{X}}\xspace}
\nc{\bY}{\ensuremath{\mathbf{Y}}\xspace}
\nc{\bZ}{\ensuremath{\mathbf{Z}}\xspace}
\nc{\dA}{\ensuremath{\mathds{A}}\xspace}
\nc{\dB}{\ensuremath{\mathds{B}}\xspace}
\nc{\dC}{\ensuremath{\mathds{C}}\xspace}
\nc{\dD}{\ensuremath{\mathds{D}}\xspace}
\nc{\dE}{\ensuremath{\mathds{E}}\xspace}
\nc{\dF}{\ensuremath{\mathds{F}}\xspace}
\nc{\dG}{\ensuremath{\mathds{G}}\xspace}
\nc{\dH}{\ensuremath{\mathds{H}}\xspace}
\nc{\dI}{\ensuremath{\mathds{I}}\xspace}
\nc{\dJ}{\ensuremath{\mathds{J}}\xspace}
\nc{\dK}{\ensuremath{\mathds{K}}\xspace}
\nc{\dL}{\ensuremath{\mathds{L}}\xspace}
\nc{\dM}{\ensuremath{\mathds{M}}\xspace}
\nc{\dN}{\ensuremath{\mathds{N}}\xspace}
\nc{\dO}{\ensuremath{\mathds{O}}\xspace}
\nc{\dP}{\ensuremath{\mathds{P}}\xspace}
\nc{\dQ}{\ensuremath{\mathds{Q}}\xspace}
\nc{\dR}{\ensuremath{\mathds{R}}\xspace}
\nc{\dS}{\ensuremath{\mathds{S}}\xspace}
\nc{\dT}{\ensuremath{\mathds{T}}\xspace}
\nc{\dU}{\ensuremath{\mathds{U}}\xspace}
\nc{\dV}{\ensuremath{\mathds{V}}\xspace}
\nc{\dW}{\ensuremath{\mathds{W}}\xspace}
\nc{\dX}{\ensuremath{\mathds{X}}\xspace}
\nc{\dY}{\ensuremath{\mathds{Y}}\xspace}
\nc{\dZ}{\ensuremath{\mathds{Z}}\xspace}
\nc{\bbA}{\ensuremath{\mathbb{A}}\xspace}
\nc{\bbB}{\ensuremath{\mathbb{B}}\xspace}
\nc{\bbC}{\ensuremath{\mathbb{C}}\xspace}
\nc{\bbD}{\ensuremath{\mathbb{D}}\xspace}
\nc{\bbE}{\ensuremath{\mathbb{E}}\xspace}
\nc{\bbF}{\ensuremath{\mathbb{F}}\xspace}
\nc{\bbG}{\ensuremath{\mathbb{G}}\xspace}
\nc{\bbH}{\ensuremath{\mathbb{H}}\xspace}
\nc{\bbI}{\ensuremath{\mathbb{I}}\xspace}
\nc{\bbJ}{\ensuremath{\mathbb{J}}\xspace}
\nc{\bbK}{\ensuremath{\mathbb{K}}\xspace}
\nc{\bbL}{\ensuremath{\mathbb{L}}\xspace}
\nc{\bbM}{\ensuremath{\mathbb{M}}\xspace}
\nc{\bbN}{\ensuremath{\mathbb{N}}\xspace}
\nc{\bbO}{\ensuremath{\mathbb{O}}\xspace}
\nc{\bbP}{\ensuremath{\mathbb{P}}\xspace}
\nc{\bbQ}{\ensuremath{\mathbb{Q}}\xspace}
\nc{\bbR}{\ensuremath{\mathbb{R}}\xspace}
\nc{\bbS}{\ensuremath{\mathbb{S}}\xspace}
\nc{\bbT}{\ensuremath{\mathbb{T}}\xspace}
\nc{\bbU}{\ensuremath{\mathbb{U}}\xspace}
\nc{\bbV}{\ensuremath{\mathbb{V}}\xspace}
\nc{\bbW}{\ensuremath{\mathbb{W}}\xspace}
\nc{\bbX}{\ensuremath{\mathbb{X}}\xspace}
\nc{\bbY}{\ensuremath{\mathbb{Y}}\xspace}
\nc{\bbZ}{\ensuremath{\mathbb{Z}}\xspace}
\nc{\mrm}[1]{\ensuremath{\mathrm{#1}}\xspace}
\nc{\mbf}[1]{\ensuremath{\mathbf{#1}}\xspace}
\nc{\mcal}[1]{\ensuremath{\mathcal{#1}}\xspace}
\nc{\msc}[1]{\ensuremath{\mathscr{#1}}\xspace}
\renc{\bar}[1]{\overline{#1}}
\nc{\sub}{\subset}
\nc{\too}{\longrightarrow}
\nc{\hook}{\hookrightarrow}
\nc*{\hooklongrightarrow}{\ensuremath{\lhook\joinrel\relbar\joinrel\rightarrow}}
\nc{\hooklong}{\hooklongrightarrow}
\nc{\twoheadlongrightarrow}{\relbar\joinrel\twoheadrightarrow}
\nc{\shiso}{\approx}
\nc{\isoto}{\xrightarrow{\sim}}
\nc{\isofrom}{\xleftarrow{\sim}}
\renc{\ge}{\geqslant}
\renc{\le}{\leqslant}
\renc{\geq}{\geqslant}
\renc{\leq}{\leqslant}
\nc{\id}{\mathrm{id}}
\DeclareMathOperator{\Hom}{\mathrm{Hom}}
\nc{\uHom}{\underline{\smash{\Hom}}}
\DeclareMathOperator{\Maps}{\mathrm{Maps}}
\DeclareMathOperator{\End}{\mathrm{End}}
\DeclareMathOperator{\Sym}{\mathrm{Sym}}
\nc{\Pre}{\mathrm{PSh}{}}
\nc{\Shv}{\mathrm{Shv}{}}
\nc{\uEnd}{\underline{\smash{\End}}}
\renc{\lim}{\operatorname*{lim}}
\nc{\colim}{\operatorname*{colim}}
\nc{\Cofib}{\on{Cofib}}
\nc{\Fib}{\on{Fib}}
\nc{\initial}{\varnothing}
\nc{\op}{\mathrm{op}}
\let\bigcoprod=\coprod
\renc{\coprod}{\sqcup}
\DeclareSymbolFontAlphabet{\mathbb}{AMSb} 
\DeclareSymbolFontAlphabet{\mathbbl}{bbold}
\DeclareFontFamily{T1}{cbgreek}{}
\DeclareFontShape{T1}{cbgreek}{m}{n}{<-6>  grmn0500 <6-7> grmn0600 <7-8> grmn0700 <8-9> grmn0800 <9-10> grmn0900 <10-12> grmn1000 <12-17> grmn1200 <17-> grmn1728}{}
\DeclareSymbolFont{quadratics}{T1}{cbgreek}{m}{n}
\DeclareMathSymbol{\qoppa}{\mathord}{quadratics}{19}
\DeclareMathSymbol{\Qoppa}{\mathord}{quadratics}{21}
\nc{\bDelta}{\mbf{\Delta}}
\nc{\DM}{\mbf{DM}}
\nc{\eff}{\mathrm{eff}}
\nc{\veff}{\mathrm{veff}}
\nc{\cyc}{{\mrm{cyc}}}
\nc{\corr}{{\on{corr}}}
\nc{\ft}{\mrm{ft}}
\nc{\flf}{\mrm{flf}}
\nc{\fet}{{\mrm{f\acute et}}}
\nc{\fsyn}{{\mrm{fsyn}}}
\nc{\syn}{{\mrm{syn}}}
\nc{\lci}{{\mrm{lci}}}
\nc{\Perf}{\mbf{Perf}}
\nc{\LSym}{L\mrm{Sym}}
\nc{\perf}{\mrm{perf}}
\nc{\oblv}{\mrm{oblv}}
\nc{\exact}{\on{exact}}
\nc{\F}{{\on{F}}}
\nc{\clopen}{{\mrm{clopen}}}
\nc{\B}{\mrm{B}}
\nc{\D}{\mrm{D}}
\nc{\Fin}{\on{Fin}}
\nc{\fin}{\mrm{fin}}
\nc{\Cut}{\on{Cut}}
\nc{\Cart}{\on{Cart}}
\nc{\pairs}{\mathsf{pairs}}
\nc{\Pairs}{\mathrm{Pair}}
\nc{\Trip}{\mathrm{Trip}}
\nc{\Lab}{\mathrm{Lab}}
\nc{\SL}{\mathrm{SL}}
\nc{\coCart}{\mathrm{coCart}}
\nc{\RKE}{\mathrm{RKE}}
\nc{\strict}{\mathrm{strict}}
\nc{\Emb}{\mathrm{Emb}}
\nc{\Split}{\mathrm{Split}}
\nc{\Set}{\mathrm{Set}}
\nc{\sSets}{\mathrm{sSets}}
\nc{\pb}{\mathrm{pb}}
\nc{\fib}{\mathrm{fib}}
\nc{\diff}{\mrm{diff}}
\nc{\gp}{\mrm{gp}}
\nc{\map}{\mrm{map}}
\nc{\mgp}{\mrm{mot-gp}}
\nc{\FSyn}{\mrm{FSyn}}
\nc{\FEt}{\mrm{FEt}}
\nc{\Spc}{\mrm{Spc}}
\nc{\Ob}{\mrm{Ob}}
\nc{\Spt}{\mrm{Spt}}
\nc{\T}{\bT}
\nc{\suspinf}{\Sigma^\infty}
\nc{\h}{\mrm{h}}
\nc{\uhom}{\underline{\mathrm{Hom}}}
\nc{\umap}{\underline{\mathrm{Maps}}}
\renc{\H}{\bH}
\nc{\Einfty}{{\sE_\infty}}
\nc{\Eone}{{\sE_1}}
\nc{\Stab}{\mrm{Stab}}
\nc{\lax}{{\mrm{lax}}}
\nc{\cocart}{{\mrm{cocart}}}
\nc{\Sch}{\on{Sch}}
\nc{\Fr}{\on{Fr}}
\nc{\A}{\mathbf{A}}
\nc{\N}{\mathbf{N}}
\nc{\Z}{\mathbf{Z}}
\nc{\Q}{\mathbf{Q}}
\nc{\Oo}{\mathcal{O}} 
\nc{\Fscr}{\mathcal{F}}
\nc{\Gscr}{\mathcal{G}}
\nc{\Ll}{\mathcal{L}} 
\nc{\Mm}{\mathcal{M}} 
\nc{\mm}{\mathrm{m}} 
\nc{\K}{\mrm{K}} 
\nc{\W}{\mrm{W}} 
\nc{\red}{{\on{red}}}
\nc{\Voev}{{\on{Voev}}}
\nc{\Corr}{\mrm{Corr}}
\nc{\Span}{\mathbf{Corr}}
\nc{\Gap}{\mrm{Gap}}
\nc{\Corrfr}{\Corr^{\fr}}
\nc{\Corrvfr}{\Corr^{\Vfr}}
\nc{\Spec}{\on{Spec}}
\nc{\Sm}{\on{Sm}}
\nc{\Gm}{\mathbf{G}_{\on{m}}}
\renc{\P}{\bP}
\nc{\nis}{\mathrm{nis}}
\nc{\zar}{\mathrm{zar}}
\nc{\et}{\mathrm{\acute et}}
\nc{\all}{\mathrm{all}}
\nc{\fold}{\mathrm{fold}}
\nc{\Fun}{\mathrm{Fun}}
\nc{\Ho}{\mathrm{Ho}}
\nc{\Segal}{\mathrm{Segal}}
\nc{\Mon}{\mrm{Mon}{}}
\nc{\Ab}{\mrm{Ab}}
\nc{\Sh}{\on{Sh}}
\nc{\M}{\mrm{M}}
\nc{\Lhtp}{L_{\A^1}}
\nc{\Lmot}{L_{\mrm{mot}}}
\nc{\mot}{\mrm{mot}}
\nc{\SH}{\mbf{SH}}
\nc{\RR}{\mbf{R}}
\nc{\CC}{\mbf{C}}
\nc{\Mod}{\mbf{Mod}}
\nc{\QCoh}{\mbf{QCoh}}
\nc{\MonUnit}{\mbf{1}}
\nc{\tr}{\on{tr}}
\nc{\cotr}{\mrm{cotr}}
\nc{\vop}{\mrm{vop}}
\nc{\fr}{{\on{fr}}}
\nc{\Ar}{\mrm{Ar}}
\nc{\Vfr}{\on{Vfr}}
\nc{\frdiff}{{\on{frdiff}}}
\nc{\frGys}{\on{frGys}}
\nc{\SHfr}{\SH^{\fr}}
\nc{\SHfrdiff}{\SH^{\frdiff}}
\nc{\SHfrGys}{\SH^{\frGys}}
\nc{\InftyCat}{(\mathrm{\infty,1)\textnormal{-}Cat}}
\nc{\TriCat}{\mathrm{TriCat}}
\nc{\oneCat}{\mathrm{1\textnormal{-}Cat}}
\nc{\Cat}{\mathrm{Cat}}
\nc{\Th}{\on{Th}}
\nc{\CMon}{\mrm{CMon}{}}
\nc{\CAlg}{\mrm{CAlg}{}}
\nc{\MGL}{\mrm{MGL}}
\nc{\Seg}{\mrm{Seg}{}}
\nc{\GW}{\mrm{GW}{}}
\nc{\Tw}{\mrm{Tw}}
\nc{\sslash}{/\mkern-6mu/}
\nc{\PrL}{\mrm{Pr}^\mrm{L}}
\nc{\PrR}{\mrm{Pr}^\mrm{R}}
\nc{\pr}{\mrm{pr}}
\let\phi\varphi
\nc\efr{\mrm{efr}}
\nc\nfr{\mrm{nfr}}
\nc\dfr{\mrm{fr}}
\nc\tfr{\mrm{tfr}}
\nc\Vect{\mrm{Vect}}
\nc\sVect{\mrm{sVect}}
\nc{\fix}{\mrm{fix}}
\nc{\ho}{\mrm{h}}
\nc\Mfd{\mrm{Mfd}}
\nc{\PSh}{\mrm{PSh}}
\nc{\hzmw}{H \tilde\Z{}}
\nc{\Cor}{\mrm{Cor}{}}
\nc{\cormw}{\mrm{\widetilde{Cor}}{}}
\nc{\Chw}{\mrm{\widetilde{CH}}{}}
\nc{\Ex}{\mrm{Ex}}
\nc{\BM}{\mrm{BM}}
\let\setminus\smallsetminus
\nc{\Pic}{\mrm{Pic}}
\nc{\Br}{\mrm{Br}}
\nc{\pur}{\mathfrak p}
\nc{\angles}[1]{\langle #1\rangle}
\nc{\inv}[1]{[\tfrac{1}{#1}]}
\nc{\pinv}{\inv{p}}
\nc{\cinv}{\inv{p}}
\nc{\Sph}{\on{Sph}}
\nc{\KGL}{\mrm{KGL}}
\nc{\KH}{\mrm{KH}}
\nc{\Flag}{\mrm{Flag}}
\nc{\Pro}{\mrm{Pro}}
\nc{\Frac}{\mrm{Frac}}
\nc{\NCMot}{\mbf{NCMot}}
\nc{\arc}{\mrm{arc}}
\nc{\rarc}{\mrm{rarc}}
\nc{\cdarc}{\mrm{cdarc}}
\nc{\vv}{\mrm{v}}
\nc{\rv}{\mrm{rv}}
\nc{\cdv}{\mrm{cdv}}
\nc{\hh}{\mrm{h}}
\nc{\cdh}{\mrm{cdh}}
\nc{\rh}{\mathrm{rh}}
\nc{\Et}{\mathrm{Et}}
\nc{\Nis}{\mathrm{Nis}}
\nc{\Zar}{\mathrm{Zar}}
\nc{\cdp}{\mathrm{cdp}}
\nc{\RZ}{\mathrm{RZ}}
\nc{\qcqs}{\mathrm{qcqs}}
\nc{\aff}{\mathrm{aff}}
\nc{\cl}{\mathrm{cl}}
\nc{\Val}{\mathrm{Val}}
\nc{\GFin}{\mathrm{GFin}{}}
\nc{\Proj}{\mathrm{Proj}}
\nc{\Ind}{\mathrm{Ind}}
\DeclareMathOperator{\Spf}{Spf}
\nc{\mbb}{\mathbb}
\nc{\mc}{\mathcal}
\nc{\ra}{\rightarrow}
\nc{\oop}{\mrm{op}}
\nc{\inftyCat}{\term{$\infty$-category}}
\nc{\inftyCats}{\term{$\infty$-categories}}
\nc{\inftyOneCat}{\term{$(\infty,1)$-category}}
\nc{\inftyOneCats}{\term{$(\infty,1)$-categories}}
\nc{\inftyGrpd}{\term{$\infty$-groupoid}}
\nc{\inftyGrpds}{\term{$\infty$-groupoids}}
\nc{\inftyTop}{\term{$\infty$-topos}}
\nc{\inftyTops}{\term{$\infty$-toposes}}
\nc{\inftyTwoCat}{\term{$(\infty,2)$-category}}
\nc{\inftyTwoCats}{\term{$(\infty,2)$-categories}}
\title{On filtered algebraic $K$-theory of stacks I: characteristic zero}
\author[E. Elmanto]{Elden Elmanto}
\address{
University of Toronto\\
40 St. George St. \\
M5S 2E4 Ontario\\
Canada
}
\email{\href{mailto:elden.elmanto@utoronto.ca}{elden.elmanto@utoronto.ca}}
\author[D. Kubrak]{Dmitry Kubrak}
\author[V. Sosnilo]{Vladimir Sosnilo}
\address{
M309\\
Universit{\"a}t Regensburg\\
Universit{\"a}tsstra{\ss}e 31 \\
93053 Regensburg\\
Germany
}
\email{\href{mailto:vsosnilo@gmail.com}{vsosnilo@gmail.com}}
\begin{document}

\begin{abstract} 
Given a compact Lie group $G$ acting on a space $X$, the classical Atiyah-Segal completion theorem identifies topological $K$-theory of the homotopy quotient $X/G$ with an explicit completion of $G$-equivariant topological $K$-theory of $X$. We prove an analog of this result for algebraic $K$-theory over a field of characteristic 0. In our setting $G$ is a reductive group that acts on a derived algebraic space $X$ with the assumption that all stabilizer groups are nice (in the sense of Alper). Our main result identifies the value $R^{\mathrm{dAff}}K([X/G])$ of right Kan extension of the $K$-theory functor from schemes to stacks with the completion of $K$-theory of the category $\mathrm{Perf}([X/G])$ at the augmentation ideal of $K_0(\mathrm{Rep}(G))$. The main novelty of our results is that $X$ is allowed to be singular or even derived. This generality is achieved by employing and improving analogous versions of completion theorem for negative cyclic homology (after Ben-Zvi--Nadler and Chen) and for homotopy $K$-theory (after van den Bergh--Tabuada). We also show that in the singular setting the completion theorem does not necessarily hold without the nice stabilizer assumption. We view our results as a part of the general paradigm of extending the motivic filtration on algebraic $K$-theory of schemes to algebraic $K$-theory of stacks.
\end{abstract}

\maketitle
\setcounter{tocdepth}{1}
\tableofcontents

\section{Introduction}

\renewcommand{\Perf}{\mathrm{Perf}}
\renewcommand{\Mod}{\mathrm{Mod}}
 
 The goal of this paper is to offer computational and structural 
results about the nonconnective algebraic $K$-theory of a large class of algebraic stacks over $\mathbb{Q}$, i.e., those which are of equicharacteristic zero. The meaning of the algebraic 
$K$-theory of a stack can be ambiguous. On the one hand, after the work of \cite{blumberg2013universal}, we know that algebraic $K$-theory is a universal functor
\[
K\colon \Cat^{\perf}_{\infty} \rightarrow \Spt,
\]
enjoying the key property that it respects certain localization sequences. In particular, the $K$-theory of a stack $\sX$ can be defined as $K(\sX):=K(\Perf(\sX))$\footnote{Throughout this paper, by $\Perf(\sX)$ we mean the subcategory of $\mrm{D}_{\mrm{qc}}(\sX)$ spanned by dualizable objects. In contrast to the situation for quasi-compact quasi-separated schemes, subcategories of $\mrm{D}_{\mrm{qc}}(X)$ spanned by compact and, respectively, dualizable objects need not coincide. Furthermore, the $\infty$-category $\mrm{D}_{\mrm{qc}}(\sX)$ is not necessarily compactly generated \cite{one-pos-two-neg}.}. This has the advantage that we can sometimes describe $K(\sX)$ in rather simple terms. For example, if $\sX$ is the classifying stack $B\bbG_m$ over a field $k$ then, identifying $\Perf(B\bbG_m)$ with graded perfect complexes over $k$, yields:
\begin{equation}\label{eq:bgm}
K(B\bbG_m) \simeq \bigoplus_{\bbZ} K(k),
\end{equation}
where each component $K(k)$ corresponds to a single weight. However, for more general $\sX$, we typically do not have such a good control of $\Perf(\sX)$, and it can be very hard to compute $K(\sX)$ explicitly.

 On the other hand, we know that the algebraic $K$-theory of schemes 
enjoys a wealth of structure, chief among which is the \emph{motivic filtration}. This is captured succinctly by the functor landing in filtered $\bbE_\infty$-algebras in spectra: 
\[
\mrm{Fil}_{\mot}^{\star}K: \left(\Sch^{\qcqs}_{\mathbb{Z}}\right)^{\op} \rightarrow \CAlg(\Fun(\bbZ^{\op},\Spt)).
\]
On sufficiently nice schemes the filtration $\mrm{Fil}_{\mot}^{\star}K$ is complete; its associated graded is given by the motivic cohomology $\mbb Z(\star)^{\mot}[2\star]$, which in general is much more accessible for computations. 
It is thus natural to contemplate the following problem:

\begin{quest} Does the motivic filtration on $K$-theory of schemes extend to algebraic stacks?
\end{quest}

We remark, however, that the value on algebraic stacks of any reasonably defined motivic filtration on $K$-theory
 will typically not be complete. Moreover, we do need completeness of the filtration to have many of the desired properties of $\mrm{Fil}_{\mot}^{\star}K$, as the following examples hopefully illustrate.

\begin{exam}\label{exam:bgmK} The formula~\eqref{eq:bgm} can be written multiplicatively as:
\[
K(B\bbG_m) \simeq K(k)[x^{\pm 1}] \qquad x = [\sO(1)] \in K_0(B\bbG_m)
\]
so that, in particular, we have that $K_0(B\bbG_m) \cong \mbb Z[x^{\pm1}]$. A motivic filtration on $K$-theory will then induce a filtration on $K_0$ whose graded pieces are displayed in the $E_{\infty}$-page of the corresponding Atiyah-Hirzebruch spectral sequence. One expects that any reasonable motivic filtration on stacks to have the following property as in the schemes case \cite[Lemma 5.10]{e-morrow}: the Euler class $e(\sO(1)) := 1-[\sO(1)]=1-x$ in $K_0(B\mbb G_m)$ lifts to motivic filtration degree $\geq 1$. Therefore if we want the motivic filtration on $K(B\bbG_m)$ be multiplicative and complete, then it forces $K_0(B\bbG_m)\simeq \mbb Z[x^{\pm 1}] $ to be $(x-1)$-adically complete, which it is clearly \emph{not}. Note that $(x-1)\subset K_0(B\bbG_m)$ is exactly the kernel of the pull-back map $K_0(B\bbG_m) \ra K_0(k)$ for the covering $\Spec k \ra B\mbb G_m$; this is a particular case of the so-called \textbf{Atiyah-Segal ideal}.
\end{exam}

\begin{exam}\label{exam:bgmKQ} This example is supposed to illustrate that some completion is necessary, if we want to have a splitting of $K(\sX)_{\mbb Q}$ into the product of Adams eigenspaces, i.e., a rational motivic filtration. Recall that for (sufficiently nice) schemes the motivic filtration on rationalized  $K$-theory splits functorially \cite[Appendix B]{e-morrow}: namely,
\begin{equation}\label{eq:motivic splitting rationally}
	K(X)_{\mbb Q} \simeq \oplus_{j\ge 0} K(X)_{\mbb Q}^{(j)} \simeq \oplus_{j\ge 0} \mbb Q(j)(X)^{\mot}[2j]
\end{equation}	
where the $j$-th graded piece is canonically identified with the weight $j$ Adams eigenspace.

 Let again $\sX=B\mbb G_m$. The $\ell$-th Adams operation $\psi_\ell$ acts on $K_0(X)_{\mbb Q}\simeq \mbb Q[x,x^{-1}]$ by sending $[\mc O(1)]$ to $[\mc O(1)^{\otimes \ell}]=[\mc O(\ell)]$; in other words we have $\psi_\ell(x)=x^\ell$. If $\ell>1$, it is clear that the only eigenvector of $\phi_\ell$ in $\mbb Q[x,x^{-1}]$ are the scalars (on which the action is trivial); thus we see that for $K(B\mbb G_m)_{\mbb Q}$ there cannot be any splitting  analogous to \eqref{eq:motivic splitting rationally}.

The function in $x$ that would generate ``$K(B\mbb G_m)_{\mbb Q}^{(j)}$" should satisfy the equation $f(x^\ell)=\ell^j \cdot f(x)$ for all $\ell$;  such a function is given by $\log(x^j)$. Note that $\log(x^j)$ makes perfect sense as an element of the $(x-1)$-adic completion $\mbb Q[x,x^{-1}]^{\wedge}_{(x-1)}\simeq \mbb Q[[x-1]]$ (but not before completion). Moreover, $\log(x^j)\in (x-1)^j\subset \mbb Q[[x-1]]$ and it's not hard to check that the map 
$$
\prod_{j\ge 0} \mbb Q\cdot \log(x^j) \rightarrow \mbb Q[[x-1]], \quad (a_j)_{j\ge 0} \mapsto \sum_{j\ge 0} a_j\cdot \log(x^j)
$$
is an isomorphism. This way we see that the splitting in \eqref{eq:motivic splitting rationally} (in the form of direct product) does generalize to $B\mbb G_m$ if we are willing to replace $K(B\mbb G_m)$ by the \textbf{Atiyah-Segal completion} $K(B\mbb G_m)^\wedge_{(x-1)}$.
\end{exam}	

 In this paper, we propose a candidate for the motivic filtration on $K(\sX)$. By construction\footnote{At least if the underlying classical stack $\sX^{\mrm{cl}}$ is of finite type.}, the completion of $\mrm{Fil}_{\mot}^{\star}K(\sX)$ is identified with the right Kan extension of $K$-theory from affine schemes. The main point of the paper, however, is that if $\sX=[X/G]$ this right Kan extension can often be described intrinsically in terms of $K(\sX)$ and the map $K(BG) \ra K(\sX)$ induced by the presentation of $\sX$ as a quotient stack. More precisely, it is the completion of $K(\sX)$ at the Atiyah-Segal ideal $I_G\subset K_0(BG)$ which, in the case of $B\bbG_m$ coincides with the ideal generated by the Euler class of $\sO(-1)$. In effect, we prove an analog of the Atiyah-Segal completion theorem \cite{atiyah-segal}, in a quite general form, for algebraic $K$-theory and other related localizing invariants. 

\subsection{Formulation of main results} Let $k$ be a base commutative ring. We recall that a finitely presented flat $k$-group scheme $G$ is called \textbf{nice} \cite[Definition 1.1]{HallRydh2} (resp. \textbf{linearly reductive} \cite[Definition 12.1]{alper-good})  if it sits in an extension
\[
1 \rightarrow G^0 \rightarrow G \rightarrow H \rightarrow 1,
\]
where $G^0$ is an algebraic torus and $H$ is a finite, locally constant group scheme whose order is invertible in $k$ (resp. the functor of derived invariants is $t$-exact). The notion of a nice group appears in the stacks literature where the notion of an \textbf{ANS stack} (see Definition~\ref{defn:ANS stack}) has been isolated as a key condition that allows for many structural results that one would like to use in practice. By definition, ANS stacks only have nice groups as stabilizers of points.

For a linearly reductive $G$ we consider $K_0(BG)$ which is naturally identified with the (algebraic) representation ring of $G$ whenever $k$ is a field; we recall that the notion of linear reductivity and reductivity agrees for $G$ an algebraic group over a field $k$ of characteristic zero. Now further assume that $k$ is a field of characteristic 0 and let $G$ be a reductive group over $k$. The \textbf{Atiyah-Segal ideal}, denoted by $I_G$, by definition is the kernel of the ``rank" map $K_0(BG) \rightarrow K_0(k)\simeq \mbb Z$ (induced by the covering $\Spec k \ra BG$). Let $X$ be a derived algebraic space over $k$ and assume that we have an action of $G$ on $X$; we have a natural map $\sX:=[X/G]\ra BG$ inducing a $K(BG)$-module structure on $K(\sX)$.  Our main theorem identifies the derived $I_G$-adic completion of $K(BG)$ (in the sense of spectral algebraic geometry \cite[Chapter 7]{SAG}) with the right Kan extension of $K$-theory from affine schemes in some generality.

\begin{thm}\label{thm:main_intro} Fix $G$ a reductive group over a characteristic zero field $k$. Let $\sX \rightarrow BG$ be a representable morphism of derived algebraic stacks, such that the morphism of classical stacks $\sX^{\mrm{cl}} \rightarrow BG$ is of finite type. Assume further that $\sX$ is ANS. Then we have a canonical equivalence:
\begin{equation}\label{eq:main_intro}
K(\sX)^{\wedge}_{I_G} \xrightarrow{\simeq} \lim_{\Spec R \rightarrow \sX} K(R),
\end{equation}
where the limit is taken across all morphisms from derived affine schemes to $\sX$.
\end{thm}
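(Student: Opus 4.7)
My strategy is to reduce the completion theorem for $K$ to the analogous completion theorems for two simpler invariants---homotopy $K$-theory $KH$ and negative cyclic homology $HC^-$---which in characteristic zero together determine $K$ via a cartesian square. Both of these ingredients are available in the literature (with the improvements announced in the abstract), and both the derived $I_G$-adic completion $(-)^{\wedge}_{I_G}$ and the right Kan extension $R^{\mrm{dAff}}(-)$ preserve finite limits, so the two squares glue into the desired statement.

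\textbf{Setup.} The representability of $\sX \to BG$ lets me write $\sX = [X/G]$ for a derived algebraic space $X$ with a $G$-action whose geometric stabilizers are all nice, by the ANS hypothesis. Both operations in play commute with finite limits of spectra: $(-)^{\wedge}_{I_G}$ is itself a limit, while $R^{\mrm{dAff}}(-)(\sX) = \lim_{\Spec R \to \sX}(-)$ is a limit by construction.

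\textbf{Step 1: Cartesian decomposition.} In characteristic zero, for any localizing invariant of a $\bbQ$-linear category one has a natural cartesian square (Cortiñas, Land--Tamme, Kerz--Strunk--Tamme)
\[
\begin{tikzcd}
K \ar[r] \ar[d] & HC^- \ar[d] \\
KH \ar[r] & L_{\cdh} HC^-
\end{tikzcd}
\]
using the identification $TC \simeq HC^-$ in characteristic zero and cdh-descent on the left. Evaluating at $\Perf(\sX)$ and at each $\Perf(R)$ for $\Spec R \to \sX$, then applying $(-)^{\wedge}_{I_G}$ and $R^{\mrm{dAff}}$ respectively, yields two cartesian squares of spectra. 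The comparison map in \eqref{eq:main_intro} is the induced map between their upper-left corners.

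\textbf{Step 2: Auxiliary completion theorems.} It thus suffices to prove
\[
F(\sX)^{\wedge}_{I_G} \isoto \lim_{\Spec R \to \sX} F(R)
\]
for each $F \in \{HC^-,\ L_{\cdh} HC^-,\ KH\}$. For the two $HC^-$-corners I would invoke the sharpened Ben-Zvi--Nadler--Chen completion theorem; for $KH$ I would invoke the sharpened van den Bergh--Tabuada completion theorem. The ANS hypothesis is precisely the input needed to verify the hypotheses of these theorems at every geometric point of $\sX$.

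\textbf{Main obstacle.} The hard part is not the gluing but securing the two auxiliary completion theorems in the required generality: $X$ is allowed to be singular or even derived, and $\sX$ is only assumed ANS, not a quotient by a nice group. The original Ben-Zvi--Nadler and van den Bergh--Tabuada arguments are written for considerably smoother settings, and extending them requires careful handling of derived completion in the non-Noetherian regime together with a Luna-type local structure analysis at each geometric point, which the ANS hypothesis is precisely designed to support.
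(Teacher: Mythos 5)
Your proposal matches the paper's argument exactly: form the cartesian cdh-square relating $K$, $HC^-$, $KH$, and $L_{\cdh}HC^-$, observe that both $(-)^{\wedge}_{I_G}$ and $R^{\mrm{dAff}}(-)$ preserve it, and reduce to the auxiliary completion theorems for the other three corners. One small correction to your division of labor: you credit the Ben-Zvi--Nadler--Chen circle of ideas with the $L_{\cdh}HC^-$ corner, but the paper handles that corner via cdh-excision and equivariant resolution of singularities (the same machinery as the $KH$ corner), reducing to the smooth ANS case where $L_{\cdh}HC^-\simeq HC^-$---so that corner in fact needs both toolkits, and a direct loop-stack argument alone would not apply to it.
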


One can also rephrase the statement above as follows: up to completion at the ideal $I_G$, the $K$-theory of the stack $\sX$ is in fact  right Kan extended from derived affine schemes. We will call this type of result ``Atiyah-Segal (AS) completion theorem".

Let us now comment on what sort of algebraic stacks $\sX$ are allowed by Theorem \ref{thm:main_intro}.

\begin{rem}\label{rem:ans-bg} The condition that the morphism $\sX \rightarrow BG$ is representable implies that $\sX$ can be written as $[X/G]$ where $X$ is a derived algebraic space over $k$. The condition that $\sX$ is ANS\footnote{In our convention derived algebraic spaces have affine diagonal.} means that the stabilizers at all points are nice groups. Let us emphasize however that the group $G$ that appears in Theorem~\ref{thm:main_intro} need not be nice. 
	We also can show that the result is not necessarily true if we do not assume that $\sX$ is ANS: see the remark just below.
	
\end{rem}

\begin{rem}[Counterexample to Atiyah-Segal completion] \label{rem:counterexample} In Section~\ref{sec:counterexample}, we show that the AS completion theorem for $K$-theory is \emph{false} in general. The counterexample we offer is in the $\mbb Q$-stack $B\bbG_a\times \Spec {\mathbb{Q}[\varepsilon]/\varepsilon^2}$, viewed as a stack over $BSL_2$ via the identification $B\mbb G_a\simeq [\mbb A^2\setminus \{0\}/SL_2]$. Our calculation is based on the Levy's generalization \cite{levy2022algebraic} of Dundas--Goodwillie--McCarthy theorem to $(-1)$-connective rings which reduces it to an explicit analysis of the negative cyclic homology $HC^-$.
\end{rem}

The right-hand-side of the isomorphism in Theorem~\ref{thm:main_intro} is the value of $R^{\mrm{dAff}}K$, the right Kan extension of the $K$-theory of derived affine schemes to stacks, on the stack $\sX$. This invariant carries a natural filtration given by right Kan extension of the motivic filtration on schemes. We define \textbf{motivic filtration} on $K(\sX)$ via the pullback square:
\begin{equation}\label{eq:motfilt-stk-intro}
\begin{tikzcd}
\mrm{Fil}_{\mot}^{\star}K(\sX) \ar{d} \ar{r} & R^{\mrm{dAff}}\mrm{Fil}_{\mot}^{\star}K(\sX)\ar{d}\\
K(\sX) \ar{r} & R^{\mrm{dAff}}K(\sX).
\end{tikzcd}
\end{equation}
As explained in Example~\ref{exam:bgmK}, the motivic filtration on $K(\sX)$ will often not be complete. On the other hand, at least if $\sX^{\mrm{cl}}$ is of finite type, $R^{\mrm{dAff}}\mrm{Fil}_{\mot}^{\star}K(\sX)$ is complete, since it can be written as a limit of $K$-theory of finite type affine schemes, where motivic filtration is complete. This forces identification of the completion of $K(\sX)$ with respect to motivic filtration and $R^{\mrm{dAff}}\mrm{Fil}_{\mot}^{\star}K(\sX)$. In this terms, Theorem~\ref{thm:main_intro} can also be seen as a way to characterize this completion in a way that is more intrinsic to $\sX$. 

\begin{cor}\label{cor:main} Let $\sX$ be as in Theorem~\ref{thm:main_intro}, then the completion of $K(\sX)$ with respect to the motivic filtration in~\eqref{eq:motfilt-stk-intro} is given by $K(\sX)^{\wedge}_{I_G}$.
\end{cor}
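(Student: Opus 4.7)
The plan is to deduce the corollary as an essentially formal consequence of Theorem~\ref{thm:main_intro} together with the defining pullback square~\eqref{eq:motfilt-stk-intro}. The key input is that the filtered object $R^{\mrm{dAff}}\mrm{Fil}_{\mot}^\star K(\sX)$ is complete, which I would verify first. Since the classical truncation $\sX^{\mrm{cl}}$ is of finite type, the right Kan extension can be computed as a limit of motivic filtrations on $K$-theory of (Postnikov towers of) finite type derived affine schemes; on each such scheme $\mrm{Fil}_{\mot}^\star K$ is complete, and completeness of filtered spectra is closed under limits, so $\lim_n R^{\mrm{dAff}}\mrm{Fil}^n K(\sX) \simeq 0$.

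Next I would apply $\lim_n$ to the defining pullback square~\eqref{eq:motfilt-stk-intro}. Since limits commute with pullbacks, and using completeness just established, this gives
\[
\lim_n \mrm{Fil}_{\mot}^n K(\sX) \;\simeq\; K(\sX)\times_{R^{\mrm{dAff}}K(\sX)} \lim_n R^{\mrm{dAff}}\mrm{Fil}_{\mot}^n K(\sX)\;\simeq\; \Fib\bigl(K(\sX)\to R^{\mrm{dAff}}K(\sX)\bigr).
\]
By definition, the completion of $K(\sX)$ with respect to the motivic filtration is $\lim_n K(\sX)/\mrm{Fil}_{\mot}^n K(\sX)$, and the fiber sequences $\mrm{Fil}_{\mot}^n K(\sX)\to K(\sX)\to K(\sX)/\mrm{Fil}_{\mot}^n K(\sX)$ remain fiber sequences after passing to the limit. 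Combined with the computation of $\lim_n \mrm{Fil}_{\mot}^n K(\sX)$ above, and using that in a stable $\infty$-category the cofiber of a fiber of a map recovers the target, one obtains
\[
\bigl(K(\sX)\bigr)^{\wedge}_{\mrm{Fil}_{\mot}} \;\simeq\; \Cofib\!\left(\Fib\bigl(K(\sX)\to R^{\mrm{dAff}}K(\sX)\bigr)\to K(\sX)\right)\;\simeq\; R^{\mrm{dAff}}K(\sX).
\]

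Finally, I would invoke Theorem~\ref{thm:main_intro} to identify $R^{\mrm{dAff}}K(\sX)$ with $K(\sX)^{\wedge}_{I_G}$, giving the desired equivalence. The only step that requires genuine care is the completeness of $R^{\mrm{dAff}}\mrm{Fil}_{\mot}^\star K(\sX)$; this rests on the completeness of the motivic filtration on $K$-theory of finite type affine (derived) schemes, which is part of the setup used elsewhere in the paper, and on the fact that the indexing category for the right Kan extension is controlled by $\sX^{\mrm{cl}}$ being of finite type. Once that is in place the remaining manipulations are purely formal.
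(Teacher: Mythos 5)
Your formal manipulations are correct, and the overall strategy matches the paper's: establish completeness of $R^{\mrm{dAff}}\mrm{Fil}_{\mot}^\star K(\sX)$, apply $\lim_n$ to the pullback square~\eqref{eq:motfilt-stk-intro}, recover $R^{\mrm{dAff}}K(\sX)$ as the motivic completion, and invoke Theorem~\ref{thm:main_intro}. This is exactly the content of Lemma~\ref{lem:motivic-completion} and Theorem~\ref{thm:main-motivic} in Section~\ref{sec:apps}.

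However, there is a genuine gap in the one step you flag as requiring care: the completeness of $R^{\mrm{dAff}}\mrm{Fil}_{\mot}^\star K(\sX)$. Your justification is that ``the indexing category for the right Kan extension is controlled by $\sX^{\mrm{cl}}$ being of finite type,'' but this is not true as stated. By definition $R^{\mrm{dAff}}\mrm{Fil}_{\mot}^\star K(\sX)$ is a limit over the \emph{entire} category $\mathrm{dAff}_{/\sX}$ of derived affine schemes over $\sX$, and finiteness of $\sX^{\mrm{cl}}$ places no restriction whatever on which $\Spec R$ admit a map to $\sX$ (e.g.\ arbitrary $G$-torsors over arbitrary affines map to $BG$). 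The motivic filtration on $K(R)$ is \emph{not} complete for a general animated $k$-algebra $R$ --- Theorem~\ref{thm:deralgspc}(2) requires the classical locus to have finite valuative dimension --- so one cannot conclude completeness by passing to a limit of complete filtrations over the full diagram. A cofinality argument restricting to finite-type affines over $\sX$ would require $\sX$ to be locally almost of finite type, which the hypotheses do not grant (the derived directions of $\sX$ are unconstrained).

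The missing ingredient is the derived version of Deshmukh's theorem (Theorem~\ref{thm:deshmukh}, proved in Appendix~\ref{sec:deshmukh}): since $\sX$ is qcqs with separated diagonal, there is a Nisnevich surjection $X\twoheadrightarrow\sX$ from a derived algebraic space, which one may arrange to have $X^{\mrm{cl}}$ of finite type over $k$. Writing $\sX$ as the Nisnevich colimit of the \v{C}ech nerve of this surjection and using that $R^{\mrm{dAff}}\mrm{Fil}_{\mot}^\star K$ satisfies universal descent for Nisnevich surjections (Proposition~\ref{prop:spc}) together with Proposition~\ref{prop:localizing-nis}, one computes $R^{\mrm{dAff}}\mrm{Fil}_{\mot}^\star K(\sX)$ as a totalization of $\mrm{Fil}_{\mot}^\star K(X^{\times_\sX\bullet})$. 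Each term in this cosimplicial diagram is a derived algebraic space with finite-type classical locus, hence of finite valuative dimension, so by Theorem~\ref{thm:deralgspc}(2) its motivic filtration is complete; completeness then passes to the limit. Without Deshmukh your reduction to a diagram of complete filtrations does not go through.
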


We view this statement as our main contribution towards understanding motivic filtrations on $K$-theory beyond the case of schemes.

The main novelty of Theorem~\ref{thm:main_intro} is that it works generally without assuming any smoothness hypotheses on $\sX$. In this situation algebraic $K$-theory is not $\mbb A^1$-homotopy invariant, so many standard tools can not be applied. As with other results concerning algebraic $K$-theory of non-smooth schemes, we employ trace methods which are centered around the transformation $K \rightarrow TC$, from $K$-theory to \textbf{topological cyclic homology}. This map fits as the top part of the following square:
\begin{equation}\label{eq:square}
\begin{tikzcd}
K \arrow[d]\arrow[r] & TC\arrow[d]\\
KH \arrow[r] & L_{\mrm{cdh}}TC.
\end{tikzcd}
\end{equation}
When restricted to quasicompact and quasiseparated schemes, this square is cartesian thanks to results of Kerz-Strunk-Tamme \cite{KST} and Land-Tamme \cite{land-tamme} (we also refer to \cite[Theorem 3.8]{e-morrow} on how to deduce the general case from the noetherian case). The cartesian-ness of this square encodes two key phenomena:
\begin{enumerate}
\item that Weibel's homotopy $K$-theory $KH$ identifies with $\cdh$-sheafified $K$-theory \cite[Theorem 6.3]{KST};
\item that the fiber of the cyclotomic trace map is a $\cdh$ sheaf \cite[Corollary 3.6]{land-tamme}.
\end{enumerate}
In characteristic zero, $TC(-) \simeq HC^{-}(-/\mbb Q)$, the invariant known as \textbf{negative cyclic homology}. In this setting, these 
two ingredients are somewhat older; (1) was proved by Haesemeyer in his thesis \cite{Haesemeyer} and (2) was proved by 
Corti\~{n}as \cite{cortinas}. Our work includes an extension of the square~\eqref{eq:square} to the case of ANS stacks, at 
least assuming resolution of singularities, which allows to reduce Theorem~\ref{thm:main_intro} for $K$-theory to analogous results for its $\mathbb{A}^1$-invariant approximation (known as $KH$) and for Hochschild homology 
(and related theories, like $HC^-$). In the next two sections of the introduction we will discuss the relevant variants of Theorem~\ref{thm:main_intro}: namely, for $KH$ and other $\mathbb{A}^1$-invariant 
localizing invariants (in Section \ref{ssec:intro:A^1-invariant stuff}), and for localizing invariants born out of Hochshild homology (in Section \ref{ssec:intro:completion theorem for HH}). Let us emphasize that the methods used in these two contexts are very different: the first relies crucially on equivariant resolution of singularities and cdh-descent, while the second uses the geometry of corresponding loop stacks.
%

\subsection{Completion theorems for $\bbA^1$-invariant localizing invariants}\label{ssec:intro:A^1-invariant stuff}

Theorem~\ref{thm:main_intro} fits into the following general framework. Suppose that $E$ is a localizing invariant and that we have a morphism $f\colon \sX \rightarrow BG$ where $\sX$ is a perfect stack; for the formulation of the statements, we do not need to assume anything about $f$, while the perfect stack\footnote{Recall that an algebraic stack is called \textit{perfect} if its derived category of quasi-coherent sheaves $D_{\mrm{qc}}(\sX)$ is compactly generated by perfect complexes: in other words, if $D_{\mrm{qc}}(\sX)\simeq \Ind(\Perf(\sX))$.  In characteristic 0 many stacks are perfect\footnote{More generally, any quotient stack of a quasi-projective scheme by a linear algebraic group is perfect.}, in particular any quasi-compact ANS stack is. } assumption is simply to let us make sense of $E(\sX)$ in terms of perfect complexes over $\sX$. Given a $k$-linear localizing invariant $E$, $E(\sX)$ naturally promotes to a $K(BG)$-module, and we obtain a $K(BG)$-linear map $E(\sX)\ra R^{\mrm{dAff}}E(\sX)$. It is not hard to see (Corollary \ref{cor:Kan extension is I_G-complete}) that the target here is always $I_G$-complete, and thus we obtain an induced map
\begin{equation}\label{eq:asmap-intro}
E(\sX)^\wedge_{I_G} \ra R^{\mrm{dAff}}E(\sX).
\end{equation}
Given this setup, we say that the \textbf{Atiyah-Segal (AS) completion theorem holds} for $E, G, \sX$ if~\eqref{eq:asmap-intro} is an equivalence. While there are no a priory reasons for this to always hold, we note that there are interesting precedents to the veracity of this statement.

%
%

%
%

So far, most AS completion theorems in the literature have involved those $E$'s which are also $\mathbb{A}^1$-invariant. In this paper, using equivariant resolution of singularities and cdh-descent, we bootstrap the results of Tabuada-van den Bergh \cite{tabuada2020motivic} for quotient stacks of a smooth projective variety with a torus action, and get the following general form of AS completion theorem in the $\mbb A^1$-invariant context. 

\begin{thm}\label{thm:main_2}  Fix $G$ a reductive group over a characteristic zero field $k$ and $E$ a truncating localizing 
invariant which is also $\mathbb{A}^1$-invariant. Let $\sX \rightarrow BG$ be representable morphism. Assume that $\sX$ is 
ANS and $\sX^{\mrm{cl}} \rightarrow BG$ is finite type. Then we have an equivalence:
\begin{equation}\label{eq:main_introE}
E(\sX)^{\wedge}_{I_G} \xrightarrow{\simeq} R^{\mrm{dAff}}E(\sX).
\end{equation}
\end{thm}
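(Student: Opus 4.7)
The plan is to reduce, through a sequence of descent steps, to the case of $\sX=[Y/T]$ with $Y$ smooth projective and $T$ a split torus, for which \thmref{thm:main_2} is due to Tabuada--van den Bergh. Since $E$ is truncating, $E(R)\simeq E(\pi_0 R)$ for every derived affine $R$, which lets us replace the limit in $R^{\mrm{dAff}}E(\sX)$ by one indexed by classical affine schemes over $\sX^{\mrm{cl}}$; similarly $E(\sX)\simeq E(\sX^{\mrm{cl}})$ via a presentation of $\Perf(\sX)$ by classical pieces. Hence we may assume throughout that $\sX$ is classical, ANS, and of finite type over $BG$.

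Next, I would establish that both sides of~\eqref{eq:main_introE} are cdh-sheaves on the site of finite type classical ANS stacks over $BG$. For the right-hand side this is automatic since $R^{\mrm{dAff}}E$ is a limit of copies of $E$ on derived affines, and $E$ is a cdh-sheaf on qcqs schemes by Land--Tamme (using that $E$ is $\bbA^1$-invariant and truncating). For the left-hand side one must first extend cdh-descent for $E$ to the ANS setting, which should follow from the local structure theorem for ANS stacks (reducing locally to charts of the form $[\Spec A/H]$ with $H$ nice), and then observe that derived completion at the finitely generated ideal $I_G \subset K_0(BG)$ is computed by a finite Koszul-type limit and hence preserves cdh-descent. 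With cdh-descent in place, $G$-equivariant resolution of singularities in characteristic zero, applied to $X$ where $\sX=[X/G]$, produces a cdh-hypercover of $\sX$ by smooth ANS quotients; this reduces the statement to the case of smooth $\sX$.

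For the smooth case, I would use an equivariant Sumihiro-type result (available under the ANS hypothesis) to cover $X$ by $G$-stable quasi-projective opens and construct $G$-equivariant smooth compactifications, once again reducing via cdh-descent to the projective case $\sX=[Y/G]$ with $Y$ smooth projective. To move from a reductive $G$ to a split maximal torus $T\subset G$, I would pull back along the proper smooth map $BT\to BG$ with fiber the flag variety $G/T$: the pullback $[(Y\times G/T)/T]$ is smooth projective with a $T$-action, so Tabuada--van den Bergh applies directly. The theorem for $\sX\to BG$ is then deduced from the $T$-case by checking that the ideals $I_G\cdot K_0(BT)$ and $I_T$ induce the same derived completion on $K(BT)$-modules (using that $K_0(BT)$ is finite over $K_0(BG)$ with Weyl-quotient in characteristic zero), combined with base-change compatibility of $R^{\mrm{dAff}}E$ along $BT\to BG$.

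The step I expect to be the main obstacle is precisely this transfer from the torus to the reductive case while maintaining compatibility with $I_G$-adic completion: the morphism $BT\to BG$ is not affine, and one has to argue carefully that Atiyah--Segal completions on the two sides match up. A secondary delicate point is extending cdh-descent of $E$ from qcqs schemes to ANS stacks, where the ANS hypothesis is crucial: it guarantees local étale presentations by $[\Spec A/H]$ with $H$ nice, which is what makes descent spectral sequences behave well enough to bootstrap from the scheme-theoretic cdh-descent of Land--Tamme.
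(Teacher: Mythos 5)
Your plan shares the paper's large-scale strategy --- reduce to the classical case via the truncating hypothesis, reduce to the smooth case via cdh excision and equivariant resolution of singularities, reduce to the projective case via compactification, and anchor everything on Tabuada--van den Bergh for smooth projective schemes with torus actions --- but the two pivot points contain substantive errors.

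The reduction from $G$ to a maximal torus $T$ does not work as stated. You write that $BT\to BG$ is a ``proper smooth map \ldots\ with fiber the flag variety $G/T$.'' Neither clause is correct: the fiber of $BT\to BG$ is $G/T$, which is \emph{not} the flag variety (that is $G/B$ for a Borel $B$) and is \emph{not} proper (it is an affine bundle over $G/B$). So pulling back along $BT\to BG$ cannot produce a retraction between the two sides of \eqref{eq:main_introE}. What the paper actually does is pass through $BB\to BG$ with $B$ a Borel of $\mathrm{GL}_n\supset G$: that map \emph{is} proper with fiber the flag variety, and the projection formula plus Kempf's theorem ($\pi_{G/B\ast}\sO_{G/B}\simeq\sO$) gives a retraction $\Perf([Y/G])\to\Perf([Y/B])\to\Perf([Y/G])$ of $K(BG)$-linear categories. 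Then --- and this step is entirely absent from your proposal --- one needs $[Y/T]\to[Y/B]$ to be an affine bundle and a separately established affine bundle invariance for $\bbA^1$-invariant truncating $E$ on finite type ANS stacks (Lemma~\ref{lem:affine_bundle_invariance}, which has its own cdh-plus-resolution argument). Without the Borel intermediate and the affine bundle step, there is no bridge from the torus case to the reductive case.

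The second gap is in passing from smooth to smooth projective. cdh excision controls abstract blow-up squares; but an open $T$-equivariant immersion of a smooth affine $X$ into a smooth projective compactification $\bar X$ is \emph{not} an abstract blow-up. What is actually needed is a localization fiber sequence $E([(\bar X\setminus X)/T])\to E([\bar X/T])\to E([X/T])$ for the snc-divisor complement (Lemma~\ref{lem:localization}), proved by a stratum-by-stratum induction combining equivariant Thomason localization (the single-divisor case) with blow-up excision. Your phrase ``reducing via cdh-descent to the projective case'' conflates these two distinct devissages; the localization sequence is a separate ingredient and is where $\bbA^1$-invariance re-enters. A lesser issue is the ordering: you do resolution and compactification for $G$ first and pass to $T$ at the end. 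This is not inherently impossible, but it means that after compactification you have a smooth projective $G$-scheme to which Tabuada--van den Bergh does not apply directly, and you are still stuck without the Borel retraction and affine bundle invariance to descend from $[Y/T]$ (where the base case is known) to $[Y/G]$.
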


The conditions of being truncating and $\mathbb{A}^1$-invariant are allied. The former implies that $E$ satisfies cdh descent, as explained in \cite{land-tamme}. On the other hand, for those $E$'s which are $\mathbb{A}^1$-invariant and also left Kan extended from smooth $\mathbb{Z}$-algebras Cisinski's theorem in motivic homotopy theory \cite{Cisinski} implies cdh descent for $E$. A combination of $\mathbb{A}^1$-invariance and cdh descent is necessary to prove Theorem~\ref{thm:main_2} in the stated generality.  Previous incarnations of such theorems were proved in the case $X$ is a smooth projective scheme or, more generally, smooth {\it 
filtrable} scheme (note that in these cases $KH$ is equivalent to $K$). Note, however, that in Theorem \ref{thm:main_2} we get rid of any of these assumptions. 

%
%

\begin{rem}[Remark on geometric classifying spaces] The formulation of previous iterations of the Atiyah-Segal completion theorem (see, for example, \cite{krishna-completion, tabuada2020motivic}), rather uses the geometric classifying space of Morel and Voevodsky \cite[Section 4.2]{mv99}, which imitates the classical Borel construction in algebraic topology. Such a construction was also used by Totaro to define Chow groups of classifying stacks \cite{totaro-motive}. As follows from \cite[Theorem 3.6]{khan-ravi-2}, in the truncating $\mbb A^1$-invariant setup this agrees with right Kan extension from affine schemes.

This model also happens to work in a very specific situation of $BGL_n$ without $\mathbb{A}^1$-invariance thanks 
to the work of Annala-Iwasa (see \cite[Theorem~3.1]{Annala_2022} and \cite[Theorem~5.3]{annala-hoyois-iwasa}). But in general 
such a model relies heavily on $\mathbb{A}^1$-invariance and plainly does not work in more general settings. It is 
therefore crucial, in order to prove Theorem~\ref{thm:main} in the generality of not necessarily smooth stacks, that 
our completion theorems are formulated uniformly both in the case of $\mathbb{A}^1$-invariant theories and Hochschild-type  theories in terms of the \textit{right Kan extension from derived affine schemes}. 

The idea that evaluating on a geometric classifying space is equivalent to just taking the right Kan extension is an idea 
that has been around in the community already for some time. The work of \cite{khan2021generalized}, which we rely on, has used this elegantly to 
provide an extension of the motivic six functor formalism to certain suitable stacks.
\end{rem}

\subsection{Completion theorems for Hochschild homology and related theories}\label{ssec:intro:completion theorem for HH} In an orthogonal direction, it remains to discuss AS completion theorem for the invariants built from Hochschild homology. The starting point here is an observation due to Ben-Zvi--Francis--Nadler \cite{BFN} that for perfect stacks the \textbf{Hochshild homology} $HH(\sX/\mbb Q):=HH(\Perf(\sX)/\mbb Q)$ has a geometric interpretation as the ring of  functions on the \textbf{loop stack} $\sL\sX$, defined as the mapping $\mbb Q$-stack from $S^1$ to $\sX$ (see our Section~\ref{sec:loops} for a short review).
Out of $HH(\sX/\mbb Q)$ together with its natural $S^1$-action, one can then also extract some other invariants: the \textbf{negative cyclic homology} $HC^{-}(\sX/\mbb Q)$ (as homotopy $S^1$-fixed points), as well as \textit{cyclic homology} $HC(\sX/\mbb Q)$  (as homotopy $S^1$-orbits) and \textit{periodic cyclic homology} $HP(\sX/\mbb Q)$ (as the Tate construction).

To prove the completion theorem for Hochschild homology, given $\sX=[X/G]$ we wish to compare the value of right Kan extension $R^\mrm{dAff}HH(\sX/\mbb Q)$ and the Atiyah-Segal completion $HH(\sX/\mbb Q)^\wedge_{I_G}$. It has been already observed in a slightly different form by Nadler--Ben-Zvi \cite{Ben_Zvi_2012}, and later Chen \cite{Chen_2020}, that these two versions of Hochschild homology correspond geometrically to completing $\sL \sX$ along two closed substacks. Namely,
\begin{enumerate}
\item $R^\mrm{dAff}HH(\sX/\mbb Q)$ is given by global functions on the derived completion $\widehat{\sL}\sX$ along the substack of \textbf{constant loops} $\sX \hookrightarrow \sL \sX$. This was observed by Ben-Zvi--Nadler in \cite{Ben_Zvi_2012}; we revisit their results in Appendix~\ref{sec:bzn} to make sure they remain true in the required generality (see Remark \ref{rem:intro:completions}). 
\item The other completion is given by the stack of \textbf{unipotent loops} $$\underline{\sM\mrm{aps}}(B\mathbb{G}_a,\sX) =: \sL^u \sX \rightarrow \sL \sX.$$ As discussed in  \cite{Chen_2020} one can view $\sL^u \sX$ as the completion of $\sL\sX$ at a closed substack that picks up the unipotent elements in the stabilizer group of every point.  The map to $\sL\sX$ is induced by the affinization map $S^1 \rightarrow B\mathbb{G}_a$. As we discuss in Section \ref{ssec:AS-completion of HH and unipotent loops}, one has a natural identification $HH(\sX/\mbb Q)^\wedge_{I_G}\simeq \mc O( \sL^u \sX)$.
\end{enumerate}
This way, the completion theorem for $HH$ reduces to showing that the map ${\sL}^u\sX\ra \widehat{\sL}\sX$ between the two stacky completions is an isomorphism. A pleasant observation is that this is essentially equivalent to stabilizers of all points of $\sX$ being nice or, in other words, the stack $\sX$ should be ANS. This gives the following:

\begin{thm}\label{thm:completion-th-HP-intro}
Fix $G$ a reductive group over a characteristic zero field $k$. Let $\sX \rightarrow BG$ be representable morphism where $\sX$ is derived algebraic stack, such that the morphism $\sX^{\mrm{cl}} \rightarrow BG$ is of finite type. Assume further that $\sX$ is ANS. Then we have canonical equivalences
\begin{align*}
	HH(\sX/\mbb Q)^\wedge_{I_G} &\xrightarrow{\simeq} \lim_{\Spec R \rightarrow \sX} HH(R)\\
	HC^-(\sX/\mbb Q)^\wedge_{I_G} &\xrightarrow{\simeq} \lim_{\Spec R \rightarrow \sX} HC^-(R)\\
HP(\sX/\mbb Q)^\wedge_{I_G} &\xrightarrow{\simeq} \lim_{\Spec R \rightarrow \sX} HP(R)\\
HC(\sX/\mbb Q)^\wedge_{I_G} &\xrightarrow{\simeq}  \lim_{\Spec R \rightarrow \sX} HC(R).
\end{align*}
\end{thm}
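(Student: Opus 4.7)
The plan is to first prove the $HH$ case by a geometric comparison of loop stacks, and then derive the variants for $HC^-$, $HC$, $HP$ by passing through $S^1$-equivariant constructions. A quasi-compact ANS stack $\sX$ is perfect, so Ben-Zvi--Francis--Nadler gives $HH(\sX/\mbb Q) \simeq \mc O(\sL\sX)$ where $\sL\sX$ is the derived loop stack, equipped with the natural $S^1$-action by loop rotation. I invoke the two geometric identifications already outlined in the introduction: the right Kan extension gives $R^\mrm{dAff}HH(\sX/\mbb Q) \simeq \mc O(\widehat{\sL}\sX)$, functions on the formal completion of $\sL\sX$ along the constant loops $\sX \hookrightarrow \sL\sX$, following Ben-Zvi--Nadler as revisited in Appendix~\ref{sec:bzn}; while the Atiyah-Segal completion gives $HH(\sX/\mbb Q)^\wedge_{I_G} \simeq \mc O(\sL^u\sX)$ where $\sL^u\sX = \underline{\sM\mrm{aps}}(B\mbb G_a, \sX)$ is the unipotent loop stack, following Chen and Section~\ref{ssec:AS-completion of HH and unipotent loops}.

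The affinization map $S^1 \to B\mbb G_a$ induces a canonical morphism $\sL^u\sX \to \sL\sX$ which factors through $\widehat{\sL}\sX$, and the $HH$ case thus reduces to checking that $\sL^u\sX \xrightarrow{\simeq} \widehat{\sL}\sX$ is an equivalence of derived stacks. This is a local question, verifiable on formal neighborhoods of geometric points $x \to \sX$. At such a point with stabilizer $H_x$, the local model of $\sL\sX$ is the adjoint quotient $[H_x/H_x]$, so $\widehat{\sL}\sX$ corresponds to the formal neighborhood of $e \in H_x$ (modulo conjugation) while $\sL^u\sX$ corresponds to the unipotent locus of $H_x$ (again modulo conjugation). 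Under the ANS hypothesis $H_x$ is nice, hence an extension of a torus by a finite linearly reductive group whose only unipotent element in characteristic zero is the identity. The key local input is then that the derived formal neighborhood of $e \in H_x$ coincides with the derived fiber of $\underline{\sM\mrm{aps}}(B\mbb G_a,-)$ at the trivial loop, realizing the desired equivalence.

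Once the $HH$ equivalence is established, I upgrade it to an $S^1$-equivariant equivalence (both sides inherit the loop rotation action from $\sL\sX$) and pass to $(-)^{hS^1}$, $(-)_{hS^1}$ and $(-)^{tS^1}$ to obtain the statements for $HC^-$, $HC$ and $HP$ respectively. The only thing to verify is that derived $I_G$-adic completion commutes with each of these $S^1$-constructions. Since $I_G$ is finitely generated, this is immediate for $HC^-$ (an $S^1$-limit) and for $HC$ (an $S^1$-colimit), and follows for $HP$ once one identifies it as the localization $HC^-[t^{-1}]$ for $t$ of cohomological degree $2$.

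The hardest step will be the local identification $\sL^u\sX \simeq \widehat{\sL}\sX$ under the ANS hypothesis. The subtlety is twofold: first, one must work with genuinely derived formal completions, since $\sX$ may be derived and the constant-loop inclusion need not be a regular immersion; second, the finite linearly reductive quotient of a nice stabilizer $H_x$ contributes nontrivial components to both $[H_x/H_x]$ and $\sL^u\sX$, so matching the two completions requires analyzing neighborhoods of non-identity torsion elements as well. The argument should follow the spirit of earlier work of Ben-Zvi--Nadler and Chen, but must be carried through in the derived, non-smooth setting that the paper targets.
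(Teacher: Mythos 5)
Your strategy for the $HH$ case is essentially the paper's: reduce to comparing the formal completion $\widehat{\sL}\sX$ (at constant loops) with the unipotent completion $\sL^u\sX$, check the comparison on the level of the inertia stack fiberwise over geometric points, and use the ANS hypothesis to conclude that the unipotent cone of a nice stabilizer is trivial. This is what the paper does (Theorem~\ref{thm:general completion thm for HH}); your version is a bit compressed but sound in spirit. The $HC^-$ step is also fine, for the reason you give: $(-)^{hS^1}$ is a limit, and both derived $I_G$-completion (for $I_G$ finitely generated) and right Kan extension preserve limits.

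The gap is in your derivation of the $HC$ and $HP$ cases. You assert that derived $I_G$-adic completion commutes with $(-)_{hS^1}$ "since $I_G$ is finitely generated," and with inverting $t$ via $HP \simeq HC^-[t^{-1}]$. Neither is true. The completion endofunctor $(-)^\wedge_{I_G} = i_*i^*$ preserves finite colimits (it is exact) and all limits, but it does \emph{not} preserve infinite colimits: the subcategory of $I_G$-complete modules is not closed under filtered colimits (e.g.\ $\mathbb{Z}/p^n$ are all $p$-complete but $\colim_n \mathbb{Z}/p^n = \mathbb{Q}_p/\mathbb{Z}_p$ is not). Since $M_{hS^1} = \colim_{BS^1} M$ is a sequential colimit of finite colimits over the skeleta of $BS^1$, and since $HC^-[t^{-1}]$ is a filtered colimit, your reductions for $HC$ and for $HP$ do not go through. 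In fact the same problem afflicts the other side of the comparison: $R^{\mrm{dAff}}$ is a limit in the $\sX$-variable, which need not commute with $(-)_{hS^1}$ or with $[t^{-1}]$ either, so $R^{\mrm{dAff}}HC$ and $R^{\mrm{dAff}}HP$ are not simply $S^1$-orbits or a $t$-localization of $R^{\mrm{dAff}}HH$ or $R^{\mrm{dAff}}HC^-$.

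The paper avoids this by a genuinely different route for $HP$: since $HP(-/\mathbb{Q})$ is both $\mathbb{A}^1$-invariant (Goodwillie) and truncating in characteristic zero, the AS completion theorem for $HP$ follows from the completion theorem for $\mathbb{A}^1$-invariant truncating localizing invariants (Theorem~\ref{thm:main_2}), which is proved by a completely different method (equivariant resolution of singularities and cdh descent). Then $HC$ follows from the fiber sequence $HC \to HC^- \to HP$, since both completion and right Kan extension preserve finite limits and hence fiber sequences. So $HC^-$ is an $S^1$-limit of the $HH$ result, $HP$ needs the $\mathbb{A}^1$-invariant machinery, and $HC$ is squeezed between the two; you cannot get all three directly from $HH$ by applying $S^1$-constructions.
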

In fact, while the case of $HC^-$ formally follows from the case $HH$, the case of $HP$ is a particular case of our completion theorem for $\mbb A^1$-localizing invariants (Theorem \ref{thm:main_2}); both then imply the statement for $HC$.

The work of Chen \cite{Chen_2020} was the first to consider both completions ${\sL}^u\sX$ and $\widehat{\sL}\sX$ in a systematic way. He proved a version of the Atiyah-Segal completion theorem for $HP$ for stacks over an algebraically closed field of characteristic zero (\cite[Theorem B]{Chen_2020}), and Theorem \ref{thm:completion-th-HP-intro} can be considered as a partial generalization of his result. Note that Chen's theorem does not require $\sX$ to be ANS; however, this is due to $\mbb A^1$-invariance of $HP$ which eliminates the difference between the two completions (since, morally, $\mbb A^1$-invariance makes the unipotent cone of any group ``contractible"). In Remark~\ref{rem:no AS in general} we show that the Atiyah-Segal completion theorem for the other Hochschild-type invariants does not hold without the ANS assumption on $\sX$.

Let us also note that the actual definition of unipotent loops that we will consider in the body of the paper is slightly more ad hoc than the one we used above. The relation between the two is explained in Remark~\ref{rem:relation to Chen}.

\begin{rem}[A remark on derived completions and \cite{Ben_Zvi_2012}]\label{rem:intro:completions} One of the main results of Ben-Zvi--Nadler in \cite{Ben_Zvi_2012} is the identification of $\widehat{\sL}\sX$ and the completed shifted cotangent bundle $\widehat{\mbb T}_{\sX}[-1]$; using descent for cotangent complex one can then show that the construction $\sX \mapsto \widehat{\sL}\sX$ is left Kan extended from affine schemes. However, it is never specified in their work what they mean by derived completion, and if one follows their proof the definition is not completely clear, e.g. $\widehat{\mbb T}_{\sX}[-1]$ is understood as a colimit of $n$-th neighbourhoods $\colim_n \mbb T_{\sX}^{\le n}$ of the zero section. In fact, as we discuss in Remark \ref{rem:not a completion in general} this is not quite accurate: namely, this formula will typically give a non-convergent stack (unless $\sX$ is smooth), and thus can't agree with $\widehat{\sL}\sX$ (given the latter is defined via some sort of de Rham stack). Nevertheless, in Proposition \ref{prop:completion at 0 section} we show that the analogous \textit{convergent} colimit does agree with other versions of formal completion (which we review in Section~\ref{sec:reminders-completion}), at least if $H^0(\mbb L_{\sX})$ is locally finitely generated (which is true if $\sX$ is algebraic). 
	
	Another issue is deformation theory, to which the authors appeal in their proof. In a non necessarily finite type situation, to have access to usual deformation theory tools, one needs to use a slightly non-standard definition of de Rham stack (called the ``absolute de Rham stack" in \cite{SAG}) to define the formal completion. There is also a Simpson's formal completion\footnote{The completion $\sY^\wedge_f$ along a map $f\colon \sX \ra \sY$ is then defined as the fiber product $\sY\times_{\sY_\mrm{dR}} \sX_{\mrm{dR}}$.}, with the corresponding de Rham stack given by $\sX_{{\mrm{dR}}}(R):= \sX(\pi_0(R)^{\mrm{red}})$, which is more convenient to use in practice. Note that in our situation, if $\sX$ an algebraic stack that is finite type over a field $k/\mbb Q$, we are interested in $HH(-/\mbb Q)$ and thus are forced to consider $\sX$ as an algebraic stack over $\mbb Q$, where it typically will not be of finite type. In Section~\ref{sec:reminders-completion} we systematically analyze the difference between these two versions of derived completion, showing that they are the same if the map of underlying classical stacks is locally finitely presented. This then applies in particular to completion at constant loops $\sX \ra \sL\sX$, when $\sX$ is an algebraic stack. More generally, we fully revisit the proof of Ben-Zvi--Nadler, making sure all steps work in the required generality (this is in Appendix \ref{sec:bzn}).
	
We would like to warn the reader that these various forms of formal completion (at constant loops and/or zero section) are only compatible if we take algebraic stacks in the sense that they have a \textit{smooth} representable cover by the union of derived affine schemes. For algebraic stacks in the \textit{fpqc} sense, $R^{\mrm{dAff}}HH(\sX/\mbb Q)$ will be given by functions on yet another ``neighborhood completion", details on which we might add in a sequel.

In the upcoming \cite{brav-rozenblyum}, Brav and Rozenblyum also give another perspective on the Ben-Zvi--Nadler theorem. Their argument is slightly different and generalizes the result in a different direction, namely for laft prestacks admitting a deformation theory. In other words, their proof significantly relaxes the geometricity condition on the stack, while remaining in a finite type situation. 
\end{rem}

\subsection{Relation with previous and contemporaneous works} We briefly survey the history of Atiyah-Segal completions in algebraic geometry as well as what is new in this paper in both generality and philosophy; we apologize to the many authors in this very rich field in advance for sins of omission. The subject began with Thomason's construction of equivariant $K$ and $G$-theories \cite{thomason-group-actions} where an equivariant version of Quillen's basic results on $K$-theory were proved. Perhaps inspired by Atiyah--Segal's work \cite{atiyah-segal}, Thomason proved a completion theorem for $G$-theory and $K$-theory in the smooth setting \cite{thomason-topological} after $K(1)$-localization. The usual d\'evissage of reducing from a general reductive group scheme, to a torus and eventually to a diagonalizable group with trivial action was already explained in this work; we remark that this approach heavily relies on $\mathbb{A}^1$-invariant properties to pass from a Borel subgroup to a torus. The relationship between equivariant $G$-theory and algebraic cycles on stacks, a shadow of the story about motivic filtrations, was first explored by Edidin--Graham \cite{edidin-graham, edidin-graham-rr}. The viewpoint there is very natural: up to completion and rationalization the Chow groups of algebraic stacks are discovered as the graded pieces of a split filtration on $G_0$. 

With the advent of motivic homotopy theory and noncommutative motives, the completion theorems take on a richer flavor. In particular, Morel--Voevodsky's construction of their geometric classifying space provides a target for the completed $K$-theory and other invariants. We mention the work of \cite{carlsson-joshua} which deals with $G$-theory and \cite{orbifolds} which proves similarly flavored theorems in the case of Deligne--Mumford stacks/orbifolds. Three papers which are very influential for us are Tabuada--van den Bergh's definitive account of the completion theorem in the torus case for $\mathbb{A}^1$-invariant localizing invariants (which we use) \cite{tabuada2020motivic}, Krishna's $K$-theoretic completion theorem in the smooth, projective setting for $K$-theory \cite{krishna-completion} and Chen's work on equivariant hochschild homology \cite{Chen_2020}. These papers work with cohomological invariants such $K$-theory and Hochschild homology, as opposed to earlier accounts which deals with $G$-theory, and Borel-Moore type invariants. The complexion of the problem changes entirely --- there is a lack of $\mathbb{A}^1$-invariance (if the input stack is not smooth) and there is a lack of localization-type results, both are standard approaches to the completion problem even in topology.

The novelty of our result is that it is $K$-theoretic (as opposed to $G$-theoretic) and is, as far as we know, the first to deal with stacks which are not smooth, reduced or even classical. This accounts for some of limitations (such as ANS) of our results. The new idea that we use is trace methods, combining an $\mathbb{A}^1$-invariant completion theorem with one for negative cyclic homology, and relying on the first and last authors' extension of the Goodwillie theorem in \cite{dgm-elden-vova} to the world of stacks. Like the work of Edidin-Graham we are also motivated by motives --- one can view our results as an integral and higher refinement of their results where Chow groups are replaced by the $(2n,n)$-line of motivic cohomology in the sense of \cite{e-morrow}. 

Lastly, around the same time as this work, Krishna and Nath \cite{krishna-nath} also independently explored the completion theorem. We remark on the overlap and distinctions between their work and ours:

\begin{enumerate}
\item Given a non-smooth algebraic stack $\sX$, \cite[Theorem 1.1]{krishna-nath} proves completion results for the $G$-theory of $\sX$; the generality here asks that $\sX$ is a quotient stack by a reductive group. On the other hand, our Theorem~\ref{thm:main_intro} is $K$-theoretic and only works with added hypotheses on the stabilizers of $\sX$. Indeed, the main foci of the two papers somewhat differs --- theirs follow, and in some sense complete, the tradition of $G$-theoretic completion theorems, while our aim is to explore the less-chartered $K$-theoretic territory. 
\item Our Theorem~\ref{thm:main_2} subsumes \cite[Theorem 1.6, Corollary 1.7]{krishna-nath} in characteristic zero and our Theorem~\ref{thm:completion-th-HP-intro} subsumes \cite[Corollary 1.11, Theorem 1.14]{krishna-nath} in that we do not need conditions on the stabilizers, the smoothness of $X$ or algebraic-closedness of the base field. However, we mention the elegant \cite[Lemma 13.6]{krishna-nath} which controls the completion of the cyclic homology term which should be useful for other future applications.
\item A very interesting aspect of \cite{krishna-nath} is that they also considered completions at other ideals, we do not address this at all in this paper. 
\end{enumerate}

\subsection{Outline} We begin the paper with Section~\ref{sec:examples}. The point of this section is to illustrate via examples one of the main perspectives that we take in this paper: that completion at the Atiyah-Segal ideal restores certain descent properties that one should expect for various invariants in the setting of algebraic stacks. In the preliminaries Section~\ref{sec:prelim}, we review the derived algebraic geometry and the theory of localizing invariants that we will need in this paper. We work in the generality of animated rings over $\mathbb{Z}$, something more than necessary for the main results of this paper (which is over $\mathbb{Q}$) in view of the completion theorems in positive and mixed characteristics that will constitute the sequel to this paper. In Section~\ref{sec:derived} we discuss descent properties for cohomology theories of derived stacks. 
In particular, we introduce $R^{\mrm{dAff}}E$, the right-hand-side of the Atiyah-Segal completion theorem and establish its basic properties.

In Section~\ref{sec:setup} we define the Atiyah-Segal ideal and formulate the completion theorem. A useful result we will 
turn to repeatedly is the ``change-of-groups'' Lemma~\ref{lem:comparing Atiyah-Segal completions}. This ensures that even 
though we are working with stacks which are ANS, and hence its stabilizers are of a fairly restrictive class, we are allowed to complete with respect to $I_G$ where $G$ is a reductive group under the stack. Our first main theorem is established in Section~\ref{sec:askh} where we improve on the Tabuada-Van den Bergh completion theorem using cdh excision. Section~\ref{sec:as-hh} proves the completion theorem for Hochschild invariants --- in particular for $HH, HC^-, HC$ and $HP$, which is our second main theorem. Putting all of this together in Section~\ref{sec:ask}, we prove the Atiyah-Segal completion theorem for ANS stacks in characteristic zero. In Section~\ref{sec:apps}, we discuss applications of our main theorem to building coherent transfers (aka spectral Mackey functors) on completed $K$-theory as well as propose a candidate for motivic cohomology of certain stacks. 

The present paper constitutes three appendices. In Appendix~\ref{sec:bzn} we revisit the Ben-Zvi--Nadler's geometric version of the HKR theorem for geometric stacks. 
It also contains certain constructions in derived algebraic geometry that we could not locate in suitable generality in the literature, including different versions of completions vector bundles and some cotangent complex related descent results.  In Appendix~\ref{sec:deshmukh} we extend a result of Deshmukh, which builds on work of Moret-Bailly, to derived stacks. It is a strong Nisnevich-local structural result for covering of stacks that we will need to control $R^{\mrm{dAff}}E$. Lastly, in Appendix~\ref{app:mot-algpsc} we record a (relatively straightforward) extension of the motivic filtration on schemes to derived algebraic spaces.

\subsection{Vista} This is the beginning of a series of papers on motivic filtrations on algebraic stacks, centered around completion theorems in $K$-theory and its cousins. As already laid out in this paper, our philosophy is that Atiyah-Segal completion theorems provide intrinsic descriptions of completions of genuine invariants along an induced motivic filtration. In the sequel, we plan to establish completion theorems for topological Hochschild and cyclic homology. While we build on the results of Ben-Zvi--Nadler in characteristic zero, the strategy in mixed and positive characteristics is new and more involved. 

Towards this end we are working on a project, joint with Annala, in using the machinery of (derived) Theta-stratifications \cite{DHL-theta}, coupled with the second author's work on $p$-adic Hodge theory \cite{kubrak-prikhodko} to ``induct along the schematic locus'' of a derived algebraic stack. We hope to use this strategy to establish a completion theorem for a nice enough collection of algebraic stacks in mixed and positive characteristics. 

\subsection{Acknowledgements} We thank Amalendu Krishna for discussions about his work on the completion theorem, Jacob Lurie for explaining to us his version of derived completion and numerous other useful conversations related to this paper, Charanya Ravi for helpful conversations about the independence of the AS completion on the group and the affine bundle invariance, Nick Rozenblyum for clarifying various aspects of the HKR filtration for stacks and explaining to us his own lucid perspsective on it, David Rydh for his usual steadfast guidance on all stack-theoretic issues, and Matthew Morrow for conversations related to motivic filtrations beyond schemes.

EE was supported by an Erik Ellentuck fellowship during his time at the Institute for Advanced Study (IAS). Somewhat ironically, some of this work was done during the special semester on ``$p$-adic geometry''; we apologize to (but also thank) the organizers, Jacob Lurie and Bhargav Bhatt that the work produced is in characteristic zero (for now). We thank Vita the cat for constant distractions and entertainment while EE and DK were writing this paper at the IAS. DK is grateful to IAS and IH\'ES for the great working conditions and a fruitful academic environment during the time this project was carried out. VS acknowledges support from the grant SFB 1085: "Higher Invariants."

\section{Examples and phenomenology}\label{sec:examples} We begin with an informal explanation of some examples of the phenomena that this paper aims to capture. To simplify matters all schemes and stacks in this section are classical. Let $E: \Cat^{\perf}_{\infty} \rightarrow \Spt$ be a localizing invariant; we review this concept in more detail in Section~\ref{sec:localizing}. The value of $E$ on a qcqs scheme $X$ is defined to be $E(X) := E(\Perf(X))$ and the functor $X \mapsto E(X)$ enjoys some reasonable descent properties. At the very least, it satisfies Nisnevich descent, as already is implicit in the work of Thomason-Trobaugh \cite{TT} (see, for example, \cite[Proposition A.15]{CMNN}). Often, one can actually do better: the functors constructed out of trace considerations like $THH, HH(-/k)$, $TC$ and their cousins in fact satisfy flat descent \cite[Corollary 3.4]{BMS2}, and so do telescopically-localized invariants \cite[Theorem 5.1]{CMNN}. 

We point out two further features of the functor $X \mapsto E(X)$. First, by the definition of a localizing invariant, $E(X)$ is amenable to calculations from derived-categorical techniques. The point is that any localizing invariant splits semiorthogonal decomposition; for example Beilinson's decomposition of $\Perf(\bbP_X^n)$ \cite{beilinson-decomp}
\[
\Perf(\bbP_X^n) \simeq \langle \sO, \sO(1), \cdots, \sO(n) \rangle,
\]
leads to the projective bundle formula: $E(\bbP^n_X) \simeq \bigoplus_{i=0}^n E(X)$. Second, the functoriality of the assignment $X \mapsto \Perf(X)$ endows $E(X)$ with additional structure. For example, if $f: X \rightarrow Y$ is a morphism then the pullback functor $f^*: \Perf(Y) \rightarrow \Perf(X)$ admits a right adjoint $f_*$ whenever $f$ is proper and has finite tor-amplitude \cite[Tag 0B6G]{stacks}. This endows the functor $X \mapsto E(X)$ with the structure of transfers (more precisely, it organizes into a spectral Mackey functor as in \cite{BarwickMackey}), which are often useful for computations and are central to proving descent for rationalized/telescopically-localized localizing invariants, as was done in \cite{CMNN}. Under these considerations it is reasonable, for $\sX$ an algebraic stack, to set $E(\sX):= E(\Perf(\sX))$. Just as in the case of schemes, such a functor splits semiorthogonal decomposition of stacks and admits transfers along proper morphisms of stacks that have finite tor-amplitude.

However, this definition has a major drawback, related to \emph{descent}. We illustrate this in the next two examples. For simplicity, we work with localizing invariants linear over a field $k$ of characteristic zero and consider the case of $E = HH(-/k)$, Hochschild homology. To formulate descent, we will also use the language of $E$(-universal) descent which we review in Definition~\ref{def:eff-desc}. 

\begin{exam}[\'Etale coverings of Deligne-Mumford stacks are not of $HH(-/k)$-descent.]\label{exam:mun}  The morphism $\Spec k \rightarrow B\mu_n$ is an \'etale covering (recall the characteristic zero hypothesis). If $\Spec k \rightarrow B\mu_n$ were of $HH$-descent, then we would have equivalences
	\begin{equation}\label{eq:hh-mun}
	HH(B\mu_n/k) \simeq \lim_{[m] \in \Delta} HH(\mu_n^{\times m}/k) \simeq \lim_{[n] \in \Delta} \mc O(\mu_n^{\times m}) \simeq \mc O(B\mu_n).
	\end{equation}
	Via Cartier duality, the category $\Perf(B\mu_n)$ is canonically equivalent to $\bbZ/n$-graded objects in $\Perf(k)$, in particular
	\[
	\Perf(B\mu_n) \simeq \Perf(k)^{\oplus \bbZ/n}
	\]
	as small stable $\infty$-categories. Therefore, we get that $HH(B\mu_n/k) \simeq k[\mbb Z/n]$. On the other hand, $\mc O(B\mu_n)\simeq k \neq k[\mbb Z/n]$.
\end{exam}

\begin{exam}[Nisnevich surjections of Artin stacks are not of $HH(-/k)$-descent.]\label{exam:bgm} Since any line bundle is Zariski-locally trivial, the standard covering map $\Spec k \rightarrow B\bbG_m$ is a Zariski, and, in particular, Nisnevich surjection. If the morphism $\Spec k \rightarrow B\bbG_m$ were of $HH$-descent, then we would have an equivalence:
	\begin{equation}\label{eq:hh-gm}
HH(B\bbG_m/k) \simeq \lim_{[n] \in \Delta} HH(\bbG_m^{\times n}/k).
\end{equation}
The HKR isomorphism furnishes an equivalence $HH(\bbG_m^{\times n}/k) \simeq R\Gamma(\bbG_m^{\times n}; \prod_i \Sym^i(\bbL_{\bbG_m^{\times n}}[1]))$. By descent for cotangent complex (see Proposition~\ref{prop:etale-descen-cot}) we then conclude that
\[
\lim_{[n] \in \Delta} HH(\bbG_m^{\times n}/k) \simeq \prod^{\infty}_{i=0} R\Gamma(B\bbG_m,   \prod_i \Sym^i\left(\bbL_{B\bbG_m}[1])   \right).
\]
Now, the cotangent complex of a classifying stack $BG$ is identified as $\bbL_{BG} \simeq \mathfrak{g}^{\vee}[-1]$ (see, for example, \cite[Example A.3.8]{kubrak-prikhodko}) where $\mathfrak{g}$ is the Lie algebra of $G$. Therefore, the right-hand-side simplifies to $\prod_i R\Gamma(B\bbG_m,   \prod_i \Sym^i(  \mc O_{B\mbb G_m}))\simeq \prod_ik$. On the other hand, since $HH(-/k)$ is a \emph{finitary} localizing invariant and $\Perf(B\mbb G_m)\simeq \oplus_{i\in \mbb Z} \Perf(k)$, we have that $HH(B\bbG_m/k) \simeq \bigoplus_i k$. We see that they cannot be the same.

\end{exam}

Examples~\ref{exam:mun} and~\ref{exam:bgm} suggest an alternative extension of $HH(-/k)$ to algebraic stacks, one which 
enjoys better descent properties and ensures that~\eqref{eq:hh-mun} and~\eqref{eq:hh-gm} holds. Consider $E$ as a functor on 
affine schemes and for an algebraic stack $\sX$, we set
\[
R^{\mrm{Aff}}E(\sX) := \lim_{\Spec R \rightarrow \sX} E(\Spec R).
\]
This procedure is nothing but a right Kan extension and will be discussed more extensively in Section~\ref{subsec:rke}. As 
explained Lemma~\ref{eq:colimofrep}, $R^{\mrm{Aff}}E$ inherits descent properties that $E$ has in the following sense: if $E$ 
is an $\tau$-sheaf on affine schemes, then any $\tau$-surjection of stacks is of universal $R^{\mrm{Aff}}E$-descent. For 
example if $E = HH(-/k)$, then
\[
R^{\mrm{Aff}}HH(B\mu_n/k) \simeq \mc O(B\mu_n) \simeq k \qquad R^{\mrm{Aff}}HH(B\bbG_m/k) \simeq \prod_i R\Gamma(B\bbG_m,   \prod_i \Sym^i(  \mc O_{B\mbb G_m}))\simeq \prod_ik.
\]
Additionally, as already explained in the introduction, $R^{\mrm{Aff}}E$ inherits filtrations that $E$ has on affine schemes. Of particular interest, $R^{\mrm{Aff}}K$ of a stack comes readily equipped with motivic filtrations.

The drawback of this extension is that it is not clear at all that $R^{\mrm{Aff}}E$ splits semiorthogonal decompositions and admits coherent transfers. The Atiyah-Segal completion theorem can be read as the rather surprising assertion that, in fact, 
$R^{\mrm{Aff}}E$ does. Theorem~\ref{thm:main_intro} presents an alternative formula $R^{\mrm{Aff}}E(\sX)$, in certain cases, 
which does not require the data of all affine schemes mapping into $\sX$. This alternate formula instead asks for the data of 
a $K(BG)$-module structure on $E(\Perf(\sX))$. 
We illustrate this theorem in light of Examples~\ref{exam:mun} and~\ref{exam:bgm}. 

\begin{exam}[The Atiyah-Segal completion for $HH(-/k)$] We compute $R^{\mrm{Aff}}HH(-/k)$ of $B\bbG_m$ and $B\mu_n$ via the 
Atiyah-Segal completion theorem. Both computations could be performed easily via descent and, in fact, already implicitly 
carried out in  Examples~\ref{exam:mun} and~\ref{exam:bgm}. We hope that their computation via the completion theorem would 
be illustrative for the reader.

Using the lax-monoidal structure on $HH$, the above computations of $HH(B\bbG_m/k)$ and $HH(B\mu_n/k)$ enhance to equivalences of derived commutative rings:
\[
HH(B\bbG_m/k) \simeq k[\mbb Z]\simeq k[t,t^{-1}] \qquad HH(B\mu_n/k) \simeq k[\mbb Z/n]\simeq k[t]/(t^n-1)
\]
(both rings  being concentrated in cohomological degree 0). The Atiyah-Segal ideal in $HH$ in both cases is generated by the element $t-1$; we refer to Example~\ref{exam:as-ideal} and Proposition~\ref{prop:ig-uni} for details. 

The $(t-1)$-adic completion of $k[t]/(t^n-1)$ picks out the component $k[t]/(t-1) \cong k$ via the isomorphism $k[t]/(t^n-1) \cong k[t]/(t-1) \times k[t]/(t^{n-1} + t^{n-2} + \cdots + 1)$, and therefore, by the Atiyah-Segal completion theorem, we have that:
\[
R^{\mrm{Aff}}HH(B\mu_m) \simeq (k[t]/(t^n-1))^\wedge_{(t-1)}\simeq k[t]/(t-1) = k.
\]

For $B\mbb G_m$,  the $(t-1)$-adic completion $k[t,t^{-1}]^\wedge_{(t-1)}$ is simply given by power series in $t-1$, and thus we get
\[
R^{\mrm{Aff}}HH(B\bbG_m)\simeq k[t,t^{-1}]^\wedge_{(t-1)} \simeq k[[t-1]].
\]



\end{exam}

\begin{exam}[Rational $K$-theory of stacks] Another interesting invariant which enjoys \'etale descent for schemes is rationalized algebraic $K$-theory. As already pointed out in \cite[Remark A.16]{CMNN}, it does note enjoy descent along the simplest \'etale covering of stacks $\Spec \mathbb{C} \rightarrow BC_2$. Our main results also apply to establish \'etale descent for Atiyah-Segal completed rational algebraic $K$-theory. 
\end{exam}

\begin{exam}[Right Kan extended $K$-theory] In the $HH$ situations, the right Kan extended theories are easily computed because $HH$ enjoys \'etale descent on schemes.  For $K$-theory, which infamously does not, the right Kan extended theories more easily computed via the Atiyah-Segal completion and semiorthogonal decompositions of categories. For example, from our main theorem we easily read off:
\[
R^{\mrm{Aff}}K(B\bbG_m) \simeq (K(k)[t,t^{-1}])^{\wedge}_{(t-1)}\simeq K(k)[[t-1]].
\]
\end{exam}

%

\section{Preliminaries} \label{sec:prelim}

\subsection{Generalities on prestacks and stacks}In this section we make a review of notions of derived algebraic geometry that we will use. We also set up notations and conventions for the paper which, to warn the reader, might differ slightly from the standard ones.

While most of the results of this paper concern derived algebraic stacks over characteristic zero fields, we still aim to set up notations and foundations for future use (particularly the sequel where will investigate the Atiyah-Segal completion theorems in positive and mixed characteristics). The paper also contains some general results that don't require characteristic 0 assumption (like the motivic filtration on K-theory of derived algebraic spaces in Appendix \ref{app:mot-algpsc}). This is why below we will set up everything in the context of $\mbb Z$-linear derived algebraic geometry. 

 We sometimes refer to \cite{GRderalg1, GRderalg2} which assumes that the base is a field of characteristic zero throughout, however, for the results that we cite it is never important.

\subsubsection{Derived prestacks} Let $\CAlg^{\mrm{cl}}$ denote the 1-category of classical commutative rings.
Let $\CAlg^{\mrm{an}}:=\mrm{Ani}(\CAlg^{\mrm{cl}})$ denote the $\infty$-category of animated commutative rings, given by ``animation" of $\CAlg^{\mrm{cl}}$. Recall that by definition this is the $\infty$-category of product preserving functors from polynomial $\mbb Z$-algebras, $\PSh_{\Sigma}(\mrm{Poly}_\mbb Z) \simeq \CAlg^{\mrm{an}}$. Given an animated ring $R$ we denote by $\CAlg_{R}^{\mrm{an}}$ the under category $(\CAlg^{\mrm{an}})_{R/}$. In case $R$ is classical, we have $\CAlg_{R}^{\mrm{an}}:=\mrm{Ani}(\CAlg^{\mrm{cl}}_{R})$. If $R$ is a $\mbb Q$-algebra, then $\CAlg_{R}^{\mrm{an}}$ is naturally identified with the category $\CAlg(\D(R)_{\ge 0})$ of $\mbb E_\infty$-algebras in connective $R$-modules.

\begin{defn}We say that $A\in \CAlg^{\mrm{an}}$ is
	\begin{enumerate}
		\item \textbf{$n$-truncated} if $\pi_i(A)=0$ for $i>n$;
		\item  \textbf{truncated} if it is $n$-truncated for some $n\ge 0$.
	\end{enumerate}	
	We denote by  $$ \CAlg^{\mrm{an},[0,\infty)}\subset \CAlg^{\mrm{an}}$$ the full subcategory spanned by truncated animated commutative rings. 
\end{defn}

\begin{defn} Let $R\in \CAlg^{\mrm{an}}$. Recall the following notions:
	\begin{enumerate}
		\item A \textbf{derived prestack} over $R$ is a functor $\mc X\colon \CAlg^{\mrm{an}}_R\ra \Spc$; we denote by $\mrm{dPStk}_R$ the $\infty$-category of derived prestacks over $R$. 
		\item A prestack $\sX$ is called \textbf{convergent} if 
		$$
		\sX(A)\xrightarrow{\sim}\lim_n \sX(\tau_{\le n} A)
		$$
		for any $A\in \CAlg^{\mrm{an}}_R$.
	\end{enumerate}	
	We will denote the full subcategory spanned by convergent derived prestacks by
	$$
	\mrm{dPStk}^{\mrm{conv}}_R\subset \mrm{dPStk}_R.
	$$
\end{defn}	

\begin{exam}
		We denote by $\Spec A$ the prestack represented by an animated algebra $A\in \CAlg^{\mrm{an}}_R$. We call such prestacks \textbf{derived affine schemes} and denote by $\mrm{dAff}_R\subset \mrm{dPStk}_R$ the corresponding full subcategory. By Yoneda's lemma $\mrm{dAff}_R\simeq (\CAlg^{\mrm{an}}_R)^{\mrm{op}}$.
\end{exam}	

\begin{exam}\label{ex:constant prestack}
	Let $S\in \mrm{Spc}$ be a space. A functor $\underline{S}$ sending any $A\in \CAlg^{\mrm{an}}_R$ to $S\in \mrm{Spc}$ is the \textbf{constant prestack} associated to $S$. Functor $S\mapsto \underline{S}$ commutes with all limits and colimits. In particular, we have $\underline{*}\simeq \Spec R$ is the final object and 
	$$
	\underline{S}\simeq \colim_S (\Spec R),
	$$ 
	where the diagram in the colimit is constant with value $\Spec R$.
\end{exam}	

\begin{rem}\label{rem:convergent prestacks}
	Restricting to $\CAlg^{\mrm{an},[0,\infty)}\subset \CAlg^{\mrm{an}}$ induces an equivalence
	$$
	\mrm{dPStk}^{\mrm{conv}}_R\simeq \Fun(\CAlg^{\mrm{an},[0,\infty)}_R,\Spc)
	$$
	with the inverse functor given by right Kan extension. The embedding 
	$$
	\mrm{dPStk}^{\mrm{conv}}_R\ra \mrm{dPStk}_R
	$$
	commutes with limits, but typically not colimits. E.g. given an animated algebra $A\in \CAlg^{\mrm{an}}_R$ which is not necessarily truncated we have
	$$
	\Spec A \simeq \colim_n \Spec \tau_{\le n}A
	$$
	in $\mrm{dPStk}^{\mrm{conv}}_R$, but typically not in $\mrm{dPStk}_R$ since the identity map $A\ra A$ doesn't need to factor through any of the truncations $\tau_{\le n}A$.
\end{rem}

\begin{exam}
	Any affine derived scheme $\mc X:=\Spec A$ or a constant prestack $\underline{S}$ is convergent.
\end{exam}	

Sometimes, we will also need to consider classical prestacks. 
\begin{defn}
	Let $R$ be a classical ring. A \textbf{classical prestack} over $R$ is a functor $\mc X\colon \CAlg^{\mrm{cl}}_R\ra \Spc$. We denote by $\mrm{PStk}_R$ the category of classical prestacks over $R$.
\end{defn}	

Let $R\in \CAlg_{R}^{\mrm{an}}$. The fully faithful embedding $\CAlg^{\mrm{cl}}_{\pi_0(R)}\subset  \CAlg^{\mrm{an}}_R $ defines a restriction functor 
$$
\mrm{dPStk}_R\ra \mrm{PStk}_{\pi_0(R)}, \quad \sX \mapsto \sX^{\mrm{cl}}.
$$
We will call $\sX^{\mrm{cl}}$ the underlying classical stack of $\sX$. We also have the left Kan extension along the above inclusion, which gives a functor 
$$
L^\mrm{cl}\colon \mrm{PStk}_{\pi_0(R)} \ra \mrm{dPStk}_R
$$
in the other direction. 
\begin{defn}
	The \textbf{classical truncation} $\tau^{\mrm{cl}}(\sX)$ of $\sX$ is defined as 
	$$
	\tau^{\mrm{cl}}(\sX):=L^{\mrm{cl}}(\sX^{\mrm{cl}}).
	$$
	By adjunction, we have a natural map $\tau^{\mrm{cl}}(\sX) \ra \sX$.
\end{defn}

\begin{exam}\label{exam:cl} If $\sX = \Spec A$ is affine, then $\tau^{\mrm{cl}}(\sX) =\Spec \pi_0(A)$. The map $\tau^{\mrm{cl}}(\sX) \ra \sX$ is induced by the natural map $A \rightarrow \pi_0(A)$. 
\end{exam}

\subsubsection{Quasi-coherent sheaves}  Given a derived prestack $\sX\in \mrm{dPStk}_R$ one defines the $\infty$-category $\mrm{D}_{\mrm{qc}}(\sX)$ of \textbf{quasi-coherent sheaves on $\sX$} via right Kan extension from derived affine schemes; explicitly this is given  as
$$\mrm{D}_{\mrm{qc}}(\sX):= \lim_{\Spec A \rightarrow \sX} \mrm{D}(A)$$
where $\mrm{D}(A)$ is the unbounded derived $\infty$-category of $A$-modules. This is a stable, presentable $\infty$-category equipped with a symmetric monoidal structure coming from the limit of symmetric monoidal structures on $\mrm{D}(A)$.  By construction, if $\sX=\Spec A$ one has $\mrm{D}_{\mrm{qc}}(\sX)\simeq \mrm{D}(A)$. A map of prestacks $f\colon \sX\ra \sY$ induces the \textbf{pull-back functor} 
$$
f^*\colon \mrm{D}_{\mrm{qc}}(\sY) \ra \mrm{D}_{\mrm{qc}}(\sX).
$$

The functor $f^*$ commutes with colimits and thus has a right adjoint which we denote by 
$$f_*\colon \mrm{D}_{\mrm{qc}}(\sX) \ra \mrm{D}_{\mrm{qc}}(\sY).$$
In the case of the final object $\sY=\Spec R$ in $\mrm{dPStk}_R$ and the projection $p\colon \sX \ra \Spec R$ the pushforward $p_*$ is identified with the \textbf{global sections functor} $$\Gamma(\sX, -)\colon \mrm{D}_{\mrm{qc}}(\sX) \ra \mrm{D}(R).$$
\begin{exam}
	The association $(\Spec A \ra \sX) \mapsto A\in \mrm{D}(A)$ defines an object $\mc O_{\sX}\in\mrm{D}_{\mrm{qc}}(\sX)$ which is called the \textbf{structure sheaf}. We will often denote $\Gamma(\sX, \sO_{\sX})$ as $\sO(\sX)$. It can be explicitly computed as 
	$$
	\Gamma(\sX, \sO_{\sX})\simeq \lim_{\Spec A\ra \sX} A.
	$$
\end{exam}	

An object $\sF\in \mrm{D}_{\mrm{qc}}(\sX)$ is called a \textbf{perfect complex} if it is dualizable in the sense of the 
symmetric monoidal structure. We denote by $\Perf(\sX)\subset \mrm{D}_{\mrm{qc}}(\sX)$ the full subcategory spanned by perfect sheaves. We also have 
$$
\Perf(\sX)\simeq \lim_{\Spec A \rightarrow \sX} \Perf(A);
$$
see, for example, the discussion at the beginning of \cite[Appendix A]{Halpern_Leistner_2023}. If $\sX\colon \mrm{dAff}^{\mrm{op}}_R \ra \Spc$ is an accessible functor, then $\Perf(\sX)$ is a small $\infty$-category (see e.g. \cite[Proposition A.2]{Hesselholt-Pstragowski}). In practice,  we will only consider the category $\Perf(\sX)$ when $\sX$ is accessible.
\exam{Since $A\in \Perf(A)$ for any $A$ we get that the structure sheaf $\mc O_{\sX}\in\mrm{D}_{\mrm{qc}}(\sX)$ is a perfect sheaf on $\sX$.}
\subsubsection{Stacks} 
By a \textbf{derived stack} over $R$ we mean a prestack which is a sheaf in the \'etale topology\footnote{We remark that this is different from the convention in some sources where a stack is supposed to be a sheaf in the flat topology; see, for example, \cite[Tag 026N]{stacks}.} on $\CAlg^{\mrm{an}}_R$ (as in \cite[Definition 2.2.2.12]{toen-vezzosi} or similar to \cite[Chapter 2]{GRderalg1}). We denote by $\mrm{dStk}_R\subset \mrm{dPStk}_R$ the full subcategory spanned by derived stacks. The fully faithful embedding $\mrm{dStk}_R\ra \mrm{dPStk}_R$ has a left adjoint
$$L_{\text{\'et}}\colon \mrm{dPStk}_R \ra \mrm{dStk}_R$$
given by \'etale sheafification. The functors $\mrm{D}_{\mrm{qc}}$, $\Perf$, $\sO$ are right Kan extensions of functors on derived affine schemes that are sheaves in \'etale topology. As such, they factor through $L_{\text{\'et}}$: namely, given a prestack $\sX$, the map
\[
\sF(L_{\et}\sX) \xrightarrow{} \sF(\sX)
\]
(induced by canonical map $\sX \rightarrow L_{\et}\sX$) is an equivalence for $\sF$ being any of the functors above.

We will reserve the word \textbf{surjection} for an \'etale-effective epimorphism (=\'etale surjection). Namely, a map of derived stacks $f\colon \sX \ra \sY$ will be called a surjection if the induced map $\pi_0(\sX) \ra \pi_0(\sY)$ is a surjection of \'etale sheaves of sets. Here by $\pi_0(\sX)\colon \CAlg^{\mrm{an}}_R\ra \mrm{Set}$ we mean the \'etale sheafification of the functor which sends $R$ to $\pi_0(\sX(R))$.
\subsubsection{Algebraic and geometric stacks} We now carefully state our conventions on algebraicity of stacks; we follow quite closely \cite[Chap. 2, Sec. 4]{GRderalg1}, except we will also make systematic use of derived algebraic spaces which 
are omitted in the reference. For a Grothendieck topology $\tau$ on $\mrm{dAff}_R$, we will denote by $\mrm{dStk}_{R}^\tau\subset  \mrm{dPStk}_R$ the full subcategory spanned by $\tau$-sheaves. We will also make use of the 
sheafification functor 
\[
L_{\tau}\colon  \mrm{dPStk}_R \rightarrow \mrm{dStk}^{\tau}_R,
\]
whenever the latter exists, in particular for the Nisnevich and \'etale topologies which feature prominently in this paper.

If $\mbf P$ is a property of a map of (derived) affine schemes, then we say that a morphism of (derived) prestacks $\sX \rightarrow \sY$ is \textbf{affine and is $\mbf P$} if for any map $\Spec A \rightarrow \sY$, the pullback $\Spec A \times_{\sY} \sX \ra \Spec A$ is affine and is $\mbf P$. Here $\mbf P$ could be empty, in which case we get a definition of affine map of derived prestacks.

The derived stacks we consider this paper typically are examples of $n$-Artin stacks (defined as in \cite[Chap. 2, Sec. 4]{GRderalg1} or \cite[Sec 1.3.3]{toen-vezzosi} for the original reference). We will not recall the general definition of $n$-Artin stack, since all the important examples in the paper are at most 2-Artin.  Instead, below we make a short glossary for different types of ``algebraic" stacks that the reader will meet in the paper. We warn that our terminological conventions might diverge slightly from those in some other sources.

\begin{defn} \label{def:alg-geom} We fix the following terminology throughout the paper:
	\begin{enumerate}
		\item A stack $\mc X$ over $R$ is called \textbf{derived 0-Artin} if it is isomorphic to $L_{\text{\'et}}(\sqcup_{i\in I}U_i)$ where $U_i$ are derived affine $R$-schemes. A map of prestacks $\sX\ra \sY$ is called \textbf{0-representable} if for any map $\Spec A\ra \sY$ the base change $\Spec A \times_{\sY} \sX \ra \Spec A$ is a derived 0-Artin stack.
			\item A stack $\mc X$ is called a \textbf{derived $R$-scheme} if it is obtained as a quotient 
			$$
			X_1 \rightrightarrows X_0
			$$
			by a correspondence where $X_i$ are derived 0-Artin and the maps are Zariski open embeddings. We denote by $\mrm{dSch}_R$ the $\infty$-category of derived $R$-schemes.
			\item A stack $\mc X$ over $R$ is called a \textbf{derived algebraic space} if the diagonal $\Delta_{\sX}\colon \sX \ra \sX \times \sX$ is affine, there is an \'etale 0-representable cover $U\twoheadrightarrow \sX$ where $U$ is 0-Artin, and $\mc X^{\mrm{cl}}$ is a classical algebraic space\footnote{If $\sX$ is classical, this notion will be stronger than being a quasi-separated algebraic space in the sense of \cite[Tag 025X, Tag 02X5]{stacks} where the diagonal is required to  be representable in quasi-compact schemes.}  over $\pi_0(R)$. We denote by $\mrm{dAlgSpc}_R$ the $\infty$-category of derived algebraic spaces over $R$.


\item We will say that a morphism of derived prestacks $f\colon \sX \ra \sY$ is \textbf{representable} if it is represantable in derived algebraic spaces: namely, for any point $\Spec A \ra \sY$ the fiber $\Spec A \times_{\sY} \sX \ra \Spec A$ is a derived algebraic space over $A$. If $\mbf P$ is a property of a morphism of derived algebraic spaces, then we say that a map of prestacks $f\colon \sX \rightarrow \sY$ \textbf{is $\mbf P$} if it is representable and $\Spec A \times_{\sY} \sX \ra \Spec A$ is $\mbf P$ for any $\Spec A \ra \sY$. 
	\item A derived stack $\mc X$ over $R$ is called \textbf{algebraic} if the diagonal $\Delta_{\sX}\colon \sX \ra \sX \times \sX$ is representable and there exists a smooth map $U\twoheadrightarrow \mc X$ from a $0$-Artin stack which is surjective. We denote by $\mrm{dAlgStk}_R$ the $\infty$-category of derived algebraic stacks.

		\item A derived stack $\mc X$ over $R$ is called \textbf{geometric} if it is algebraic and diagonal $\Delta_{\sX}\colon \sX \ra \sX \times \sX$ is affine. We denote by $\mrm{dGeoStk}_R$ the $\infty$-category of derived geometric stacks.
		\item A derived stack $\mc X$ over $R$ is called \textbf{Deligne--Mumford} if it is algebraic and $\sX^{\mrm{cl}}$ is a classical Deligne-Mumford stack over $\pi_0(R)$.  
	\end{enumerate}	

\begin{rem} We reminder the reader of certain properties $\mbf P$ of morphisms of derived algebraic spaces which will appear throughout the paper. Let $f\colon \sX \rightarrow \sY$ be a morphism of derived algebraic spaces. 
\begin{enumerate}
\item For $\mbf P = \{ \text{proper}, \text{closed immersions} \}$, then we say that $f$ is $\mbf P$ if the map of classical algebraic spaces $f^{\mrm{cl}}x \sX^{\mrm{cl}} \rightarrow \sY^{\mrm{cl}}$ is $\mbf P$ in the usual sense. 
\item For $\mbf P = \{ \text{\'etale}, \text{smooth}, \text{open immersion}, \text{flat} \}$,  then $f$ is $\mbf{P}$ if for any \'etale 0-representable cover $U\twoheadrightarrow \sX$ (so that $U = L_{\et}(\sqcup_{i\in I} \Spec A_i)$) the composite $U \rightarrow \sY$, which is $0$-representable morphism, is $\mbf P$. 
\end{enumerate}
\end{rem}

By definition, we have the sequence of inclusions
\[
\mrm{dAlgSpc}_R \subset \mrm{dGeoStk}_R \subset \mrm{dAlgStk}_R \subset \mrm{dStk}_R.
\]
A derived algebraic stack $\sX$ is called \textbf{quasi-compact (qc)} if there is a smooth surjective map $\Spec A \twoheadrightarrow \sX$ from a derived affine scheme. It is called \textbf{quasi-separated (qs)} if diagonal $\Delta_{\sX}\colon \sX \ra \sX \times \sX$ is quasi-compact, meaning that the pull-back of $\Delta_{\sX}$ to any point $\Spec A\ra \sX\times \sX$ is a quasi-compact derived algebraic space.
\end{defn}	

We denote by 
\[
\mrm{dAlgStk}^{\mrm{qcqs}}_R \subset \mrm{dAlgStk}_R,
\]
the inclusion of the $\infty$-category of qcqs derived algebraic stacks into all derived algebraic stacks; we us a similar notation for $\mrm{dGeoStk}^{\mrm{qcqs}}_R$.

\begin{rem}
	By the argument in \cite[Chapter 2, Proposition 4.4.9]{GRderalg1} any algebraic (or, more generally, any $n$-Artin) stack is convergent. 
\end{rem}

\begin{exam}[Classifying stack] Let for simplicity $R=k$ be a field of characteristic 0.
	Let $G$ be a finite type affine group scheme (aka algebraic group) over $k$; note that $G$ is automatically smooth. For any $A\in \CAlg^{\mrm{an}}_k$ the space $G(A)$ has a natural structure of a group-like $\mbb E_1$-monoid and we can consider its classifying space $B(G(A)):= \{*\}_{hG(A)}$. The \textbf{classifying stack $BG$} is defined as \'etale sheafification of the presheaf $A\mapsto B(G(A))$. Furthermore, $BG$ is a geometric stack: the map $\Spec k \ra BG$ (induced by $\{*\}\ra B(G(A))$ on $A$-points) gives a smooth cover, and the diagonal is affine.
\end{exam}	

\begin{exam}[Quotient stack]\label{exam:quot_stack}
	More generally, given a derived algebraic space $X$ over $k$ endowed with an action of $G$ we can form \textbf{the quotient stack} $[X/G]$, defined as the \'etale sheafification of the prestack $R\mapsto X(R)_{hG(R)}$. The map $X\ra [X/G]$ is a smooth affine cover and one can check that $[X/G]$ is a geometric stack (since in our convention $X$ has affine diagonal).
\end{exam}

\subsubsection{ANS and perfect stacks} We now recall conditions on stacks which are ultimately about their stabilizer groups. These conditions are rather standard in stacky extensions of motivic and $K$-theoretic objects; see for example \cite{hoyois-sixops}, \cite{khan2021generalized}, \cite{BKRS}, \cite{dgm-elden-vova}. We will be somewhat terse and refer the reader to \cite[Section 2.2.16]{dgm-elden-vova} for more details.

\begin{defn}\label{defn:reductive}
	An affine group scheme $G$ of finite type over a classical ring $R$ is called \textbf{linearly reductive} if the global sections functor
	\[
	\Gamma(BG,-)\colon  \mrm{D}_{\mrm{qc}}(BG) \to \mrm{D}(R)
	\]
	is $t$-exact for the standard $t$-structure.
\end{defn}

\begin{exam}\label{exam:linred}
	The group $\mathbb{G}_m$ is linearly reductive over any base ring $R$. 
	If $R=k$ is a field of characteristic $0$ then $G$ is linearly reductive if and only if it is reductive: namely, its unipotent radical is trivial. In particular, in this case the general linear group $GL_n$ and all finite group schemes are linearly reductive. The group $\mathbb{G}_a$ is an example of a non-linearly reductive group.
\end{exam}

\begin{defn}\label{defn:nicegroup}
	A group scheme $G$ is said to be {\bf nice} if it fits into a short exact sequence 
	\[
	1 \to G_1 \to G \to G_2 \to 1
	\]
	such that $G_2$ is finite locally free  and $G_1$ is an algebraic torus (in other words the base change to the algebraic closure $(G_1)_{\bar{k}}$ is isomorphic to $\mathbb{G}_m^n$ for some $n$). 
\end{defn}
\begin{rem}
	Nice group schemes are linearly reductive (see \cite[Remark~2.2]{alper2019tale}). 
\end{rem}

The following class of stacks is the most important one for our considerations.

\begin{defn}\label{defn:ANS stack}
	An algebraic
	derived stack $\sX$ is called {\bf ANS}\footnote{ANS = algebraic with nice stabilizers} if the diagonal of 
	$\sX$ is affine and the stabilizer group of every point $\Spec K \to \sX$ is nice.
\end{defn}


	One of the structural results on ANS stacks that will be useful for us is the following theorem of Alper--Hall--Halpern-Leistner--Rydh (see \cite[Theorem~1.9]{alper2022artin}; also \cite[Theorem~2.12]{khan2021generalized}):
	\begin{thm}\label{thm:niscover}
		Let $\sX$ be a quasi-compact ANS derived algebraic stack over a field $k$. 
		Then there is an affine Nisnevich cover $\{[\Spec A_i/G_i] \to \sX\}_{i\in I}$, where $R_i$ are some connective $k$-algebras and 
		$G_i$ are nice group $k$-schemes. 
	\end{thm}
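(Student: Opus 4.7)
The plan is to reduce the statement to the classical local structure theorem for algebraic stacks with nice stabilizers. The two key inputs are (i) the topological invariance of the \'etale site, which lets us lift Nisnevich covers from $\sX^{\cl}$ to $\sX$, and (ii) the pointwise \'etale slice theorem of Alper--Hall--Rydh, in its Nisnevich-refined form due to Alper--Hall--Halpern-Leistner--Rydh.

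First I would reduce to the classical case. Suppose $\{U_i \to \sX^{\cl}\}$ is an affine Nisnevich cover with each $U_i \simeq [\Spec A_i^{\cl}/G_i]$ where $G_i$ is nice over $k$. By topological invariance of the \'etale site (applied in the derived setting), each $U_i$ extends uniquely to an \'etale morphism $\widetilde U_i \to \sX$ with $\widetilde U_i^{\cl} = U_i$, and the Nisnevich condition (a condition on residue fields) is preserved. Since formation of quotient by a flat $k$-group scheme commutes with arbitrary base change, $\widetilde U_i$ is automatically of the form $[\Spec A_i/G_i]$ for some connective derived enhancement $A_i$ of $A_i^{\cl}$. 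Hence it suffices to construct the cover classically.

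Now for the classical $\sX^{\cl}$, I would fix a point $x\colon \Spec K \to \sX^{\cl}$ and invoke the \'etale slice theorem: since the stabilizer $G_x$ is nice and hence linearly reductive, there exists an affine \'etale morphism
$$
f\colon [\Spec A/G_x] \longrightarrow \sX^{\cl}
$$
together with a distinguished point mapping to $x$. The Nisnevich refinement guarantees that after possibly shrinking, $f$ induces an isomorphism on residue fields at the distinguished point, making it a Nisnevich neighborhood of $x$. To produce $G_i$ as an honest nice $k$-group, one uses that nice group schemes spread out and descend along separable extensions, so after passing to a suitable finite \'etale refinement of the base one may arrange the stabilizer to come from $k$; this is harmless since an \'etale refinement of a Nisnevich cover is still Nisnevich.

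Finally, quasi-compactness of $\sX$ extracts a finite subcover from the collection indexed by residue-field points. The hard part is the local structure theorem itself, whose proof in \cite{alper2022artin} rests on Tannakian reconstruction and deformation theory for stacks with linearly reductive stabilizers; upgrading the \'etale neighborhood to a Nisnevich one at the distinguished point is the genuinely delicate technical step, while the derived enhancement and the quasi-compactness argument are comparatively formal.
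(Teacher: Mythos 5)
The paper does not give a proof of this statement; it is quoted as a known result, with the citations \cite[Theorem~1.9]{alper2022artin} and \cite[Theorem~2.12]{khan2021generalized} standing in for the argument. Your sketch is a reasonable reconstruction of the strategy of those references (reduce to the classical case, then invoke the local structure theorem of Alper--Hall--Rydh and its Nisnevich refinement), so there is no ``paper proof'' to compare it against.

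That said, two steps of your outline are imprecise enough to deserve a flag.

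\emph{The derived lift.} You justify that $\widetilde U_i$ is again a quotient stack by saying that ``formation of quotient commutes with arbitrary base change.'' But you are not base-changing a quotient; you are trying to \emph{lift} a quotient presentation across the closed immersion $\sX^{\mrm{cl}}\hookrightarrow\sX$. The correct mechanism is that the presentation $U_i \simeq [\Spec A_i^{\mrm{cl}}/G_i]$ is equivalent to a morphism $U_i \to BG_i$ whose fiber is affine; since $BG_i$ is a smooth $k$-stack, this $1$-morphism extends uniquely along any nilpotent (or, by passing to the limit, any square-zero-filtered) thickening, giving $\widetilde U_i \to BG_i$; the remaining point is that the fiber $\widetilde U_i \times_{BG_i}\Spec k$ is an affine derived scheme, which one reads off from its classical truncation by a derived version of Serre's criterion. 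So the step is correct, but for a different reason than the one you give.

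\emph{Field of definition of $G_i$.} The \'etale slice theorem hands you a nice group over the residue field $\kappa(x)$, not over $k$, and this is the genuinely delicate part. Your proposed fix — pass to a finite \'etale refinement of the base, then observe that ``an \'etale refinement of a Nisnevich cover is still Nisnevich'' — does not work as stated: an arbitrary \'etale morphism is not completely decomposed, and in particular base-changing along a nontrivial finite field extension destroys the Nisnevich condition. The actual argument in the AHHLR source produces the quotient presentation with a $k$-group of equivariance from the start (roughly, by embedding the stabilizer in a nice group split over $k$ and rewriting the quotient accordingly, together with spreading-out and Nisnevich-localizing to fix the distinguished point). That part of the proof is not something you can wave away as a Nisnevich refinement; it should either be quoted or worked out in detail.
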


%
\begin{rem}\label{rem:ANS passes through representable maps}
	Note that by definition an ANS stack $\sX$ is a derived geometric stack. Also, by \cite[Lemma A.1.7]{BKRS} if $f\colon \sX' \ra \sX$ is representable, then $\sX'$ is ANS if $\sX$ is. 
\end{rem}	

\begin{exam}
Over a field of characteristic zero all derived Deligne-Mumford stacks with affine diagonal are ANS. 
\end{exam}
\begin{exam}
	Given a nice group scheme $G$ acting on a derived algebraic space $X$, the quotient stack $[X/G]$ is ANS. 
\end{exam}

Lastly, we will often impose the following sheaf-theoretic restriction on our stacks. It ensures that derived $\infty$-category of quasicoherent sheaves admits an ample supply of dualizable objects. 

\begin{defn}[{\cite{BFN}, \cite[Definition A.3.1]{BKRS}}]\label{def:perfect-stacks}
A derived algebraic stack $\sX$ is called \textit{perfect} if the natural functor
$$
\mrm{Ind}(\Perf(\sX)) \rightarrow \mrm{D}_{\mrm{qc}}(\sX)
$$
is an equivalence of categories.  
\end{defn}

\begin{exam}\label{ex:perfect stacks}
In characteristic zero, many quotient stacks are perfect.  Indeed, by \cite[Corollary 3.22]{BFN} the quotient $[X/G]$ of a derived scheme $X$ quasi-projective over a field in characteristic 0 by 
an algebraic group $G$ is perfect. Also, by \cite[Theorem A.3.2]{BKRS} a quasi-compact ANS stack is 
perfect.
\end{exam}	

\subsection{Localizing invariants}\label{sec:localizing} In this section, we recall some preliminaries about localizing invariants that we will need in this paper, following \cite{blumberg2013universal} and the relative version as developed in \cite{HSS}. Throughout, let us fix
\[
\sE \in \CAlg(\Cat_{\infty}^{\perf}),
\]
a symmetric monoidal, idempotent complete stable $\infty$-category (whose tensor product is exact in each variable) which is \textbf{rigid}: all of its objects are dualizable. We set the $\infty$-category of $\sE$-modules in $\Cat^{\perf}$ as:
\[
\Cat^{\perf}(\sE) := \Mod_{\sE}(\Cat^{\perf}).
\]
We have the forgetful functor $U_{\sE}: \Cat^{\perf}(\sE) \rightarrow \Cat^{\perf}$. 

\begin{exam}\label{exam:only-exam}Examples of $\sE$ include $\Perf(\sX)$ where $\sX$ is a perfect stack in the sense of Definition~\ref{def:perfect-stacks}. The example that we will mostly care about is $\sX = BG$ where $G$ is an affine algebraic group in characteristic zero.
\end{exam}

We will use the terminology of \cite{GW-II}.

\begin{defn}\label{defn:localizing_invariant}
A pair of composable morphisms  
\[
\sA \xrightarrow{i} \sB \xrightarrow{p} \sC
\]
in $\Cat^\perf$ is called a {\bf Karoubi sequence} (see \cite[Definition A.3.5, Proposition A.3.7]{GW-II}), if the composite is null, $i$ is fully faithful and the quotient map (in stable categories)
$\sB/\sA \to \sA$ is an equivalence in $\Cat^\perf$ (in other words, equvialence after idempotent completion). 

A pair of composable morphisms
\[
\sA \xrightarrow{i} \sB \xrightarrow{p} \sC
\]
in $\Cat^{\perf}(\sE)$ is said to be a {\bf Karoubi sequence} (see \cite[Definition 5.3]{HSS}) if $U_{\sE}$ of it is a Karoubi sequence in the above sense. Let $\sV$ be a stable $\infty$-category. An {\bf ($\sV$-valued) localizing invariant (of $\sE$-linear categories)}
is a functor
\[
F: \Cat^{\perf}(\sE) \to \sV
\]
which sends $0$ to $0$ and  Karoubi sequences to fiber sequences. Such an $F$ is \textbf{finitary} if it preserves filtered colimits. 
\end{defn}

To formulate the completion theorem, we will need the following structure on $K$-theory of $\sE$-linear categories.

\begin{constr}\label{constr:k-lax} According to the formula of \cite[Theorem 5.25]{HSS}, generalizing the main theorem of \cite{blumberg2013universal}, (non-connective) $K$-theory promotes to a lax monoidal functor
\[
K\colon  \Cat^{\perf}(\sE) \to \Spt. 
\]
Since $\sE$ is the unit object of $ \Cat^{\perf}(\sE)$, $K(\sE)$ is canonically an $\bbE_{\infty}$-algebra and the functor $K$ factors uniquely through
\begin{equation}\label{eq:k-factors}
\begin{tikzcd} 
& \Mod_{K(\sE)}(\Spt) \ar{d}\\
\Cat^{\perf}(\sE) \ar[dashed]{ur}{K} \ar{r}{K} & \Spt. 
\end{tikzcd}
\end{equation}
\end{constr}
Therefore, if $\sC \in \Cat^{\perf}(\sE)$, then there is a canonical $K(\sE)$-module structure on $K(\sC)$. We will use this structure implicitly without further comment.

\begin{rem}[A remark on localizing invariants as ``$K$-modules'']\label{rem:unit} In fact, more is true. Let $\Fun_{\mrm{loc}}^{\mathrm{fin}}(\Cat^{\perf}(\sE), \Spt)$ be the 
$\infty$-category of finitary localizing invariants of $\sE$-linear categories. Under Day convolution, we have a symmetric 
monoidal $\infty$-category $\Fun_{\mrm{loc}}^{\mathrm{fin}}(\Cat^{\perf}(\sE), \Spt)^{\otimes}$ and, by 
\cite[Theorem 5.25]{HSS}, $K$-theory is the unit object and therefore any localizing invariant admits a coherent action of 
$K$-theory as a functor out of $\Cat^{\perf}(\sE)$. We will not need this general structure as other multiplicative 
localizing invariants involved in this paper will come with preferred multiplicative maps from $K$-theory.
\end{rem} 


%
%

%
%
%

There are other properties of localizing invariants that will be useful for us. Let
\[
E\colon \Cat^{\perf}(\sE) \rightarrow \sV
\]
be a localizing invariant, valued in $\sV$ a stable, presentable $\infty$-category.
\begin{enumerate}
\item $E$ is said to be {\bf truncating} if for any connective algebra $A$, and an $\mc E$-module stucture on $\Perf(A)$ the map $E(\mrm{Perf}(A)) \to E(\mrm{Perf}(\pi_0 A))$ is an equivalence;
\item $E$ is said to be {\bf $\bbA^1$-invariant} if for any $\sC \in \Cat^{\perf}(\sE)$, the canonical map\footnote{Here, we denote $\sC[t]:=\sC\otimes \Perf(\mbb S[t])$.} 
$E(\sC) \rightarrow E(\sC[t])$ is an equivalence and $E$ factors as $F \circ E'$ where $E'$ is a {\it finitary} localizing invariant\footnote{This technical assumption appears as (C3) in \cite{tabuada2020motivic}. All known localizing invariants satisfy that.}.
\end{enumerate}

We now discuss some examples which will feature prominently throughout the paper. 
\begin{exam}[$K$-theoretic examples]\label{rem:examples} 
Here we list some examples of localizing invariants of $K$-theoretic nature that will be relevant for us. 
\begin{enumerate}
\item The primary example is the nonconnective algebraic K-theory $K$;
it is neither truncating, nor $\mathbb{A}^1$-invariant, and is the central character of the present paper. 
\item Let $\sE = \Perf(R)$ where $R$ is a base commutative ring spectrum. We define Weibel's homotopy-invariant $K$-theory $KH$ as
\[
KH(\sC) = \lim\limits_{n \in \Delta} KH(\sC \otimes_R \Perf_{R[t]^{\otimes n}}).
\]
As name suggests, this is $\mathbb{A}^1$-invariant and is the initial $\mathbb{A}^1$-invariant localizing invariant under $K$-theory. In fact the same construction may be performed with any localizing invariant to produce an $\mathbb{A}^1$-invariant approximation thereof.
\item Thomason's $K(1)$-local algebraic $K$-theory, $L_{K(1)}K$ is defined by applying the $K(1)$-localization functor (recall that we have picked an implicit prime here) to $K$-theory. Results of Land-Meier-Mathew-Tamme \cite[Corollaries 4.23 and 4.25]{lmmt} shows, in particular, that $L_{K(1)}K$ is both $\mathbb{A}^1$-invariant and truncating whenever $\sE = \Perf(R)$, where $R$ is a classical commutative ring.
\item Let $\sE = \Perf(\bbC)$. Blanc's semitopological $K$-theory $K^{\mathrm{st}}$ and topological $K$-theory $K^{\mrm{Top}}$ \cite{blanc} are also examples of localizing invariants constructed out of algebraic $K$-theory. A theorem of Konovalov \cite{konavalov} proves that $K^{\mathrm{st}}$ is truncating, while its $\mathbb{A}^1$-invariance is established in \cite[Lemma 3.1]{antieau-heller}. This leads to the same properties for topological $K$-theory. 
\end{enumerate}
\end{exam}
\begin{exam}[Hochschild-theoretic examples]\label{rem:exam2} Another class of localizing invariants that we will study are those constructed out of Hochschild homology.
\begin{enumerate}
\item Fixing a base commutative ring spectrum $R$ and a small $R$-linear stable $\infty$-category $\sC$ the trace map 
\[
\mrm{D}(R) \to \Ind(\sC^{\mrm{op}}) \otimes_R \Ind(\sC) \to \mrm{D}(R)
\]
is determined by its value on the unit $R \in \mrm{D}(R)$. The corresponding object
\[
HH(\sC/R) \in \mrm{D}(R),
\] 
is called the {\bf Hochshild homology} of $\sC$ and determines a $\mrm{D}(R)$-valued localizing invariants of $\Perf(R)$-linear categories. If $R = \mathbb{S}$, the sphere spectrum $HH(\sC/\bbS) = THH(\sC)$ is often referred to as \textbf{topological Hochschild homology}. For the purposes of this paper, we are mostly interested in the case that $R$ is a base discrete commutative ring. Hochschild homology is a localizing invariant, which is neither  $\mathbb{A}^1$-invariant 
nor truncating. There is a natural $S^1$-action on $HH(\sC/R)$. In fact it has a further cyclotomic structure \cite{nikolaus-scholze}; but since we are not really considering the topological variant of it too seriously we will not go into it further. In section~\ref{sec:loops}, we will review another perspective of Hochschild homology in terms of loop stacks. This will be crucial in understanding completions of Hochschild homology at various ideals. 

\item The $S^1$-action on $HH(\sC/R)$ gives rise to other invariants:
\[
HC^-(\sC/R) =: HH(\sC/R)^{hS^1} \qquad HH(\sC/R)_{hS^1} =: HC(\sC/R) \qquad HH(\sC/R)^{tS^1}=: HP(\sC/R).
\]
These invariants fit into a fibre sequence:
\[
HC(\sC/R)[1] \to HC^-(\sC/R) \to HP(\sC/R).
\]
\item Let $k$ be a field of characteristic zero. Then the $HP(\sC/k)$ is $\mathbb{A}^1$-invariant and truncating. The latter basically follows from the former and is a theorem of Goodwillie \cite{goodwillie-hp}. For a modern account, we refer the reader to \cite{ramzi-hp}. While these accounts only prove $\mathbb{A}^1$-invariance on $k$-dga's the arguments of \emph{loc. cit.} can be used to promote the results to arbitrary $k$-linear categories. 
\end{enumerate}
\end{exam}

\section{Descent for cohomology theories on derived stacks} \label{sec:derived}

In this section, we discuss descent for cohomology theories for algebraic stacks. We start with a cohomology theory (more specifically, a localizing invariant) $E\colon \mrm{dAff}^{\op}_R \rightarrow \Spt$ on derived affine schemes. Invoking the Knutson-Lurie Theorem~\ref{thm:cover}, which states that every qc derived algebraic space is covered by derived affine schemes, we can canonically extend $E$ to $\mrm{dAlgSpc}_R^{\mrm{qc}}$ using Nisnevich descent. At this point, to further extend $E$ to derived algebraic stacks, we have two natural options: for a Grothendieck topology $\tau$ on $\mrm{dAff}_R$ for which $E$ is a $\tau$-sheaf (e.g. Nisnevich), one could impose descent with respect to suitably defined geometric \textit{$\tau$-coverings} or with respect to all \textit{$\tau$-surjections}. We discuss this difference in detail in Section~\ref{def:nis-covers}. In particular, we will see that the functor $\sX \mapsto E(\Perf(\sX))$ has descent with respect to Nisnevich covers when restricted to perfect stacks (Corollary \ref{cor:Nisnevich descent for perfect stacks}), while extending $E$ via right Kan extension has descent with respect to all Nisnevich surjections (Theorem~\ref{thm:universal}). From this point of view, the Atiyah-Segal completion theorem bridges the gap between these two different forms of descent. 

Lastly, in Section \ref{sec:cdh excision} we discuss cdh excision for localizing invariants of stacks.

\ssec{Universal descent}
To formulate descent results on stacks, we will use the language of universal $E$-descent as in \cite{LiuZheng2}.

\newcommand{\triplearrows}{\begin{smallmatrix} \to \\ \to \\ 
		\to \end{smallmatrix} }

\begin{defn}\label{def:eff-desc} Let $\sC$ be an $\infty$-category with pullbacks, $\sD$ an $\infty$-category with all limits, and suppose that we are given a functor
	\[
	E\colon \sC^{\op} \rightarrow \sD;
	\]
	we say that a morphism $f\colon \sX \rightarrow \sY$ in $\sC$ is \textbf{of (resp. universal) $E$-descent} if the canonical map
	\[
	E(\sY) \rightarrow \lim_{\Delta} \left( E(\sX) \rightrightarrows E(\sX \times_{\sY}
	\sX) \triplearrows \dots \right)
	\]
	is an equivalence (resp. remains so after every pullback of $f$ in $\sC$). We also say that $E$ \textbf{satisfies (resp. universal) descent} with respect to a collection of morphisms $\sE$ in $\sC$ if any $f \in \sE$ is of (resp. universal) $E$-descent.
\end{defn}

\begin{exam} Let $\tau$ be a Grothendieck topology on an $\infty$-category $\sC$ with pullbacks and coproducts. Let $\{ X_i \rightarrow X \}$ be collection of morphisms generating a $\tau$-sieve. Then, by definition, the map $\bigcoprod X_i \rightarrow X$ is of universal descent for any $\tau$-sheaf. 
\end{exam}

The following records standard stability properties of morphisms of $E$-descent \cite[Lemma 3.1.2]{LiuZheng2}:

\begin{lem}\label{lem:desc-basics}  Let $\sC$ be an $\infty$-category with pullbacks, $\sD$ an $\infty$-category with limits,  and suppose that we are given a functor
	\[
	E\colon \sC^{\op} \rightarrow \sD.
	\]
	\begin{enumerate}
		\item Every retraction is of universal $E$-descent.
		\item If (composable) morphisms $f, g$ are of universal $E$-descent, then so is the composition $f\circ g$.
		\item If the composition $f \circ g$ is of universal $E$-descent, then $f$ is.
		\item Morphisms of universal $E$-descent are stable under base change.
	\end{enumerate}
\end{lem}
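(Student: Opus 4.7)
These four stability properties are the standard closure rules for the class of morphisms of universal $E$-descent, going back to \cite[Lemma 3.1.2]{LiuZheng2}; here is the plan I would follow.

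Items (4) and (1) are quickest. For (4), there is nothing to prove: the class of morphisms of universal $E$-descent is defined by stability under arbitrary pullback, so it is base-change closed by construction. For (1), suppose $f\colon\sX\to\sY$ admits a section $s\colon\sY\to\sX$, so $fs=\id_\sY$. Then $s$ together with the simplicial degeneracies endows the augmented Čech nerve of $f$ with extra degeneracies, making it a split augmented simplicial object in $\sC$. Applying $E$, the resulting augmented cosimplicial object in $\sD$ is split, so its totalization is canonically its $(-1)$-term $E(\sY)$, which gives $E$-descent. Since sections pull back to sections, the same argument runs after any base change, so $f$ is of universal $E$-descent.

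For (2), assume $f\colon\sY\to\sZ$ and $g\colon\sX\to\sY$ are both of universal $E$-descent. By (4) it suffices to show $fg$ is of $E$-descent. The plan is to iterate the descent along the two maps. First, descent along $f$ gives
\[
E(\sZ) \simeq \lim_{[n]\in\Delta} E(\sY_n),\qquad \sY_n:=\underbrace{\sY\times_\sZ\cdots\times_\sZ\sY}_{n+1}.
\]
For each $n$ the natural map $\sX_n\to\sY_n$ (with $\sX_n$ defined analogously) is the $(n+1)$-fold fiber product over $\sZ$ of $g$, and hence is of universal $E$-descent as a product of base changes of $g$. Applying descent in each simplicial degree produces a bi-cosimplicial object, and a Fubini/diagonal identification then collapses the resulting double limit to $\lim_{[m]\in\Delta} E(\sX_m)$, yielding $E$-descent for $fg$.

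Part (3) is the subtlest. Given $h=fg$ of universal $E$-descent, (4) again reduces us to showing $f$ is of $E$-descent. The map $g$ induces a morphism of augmented Čech nerves $\check C(h)\to\check C(f)$, and passing to $E$ I obtain a factorization
\[
E(\sZ)\longrightarrow \lim_{[n]\in\Delta} E(\sY_n) \longrightarrow \lim_{[n]\in\Delta} E(\sX_n) \simeq E(\sZ)
\]
whose composition is the identity. This only exhibits $E(\sZ)$ as a retract of $\lim_{\Delta} E(\sY_\bullet)$, and promoting this retract to an equivalence is the main obstacle. I would handle it by following the combinatorial argument of \cite[Lemma 3.1.2]{LiuZheng2}: work with the bi-cosimplicial object $E(\sY_m\times_\sZ\sX_n)$ and exploit the cosimplicial identities together with universal $E$-descent of $fg$ to identify both partial limits, thereby reducing to the totalization of $\check C(fg)$ and concluding that $f$ is of $E$-descent.
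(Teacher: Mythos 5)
The paper itself does not prove Lemma~\ref{lem:desc-basics}: it simply refers to \cite[Lemma 3.1.2]{LiuZheng2} (which is even cited inline with the statement). So there is no ``paper proof'' to compare against, and your sketch should be judged against the cited Liu--Zheng argument, which you explicitly invoke.

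Parts (1) and (4) of your sketch are correct: (4) is literally the definition of ``universal,'' and for (1) the split augmented (co)simplicial object argument is the standard one and pulls back since sections are stable under base change.

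Part (2), however, has a genuine circularity. You justify that $\sX_n \to \sY_n$ is of universal $E$-descent ``as a product of base changes of $g$,'' but the $(n+1)$-fold fiber product of $g$ over $\sZ$ is really a \emph{composite} of base changes of $g$ (e.g.\ write $X^{\times_\sZ(n+1)} \to Y^{\times_\sZ(n+1)}$ as the chain $X^{\times_\sZ(n+1-k)}\times_\sZ Y^{\times_\sZ k}$, each successive step applying $g$ to one factor). Concluding that this composite is of universal descent is exactly the statement (2) you are trying to prove. The fix in the Liu--Zheng/Lurie-style argument is to set up the bicosimplicial object so that each of the two ``partial limits'' is computed by iterating \emph{single} base changes of $g$ (index the fiber-power factors independently, e.g.\ $E\bigl(X^{\times_Y(m+1)}\times_\sZ X^{\times_Y(k+1)}\bigr)$ in the $n=1$ case), and only then combine via Fubini and cofinality of the diagonal $\Delta\to\Delta\times\Delta$. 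Relatedly, the diagonal of the bicosimplicial object you propose is not literally the \v Cech nerve of $fg$, so the final ``collapse'' step also needs more care.

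Part (3): you correctly identify the main obstacle — the naive argument only exhibits $E(\sZ)$ as a retract of $\lim_\Delta E(\sY_\bullet)$ — and then defer to the combinatorial argument of Liu--Zheng. That is consistent with what the paper itself does (cite the reference), but it is worth being aware that this is the part of the lemma that requires a real argument rather than a formal manipulation.
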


\subsection{Right Kan extensions of cohomology theories}\label{subsec:rke} In this section, we will focus on answering Question~\ref{quest:main} for the extension of $E$ to algebraic stacks given by right Kan extension. Most of the material here is covered and extended further in the work of Khan-Ravi \cite{khan2021generalized} and Chowdhury \cite{chowdhury} (see Remark \ref{rem:Chowdhury and Khan-Ravi}). We will only develop some rudiments of this formalism needed for the Atiyah-Segal completion theorem.

Let $R\in \CAlg^{\mrm{an}}$ be a base animated commutative ring. For any functor $E\colon \mrm{dAff}_R^{\mrm{op}} \ra \mc \sD$ to a complete $\infty$-category $\mc D$ we can consider its right Kan extension 
$$
R^{\mrm{dAff}}E\colon \mrm{dPStk}_R^\op \ra \mc D
$$
with respect to the embedding $\CAlg_R^{\mrm{cn}}\simeq \mrm{dAff}_R^{\op}\subset \mrm{dPStk}_R^\op$ 
(cf. \cite[Construction~12.1~and~Remark~12.3]{khan2021generalized}). In most examples of our interest, the functor on affine schemes will be of the form $$E \circ \Perf\colon \mrm{dAff}_R^{\op} \ra \Spt,$$ where $E\colon \Cat^{\perf}_R \rightarrow \Spt$ is an $R$-linear localizing invariant. In this case we will still simply write $R^{\mrm{dAff}}E$ for the corresponding right Kan extension.

Note that since $\mrm{dPStk}_R$ is freely generated by $\mrm{dAff}_R\subset \mrm{dPStk}_R$ under colimits, $R^{\mathrm{dAff}}E$ sends colimits in $\mrm{dPStk}_R$ to limits in $\mc C$. Since any $\sX\in \mrm{dPStk}_R$ is reconstructed as $$\colim_{\{\Spec A\ra \sX\}} \Spec A \in \mrm{dPStk}_R,$$ we can explicitly compute $R^{\mathrm{dAff}}E(\sX)$ as
\begin{equation}\label{eq:colimofrep}
R^{\mathrm{dAff}}E(\sX)\simeq \lim_{\Spec A\ra \sX} E(\Spec A).
\end{equation}
The next proposition is a purely formal consequence of this observation.

\begin{prop}\label{prop:spc} Let $\tau$ be a Grothendieck topology on $\mrm{dAff}_R$ and let $\sD$ be a presentable $\infty$-category. Suppose that
\[
E\colon \mrm{dAff}_R^{\op}  \rightarrow \sD,
\]
is a $\tau$-sheaf. Then any $\tau$-surjection $\pi\colon \sX \rightarrow \sY$ in $\mrm{PStk}_R$ is of universal $R^{\mrm{dAff}}E$-descent.
\end{prop}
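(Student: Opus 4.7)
The plan is to recast $R^{\mrm{dAff}}E$-descent along a $\tau$-surjection as the inversion of a single $\tau$-local equivalence, and then to exploit the $\tau$-sheaf hypothesis on $E$ via the universal property of sheafification. First, I would observe that $\tau$-surjections are stable under base change in $\mrm{dPStk}_R$: since $L_\tau$ is a left-exact left adjoint, one has $L_\tau(\sX\times_\sY\sY')\simeq L_\tau\sX\times_{L_\tau\sY}L_\tau\sY'$ for any $\sY'\to\sY$, and effective epimorphisms in the $\infty$-topos $\mrm{dStk}^{\tau}_R$ are stable under pullback. In view of Lemma~\ref{lem:desc-basics}, this reduces the assertion of universal descent to showing that an arbitrary $\tau$-surjection $\pi\colon\sX\to\sY$ is of (non-universal) $R^{\mrm{dAff}}E$-descent.

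Next, formula~\eqref{eq:colimofrep} shows that $R^{\mrm{dAff}}E$ sends colimits of prestacks to limits in $\sD$. Writing $|\sX^{\bullet/\sY}|$ for the geometric realization of the \v{C}ech nerve of $\pi$ computed in $\mrm{dPStk}_R$, the $R^{\mrm{dAff}}E$-descent condition becomes the statement that the canonical map $|\sX^{\bullet/\sY}|\to\sY$ is inverted by $R^{\mrm{dAff}}E$. Since $L_\tau$ preserves finite limits and all colimits, this map is sent by $L_\tau$ to $|(L_\tau\sX)^{\bullet/L_\tau\sY}|\to L_\tau\sY$, which is an equivalence because $L_\tau\pi$ is by assumption an effective epimorphism in the $\infty$-topos $\mrm{dStk}^{\tau}_R$. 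Hence $|\sX^{\bullet/\sY}|\to\sY$ is a $\tau$-local equivalence, reducing the problem to checking that $R^{\mrm{dAff}}E$ inverts every $\tau$-local equivalence.

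This last step is the standard characterization of $\sD$-valued $\tau$-sheaves: by the universal property of the Bousfield localization $L_\tau\colon\mrm{dPStk}_R\to\mrm{dStk}^{\tau}_R$, a limit-preserving functor $\mrm{dPStk}_R^{\op}\to\sD$ factors through $L_\tau^{\op}$ precisely when its restriction to representables is a $\tau$-sheaf---which is exactly the hypothesis on $E$. The main technical obstacle lies in running this universal property with coefficients in a general presentable $\infty$-category $\sD$; one may alternatively proceed by direct verification, showing that the class of maps inverted by $R^{\mrm{dAff}}E$ contains $\tau$-hypercoverings of affines (by the sheaf hypothesis on $E$) and is stable under colimits and two-out-of-three, hence exhausts every $\tau$-local equivalence.
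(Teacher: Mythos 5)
Your proposal is correct and follows essentially the same strategy as the paper's proof: both reduce the claim to the observation that $R^{\mrm{dAff}}E$ sends colimits of prestacks to limits and inverts the $\tau$-local equivalence $|\sX^{\bullet/\sY}| \to \sY$, which holds since the $\tau$-sheaf condition on $E$ makes covering-sieve inclusions of affines $R^{\mrm{dAff}}E$-equivalences and these generate the $\tau$-local equivalences as a strongly saturated class. The paper packages this slightly differently: for $\sD = \Spc$ it uses the Yoneda identification $R^{\mrm{dAff}}E(\sX) \simeq \Maps_{\mrm{PStk}_R}(\sX,E)$ so that locality of $E$ is manifest, then bootstraps to general presentable $\sD$ by writing $\sD$ as a localization of $\Spc$-valued presheaves, whereas you work with general $\sD$ throughout via the saturated-class argument; one small imprecision in your sketch is that ``$\tau$-hypercoverings of affines'' should read ``covering sieves of affines'' (equivalently, \v{C}ech covers), as the Grothendieck-topology sheaf condition and the localization $L_\tau$ only involve those.
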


\begin{proof} First, let us assume that $\sD = \Spc$ so that  $E\colon  \mrm{dAff}_R^{\op} \rightarrow \Spc$ is itself a prestack;~\eqref{eq:colimofrep} then states that:
\[
R^{\mrm{dAff}}E(\sX) \simeq  \Maps_{\mrm{PStk}_R}(\sX, E). 
\]
Since $\tau$-sheafification is a localization, we get that $\Maps_{\mrm{PStk}_R}(\sX, E) \simeq \Maps_{\mrm{PStk}_R}(L_{\tau}\sX, E)$. By definition, if $\pi$ is a $\tau$-surjection, then $|\sX_\bullet|\ra \sY$ is an $L_{\tau}$-equivalence. We thus conclude that
\[
\Maps_{\mrm{PStk}_R}(\sX, E) \simeq \lim_{[\bullet]\in \Delta}(R^{\mathrm{dAff}}E(\sX_\bullet)).
\]
Since any presentable $\infty$-category is a localization of a category of presheaves valued in spaces \cite[Proposition 5.5.1.1]{HTT}, the result follows. 
\end{proof}


The previous proposition states that $R^{\mrm{dAff}}E$ satisfies universal descent for $\tau$-surjections, whenever $E$ is a $\tau$-sheaf on $\mrm{dAff}_R$. We now will also characterize certain functors which are of the form $R^{\mrm{dAff}}E$ via the following general lemma. Let us recall some terminology. Let $\sC$ be an $\infty$-category with pullbacks and $\sC' \subset \sC$ be a full subcategory, stable under pullbacks. We say that a morphism $f\colon X \rightarrow Y$ in $\sC$ is \textbf{representable in $\sC'$} if for all $Z \in \sC'$ and all morphisms $Z \rightarrow Y$, the pullback $Z \times_Y X$ belongs to $\sC'$.

\begin{lem}\label{lem:rke}\cite[Proposition 4.1.1]{LiuZheng2} Let $\sC$ be an $\infty$-category with pullbacks and $\sD$ a presentable $\infty$-category. Let $\sE \subset \sC^{\Delta^1}$ a collection of morphisms stable under pullback and composition. Let $\iota \colon \sC' \subset \sC$ be a full subcategory closed under pullbacks. Assume that for any $X \in \sC$ there exists a morphism $f\colon  Y \rightarrow X$ such that 
\begin{enumerate}
\item  $Y \in \sC'$, 
\item $f$ is representable in $\sC'$ and 
\item $f$ is in $\sE$. 
\end{enumerate}
Consider a presheaf
\[
\sF\colon   \sC ^{\op} \rightarrow \sD,
\]
such that any arrow in $\sE$ is of universal $\sF$-descent. Then the canonical map 
$$\sF \rightarrow R^{\sC/\sC'} (\sF|_{\sC'})$$ is an equivalence.

%

\end{lem}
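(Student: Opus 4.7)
The natural transformation $\sF \to G := R^{\sC/\sC'}(\sF|_{\sC'})$ is automatically an equivalence on $\sC'$ by the standard property of right Kan extension along a fully faithful functor, so it suffices to check equivalence pointwise on each $X \in \sC$. Fix such $X$ and choose $f\colon Y \to X$ satisfying (1)--(3). A simple induction using that $f$ is representable in $\sC'$ and that $\sC'$ is closed under pullbacks shows that every term $Y^{\times_X (n+1)}$ of the Čech nerve lies in $\sC'$. Stability of $\sE$ under pullback and composition guarantees that each $Y^{\times_X (n+1)} \to X$ and each base change $Z \times_X Y \to Z$ (for $Z \in (\sC')_{/X}$) belongs to $\sE$, and hence is of universal $\sF$-descent.

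Applying descent both to $X$ itself and to each $Z$ yields
$$\sF(X) \simeq \lim_{[n] \in \Delta} \sF(Y^{\times_X (n+1)}), \qquad \sF(Z) \simeq \lim_{[n] \in \Delta} \sF(Z \times_X Y^{\times_X (n+1)}).$$
Restricting the limit $G(X) = \lim_{Z \in (\sC')_{/X}^{\op}} \sF(Z)$ to the Čech subdiagram produces a canonical map $G(X) \to \sF(X)$, and together with the unit $\sF(X) \to G(X)$ the goal is to show that these are mutually inverse.

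The main step is to construct the inverse explicitly via descent: given a compatible system $(\alpha_n) \in \lim_\Delta \sF(Y^{\bullet/X}) \simeq \sF(X)$, define $\beta_Z \in \sF(Z)$ as the element corresponding, via the descent equivalence for $\sF(Z)$, to the system $\bigl(\sF(\mathrm{pr}_Y)(\alpha_n)\bigr)_n$, where $\mathrm{pr}_Y\colon Z \times_X Y^{\times_X (n+1)} \to Y^{\times_X (n+1)}$ is the second projection. Naturality of these projections with respect to the simplicial operators of the Čech nerve gives the required coherence, and functoriality of descent under $Z' \to Z$ over $X$ makes $Z \mapsto \beta_Z$ assemble into an element of $G(X)$. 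The main obstacle---verifying that this construction inverts restriction---is a somewhat delicate but formal check: restricting $(\beta_Z)$ to $Z = Y^{\times_X (m+1)}$ recovers $\alpha_m$ using descent for $Y^{\times_X (m+1)} \to X$ together with the simplicial coherence of $(\alpha_n)$, while the other composition is handled by observing that, in the descent formula for $\sF(Z)$, the elements $\sF(\mathrm{pr}_Y)(\beta_{Y^{\times_X (n+1)}})$ and $\sF(\mathrm{pr}_Z)(\beta_Z)$ (with $\mathrm{pr}_Z\colon Z \times_X Y^{\times_X (n+1)} \to Z$) coincide by functoriality. Combining these steps yields $\sF(X) \simeq G(X)$ for every $X$.
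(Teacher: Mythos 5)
The paper does not give its own proof of this statement: it is stated as a citation to \cite[Proposition 4.1.1]{LiuZheng2}. Your proposal is therefore an independent reconstruction, and the high-level ingredients you identify are exactly right — the Čech nerve $Y^\bullet$ lands in $\sC'$, the base changes $Z \times_X Y^\bullet \to Z$ are of universal $\sF$-descent, and one should compare $G(X) = \lim_{Z \in (\sC'_{/X})^{\op}} \sF(Z)$ with $\sF(X) \simeq \lim_\Delta \sF(Y^\bullet)$ using the termwise descent equivalences.

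However, the crucial step is not carried out at a level of rigor appropriate to the $\infty$-categorical setting. You try to produce the inverse by \emph{defining elements}: picking a compatible system $(\alpha_n)$ and constructing $\beta_Z$ for each $Z$, then verifying equations. In an $\infty$-category such pointwise constructions do not assemble into a map without specifying an infinite tower of coherence homotopies, and the phrase ``naturality of these projections $\dots$ gives the required coherence'' is precisely where all of the actual content is buried. As written there is a genuine gap: you have not produced a map $\sF(X) \to G(X)$ inverse to the restriction, only a recipe that would need to be upgraded to a diagram.

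The way to fill the gap is to replace the element-chasing by a limit interchange combined with a cofinality argument. Concretely: by descent applied to each $Z \in \sC'_{/X}$, one has
\[
G(X) \;=\; \lim_{Z} \sF(Z) \;\simeq\; \lim_{Z} \lim_{\Delta} \sF(Z \times_X Y^{\bullet}) \;\simeq\; \lim_{\Delta} \lim_{Z} \sF(Z \times_X Y^{\bullet}).
\]
For each fixed $n$, the functor $(\sC'_{/X})^{\op} \to (\sC'_{/Y^{[n]}})^{\op}$, $Z \mapsto Z \times_X Y^{[n]}$, is \emph{initial}: for each $W \in \sC'_{/Y^{[n]}}$, the relevant comma category has a terminal object given by $W$ itself (regarded over $X$) together with its diagonal $W \to W \times_X Y^{[n]}$, so Quillen's Theorem A applies. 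Hence $\lim_Z \sF(Z \times_X Y^{[n]}) \simeq \sF(Y^{[n]})$, and therefore $G(X) \simeq \lim_\Delta \sF(Y^\bullet) \simeq \sF(X)$, with compatibility against the unit following from naturality. This packages the coherence data that your element-wise construction was implicitly appealing to, and is essentially the argument underlying \cite[Proposition 4.1.1]{LiuZheng2}.
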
 

\begin{rem} Let us also formulate Lemma~\ref{lem:rke} as an equivalence of categories, as is done \cite[Proposition 4.1.1]{LiuZheng2}. With the notation of Lemma~\ref{lem:rke} we can consider the full subcategories
\[
\Fun^{\sE}(\sC^{\op}, \sD) \subset \Fun(\sC^{\op}, \sD) \qquad \Fun^{\sE \cap \sC'}( \left( \sC' \right)^{\op}, \sD) \subset \Fun(\left( \sC' \right)^{\op}, \sD)
\]
spanned by functors of universal $\sE$ and $\sE\cap \sC'$-descent correspondingly.
The right Kan extension functor
\[
\iota_{\ast}\colon\Fun(\left( \sC' \right)^{\op}, \sD) \rightarrow\Fun(\sC^{\op}, \sD)
\]
implements the right adjoint to restriction along $\iota$. Lemma~\ref{lem:rke} then asserts an equivalence over this functor, in other words a commutative diagram:
\[
\begin{tikzcd}
 \Fun^{\sE \cap \sC'}( \left( \sC' \right)^{\op}, \sD) \ar{r}{\simeq} \ar[hook]{d} & \Fun^{\sE}(\sC^{\op}, \sD) \ar[hook]{d} \\
 \Fun(\left( \sC' \right)^{\op}, \sD) \ar{r}{\iota_{\ast}} & \Fun(\sC^{\op}, \sD).
\end{tikzcd}
\]
\end{rem}

The next theorem characterizes $R^{\mrm{dAff}}E$ as the universal extension of $E$ to a presheaf on a large class of stacks which respects Nisnevich surjections. Let $\mrm{dAlgStk}_R^{\qcqs,\mrm{sd}}\subset \mrm{dAlgStk}_R^{\qcqs}$ denote the full subcategory spanned by stacks with separated diagonal.

\begin{thm}\label{thm:universal} Let $E$ be a $k$-linear localizing invariant. Then there exists a unique functor
\[
\widetilde{E}\colon \left(\mrm{dAlgStk}_R^{\qcqs,\mrm{sd}}\right)^{\op} \rightarrow \Spt
\]
characterized by the following properties:
\begin{enumerate}
\item there is an equivalence of presheaves on $\mrm{dAff}_R$: $E \xrightarrow{\simeq} \widetilde{E}|_{\mrm{dAff}_R^{\op}}$,
\item any Nisnevich surjection is of universal $\widetilde{E}$-descent.
\end{enumerate}
\end{thm}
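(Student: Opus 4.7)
The plan is to split the theorem into existence and uniqueness. For existence, I would take $\widetilde{E} := R^{\mrm{dAff}}E$. Property (1) holds tautologically by the Yoneda embedding (affine schemes are already in the source of the right Kan extension). For property (2), since $E$ is a localizing invariant, $E \circ \Perf$ is in particular a Nisnevich sheaf on $\mrm{dAff}_R$ (as recalled in the opening of Section \ref{sec:examples}). Proposition \ref{prop:spc} then immediately guarantees that every Nisnevich surjection of prestacks, and a fortiori of stacks in $\mrm{dAlgStk}_R^{\qcqs,\mrm{sd}}$, is of universal $R^{\mrm{dAff}}E$-descent.

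For uniqueness, I would invoke Lemma \ref{lem:rke} applied to the setup
\[
\sC = \mrm{dAlgStk}_R^{\qcqs,\mrm{sd}}, \qquad \sC' = \mrm{dAff}_R \subset \sC, \qquad \sE = \{\text{Nisnevich surjections in }\sC\}.
\]
The collection $\sE$ is clearly stable under composition and base change in $\sC$. The full subcategory $\sC' \subset \sC$ is stable under pullbacks precisely because we have restricted to stacks with separated (in particular affine) diagonal, so fibre products of affine schemes over such a stack are again affine. If the hypotheses of Lemma \ref{lem:rke} are met, we obtain an equivalence $\widetilde{E} \simeq R^{\sC/\sC'}(\widetilde{E}|_{\mrm{dAff}_R}) = R^{\mrm{dAff}}E$, which is exactly uniqueness.

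The nontrivial point is to verify the hypotheses of Lemma \ref{lem:rke}: namely, that every $\sX \in \mrm{dAlgStk}_R^{\qcqs,\mrm{sd}}$ admits a morphism $\Spec A \to \sX$ which (i) has affine source, (ii) is representable in affine schemes (i.e.\ is an affine morphism, so that base change along any $\Spec B \to \sX$ remains affine), and (iii) is a Nisnevich surjection. This is the main obstacle and is exactly the content of the derived extension of Deshmukh's theorem established in Appendix~\ref{sec:deshmukh}, itself building on Moret-Bailly; the qcqs assumption supplies finiteness of the cover and separated diagonal upgrades representability to affineness of the cover.

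Assembling these ingredients: for any $\widetilde{E}$ satisfying (1) and (2), universal descent along any such Deshmukh cover produces a canonical equivalence with $R^{\mrm{dAff}}E$. Conversely, $R^{\mrm{dAff}}E$ itself satisfies both conditions by the existence argument. This identifies $\widetilde{E}$ uniquely and completes the proof. I expect the structural covering result in the appendix to be the only substantive input beyond the formal machinery already developed; everything else is bookkeeping about right Kan extensions and universal $E$-descent.
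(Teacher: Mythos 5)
Your proposal matches the paper's proof step for step: existence via $\widetilde{E} := R^{\mrm{dAff}}E$ together with Proposition~\ref{prop:spc}, uniqueness via Lemma~\ref{lem:rke} with $\sC = \mrm{dAlgStk}_R^{\qcqs,\mrm{sd}}$, $\sC' = \mrm{dAff}_R$, $\sE$ the class of Nisnevich surjections, and the hypotheses of Lemma~\ref{lem:rke} supplied by the derived Deshmukh theorem (Theorem~\ref{thm:deshmukh}). That is exactly the paper's argument.

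One small caution on the way you justify the Lemma~\ref{lem:rke} hypotheses: the parenthetical ``separated (in particular affine) diagonal'' is incorrect --- separatedness of the diagonal is strictly weaker than affineness, and in general the Deshmukh cover $Y \to \sX$ need not be an affine morphism even when $\sX$ has separated diagonal. The paper itself does not claim affineness of the cover; it simply invokes Theorem~\ref{thm:deshmukh}, and the implicit resolution is that one can interpolate through $\mrm{dAlgSpc}_R^{\qcqs}$ (where Knutson--Lurie provides genuinely affine-representable Nisnevich covers and $E\circ\Perf$ is already identified with $R^{\mrm{dAff}}E$ by Proposition~\ref{prop:localizing-nis}), and then use Deshmukh's theorem, whose cover is representable in derived algebraic spaces because the diagonal of an algebraic stack is so representable. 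This does not change the structure of your argument, but the justification ``separated diagonal upgrades representability to affineness of the cover'' should be removed or replaced by the two-step factorization.
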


\begin{proof} The existence part is given by setting $\widetilde{E}:= R^{\mrm{dAff}}E$ and invoking Proposition~\ref{prop:spc} which asserts that $\Nis$-surjections are of universal descent for such a functor. We now prove the uniqueness part. Let $\widetilde{E}$ be such a functor, it then admits a canonical map $\eta\colon \widetilde{E} \rightarrow R^{\mrm{dAff}}(\widetilde{E}|_{\mrm{dAff}})$. By part (1) of the hypothesis, we have an equivalence $(\widetilde{E}|_{\mrm{dAff}}) \simeq E$. Hence we just need to show that the map $\eta$ is an equivalence. We claim that $\eta'$ is an equivalence by Lemma~\ref{lem:rke}. Indeed, set $\sC' \subset \sC$ to be the inclusion $\mrm{dAff_R^{\qcqs}} \subset \mrm{dAlgStk}_R^{\qcqs,\mrm{sd}}$ and $\sE$ be the collection of Nisnevich surjections. The hypotheses are then verified by the derived Deshmukh's Theorem~\ref{thm:deshmukh}.

%

\end{proof}

\begin{rem}\label{rem:geostk} We expect Theorem~\ref{thm:deshmukh} to also hold without the separated diagonal assumptions. In the classical case this has been verified in a new version of \cite{deshmukh} where an improvement of the original result was obtained by Deshmukh and Hall. This uses a somewhat nontrivial ingredient --- the theory of Hilbert stacks. To prove a derived version of this more general result would probably require us to delve into the theory of the derived Hilbert stack which is surely interesting. Since, in this paper, we are mostly concerned with ANS stacks which are derived geometric stacks we leave it to the interested reader to extend Theorem~\ref{thm:universal} to that generality. 
\end{rem}

\begin{rem}[Chowdhury and Khan-Ravi's generalized cohomology of stacks] \label{rem:Chowdhury and Khan-Ravi} We sketch briefly what is done in \cite{chowdhury} and \cite{khan2021generalized} and how it relates to the present discussion. In both work, Chowdhury and Khan-Ravi begins with a six functor formalism defined on (derived) schemes; the most basic data of such is a functor $\mrm{D}^{\ast}\colon \Sch^{\op} \rightarrow \Cat_{\infty}$. These categories ``house" cohomology theories which are representable in these categories. In \cite{chowdhury} the extension to stacks is performed by taking the right Kan extension to stacks which one denotes by $\mrm{D}^{\ast}_{\mrm{Kan}}$, while \cite{khan2021generalized} takes a smaller ``lisse" extension: for an algebraic stack $\sX$ they set $\mrm{D}^{\ast}_{\vartriangleleft}(\sX) := \lim_{(U, u)} \mrm{D}^{\ast}(U)$ where $u\colon U \rightarrow \sX$ is a smooth surjective morphism and $U$ a scheme. These two extensions, typically agree for a large class of stacks. If $\mrm{D}^{\ast}$ satisfies \'etale descent then this is even true for any $n$-Artin stacks; if $\mrm{D}^{\ast}$ is just a Nisnevich sheaf the best known generality is due to Deshmukh \cite[Corollary 3.12]{deshmukh}: namely, the two extensions for quasi-separated algebraic stacks with a separated diagonal. 
	
	The focus of \cite{chowdhury} and \cite{khan2021generalized} is on developing a full six functor formalism for these extended theories. In particular one gets a notion of homology, Borel-Moore homology and compactly supported cohomology on stacks by taking global sections of some objects. In this paper, we are interested in extensions of cohomology theories from derived affine schemes to derived stacks and, in this particular section, we essentially perform a ``decategorified" version of \cite{chowdhury} and \cite{khan2021generalized}. One big difference is that our cohomology theories are not necessarily represented in the versions of $\mrm{D}^{\star}$ considered in these papers; for example they are not assumed to have $\bbA^1$-invariance, though in all examples they are Nisnevich sheaves on affine derived schemes. 
\end{rem}

%

%

\subsection{Nisnevich covers}\label{def:nis-covers} We now explain a relatively large class of morphisms which are of universal descent for any $E$ a localizing invariant. Nisnevich covers on algebraic stacks has been defined in several places in the literature: \cite[Section 6.1]{KO}, \cite[Section 3]{addendum}, \cite[Section 2C]{hoyois-krishna}, \cite[Section 2.1]{BKRS} to name a few. We define it in the context of derived stacks here, via a trivial modification. 

 To define Nisnevich covers, we first need to define the notion of being \textbf{completely decomposed} for algebraic stacks. If $\sX$ is a derived algebraic stack, and $x\colon \Spec K \rightarrow \sX$ is a morphism over $k$ where $K$ is a field over $k$, then it factors through $\tau^{\mrm{cl}}\sX$. In which case we get a map $\Spec k \rightarrow \sX^{\mrm{cl}}$ and it makes sense to speak of its \textbf{residual gerbe} $\eta_x \rightarrow \Spec K$ as in \cite[Tag 06ML]{stacks}; we refer to \cite[Appendix B]{rydh-devissage} and \cite[Tag 06RD]{stacks} for the existence of residual gerbes on qs algebraic stacks. Roughly speaking $\eta_x \rightarrow \Spec K$ is of the form $BG$ where $G$ is the stabilizer of the point at $x$ and the morphism $\eta_x \rightarrow \sX^{\mrm{cl}}$ is a closed immersion. 

\begin{defn}\label{def:cd} A morphism $f\colon \sX \rightarrow \sY$ is said to be \textbf{completely decomposed} if for any field point $y\colon  \Spec K \rightarrow \sY$, there is a $K$-point $x\colon \Spec K \ra \sX$ over $y$ such that the induced map on residual gerbes $\eta_y \rightarrow \eta_x$ is an isomorphism. 
\end{defn}

\begin{defn}\label{def:nis} Let $\sX \in \mrm{dAlgStk}^{\qcqs}_R$ and let $\{ p_i\colon \sX_i \rightarrow \sX \}_{i \in I}$ be a collection of representable \'etale morphisms where $\sX_i$ is qcqs. Then we say that that the above collection is a \textbf{Nisnevich cover} if $\coprod_{i \in I} \sX_i \rightarrow \sX$ is completely decomposed.
\end{defn}

\begin{exam}\label{exam:algspc} If $f\colon \sX \rightarrow \sY$ is a morphism of derived algebraic spaces, then the completely decomposed condition just means that any field point to $\sY$ lifts to $\sX$. 
\end{exam}

\begin{rem}[Nisnevich covers vs Nisnevich surjections] \label{rem:nis-covers} Let $f\colon \sY \rightarrow \sX$ be a Nisnevich cover of qcqs derived algebraic stacks. Then $f\colon \sY \rightarrow \sX$ is a Nisnevich surjection. Indeed, it is enough to show that for any $\Spec A \rightarrow \sX$, the map $\sY \times_{\sX} \Spec A \rightarrow \Spec A$ has a Nisnevich-local section. Using Knutson--Lurie Theorem~\ref{thm:cover} we can find a Nisnevich cover $\Spec S \rightarrow \sY \times_{\sX} \Spec A$; since the composite map $\Spec S \ra \Spec A$ is also a Nisnevich cover this provides a desired Nisnevich-local section.

The converse is not true: not all Nisnevich surjections are Nisnevich coverings. Let $X \in \mrm{dSch}_R$ and suppose that $G$ is a smooth $X$-group scheme such that any $G$-torsor over affine $\Spec A\ra X$ is Nisnevich-locally trivial. Examples include $G = \mrm{GL}_n, \mrm{SL}_n, \mrm{Sp}_{4n}$. Then the morphism of stacks $X \rightarrow [X/G]$ is a Nisnevich-effective epimorphism. However it is not, in general, a Nisnevich cover. Indeed, the map $\Spec k \rightarrow B\bbG_m$ cannot be a Nisnevich cover since the morphism on the residual gerbes over the point $\Spec k \rightarrow B\bbG_m$ is given by $B\bbG_m \rightarrow \Spec k$. 
\end{rem}

One of the main features of the Nisnevich topology is the equivalence between Nisnevich descent and excision for Nisnevich squares.

\begin{defn} A \textbf{Nisnevich square} in $\mrm{dAlgStk}^{\mrm{qcqs}}_R$ is a Cartesian square of derived algebraic stacks
		$$
	\begin{tikzcd}
		\sW\arrow[d] \arrow[r]& \sV\arrow[d,"p"] \\
		\sU\arrow[r,"j"]& \sX
	\end{tikzcd}	
	$$
	where $j$ is a Zariski open immersion\footnote{Since we are working in $\mrm{Stk}^{\mrm{qcqs}}$, it is implied that $j$ is a qc open immersion; there are, however, plenty of non qc open of a stack.} and $p$ is a representable \'etale morphism that induces an isomorphism away from $\sU$. 
%
\end{defn}	

\begin{prop}\label{prop:cd-descent} Let $\sD$ be an $\infty$-category with limits. Let
\[
\sF\colon \left( \mrm{dAlgStk}^{\mrm{qcqs}}_R \right)^{\op} \rightarrow \sD,
\]
be a presheaf. Then the following are equivalent:
\begin{enumerate}
\item $\sF(\emptyset) \simeq \ast$ and converts Nisnevich squares to cartesian squares;
\item any Nisnevich cover is of universal $\sF$-descent.
\end{enumerate}
\end{prop}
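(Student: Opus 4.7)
The plan is to prove the equivalence via the standard cd-structure argument (Voevodsky in the scheme case), adapted to qcqs derived algebraic stacks. The direction (2) $\Rightarrow$ (1) will be essentially formal once the pair $\{j, p\}$ coming from a Nisnevich square is identified as a two-term Nisnevich cover; the direction (1) $\Rightarrow$ (2) is the substantive one, and I would handle it by refining every Nisnevich cover by a finite sequence of Nisnevich squares and then iterating the square-pullback property.

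For (2) $\Rightarrow$ (1) I would first deduce $\sF(\emptyset) \simeq *$ from universal descent applied to the empty Nisnevich cover of $\emptyset$. Next, given a Nisnevich square with vertices $\sW, \sU, \sV, \sX$, the family $\{j\colon \sU \to \sX,\, p\colon \sV \to \sX\}$ is itself a Nisnevich cover: any field-valued point of $\sX$ either factors through the open $\sU$, or lies in the closed complement, over which $p$ is an isomorphism, so residual gerbes are preserved in either case. Using that $j$ is a monomorphism and that $p$ restricts to an isomorphism on the complement of $\sU$, the iterated fibre products in the \v{C}ech nerve of $\sU \sqcup \sV \to \sX$ collapse to simple combinations of $\sU$, $\sV$, and $\sW = \sU \times_{\sX} \sV$; a short cofinality argument would then identify the totalisation with $\sF(\sU) \times_{\sF(\sW)} \sF(\sV)$, giving exactly the cartesian square in (1).

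For (1) $\Rightarrow$ (2) I would refine every Nisnevich cover by a finite tower of Nisnevich squares. Given a Nisnevich cover $\{p_i\colon \sX_i \to \sX\}_{i \in I}$, quasi-compactness of $\sX$ lets me take $I$ finite. A splitting lemma would then produce, for some $i_0 \in I$, a non-empty quasi-compact Zariski open $\sU \subset \sX$ over which $p_{i_0}$ admits a section, so that $(\sU \times_{\sX} \sX_{i_0}, \sU, \sX_{i_0}, \sX)$ is a Nisnevich square; on the closed complement $\sX \smallsetminus \sU$ the remaining family $\{p_{i'}\}_{i' \neq i_0}$ still forms a Nisnevich cover, now of strictly smaller topological complexity. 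Iterating produces a chain of Zariski opens $\emptyset = \sU_0 \subset \sU_1 \subset \dots \subset \sU_n = \sX$ each of whose steps fits into a Nisnevich square coming from one of the $\sX_i$. Iterated application of the square-pullback condition in (1), bootstrapped by $\sF(\emptyset) \simeq *$, then yields that $\sqcup_i \sX_i \to \sX$ is of $\sF$-descent; universality follows since Nisnevich covers are stable under base change and the refinement above can be carried out after any such base change.

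The main obstacle is the splitting lemma together with the guarantee that the induction terminates in finitely many steps. Since residual gerbes and Zariski open immersions depend only on the underlying topological space $|\sX^{\mrm{cl}}|$, which is a spectral topological space for any qcqs algebraic stack, I plan to reduce the combinatorial content to the classical qcqs algebraic stack $\sX^{\mrm{cl}}$ and conclude by noetherian induction on the number of maximal points of the constructible closed complement. Once the refinement is in place, the rest of (1) $\Rightarrow$ (2) is a purely formal iteration of the square-pullback property.
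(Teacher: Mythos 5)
Your proposal is the standard cd-structure argument and it is, in spirit, the same route the paper takes: the paper simply cites Hoyois--Krishna (Section~2C) for the classical case and notes that the derived case is identical, and Hoyois--Krishna in turn rest on Rydh's \'etale d\'evissage. So you are not proposing a different proof, but rather unfolding the proof the paper outsources.

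That said, two of the steps you flag as ``short'' or ``I plan to'' are exactly where the cited references do real work, and your proposed resolutions do not quite close them. For the termination of (1)~$\Rightarrow$~(2), your induction on ``the number of maximal points of the constructible closed complement'' only makes sense when that number is finite; a general qcqs derived stack is spectral but not noetherian, and the Zariski space $|\sX^{\mrm{cl}}|$ can have infinitely many maximal points. The correct bookkeeping is the content of Rydh's d\'evissage (the proposition the paper cites), which produces a \emph{finite} filtration by quasi-compact open substacks on which the given cover successively trivializes, using constructibility and quasi-compactness of the patch (Stone) topology rather than a count of maximal points; you also need the splitting lemma showing that the locus where a chosen $p_{i_0}$ matches residual gerbes extends from a point to a quasi-compact open, which is where the stabilizer groups genuinely enter. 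For (2)~$\Rightarrow$~(1), the ``short cofinality argument'' collapsing the \v{C}ech nerve of $\sU\sqcup\sV\to\sX$ to $\sF(\sU)\times_{\sF(\sW)}\sF(\sV)$ is the regularity of the cd-structure, and it uses more than $j$ being a monomorphism: one must observe that for a representable \'etale $p$ inducing an isomorphism over $\sX\setminus\sU$, the fiber product $\sV\times_\sX\sV$ splits as $\sV\sqcup C$ with $C$ supported over $\sU$, so that the higher terms of the nerve are controlled by $\sU$, $\sV$, $\sW$ and Nisnevich squares between them. Neither point is fatal to the strategy, but as written they are gaps rather than steps you could carry out without essentially reproving the cited d\'evissage and regularity results.
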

\begin{proof} The case of classical algebraic stacks is discussed in \cite[Section 2C]{hoyois-krishna} with the main content being \cite[Proposition 3.3]{rydh-devissage}. The derived case follows by the same argument.
\end{proof}

\begin{prop}[{\cite[Corollary 4.1.2]{BKRS}}]
Let 
		$$
	\begin{tikzcd}
		\sW\arrow[d] \arrow[r]& \sV\arrow[d,"p"] \\
		\sU\arrow[r,"j"]& \sX
	\end{tikzcd}	
	$$
	be a Nisnevich square of derived algebraic stacks, where $\sX$ and $\sV$ are perfect. Then for any localizing invariant $E$ the induced square 
			$$
	\begin{tikzcd}
	E(\sX)	\arrow[d,"j^*"] \arrow[r,"p^*"]& E(\sV)\arrow[d] \\
		E(\sU)\arrow[r]& E(\sW)
	\end{tikzcd}	
	$$
	is Cartesian.
	\end{prop}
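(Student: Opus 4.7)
The plan is to deduce the Cartesian square from a Karoubi (localization) sequence in $\Cat^{\perf}$ for each of $\sX$ and $\sV$, together with an excision equivalence between the categories of sheaves supported on the complement of the open immersion $j$.

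First I would set up the localization sequences. Let $\sZ \hookrightarrow \sX$ be the reduced closed complement of $\sU$, and $\sZ_{\sV} \hookrightarrow \sV$ the reduced closed complement of $\sW$. Write $\Perf_{\sZ}(\sX) \subset \Perf(\sX)$ for the full subcategory of perfect complexes whose restriction to $\sU$ is zero, and similarly for $\Perf_{\sZ_{\sV}}(\sV)$. Since $\sX$ and $\sV$ are perfect stacks, the pushforward along $j$ (resp.\ along $\sW \hookrightarrow \sV$) admits a right adjoint and the sequences
\begin{equation*}
\Perf_{\sZ}(\sX) \longrightarrow \Perf(\sX) \xrightarrow{\ j^{*}\ } \Perf(\sU), \qquad \Perf_{\sZ_{\sV}}(\sV) \longrightarrow \Perf(\sV) \longrightarrow \Perf(\sW)
\end{equation*}
are Karoubi sequences (in the sense of Definition~\ref{defn:localizing_invariant}); this is an instance of Thomason--Neeman-style localization on perfect stacks, as recorded for example in \cite{BFN}. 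Applying the localizing invariant $E$ therefore yields two fiber sequences in the target of $E$.

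Next I would establish the excision equivalence $p^{*} \colon \Perf_{\sZ}(\sX) \xrightarrow{\simeq} \Perf_{\sZ_{\sV}}(\sV)$. The Nisnevich square hypothesis says that $p$ is representable \'etale and restricts to an isomorphism $\sV \setminus \sZ_{\sV} \isoto \sX \setminus \sZ$, and that $p$ induces an isomorphism of reduced closed substacks $\sZ_{\sV}^{\mathrm{red}} \isoto \sZ^{\mathrm{red}}$. By \'etale excision for perfect complexes with support (a stacky variant of the classical Thomason--Trobaugh excision, and the key input of \cite{BKRS} for perfect stacks), $p^{*}$ is then an equivalence of stable $\infty$-categories. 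Consequently $E(p^{*}) \colon E(\Perf_{\sZ}(\sX)) \to E(\Perf_{\sZ_{\sV}}(\sV))$ is an equivalence.

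Combining the two steps, the map of fiber sequences
\begin{equation*}
\begin{tikzcd}
E(\Perf_{\sZ}(\sX)) \ar{r} \ar{d}{\simeq} & E(\sX) \ar{r}{j^{*}} \ar{d}{p^{*}} & E(\sU) \ar{d} \\
E(\Perf_{\sZ_{\sV}}(\sV)) \ar{r} & E(\sV) \ar{r} & E(\sW)
\end{tikzcd}
\end{equation*}
has an equivalence on the left column, and hence the right square is Cartesian, which is exactly what we wanted. The only step that requires real work is the excision equivalence $\Perf_{\sZ}(\sX) \simeq \Perf_{\sZ_{\sV}}(\sV)$ in the perfect-stack setting, since for general (non-affine, non-noetherian) stacks one must verify that a perfect complex supported on the closed reduced substack is recovered after \'etale pullback along $p$; this is where perfectness of $\sX$ and $\sV$ is essential, as it guarantees enough compact generators to reduce the question to one about module categories, and it is precisely what is packaged into \cite[Corollary 4.1.2]{BKRS}.
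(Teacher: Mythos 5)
The paper does not prove this proposition; it is quoted verbatim from \cite[Corollary 4.1.2]{BKRS}, so there is no internal proof to compare against. Your outline---localization sequences for the support subcategories, plus an \'etale excision equivalence $p^{*}\colon \Perf_{\sZ}(\sX)\xrightarrow{\simeq}\Perf_{\sZ_{\sV}}(\sV)$ between them---is the standard Thomason--Trobaugh strategy, and it is indeed the route BKRS takes, so the decomposition of the problem is correct.

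Two caveats. First, your phrasing ``the pushforward along $j$\dots admits a right adjoint'' is imprecise: the Karoubi sequence $\Perf_{\sZ}(\sX)\to\Perf(\sX)\to\Perf(\sU)$ is not an adjunction statement, but the Thomason--Neeman statement that $\Perf(\sU)$ is the idempotent completion of the Verdier quotient $\Perf(\sX)/\Perf_{\sZ}(\sX)$. For a perfect stack $\sX$ this follows because $\mrm{D}_{\mrm{qc}}(\sU)$ is the localization of $\mrm{D}_{\mrm{qc}}(\sX)=\Ind(\Perf(\sX))$ at the compactly generated subcategory $\mrm{D}_{\mrm{qc},\sZ}(\sX)$, together with Neeman's description of compact objects in a quotient; one should also note that this step needs a Thomason--Neeman localization theorem for perfect \emph{stacks}, not just schemes, which is itself a substantive input. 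Second, and more seriously as a standalone proof: the excision equivalence on support categories is not an easy consequence of the Nisnevich-square hypotheses in the stacky setting (one must reduce to an affine Nisnevich cover and use perfectness to identify compact and dualizable objects in a controlled way). You acknowledge this yourself by saying it ``is precisely what is packaged into \cite[Corollary 4.1.2]{BKRS}''---that is, your argument deduces the Cartesian square from the excision equivalence and then defers the excision equivalence back to the very result being proved. So while the outline is the right one, the proposal is a reconstruction of the shape of the argument rather than a self-contained proof.
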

From Proposition \ref{prop:cd-descent} we get the following.
\begin{cor}\label{cor:Nisnevich descent for perfect stacks}
	For any localizing invariant $E$ the restriction 
	$$
	E\colon \left(\mrm{dAlgStk}^{\mrm{perf}}_R\right)^{\mrm{op}} \ra \Spc , \qquad E\mapsto E(\Perf(\sX))
	$$ 
	to the subcategory $ \mrm{dAlgStk}^{\mrm{perf}}_R\subset  \mrm{dAlgStk}_R$ of perfect stacks satisfies descent for Nisnevich covers.
\end{cor}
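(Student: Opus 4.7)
The plan is to deduce the corollary directly by combining the two preceding propositions. Proposition \ref{prop:cd-descent} reduces Nisnevich descent on qcqs derived algebraic stacks to two conditions on a presheaf $\sF$: namely, (a) $\sF(\emptyset) \simeq \ast$, and (b) $\sF$ converts Nisnevich squares to Cartesian squares. I would apply this criterion to the presheaf $\sX \mapsto E(\Perf(\sX))$, restricted to the full subcategory $\mrm{dAlgStk}^{\perf}_R$ of perfect stacks.

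For condition (a), observe that $\Perf(\emptyset)$ is the zero stable $\infty$-category, and every localizing invariant vanishes on the zero category by definition (Definition \ref{defn:localizing_invariant}). Hence $E(\Perf(\emptyset)) \simeq 0$, which serves as the terminal object in the stable target. For condition (b), the content is precisely the preceding proposition imported from \cite[Corollary 4.1.2]{BKRS}: given any Nisnevich square of derived algebraic stacks in which $\sX$ and $\sV$ are perfect, the square obtained by applying any localizing invariant to $\Perf(-)$ is Cartesian.

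The only bookkeeping needed is to ensure that, when we restrict the presheaf to $\mrm{dAlgStk}^{\perf}_R$, the Nisnevich squares we have to test genuinely live in this subcategory. Here $\sV \to \sX$ is representable étale with $\sX, \sV$ perfect by assumption; the bottom row $\sU \hookrightarrow \sX$ is a quasi-compact open immersion, and a qc open substack of a perfect stack is again perfect; finally $\sW = \sU \times_\sX \sV$ is a qc open substack of the perfect stack $\sV$, hence perfect. Thus all four corners of a relevant Nisnevich square are perfect, and the BKRS proposition applies to give Cartesianness of the image square.

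The main (and essentially only) obstacle is this bookkeeping about stability of perfectness under the relevant operations — namely under qc open immersion and under pullback of an étale representable morphism between perfect stacks — which is standard. Once both inputs are verified, Proposition \ref{prop:cd-descent} delivers the claim with no further computation.
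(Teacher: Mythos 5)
Your proposal is correct and coincides with the paper's intended argument: the corollary is stated immediately after the citation of \cite[Corollary 4.1.2]{BKRS} with only the remark ``From Proposition \ref{prop:cd-descent} we get the following,'' and your explicit combination of those two ingredients is exactly the deduction the authors have in mind. The bookkeeping you supply (that $\sU$ and $\sW$ are perfect because perfectness passes to quasi-compact open substacks, so the entire Nisnevich square lies in $\mrm{dAlgStk}^{\perf}_R$) is the only detail the paper leaves implicit, and you identify and handle it correctly.
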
	

\begin{rem}
	Since any qc ANS stack is perfect we also get that $E$ satisfies descent for Nisnevich covers when restricted to qc ANS stacks. Note that if we have a Nisnevich cover $\coprod_{i \in I} \sX_i \rightarrow \sX$ and $\sX$ is ANS, then any of the $\sX_i$ is also ANS by Remark \ref{rem:ANS passes through representable maps}.
\end{rem}	

\sssec{Cohomology theories on derived algebraic spaces} Zariski descent implies that any cohomology theory on qcqs derived schemes is controlled by its values on derived affine schemes. Nisnevich descent allows us to control cohomology theories on qcqs derived algebraic spaces by its values on derived affine schemes. In other words, for many purposes, defining a cohomology theory on affine schemes is as good as defining one in qcqs derived algebraic spaces. This principle is formalized by:

\begin{thm}[Knutson-Lurie]\label{thm:cover} Let $X$ be qcqs algebraic space. Then there exists a finite collection of \'etale morphisms $\{ \Spec A_i \rightarrow X \}_{i = 1, \cdots n}$ constitute a Nisnevich covering of $X$.
\end{thm}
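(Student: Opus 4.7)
The plan is to bootstrap from the classical case. First I would reduce to classical algebraic spaces: for any derived algebraic space $X$, the closed immersion $X^{\mrm{cl}} \hookrightarrow X$ induces an equivalence of small \'etale sites, and this equivalence sends derived affine schemes to classical affine schemes. In particular, any étale morphism $\Spec A^{\mrm{cl}} \to X^{\mrm{cl}}$ lifts uniquely to an étale morphism $\Spec A \to X$ of derived affine schemes with $\pi_0(A) \simeq A^{\mrm{cl}}$. Moreover, the complete decomposition condition from \defref{def:cd} is tested on field-valued points through residual gerbes, which factor through $X^{\mrm{cl}}$, so a classical Nisnevich cover lifts to a derived Nisnevich cover. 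Hence it suffices to handle classical qcqs algebraic spaces.

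For the classical case, the plan is to proceed by Noetherian induction on a suitable stratification. By Knutson's theorem (and its strengthenings by Rydh), any qcqs classical algebraic space $X$ contains a dense quasi-compact open subspace $U \subset X$ which is a scheme; after further localization (or by the standard structure theorem for qcqs schemes via Zariski covers by affines), one can arrange $U$ to be affine. Applying this repeatedly to the closed complement yields a finite stratification $\emptyset = Z_n \subset Z_{n-1} \subset \cdots \subset Z_0 = X$ where each locally closed piece $Z_i \setminus Z_{i+1}$ is affine.

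For each stratum $Z_i \setminus Z_{i+1}$, I would produce a single affine étale Nisnevich neighborhood: étale-locally around any point of this stratum, $X$ looks like an affine scheme (by the very definition of an algebraic space as admitting an étale cover by affines), and we can pick such a neighborhood so that the induced map on the stratum is an isomorphism, in particular inducing isomorphisms of residual gerbes at the points lying above that stratum. The union of these finitely many affine étale morphisms then forms a Nisnevich cover: any field-valued point of $X$ lies in a unique stratum, and the corresponding affine neighborhood provides a compatible lift with the required residual gerbe isomorphism.

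The main obstacle is the construction of each affine Nisnevich neighborhood of a given locally closed affine stratum together with the verification that one can arrange the stratification to be finite; this is where qcqs-ness is essential. In the non-Noetherian setting, one reduces via Noetherian approximation to the Noetherian case (where the stratification argument is clean), which is the strategy Lurie employs in the derived generalization.
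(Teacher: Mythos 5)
The paper does not prove this statement at all---its ``proof'' is a one-line citation to Knutson~\cite[Ch.~II, Thm.~6.4]{Knutson1971} for the classical case and to Lurie~\cite[Example~3.7.1.5]{SAG} for the derived case. Your proposal is therefore genuinely different in kind: you supply an argument rather than a reference. The route you take is in fact essentially the one Knutson and Lurie follow, so it is worth recording both what works and where the real content lies.

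The reduction to classical algebraic spaces is correct and is phrased the right way: topological invariance of the small \'etale site along $X^{\mrm{cl}}\hookrightarrow X$ lifts \'etale affines uniquely, and the completely decomposed condition in~\defref{def:cd} is tested against field points, which factor through the classical truncation. The stratification strategy---repeatedly peel off a dense quasi-compact open that is a scheme, reduce to the Noetherian case by approximation, and induct on the nowhere-dense complement---is likewise the correct skeleton.

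The genuine gap is exactly the step you yourself flag as ``the main obstacle,'' and it should not be treated as a footnote: the claim that one can find a \emph{single} affine \'etale $V\to X$ that restricts to an isomorphism over a whole locally closed affine stratum $W$ is not automatic and is where the theorem actually lives. The difficulty is twofold. First, a dense open scheme $U$ inside a qcqs algebraic space need not admit a single dense affine open when $U$ is reducible, so the stratification into \emph{affine} pieces requires a bit more care than ``after further localization.'' Second, and more seriously, given the closed complement $Z\subset X$ with a Nisnevich cover $\{\Spec B_j\to Z\}$ by induction, one must \emph{extend} these \'etale affines along the closed immersion $Z\hookrightarrow X$ to \'etale affines over an open neighbourhood of $Z$ in $X$. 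This lifting of \'etale algebras along a (non-nilpotent) closed immersion is a real theorem (in Knutson's book it is the technical heart of the argument; in general one invokes EGA~IV-style spreading out or the theory of elementary \'etale neighbourhoods) and cannot be dispatched by appealing to ``the very definition of an algebraic space.'' If you supply that lifting lemma, the rest of your sketch goes through and recovers the cited theorems.
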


\begin{proof} In the case of a classical algebraic space, this result is due to Knutson \cite[Chapter II, Theorem 6.4]{Knutson1971}. The derived extension of this result  due to Lurie in \cite[Example 3.7.1.5]{SAG}.
\end{proof}
%

As a straightforward application of Lemma~\ref{lem:rke} we note the following lemma which says that any cohomology theory on $\mrm{dAff}_R$ which satisfies Nisnevich descent extends uniquely to one which has descent with respect to Nisnevich covers on qcqs derived algebraic spaces. In the notation of Lemma~\ref{lem:rke}, we set $\sC = \mrm{dAlgSpc}_R^{\mrm{qcqs}}$. We set to be $\sE = \mrm{Nis}$, the collection of arrows of the form $\coprod Y_i \rightarrow Y$, where $\{ Y_i \rightarrow Y \}$ is a Nisnevich cover. We let $\sC' := \mrm{dAff}_R$ and $\sE$ restricts to $\mrm{dAff}_R$ as the collection of Nisnevich covers of derived affine schemes. If $\sD$ is any presentable $\infty$-category, we then obtain an equivalence 
\begin{equation}\label{eq:nis-general}
\Fun^{\mrm{Nis}}((\mrm{dAlgSpc}_R^{\mrm{qcqs}})^{\op}, \sD) \simeq \Fun^{\mrm{Nis}}(\mrm{dAff}_R^{\op}, \sD),
\end{equation}
implemented by right Kan extension.

Let $E$ now be an $R$-linear localizing invariant. Let $\sX$ be a derived algebraic stack. The Atiyah-Segal completion theorem concerns the difference between $E$ evaluated on $\Perf(\sX)$ and on the right Kan extension of $E$ from derived affine schemes evaluated on $\sX$. The next lemma says that no such ambiguity occurs for derived algebraic spaces. 


%
%

%

\begin{prop}\label{prop:localizing-nis} Let $E\colon \Cat^{\perf}_R \rightarrow \Spt$ be a localizing invariant. Then the functor $E \circ \Perf\colon  (\mrm{dAlgSpc}_R^{\qcqs})^{\op} \rightarrow \Spt$ is right Kan extended from $\mrm{dAff}$. In particular, the morphism 
\[
E \circ \Perf \rightarrow R^{\mrm{dAff}}E
\]
is an equivalence.
\end{prop}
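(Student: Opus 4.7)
The plan is to show that both $E \circ \Perf$ and $R^{\mrm{dAff}}E$, viewed as presheaves on $\mrm{dAlgSpc}_R^{\qcqs}$, satisfy Nisnevich descent and agree on $\mrm{dAff}_R$; the equivalence~\eqref{eq:nis-general} then forces them to coincide.

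First I will handle the right Kan extension. By Remark~\ref{rem:nis-covers}, any Nisnevich cover of derived algebraic spaces is in particular a Nisnevich surjection of derived prestacks, and so Proposition~\ref{prop:spc} immediately gives universal $R^{\mrm{dAff}}E$-descent along such maps. In particular, $R^{\mrm{dAff}}E$ restricted to $\mrm{dAlgSpc}_R^{\qcqs}$ belongs to $\Fun^{\mrm{Nis}}((\mrm{dAlgSpc}_R^{\qcqs})^{\op},\Spt)$.

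Next I will establish Nisnevich descent for $E \circ \Perf$ on $\mrm{dAlgSpc}_R^{\qcqs}$. A qcqs derived algebraic space $X$ is perfect in the sense of Definition~\ref{def:perfect-stacks}: by Knutson--Lurie (Theorem~\ref{thm:cover}) there is a finite Nisnevich cover $\coprod_{i=1}^n \Spec A_i \to X$, whose \v{C}ech nerve consists of derived affine schemes (since $X$ has affine diagonal), and one can then compactly generate $\mrm{D}_{\mrm{qc}}(X)$ from the affine case by a standard induction on the size of the Nisnevich cover, in the spirit of Thomason--Trobaugh. With perfectness in hand, Corollary~\ref{cor:Nisnevich descent for perfect stacks} supplies Nisnevich descent for $E \circ \Perf$ on qcqs derived algebraic spaces.

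Finally, both functors restrict on $\mrm{dAff}_R$ to the presheaf $\Spec A \mapsto E(\Perf(A))$, which is a Nisnevich sheaf since every localizing invariant satisfies Nisnevich descent on affines. By the equivalence~\eqref{eq:nis-general}, the canonical map $E \circ \Perf \to R^{\mrm{dAff}}E$, which is an equivalence on $\mrm{dAff}_R$ by construction, must be an equivalence on all of $\mrm{dAlgSpc}_R^{\qcqs}$. I expect the main technical point to be the perfectness of qcqs derived algebraic spaces, since this fact is used but not spelled out in the excerpt; an alternative route that sidesteps this is to apply Lemma~\ref{lem:rke} directly with $\sC = \mrm{dAlgSpc}_R^{\qcqs}$, $\sC' = \mrm{dAff}_R$, and $\sE$ the class of Nisnevich covers by derived affine schemes, where universal $E \circ \Perf$-descent for such covers is checked by reducing along their \v{C}ech nerve (affine by the affine-diagonal hypothesis) to Nisnevich descent of $E$ on derived affine schemes.
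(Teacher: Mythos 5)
Your main route is correct and follows the paper's overall strategy (verify Nisnevich-cover descent for $E\circ\Perf$ on qcqs derived algebraic spaces, then conclude via \eqref{eq:nis-general}/Lemma~\ref{lem:rke}), but you establish the key descent step differently. The paper simply cites \cite[Proposition~A.13]{CMNN}, which directly proves that for any localizing invariant $E$ the presheaf $E\circ\Perf$ satisfies Nisnevich descent on qcqs (derived) algebraic spaces — a Thomason--Trobaugh-style argument. You instead argue that a qcqs derived algebraic space is a perfect stack and then invoke Corollary~\ref{cor:Nisnevich descent for perfect stacks} (which rests on BKRS). Both are fine, but note your detour requires the perfectness of qcqs derived algebraic spaces as an additional input; this is true (it is essentially SAG, and your sketched induction on the size of a Nisnevich cover is the right idea), but it should be cited or proved rather than waved at.

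Two further points. First, your opening paragraph verifying Nisnevich descent for $R^{\mrm{dAff}}E$ is logically redundant: the equivalence \eqref{eq:nis-general} is implemented by right Kan extension, so $R^{\mrm{dAff}}E$ restricted to $\mrm{dAlgSpc}_R^{\qcqs}$ is automatically a Nisnevich sheaf; all you actually need is that $E\circ\Perf$ is one. Second, and more substantively, the ``alternative route'' you offer at the end has a genuine gap. You say universal $E\circ\Perf$-descent for a Nisnevich cover of a qcqs derived algebraic space can be ``checked by reducing along the \v{C}ech nerve (affine by the affine-diagonal hypothesis) to Nisnevich descent of $E$ on derived affine schemes.'' This does not follow: the \v{C}ech nerve being a diagram of affines does not by itself imply that $E(\Perf(X))$ is the totalization of $E$ applied to that diagram, since $X$ itself is not affine and Nisnevich descent on affines only governs covers \emph{of} affines. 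Establishing this descent for non-affine $X$ is exactly what CMNN~A.13 (or, equivalently, the perfectness-plus-BKRS route) proves; it cannot be taken for granted.
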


\begin{proof} We need only to prove that if $\{ Y_i \rightarrow Y \}$ is a Nisnevich cover, then $E \circ \Perf$ is of universal descent for $\coprod Y_i \rightarrow Y$. This is recorded in \cite[Proposition A.13]{CMNN}.

\end{proof}

%
%
%
%

\subsection{Cdh excision}\label{sec:cdh excision} Lastly, we discuss cdh-excision on stacks. We avoid going into the cdh topology on stacks, but rather view it as a property of a cohomology theory respecting certain naturally occurring pullback squares of derived stacks. To our knowledge, this idea was first considered in \cite[Section 6]{hoyois-krishna}, and later in \cite[Section 2.1]{BKRS}. 
%

\begin{defn}\label{defn:cdh} A \textbf{proper cdh square (or abstract blow-up square)} in $\mrm{dAlgStk}^{\mrm{qcqs}}$ is a commutative square of derived algebraic stacks	
	$$
	\begin{tikzcd}
		\sZ'\arrow[d,"g"] \arrow[r,"i'"]& \sX'\arrow[d,"f"] \\
		\sZ\arrow[r,"i"]& \sX
	\end{tikzcd}	
	$$
	with the following properties:
	\begin{itemize}
		\item the corresponding square of classical stacks is Cartesian: $(\mc Z')^{\mrm{cl}}\simeq (\mc X' \times_{\mc X} \mc Z)^{\mrm{cl}}$;
		\item $f$ is representable, $f^{\mrm{cl}}$ is finitely presented and proper, $i^{\mrm{cl}}$ is finitely presented, closed embedding whose complement is quasi-compact;
		\item $f$ induces an isomorphism on the complements\footnote{We refer the reader to \cite[Section 3.3.6]{SAG} for the discussion on the underlying topological space of a derived algebraic stack; we note that the open complement of a closed substack inherits the structure of a derived algebraic stack from $\sX$.} $f_{|\sX'\setminus \sZ'}\colon (\sX'\setminus \sZ')\simeq (\sX \setminus \sZ)$. 
	\end{itemize}
	
	A functor
	\[
	E\colon \left(\mrm{dAlgStk}_R^{\mrm{qcqs}}\right)^{\op} \rightarrow \sD
	\]
	is called \textbf{cdh-excisive} if $E$ converts abstract blow-up squares to pullback squares. 
\end{defn}	

An important example of an abstract blow-up square is given by
	$$
	\begin{tikzcd}
	\varnothing	 \arrow[d,"g"] \arrow[r,"i'"]& \varnothing \arrow[d,"f"] \\
	\sX^\mrm{cl}	 \arrow[r,"i"]& \sX.
	\end{tikzcd}	
	$$
Consequently, if $E$ takes values in a stable $\infty$-category and converts the empty stack to $0$, $E$ converts the closed immersion $\tau^{\mrm{cl}}\sX \hookrightarrow \sX$ to an equivalence. The following lemma will be useful.

\begin{lem}\label{lem:truncating} Let $\sD$ be an $\infty$-category with all limits and let
\[
E\colon  \mrm{dAff}_R^{\op} \rightarrow \sD
\]
be a functor such that for any $\Spec A \in \mrm{dAff}_R$, the map $E(\Spec A) \rightarrow E(\Spec \pi_0(A))$ is an equivalence. Then the map
\[
R^{\mrm{dAff}}E(\sX) \rightarrow R^{\mrm{dAff}}(\tau^{\mrm{cl}}\sX),
\]
is an equivalence. 
\end{lem}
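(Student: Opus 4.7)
The plan is to show that, under the truncating hypothesis, $R^{\mrm{dAff}}E(\sX)$ depends on $\sX$ only through its classical truncation $\sX^{\mrm{cl}}$. Combined with the equality $(\tau^{\mrm{cl}}\sX)^{\mrm{cl}} = \sX^{\mrm{cl}}$, this will yield the statement.

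First I would observe that the hypothesis is equivalent to $E$ being itself right Kan extended from classical affines, that is, the canonical comparison $E \to i_*(E|_{\mrm{Aff}^{\mrm{cl}}})$ is an equivalence, where $i^{\op}\colon \mrm{Aff}^{\mrm{cl},\op}\hookrightarrow \mrm{dAff}^{\op}$ denotes the inclusion. By the pointwise formula for right Kan extension,
\[
i_*(F)(\Spec A) \simeq \lim_{(B,\, A\to B)\in \CAlg^{\mrm{cl}}_{A/}} F(\Spec B),
\]
and the key observation is that the indexing category $\CAlg^{\mrm{cl}}_{A/}$ has initial object $(\pi_0 A,\, A\to \pi_0 A)$: since the truncation $\pi_0$ is left adjoint to the inclusion $\CAlg^{\mrm{cl}}\hookrightarrow \CAlg^{\mrm{an}}_R$, any map $A\to B$ in $\CAlg^{\mrm{an}}_R$ with $B$ classical factors uniquely through $A\to \pi_0 A$. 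The limit therefore evaluates to $F(\Spec \pi_0 A)$, and the truncating hypothesis identifies this in turn with $E(\Spec A)$. This initial-object computation is the only substantive step in the argument; everything else is formal.

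Composing right Kan extensions along $\mrm{Aff}^{\mrm{cl}}\hookrightarrow \mrm{dAff}\hookrightarrow \mrm{dPStk}_R$ then gives
\[
R^{\mrm{dAff}}E(\sX) \simeq \lim_{\Spec B \to \sX,\, B\in \CAlg^{\mrm{cl}}} E(\Spec B).
\]
Since a map $\Spec B\to \sX$ with $B$ classical is by definition an element of $\sX(B) = \sX^{\mrm{cl}}(B)$, this limit coincides with $\lim_{\Spec B\to \sX^{\mrm{cl}}} E(\Spec B)$ and depends on $\sX$ only through $\sX^{\mrm{cl}}$. The same identification applied to $\tau^{\mrm{cl}}\sX$, together with $(\tau^{\mrm{cl}}\sX)^{\mrm{cl}} = \sX^{\mrm{cl}}$ (a consequence of $L^{\mrm{cl}}$ being a left Kan extension along a fully faithful embedding, hence fully faithful), identifies both sides with the same limit, and a direct check shows the natural map induced by $\tau^{\mrm{cl}}\sX\to \sX$ becomes the identity under this identification.
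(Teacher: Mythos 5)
Your argument is correct, and it reaches the conclusion by a somewhat different route than the paper's, although both ultimately exploit the same fact that $\pi_0$ is left adjoint to the inclusion $\CAlg^{\mrm{cl}}\hookrightarrow\CAlg^{\mrm{an}}_R$. The paper works on the side of prestacks: it identifies $\tau^{\mrm{cl}}\sX$ with the colimit $\colim_{\mrm{dAff}_{/\sX}}\Spec\pi_0(A)$ by observing that both sides agree on classical affines and that the essential image of $L^{\mrm{cl}}$ is closed under colimits; applying the limit formula for $R^{\mrm{dAff}}E$ and substituting $E(\Spec\pi_0 A)\simeq E(\Spec A)$ term-by-term then finishes the proof. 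You instead work on the side of the cohomology theory: you recognize the truncating hypothesis as saying precisely that $E$ is right Kan extended along $\mrm{Aff}^{\mrm{cl},\op}\hookrightarrow\mrm{dAff}^{\op}$ (using that $\pi_0 A$ is initial among classical rings under $A$), so by transitivity of right Kan extension $R^{\mrm{dAff}}E$ is right Kan extended along $\mrm{Aff}^{\mrm{cl},\op}\hookrightarrow\mrm{dPStk}^{\op}_R$, and the overcategory $(\mrm{Aff}^{\mrm{cl}})_{/\sX}$ only sees $\sX^{\mrm{cl}}$ by Yoneda. Your phrasing makes the factorization of $R^{\mrm{dAff}}E$ through $(-)^{\mrm{cl}}$ completely transparent and avoids the auxiliary computation of a colimit of prestacks, which is arguably a gain in clarity; the paper's version has the modest advantage of isolating the reusable identity $\tau^{\mrm{cl}}\sX\simeq\colim_{\mrm{dAff}_{/\sX}}\Spec\pi_0(A)$ as a statement about prestacks independent of any cohomology theory.
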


\begin{proof} As recalled from Remark~\ref{exam:cl}, $\tau^{\mrm{cl}}\sX$ is obtained as the left Kan extension of the restriction of $\sX$ to classical affine schemes. We claim that the canonical map in prestacks:
\[
\tau^{\mrm{cl}}\sX \simeq \colim_{ \mrm{Aff}_{/\sX}} \Spec A \rightarrow \colim_{  \mrm{dAff}_{/\sX}} \Spec \pi_0(A),
\]
is an equivalence. Restricted to classical affine schemes, the map above is an equivalence. Hence it suffices to prove that the target is also left Kan extended from classical affine schemes. But now, each term is a classical affine scheme and this subcategory is closed under colimits. The result then follows from the formula for $R^{\mrm{dAff}}E$.

\end{proof}

%

\begin{prop}\label{lem:localizing-invt} Let $E$ be a $R$-linear, truncating localizing invariant. Then
\[
R^{\mrm{dAff}}E\colon (\mrm{dAlgStk}_R^{\qcqs,\mrm{sd}})^{\mrm{op}} \rightarrow \Spt
\]
converts an abstract blow-up square to a cartesian square.
\end{prop}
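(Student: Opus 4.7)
The plan is to reduce cdh excision on derived algebraic stacks to cdh excision on qcqs derived algebraic spaces, where it follows from the Land--Tamme theorem for truncating localizing invariants combined with Nisnevich descent.

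Since $E$ is truncating, Lemma~\ref{lem:truncating} implies that $R^{\mrm{dAff}}E$ depends only on the underlying classical stack; in particular I may replace the top-left corner $\sZ'$ by the derived fiber product $\tilde{\sZ}' := \sX' \times_{\sX} \sZ$, which has the same classical truncation as $\sZ'$ by Definition~\ref{defn:cdh}. This reduces the problem to showing that $R^{\mrm{dAff}}E$ preserves the cartesian square
\[
\tilde{Q} = \begin{tikzcd}[ampersand replacement=\&]
\tilde{\sZ}' \arrow[d] \arrow[r]\& \sX' \arrow[d,"f"] \\
\sZ \arrow[r,"i"]\& \sX.
\end{tikzcd}
\]

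Next, every vertex of $\tilde{Q}$ maps representably to $\sX$: this holds for $\sX'$ by hypothesis, for $\sZ$ because closed immersions into algebraic stacks are automatically representable, and hence also for $\tilde{\sZ}'$. Using the tautological presentation $\sX \simeq \colim_{\Spec A \to \sX} \Spec A$ in $\mrm{dPStk}_R$ together with universality of colimits, each vertex $\sY$ of $\tilde{Q}$ is the colimit of its base changes $\sY_A := \sY \times_{\sX} \Spec A$ in $\mrm{dPStk}_R$. Applying the right Kan extension converts this entrywise colimit presentation into the formula
\[
R^{\mrm{dAff}}E(\sY) \simeq \lim_{\Spec A \to \sX} R^{\mrm{dAff}}E(\sY_A),
\]
and since limits of pullback squares are pullback squares, it suffices to check that $R^{\mrm{dAff}}E(\tilde{Q}_A)$ is cartesian for every $\Spec A \to \sX$.

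For each such $A$, the square $\tilde{Q}_A$ is a cartesian square of qcqs derived algebraic spaces, and the properties of Definition~\ref{defn:cdh} (properness, the isomorphism off $\sZ$, and the closed immersion condition) are stable under base change, so $\tilde{Q}_A$ is itself an abstract blow-up square. By Proposition~\ref{prop:localizing-nis}, $R^{\mrm{dAff}}E$ and $E \circ \Perf$ agree on qcqs derived algebraic spaces, so the claim reduces to showing that a truncating localizing invariant sends abstract blow-up squares of qcqs derived algebraic spaces to cartesian squares. On qcqs derived schemes this is the Land--Tamme theorem \cite{land-tamme}; the promotion to qcqs derived algebraic spaces proceeds via Nisnevich descent using the Knutson--Lurie Theorem~\ref{thm:cover}. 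The main obstacle I anticipate is precisely this last promotion step, together with confirming that the derived base change $\tilde{Q}_A$ genuinely inherits the finite presentation and properness properties required by Definition~\ref{defn:cdh}; both should go through routinely but merit careful verification.
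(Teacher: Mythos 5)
Your reduction to derived algebraic spaces is valid, but it is routed differently from the paper and the last step is a genuine gap rather than a routine verification.

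On the reduction: the paper invokes the derived Deshmukh theorem (Theorem~\ref{thm:deshmukh}) to produce a single Nisnevich surjection $\pi\colon \Spec R \to \sX$, then forms the \v{C}ech nerve and uses Proposition~\ref{prop:spc} to write $R^{\mrm{dAff}}E(\square) \simeq \lim_{\Delta} R^{\mrm{dAff}}E(\square_{\pi}^{\times\bullet})$; each $\square_\pi^{\times n}$ is then an abstract blow-up square of derived algebraic spaces. You instead take the tautological colimit $\sX \simeq \colim_{\Spec A \to \sX}\Spec A$ in prestacks and use universality of colimits. That also works — $R^{\mrm{dAff}}E$ converts all prestack colimits to limits, not just \v{C}ech nerves — so both roads lead to the same place. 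Your additional preliminary step of replacing $\sZ'$ by the derived pullback via Lemma~\ref{lem:truncating} is harmless but unnecessary; the paper never needs the square to be derived-cartesian, only the $(-)^{\mrm{cl}}$-level cartesianness required by Definition~\ref{defn:cdh}, because the final ingredient is stated for abstract blow-up squares as such.

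The actual gap is in your last paragraph. You write that the promotion of cdh-excision from qcqs derived schemes (Land--Tamme) to qcqs derived algebraic spaces ``proceeds via Nisnevich descent using the Knutson--Lurie Theorem~\ref{thm:cover}''; but base-changing an abstract blow-up square $\square$ along a Nisnevich cover $\sqcup_i \Spec A_i \to X$ of the \emph{lower-right} vertex only makes that one vertex affine. The morphism $f\colon X' \to X$ is proper and representable, not affine, so $X' \times_X \Spec A_i$ is in general a derived algebraic space and not a scheme. If you then Nisnevich-cover $X'_{A_i}$ by affines, the resulting square is no longer an abstract blow-up square, so the two descent reductions do not compose into a direct reduction to the scheme case. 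This is precisely the step that the paper handles by citing \cite[Corollary 5.2.6]{dgm-elden-vova}, which proves cdh-excision for truncating invariants directly at the level of (quotient) stacks; derived algebraic spaces are then a special case. You flag this as the ``main obstacle'' and say it ``should go through routinely,'' but it does not — some genuine argument (an induction on the geometry of $X'$, or the argument in the cited reference) is needed here, and without it your proof is incomplete.
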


\begin{proof}  Fix an abstract blow-up square which we denote as	\[
\square = 	\begin{tikzcd}
		\sZ'\arrow[d,"g"] \arrow[r,"i'"]& \sX'\arrow[d,"f"] \\
		\sZ\arrow[r,"i"]& \sX
	\end{tikzcd}		
	\]
Our goal is to prove that $R^{\mrm{dAff}}E(\square)$ is cartesian. According to Theorem~\ref{thm:deshmukh} we we may find a Nisnevich surjection $\pi\colon \Spec R \rightarrow \sX$. Denote by $\square_{\pi}$ the base change of $\square$ along $\pi$. Forming the \v{C}ech nerve of $\square$ along the map $\pi$, we have an augmented cosimplicial diagram of squares $\square \rightarrow \square_{\pi}^{\times \bullet}$. By Lemma~\ref{prop:spc}, we get that $R^{\mrm{dAff}}E(\square) \simeq \lim_{\Delta} R^{\mrm{dAff}}E(\square_{\pi}^{\times \bullet})$, hence it suffices to prove that for each $n \in \bbN$, the square $R^{\mrm{dAff}}E(\square_{\pi}^{\times n})$ is cartesian. Now, each such square is an abstract blow-up square of derived algebraic spaces. By Lemma~\ref{prop:localizing-nis}, $R^{\mrm{dAff}}E \simeq E \circ \Perf$ and hence it suffices to prove that the latter preserves abstract blow-up squares. This is then a special case of \cite[Corollary 5.2.6]{dgm-elden-vova}.

\end{proof}

\section{Formulating the Atiyah-Segal completion theorem}\label{sec:setup}  As noted above, not all natural extensions of $E$ satisfy descent with respect to $\tau$-surjections even if $E$ does on derived affine schemes. Moreover, $R^{\mrm{dAff}}E$ is an essentially unique extension of $E$ that has satisfies this form of
$\tau$-descent. In this section, we define another extension of $E$ that is more intrinsic to $E$, at least when evaluated on quotient stacks $[X/G]$ with a fixed group $G$. This extension is a completion of $E$ at a certain ideal, called the Atiyah-Segal ideal in honor of their analogous work in topology in the context of topological $K$-theory \cite{atiyah-characters, atiyah-segal}. 

\subsection{Preliminaries on completions}\label{sec:sag-completion}
We begin by briefly recalling the machinery of completions of structured ring spectra as in \cite[Chapter 7.3]{SAG}. To the authors' knowledge, the relevance of this machinery to completion theorems for $K$-theory was first noted in \cite{tabuada2020motivic}. Let $R$ be a commutative ring spectrum and $I \subset \pi_0(R)$ an ideal. According to \cite[Proposition 7.2.4.4]{SAG} there is a semiorthogonal decomposition
\[
D(R) \simeq \langle D(R)^{I-\mrm{nil}} , D(R)^{I-\mrm{loc}} \rangle.
\]
Here, $ D(R)^{I-\mrm{nil}}$ is the subcategory of \textbf{$I$-nilpotent} modules: those $M$ such that $M[x^{-1}] \simeq 0$ for all $x \in I$ and \textbf{$I$-local} objects are exactly those which are right orthogonal to $I$-nilpotent objects. On the other hand, the subcategory \textbf{$I$-complete modules}, denoted by $D(R)^{I-\mrm{cpl}}$, is defined as the right orthogonal to $I$-local objects \cite[Definition 7.3.1.1]{SAG}: in other words an $R$-module $M$ is $I$-complete if and only if for all $I$-local module $N$ we have that $\Maps(N, M) \simeq 0$. 

Under mild hypotheses, the category of complete and nilpotent modules coincide; we formulate this equivalence. The inclusion $i_*:D(R)^{I-\mrm{cpl}} \hookrightarrow D(R)$ (resp. $D(R)^{I-\mrm{nil}} \hookrightarrow  D(R)$) admits a left adjoint $i^*\colon D(R) \rightarrow D(R)^{I-\mrm{cpl}}$ \cite[Notation 7.3.1.5]{SAG} (resp. a right adjoint $i^!\colon   D(R) \rightarrow  D(R)^{I-\mrm{nil}}$ \cite[Notation 7.2.4.6]{SAG}). The unit map
\[
M \rightarrow i_*i^*M =: M^{\wedge}_I
\]
witnesses the target as the \textbf{$I$-adic completion of $M$}, while the counit map
\[
\Gamma_I(M) := i_*i^!M \rightarrow M
\]
witnesses the domain as the $I$-local cohomology of $M$. We now further assume that $I$ is \emph{finitely generated} (which is the case for the ideals that appear throughout this paper), then each adjoint pair induces an equivalence $ D(R)^{I-\mrm{nil}} \simeq  D(R)^{I-\mrm{cpl}}$ \cite[Proposition 7.3.1.7]{SAG}. Identifying this category as $ D(R)^{I-\mrm{cpl}}$, the inclusion
\[
i_*:  D(R)^{I-\mrm{cpl}} \hookrightarrow  D(R)
\]
thus admits a left adjoint $i^*$ and a right adjoint $i^!$ and we denote the endofunctor $i_*i^*\colon  D(R) \rightarrow  D(R)$ by $(-)^{\wedge}_I$. Furthermore, we note that the endofunctor $(-)^{\wedge}_I$ is symmetric monoidal since $i^*$ is a strong symmetric monoidal functor \cite[Corollary 7.3.5.2]{SAG}.

\begin{rem}[Formula for the $I$-completion]\label{rem:formula for derived completion on module} Let $(r_1,\ldots, r_n)$ be a set of generators for $I$ and let $M\in  D(R)$. For $r\in R$ denote by $[R/r]$ the cofiber $R \xrightarrow{r} R$. The module
	$
	M\otimes_R [R/r_1^{i_1}] \otimes_R \ldots \otimes_R [R/r_n^{i_n}]
	$
	is $I$-complete for any $n$-tuple of powers $(i_1,\ldots i_n)\in \mbb N^n$ (see e.g. \cite[Corollary 7.3.3.3]{SAG}). The natural maps $
	M\ra M\otimes_R [R/r_1^{i_1}] \otimes_R \ldots \otimes_R [R/r_n^{i_n}]
	$
	then give a map 
	$$
	M^\wedge_I \ra \lim_{(i_1,\ldots, i_n)} M\otimes_R [R/r_1^{i_1}] \otimes_R \ldots \otimes_R [R/r_n^{i_n}],
	$$
	which in fact is a quasi-isomorphism (this follows by induction on $n$ from \cite[Corollary 7.3.2.1]{SAG}, with the induction step provided by \cite[Proposition 7.3.3.2]{SAG}).
\end{rem}	

\begin{rem}[Change of rings]
	Let $R\ra R'$ be a homomorphism of commutative ring spectra. Let $M'$ be an $R'$-module. Let $I\subset \pi_0(R)$ be a finitely generated ideal and let $I'\subset \pi_0(R')$ be the ideal generated by the image of $I$. Then by \cite[Corollary 7.3.3.6]{SAG} the $I'$-adic completion $(M')^\wedge_{I'}$ exhibits the $I$-adic completion of $M'$ as an $R$-module. In such situation we will thus often write $(M')^\wedge_{I}$ instead of $(M')^\wedge_{I'}$ to simplify the notation.
\end{rem}	

\begin{rem}[A warning about various completions] \label{rem:completions} Starting with Definition~\ref{defn:de Rham prestack and completion}, we will see a proliferation of other completion functors. While these can be compared, the context differs slightly. We will mostly apply the theory $I$-adic completion to the $K$-theory spectra of various stacks over $BG$ with the ideal $I$ given by the augmentation ideal of $G$ (as recalled in the next section). In particular, we only perform $I$-adic completions at $K(BG)$-module spectra. In contrast, completions via de Rham prestacks are performed on the geometric level of maps of \textit{derived} (as opposed to spectral) prestacks. There is still a certain compatibility between the two notions: we refer the reader to Remark~\ref{rem:compare}, and also Lemma~\ref{lem:affine completion vs derived} for a comparison statement that will be useful for our main results.
\end{rem}

\subsection{Atiyah-Segal ideal}\label{sec: AS-completion}
We now set the base ring to be a field, which we will denote by $k$. Let $G$ be a linearly reductive group over $k$.  Let $\mathrm{Rep}_k^{\mathrm{fg}}(G)$ denote the abelian category of finite-dimensional $G$-representations on $k$-vector spaces. We have a natural functor\footnote{Namely, $\mathrm{Rep}_k^{\mathrm{fg}}(G)$ is identified with the heart $\mathrm{Perf}(BG)^\heartsuit$ of the natural $t$-structure on $\mathrm{Perf}(BG)$.} $\mathrm{Rep}_k^{\mathrm{fg}}(G)\rightarrow \mathrm{Perf}(BG)$, and the category $\mathrm{Perf}(BG)$ splits as the direct sum
\begin{equation}\label{eq:description_perfbg}
\Perf(BG)\simeq \bigoplus_{V_\lambda\in \mathrm{Irr}(G)}\Perf(k)\otimes V_\lambda
\end{equation}
with $V_\lambda$ given by isomorphism classes of irreducible $G$-representations. This way the ring $K_0(BG)$ is naturally identified with the representation ring $\mc R_k(G)$ of (virtual) finite-dimensional  $G$-representations valued in $k$-vector spaces; if no confusion arises we will write $\mathrm{Rep}^{\mathrm{fg}}(G) = \mathrm{Rep}_k^{\mathrm{fg}}(G)$ and $\mc R(G) = \mc R_k(G)$. The map $\Spec k\rightarrow BG$ induces a ring homomorphism $K_0(BG)\rightarrow K_0(k)$ which is naturally identified with the map $\dim\colon \mc R(G) \ra \mbb Z$ sending the class $[V]\in \mc R(G)$ to $\dim V\in \mbb Z$ for any finite-dimensional $G$-representation $V$. 

\begin{defn}
	The {\bf Atiyah-Segal ideal} (or {\bf AS-ideal} for short) $I_G\subset K_0(BG)$ is defined as the kernel of this map. Via the isomorphism $K_0(BG)\simeq \mc R(G)$ it is identified with  the augmentation ideal $\ker(\dim)\subset \mc R(G)$.  
	\end{defn}

\begin{rem}
	Note that $\mc R(G)$ is a finitely generated ring over $\mbb Z$ and that, consequently, $I_G$ is a finitely generated $\mc R(G)$-ideal. Indeed, for the first assertion note that $G$ fits into a short exact sequence 
\[
1\ra G_0 \ra G \ra G/G_0\ra 1
\]
with $G_0$ being the connected component of the unit $e\in G(k)$ and $G/G_0$ finite \'etale. Let $\overline{k}$ denote the algebraic closure of $k$ and put $G_{\overline{k}}:= G \times_k \overline{k}$. By \cite{Tits1971}, irreducible representations of $G_0$ are encoded by $\Gamma:=\mrm{Gal}(\overline{k}/k)$-orbits in the character lattice $X^*(T_{\overline{k}})$ of a maximal torus $T_{\overline{k}}\subset (G_0)_{\overline k}$, and as generators of $\mc R(G_0)$ one can take the classes $[V_{\Gamma\cdot \omega_i}]$ of irreducible representations corresponding to Galois orbits of the fundamental weights. To get from $G_0$ to $G$ one can take all direct summands of the inductions $\mrm{Ind}_{G_0}^G(V_{\Gamma\cdot \omega_i})$: they form a finite set of tensor generators of $\mathrm{Rep}^{\mathrm{fg}}(G)$, and thus their classes in $\mc R(G)$ generate it as a ring.
\end{rem}

To get a sense of how the representation ring and the Atiyah-Segal ideal looks like, we offer the following examples. 

\begin{exam}\label{exam:as-ideal} \begin{enumerate}
		\item Let $G=T$ be a split algebraic torus. Let $X^*(T):=\Hom_{\mathrm{gr-Sch}}(T,\mbb G_m)$ denote the character lattice. The representation ring of $\mc R(T)$ is given by the group ring $\mbb Z[X^*(T)]$. In particular, $K_0(BT)$ is naturally isomorphic to the group ring $\mbb Z[X^*(T)]$.  The Atiyah-Segal ideal $I_T\subset \mbb Z[X^*(T)]$ is generated by elements $[\chi] -1$ for $\chi\in X^*(T)$. 
	\item Let $G$ be a connected split reductive group. Let $T\subset G$ be a maximal torus. The representation ring $\mc R(G)$ is identified with the subring $\mbb Z[X^*(T)]^W\subset \mbb Z[X^*(T)]\simeq \mc R(T)$ under the natural restriction map $\mc R(G)\rightarrow \mc R(T)$. Here $W$ denotes the Weyl group of $G$. This way Atiyah-Siegal ideal $I_G\subset \mc R(G)$ is described as the intersection $I_T\cap \mbb Z[X^*(T)]^W\subset \mbb Z[X^*(T)]^W$. 
	\item Let $G=GL_n$. This is a particular case of (2), where we can identify $W\simeq\Sigma_n$ and $\mbb Z[X^*(T)]\simeq \mbb Z[t_1^{\pm 1},\ldots, t_n^{\pm 1}]$, with $\Sigma_n$ acting by permutations of $t_i$. It is well-known that the $\Sigma_n$-invariants $\mbb Z[t_1^{\pm 1},\ldots, t_n^{\pm 1}]^{\Sigma_n}$ are given by the subring $\mbb Z[e_1,\ldots,e_{n-1},e_n, e_n^{-1}]\subset \mbb Z[t_1^{\pm 1},\ldots, t_n^{\pm 1}]$ where $e_i\in \mbb Z[t_1,\ldots, t_n]$ are elementary symmetric polynomials in $t_i$. The homomorphism $\dim\colon \mc R(GL_n)\ra \mbb Z$ is induced by the map sending $t_i$ to 1, or, consequently, $e_i$ to $\binom{n}{i}$. We see that the AS-ideal $I_{GL_n}$ is generated by $(e_i-\binom{n}{i})_{1\le i\le n}$.
	
	\end{enumerate}
\end{exam}	


\ssec{The Atiyah-Segal completion and change of groups} 

Let $\sX\ra BG$ be a derived prestack over $BG$. Let $E$ be a localizing invariant; then $E(\sX):=E(\Perf(\sX))$ is naturally a module over $K(BG)$.
\begin{defn}\label{defn:as_completion}
	The {\bf Atiyah-Segal completion} of $E(\sX)$ is the $I_G$-completion ${E(\sX)}^\wedge_{I_G}$ of $E(\sX)$ as a $K(BG)$-module (in the sense of Section \ref{sec:sag-completion}). 
\end{defn}

Assume we have an embedding $H\hookrightarrow G$ of reductive groups over $k$. This induces a map of classifying stacks $BH\ra BG$ and a functor $\mrm{dPStk}/BH \ra \mrm{dPStk}/BG$ given by post-composition. 
In this case, together with the Atiyah-Segal ideal $I_H\subset K_0(BH)$ we have the image $I_G\cdot K_0(BH)$ of  $I_G\subset K_0(BG)$ under the induced map $K_0(BG)\ra K_0(BH)$. It will be very useful to know that the completions of $E(\sX)$ for a prestack $\sX \ra BH \ra BG$ with respect to these two ideals coincide. 
\lem{\label{lem:comparing Atiyah-Segal completions} In the situation above, there exists $n\gg 0$ such that
	$$
	I_H^n\subset I_G\cdot K_0(BH)\subset I_H
	$$
Consequently, for any derived prestack $\sX\ra BH$ and finitary localizing invariant $E$ there is an equivalence of completions
$$
E(\sX)^\wedge_{I_H}\simeq E(\sX)^{\wedge}_{I_G\cdot K_0(BH)}
$$
as $K(BH)$-modules.}

\begin{proof}
	Identifying $K_0(BH)\simeq \mc R(H)$ and $K_0(BG)\simeq \mc R(G)$ this follows e.g. from 
	\cite[Corollary 6.1]{Edidin-Graham}. The second part follows since by the above the radicals of $I_H$ and $I_G\cdot K_0(BH)$ coincide.
\end{proof}	

\rem{We note that by the general properties of derived completions, the underlying $K(BG)$-module of $E(\sX)^{\wedge}_{I_G\cdot K_0(BH)}$ is equivalent to the Atiyah-Segal completion $E(\sX)^{\wedge}_{I_G}$ (with $E(\sX)$ considered as a $K(BG)$-module).} In particular, we get an equivalence
$$
E(\sX)^\wedge_{I_H}\simeq E(\sX)^\wedge_{I_G}
$$
of $K(BG)$-modules.

\begin{lem}\label{lem:affine schemes are AS-complete}For any map $S=\Spec R \ra BG$ and a finitary localizing invariant $E$, the $K(BG)$-module $E(S)$ is $I_G$-complete.
\end{lem}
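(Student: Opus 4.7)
My plan is to prove $I_G$-completeness of the $K(BG)$-module $E(S)$ by verifying the stronger property of $I_G$-nilpotence. Since $I_G$ is a finitely generated ideal, these two notions coincide (see \cite[Proposition 7.3.1.7]{SAG}), so it suffices to choose a finite generating set $V_1,\ldots,V_m$ of $I_G$ and to show $E(S)[V_j^{-1}] = 0$ for each $j$.

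The strategy is a finitariness reduction to finite-type affines, followed by the classical nilpotence of the augmentation ideal in $K_0$. Since $G$ is of finite type over $k$, the stack $BG$ is locally of finite presentation, so $\Maps(-,BG)$ commutes with filtered colimits. I would write $R \simeq \colim_\alpha R_\alpha$ as a filtered colimit of compact (finitely presented) animated $k$-algebras and, after passing to a cofinal subdiagram, arrange the map $f\colon S \to BG$ to be the colimit of compatible maps $f_\alpha\colon \Spec R_\alpha \to BG$. Using that $E$ is finitary and that its $K(BG)$-module structure is functorial in the given map to $BG$ (Construction~\ref{constr:k-lax}), this yields an equivalence
$$E(S) \simeq \colim_\alpha E(\Spec R_\alpha)$$
of $K(BG)$-modules, and hence $\pi_\ast E(S) \simeq \colim_\alpha \pi_\ast E(\Spec R_\alpha)$ as graded $K_0(BG)$-modules.

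For each compact $R_\alpha$, the ring $\pi_0 R_\alpha$ is a finitely generated $k$-algebra, hence Noetherian of finite Krull dimension, and $K_0(R_\alpha) = K_0(\pi_0 R_\alpha)$ since $R_\alpha$ is connective. By the classical nilpotence of the $\gamma$-filtration on $K_0$ of such rings (Kratzer--Soul\'e), the augmentation ideal $J_\alpha := \ker(\mathrm{rk}\colon K_0(R_\alpha) \to \bbZ^{\pi_0 \Spec \pi_0 R_\alpha})$ is nilpotent. Any $V \in I_G$ pulls back to $f_\alpha^\ast V \in J_\alpha$ (since $V$ has virtual rank zero), so some power of $V$ acts as zero on $\pi_\ast E(\Spec R_\alpha)$. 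Passing to the filtered colimit, $V$ acts locally nilpotently on $\pi_\ast E(S)$, whence $E(S)[V^{-1}] = 0$.

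The main technical step requiring care is the first reduction: arranging the factorization of $f$ through compact animated subalgebras and verifying that the $K(BG)$-module structure on $E(-)$ is compatible with this filtered colimit. This should follow formally from the finite presentation of $BG$ over $k$ together with the $K$-linearity recalled in Construction~\ref{constr:k-lax}, but it is the only non-routine bookkeeping; beyond this, the argument reduces to a standard computation in $K_0$ of finite-type algebras.
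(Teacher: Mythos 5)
The proposal has a genuine gap at its opening move. You assert that, because $I_G$ is finitely generated, $I_G$-nilpotence and $I_G$-completeness coincide, citing \cite[Proposition 7.3.1.7]{SAG}. That proposition does \emph{not} say the two full subcategories of $\Mod_{K(BG)}$ are equal; it says the functors $\Gamma_{I_G}$ and $(-)^\wedge_{I_G}$ restrict to mutually inverse \emph{equivalences} between them. The subcategories are genuinely different: over $\mathbb{Z}$ with $I=(p)$, the module $\mathbb{Z}/p^\infty = \colim_n \mathbb{Z}/p^n$ is $p$-nilpotent but \emph{not} $p$-complete (its derived $p$-completion is $\mathbb{Z}_p[1]$). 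So establishing $E(S)[V_j^{-1}]=0$ does not establish the lemma.

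The same example also defeats the finitariness reduction, which is where the nilpotence actually slips away: $I$-completeness is not preserved under filtered colimits. In your argument, each $E(\Spec R_\alpha)$ is killed by $I_G^{N_\alpha}$ where $N_\alpha$ is controlled by the Krull dimension of $\pi_0 R_\alpha$, but there is no uniform bound on $N_\alpha$ as $\alpha$ ranges over a filtered diagram presenting a general connective $R$ (which may have infinite Krull dimension). Thus in $\colim_\alpha E(\Spec R_\alpha)$ the elements of $I_G$ act only \emph{locally} nilpotently, which is precisely the situation of $\mathbb{Z}/p^\infty$ above: local nilpotence of the action on $\pi_*$ gives $I_G$-nilpotence of the spectrum, but not $I_G$-completeness. (You would need a single $N$ such that $I_G^N$ annihilates $E(S)$, since that is the case in which the two notions really do coincide for a specific module.)

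The paper's proof goes in exactly the opposite direction to sidestep this. Rather than writing $E(S)$ as a filtered colimit, it chooses an embedding $G \hookrightarrow GL_n$ and a Zariski cover $S' \to S$ trivializing the induced $GL_n$-torsor, so that the composite $S'_\bullet \to BGL_n$ factors through $\Spec k \to BGL_n$. This kills the \emph{entire} image of $I_{GL_n}$ in $\pi_0 E(S'_\bullet)$ on the nose, and then the change-of-groups Lemma~\ref{lem:comparing Atiyah-Segal completions} upgrades this to a \emph{uniform} bound $I_G^n \mapsto 0$ on the whole \v{C}ech nerve. Finally, since $E$ satisfies Zariski descent, $E(S) = \lim_{\Delta} E(S'_\bullet)$ is a \emph{limit} of $I_G$-complete modules, and completeness \emph{is} stable under limits (being defined as a right orthogonal). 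The two inputs you reached for—finitariness and Kratzer--Soul\'e nilpotence on $K_0$ of finite-type affines—are replaced by Zariski-local triviality of $GL_n$-torsors and the change-of-groups lemma; and the colimit over a filtered diagram is replaced by a totalization, which is the step that actually preserves completeness.
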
	
\begin{proof}
	Pick some embedding $G\hookrightarrow GL_n$. Since any vector bundle is Zariski-locally trivial, there is a Zariski cover $S'\ra S$ such that the composition
	$$
	S'\ra S \ra BG \ra BGL_n
	$$
	factors through the cover $\Spec k\ra BGL_n$. Let $S'_\bullet$ denote the \v Cech nerve of $S'\ra S$. The image of $I_{GL_n}$ in $E_0(S'_\bullet)$ is 0, and so by Lemma \ref{lem:comparing Atiyah-Segal completions} it follows that image of $I_G^n$ in $E_0(S'_\bullet)$ is 0 for some $n$. Consequently, $E(S'_\bullet)$ is $I_G$-complete. Recall that
	$E\colon \mrm{dAff}^{\mrm{op}} \ra \mrm{Sp}$  is a Nisnevich, and in particular Zariski, sheaf and so
	$$
	E(S)\simeq \lim_{[\bullet]\in \Delta} E(S'_\bullet).
	$$
	Thus $E(S)$ is a limit of $I_G$-complete $K(BG)$-modules, and as such is $I_G$-complete.
\end{proof}


As a corollary, we obtain the right Kan extended theory is always complete. 

\begin{cor}\label{cor:Kan extension is I_G-complete}
	For any derived prestack $\sX$ over $BG$ and finitary localizing invariant $E$, the $K(BG)$-module 
	$
	R^{\mrm{dAff}}E(\sX)
	$ is $I_G$-complete. 
\end{cor}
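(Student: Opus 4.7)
The plan is to combine Lemma~\ref{lem:affine schemes are AS-complete} with the limit description of $R^{\mrm{dAff}}E$ from~\eqref{eq:colimofrep} and the basic fact that $I_G$-complete modules form a limit-closed subcategory.

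More precisely, I would proceed as follows. First, recall from~\eqref{eq:colimofrep} that
\[
R^{\mrm{dAff}}E(\sX) \simeq \lim_{\Spec A \to \sX} E(\Spec A),
\]
where the limit is taken in $K(BG)$-modules (using the natural $K(BG)$-module structure on each $E(\Spec A)$ coming from the composition $\Spec A \to \sX \to BG$). Each index of the limit is a map $\Spec A \to BG$, so Lemma~\ref{lem:affine schemes are AS-complete} tells us that every term $E(\Spec A)$ is $I_G$-complete.

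Next, I would invoke the general fact that the full subcategory of $I_G$-complete objects is closed under limits in the ambient module category. This follows formally from the setup in Section~\ref{sec:sag-completion}: the inclusion $i_* \colon D(K(BG))^{I_G\text{-}\mrm{cpl}} \hookrightarrow D(K(BG))$ admits a left adjoint (the completion functor $(-)^{\wedge}_{I_G}$), and as a right adjoint it preserves all limits. Hence a limit of $I_G$-complete modules, computed in $D(K(BG))$, is again $I_G$-complete.

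Applying this observation to the limit above gives that $R^{\mrm{dAff}}E(\sX)$ is $I_G$-complete. No step here is substantively difficult: the only point worth double-checking is that the $K(BG)$-module structures on the $E(\Spec A)$ assemble into a diagram of $K(BG)$-modules (which is immediate since they all come from the map to $BG$), so the limit is genuinely being taken in the category of $K(BG)$-modules rather than just spectra.
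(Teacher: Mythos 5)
Your proof is correct and takes essentially the same route as the paper: compute $R^{\mrm{dAff}}E(\sX)$ as a limit of $E(\Spec A)$'s, apply Lemma~\ref{lem:affine schemes are AS-complete} to see each term is $I_G$-complete, and conclude since $I_G$-complete modules are closed under limits. The only difference is that you spell out the (standard, correct) reason for limit-closure via the right adjoint $i_*$, whereas the paper simply asserts it.
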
	
\begin{proof}
We have $R^{\mathrm{dAff}}E(\sX)\simeq \lim_{\Spec R\ra \sX} E(\Perf(R))$, and by Lemma \ref{lem:affine schemes are AS-complete} all terms in the limit are $I_G$-complete. As a limit of $I_G$-complete modules, $R^{\mathrm{dAff}}E(\sX)$ is $I_G$-complete.
\end{proof}

By Corollary \ref{cor:Kan extension is I_G-complete} the target of the map $E(\sX)\ra R^{\mrm{dAff}}E(\sX)$ is $I_G$-complete and thus it factors through the $I_G$-completion:
\begin{equation}\label{eq:asmap}
E(\sX)^\wedge_{I_G} \ra R^{\mrm{dAff}}E(\sX).
\end{equation}
\begin{defn}[Atiyah-Segal completion] Let $E$ be a $k$-linear localization invariant, $\sX$ is a derived prestack over $k$ and $\sX \rightarrow BG$ be a morphism. We say that the \textbf{AS completion theorem holds} for $E, \sX, G$ if the map~\eqref{eq:asmap} is an equivalence.
\end{defn}

The rest of the paper will be dedicated in proving that, under certain conditions on $E$, $\sX$ and $G$, the AS completion theorem holds. We will also offer negative results in Section~\ref{sec:counterexample} which involves $E = K$ on certain non-reduced stack.

\subsection{AS ideal in $K_0(BG)_{\mbb Q}$}

We finish off this section with a discussion of the AS ideal in rationalized $K$-theory; this will be relevant to extend AS completion results to singular stacks. For this section, we \emph{assume that $k$ is a field of characteristic zero}. Let $G$ denote a reductive group scheme over $k$. We have a short exact sequence 
$$
1\ra G^0 \ra G\ra \pi_0(G)\ra 1
$$
where $G_0$ is the connected component of $e\in G(k)$ and  $\pi_0(G)$ is the ``group of connected components"; since we are in characteristic zero $\pi_0(G)$ is \'etale. The fiber square 
$$
\begin{tikzcd}
	BG^0 \arrow[r]\arrow[d]&BG \arrow[d]\\
	\mrm{pt} \arrow[r]& B\pi_0(G)
\end{tikzcd}
$$
induces a map
$$
r\colon \sR(G)\otimes_{\sR(\pi_0(G))} \mbb Z \ra \sR(G^0)
$$
(where the tensor product is classical).

\begin{lem}
	The induced map
	$$
	r_{\mbb Q}\colon \sR(G)\otimes_{\sR(\pi_0(G))} \mbb Q \ra \sR(G^0)_{\mbb Q}
	$$
	admits a natural retraction. Moreover,  the ideal generated by $r_{\mbb Q}(I_G\otimes 1)$ has the same radical as $I_{G^0}$.
	If, moreover, $\pi_0(G)$ is a constant group scheme over $k$ then the left hand side naturally identifies with the (classical) invariants 
	$$
	\sR(G^0)_{\mbb Q}^{\pi_0(G)} \subset \sR(G^0)_{\mbb Q}.
	$$
\end{lem}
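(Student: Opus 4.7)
The plan is to construct the retraction explicitly from induction along $G^0\hookrightarrow G$ and then to deduce the identification with invariants and the radical identity by combining this with the integrality of $\sR(G^0)_{\mbb Q}$ over its $\pi_0(G)$-invariants.

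For the retraction I would define $s\colon \sR(G^0)_{\mbb Q}\to \sR(G)\otimes_{\sR(\pi_0(G))}\mbb Q$ by
$s([V])=\tfrac{1}{|\pi_0(G)|}[\mrm{Ind}_{G^0}^G V]\otimes 1$,
where $|\pi_0(G)|$ is the rank of the \'etale $k$-group scheme $\pi_0(G)$. The projection formula for the covering $\pi\colon BG^0\to BG$ gives $\mrm{Ind}_{G^0}^G\mrm{Res}_{G^0}^G W\simeq W\otimes_k \mc O(\pi_0(G))$, where $\mc O(\pi_0(G))$ is pulled back from $B\pi_0(G)$. Since $[\mc O(\pi_0(G))]\in \sR(\pi_0(G))$ has augmentation $|\pi_0(G)|$, moving it across the tensor product yields $[W\otimes \mc O(\pi_0(G))]\otimes 1=[W]\otimes|\pi_0(G)|$, whence $s\circ r_{\mbb Q}([W]\otimes 1)=[W]\otimes 1$. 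This proves $s$ is a retraction.

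For the identification with invariants when $\pi_0(G)$ is constant, the image of $r_{\mbb Q}$ is contained in $\sR(G^0)_{\mbb Q}^{\pi_0(G)}$ because restriction of a $G$-representation is canonically conjugation-invariant, while $\mrm{Ind}_{G^0}^G V\simeq \mrm{Ind}_{G^0}^G V^g$ for each $g\in \pi_0(G)$ (a standard property of induction from a normal subgroup) forces $s$ to factor through the averaging projection $p=\tfrac{1}{|\pi_0(G)|}\sum_g g$. Writing $s=s_0\circ p$ and $r_{\mbb Q}=\iota\circ r_{\mbb Q}^0$, the Mackey decomposition $\mrm{Res}_{G^0}^G\mrm{Ind}_{G^0}^G V\simeq \bigoplus_{g\in \pi_0(G)} V^g$ gives $r_{\mbb Q}\circ s=p$, so $r_{\mbb Q}^0\circ s_0=\mrm{id}$; together with the retraction identity this makes $r_{\mbb Q}^0$ an isomorphism onto $\sR(G^0)_{\mbb Q}^{\pi_0(G)}$.

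For the radical identity I would first reduce to the case $\pi_0(G)$ constant by extending scalars to $\bar k$: radicals in $\sR(G^0)_{\mbb Q}$ can be checked after the faithfully flat extension, using Galois descent of the rationalized representation rings. In the constant case the previous step identifies $r_{\mbb Q}(I_G\otimes 1)$ with $I_{G^0}\cap \sR(G^0)_{\mbb Q}^{\pi_0(G)}$. For $x\in I_{G^0}$, consider $P(t)=\prod_{g\in \pi_0(G)}(t-g\cdot x)\in \sR(G^0)_{\mbb Q}^{\pi_0(G)}[t]$; since $\pi_0(G)$ fixes the identity of $G^0$, each $g\cdot x$ again lies in $I_{G^0}$, so all non-leading coefficients are in $I_{G^0}\cap \sR(G^0)_{\mbb Q}^{\pi_0(G)}$. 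The relation $P(x)=0$ then gives $x^{|\pi_0(G)|}\in (I_{G^0}\cap \sR(G^0)_{\mbb Q}^{\pi_0(G)})\cdot \sR(G^0)_{\mbb Q}$, yielding the desired equality of radicals. The main obstacle is making the base change step precise, since one must show that equality of radicals in $\sR(G^0_{\bar k})_{\mbb Q}$ descends to $\sR(G^0)_{\mbb Q}$, which relies on the standard but nontrivial identification of the latter with Galois invariants in the former.
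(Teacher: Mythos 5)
The paper actually states this lemma \emph{without proof}, so the fairest comparison is against the tools it has on the table and the very similar argument it does write out in the proof of Lemma~\ref{lem:G-reps vs G-invariant functions}(3).

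Your retraction $s([V])=\tfrac{1}{|\pi_0(G)|}[\mrm{Ind}_{G^0}^G V]\otimes 1$ via the projection formula, and the derivation of the invariants identification (for $\pi_0(G)$ constant) from the Mackey decomposition $\mrm{Res}_{G^0}^G\mrm{Ind}_{G^0}^G V\simeq\bigoplus_{g\in\pi_0(G)}V^g$ together with the retraction identity, are both correct, and coincide in substance with the argument the paper carries out for $K_0(-)_k$ and $HH$ in Lemma~\ref{lem:G-reps vs G-invariant functions}(3), where the same splitting $\sR(G)_k\otimes_{\sR(\pi_0(G))}k\cong\sR(G^\circ)_k^{\pi_0(G)}$ is obtained from $f_*f^*$ for the torsor $f\colon BG^\circ\to BG$.

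For the radical identity there are two remarks. First, a small imprecision: the map $\sR_k(G^0)_{\mbb Q}\hookrightarrow\sR_{\bar k}(G^0_{\bar k})_{\mbb Q}$ is a finite integral extension (the inclusion of a ring of invariants of a finite group) and is \emph{not} faithfully flat in general. What you actually need is that $\Spec$ of the extension is surjective (lying-over), which is enough to detect equality of radicals of extended ideals, but the claim should not be phrased in terms of flatness. Second, the base-change detour can be avoided entirely: the ideal generated by $r_{\mbb Q}(I_G\otimes 1)$ is precisely $I_G\cdot\sR(G^0)_{\mbb Q}$ (the ideal generated by the image of $I_G$ under restriction), and $G^0\hookrightarrow G$ is an embedding of reductive $k$-groups, so the change-of-groups Lemma~\ref{lem:comparing Atiyah-Segal completions} (which the paper already has, via Edidin--Graham) gives $I_{G^0}^n\subset I_G\cdot\sR(G^0)\subset I_{G^0}$ for $n\gg 0$; tensoring with $\mbb Q$, a localization, preserves this and yields the radical identity at once with no appeal to $\bar k$. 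Your Noether-style integrality trick is essentially an independent reproof of that special case of Edidin--Graham, which is fine, but it is doing more work and introducing a more delicate descent step than is needed given what the paper has already established.
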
	

The following Galois descent statement for $\mrm{D}({\mbb Q})$-valued localizing invariants will be useful.

\begin{lem}
	Let $E\colon  \Cat^{\perf}_{k} \rightarrow \mrm{D}({\mbb Q})$ be a localizing invariant. Let $\ell/k$ be a Galois extension with the Galois group $\Gamma_{\ell/k}$. Then  
	$$
	\pi_i(E(\mc C))\simeq \pi_i(E(\sC\otimes_{\Perf({k})} \Perf({\ell})))^{\Gamma_{\ell/k}}.
	$$
\end{lem}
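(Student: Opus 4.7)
My approach is to first reduce to the case where $\ell/k$ is a finite Galois extension, and then, in the finite case, combine an Amitsur-type descent spectral sequence with a transfer argument. The final identification of homotopy fixed points with honest $\Gamma$-invariants on $\pi_i$ will then come for free from the rationality hypothesis.

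\textbf{Reduction to the finite case.} Writing $\ell$ as a filtered colimit of its finite Galois sub-extensions $\ell'/k$, I would invoke finitariness of $E$ (this holds for every localizing invariant of interest in the paper, in particular for $K$, $KH$, $HH$, $HC^-$, $HP$, $HC$) together with the compatibility of $\otimes_{\Perf(k)}$ with filtered colimits to conclude
\[
E(\mc C \otimes_{\Perf(k)} \Perf(\ell)) \simeq \operatorname*{colim}_{\ell'} E(\mc C \otimes_{\Perf(k)} \Perf(\ell')).
\]
Since $\Gamma_{\ell/k} = \lim_{\ell'} \Gamma_{\ell'/k}$ as a profinite group and continuous $\Gamma_{\ell/k}$-invariants of a discrete abelian group are the colimit of $\Gamma_{\ell'/k}$-invariants on each finite stage, the infinite case follows from the finite case.

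\textbf{Finite Galois case.} Set $\Gamma = \Gamma_{\ell/k}$ and $n = [\ell:k]$. The Amitsur isomorphism $\ell \otimes_k \ell \simeq \prod_{\gamma \in \Gamma} \ell$ yields, upon tensoring with $\mc C$, an equivalence $\mc C \otimes_k \ell^{\otimes (m+1)} \simeq \bigoplus_{\Gamma^m} (\mc C \otimes_k \ell)$ in $\Cat^{\perf}_k$. Applying $E$, which converts finite direct sums of categories into direct sums of spectra, the \v{C}ech cosimplicial object associated to $\mc C \to \mc C \otimes_k \ell$ becomes the cobar resolution computing $E(\mc C \otimes_k \ell)^{h\Gamma}$. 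This furnishes a canonical augmentation
\[
\epsilon \colon E(\mc C) \longrightarrow E(\mc C \otimes_{\Perf(k)} \Perf(\ell))^{h\Gamma}.
\]

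\textbf{Transfer and the main obstacle.} To show that $\epsilon$ is an equivalence, I would exploit the fact that $\Perf(\ell)$ is smooth and proper over $\Perf(k)$, so the extension-of-scalars $f^*\colon \mc C \to \mc C \otimes_k \ell$ admits a $k$-linear left adjoint $f_*$ given by restriction of scalars. The composite $f_* f^*$ is tensoring with the $k$-module $\ell$, which on $E$ induces multiplication by the class $[\ell] \in K_0(\Perf(k)) = \mbb Z$, that is, by $n$. Since $n$ is invertible in $\mbb Q$, this exhibits $\epsilon$ as a split map after rationalization; running the same argument level-wise on the cobar resolution promotes the splitting to full descendability, so $\epsilon$ is an equivalence. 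Finally, since $\Gamma$ is finite and $E$ takes values in $\mrm{D}(\mbb Q)$, the Tate construction $X^{t\Gamma}$ vanishes for $X = E(\mc C \otimes_k \ell)$, and hence $\pi_i(X^{h\Gamma}) = \pi_i(X)^{\Gamma}$, giving the desired identification. The main obstacle is the transfer step: one must carefully check that $f_*$ is a morphism in $\Cat^{\perf}_k$ rather than merely an auxiliary functor on module categories, verify that on $E$ the composition $f_*f^*$ really is multiplication by $n$, and then argue that the pointwise $\tfrac{1}{n}$-splitting propagates through the totalization to yield descendability rather than merely exhibiting $E(\mc C)_{\mbb Q}$ as a direct summand.
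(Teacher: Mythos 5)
Your proposal is correct and follows essentially the same approach as the paper's proof: construct the adjoint (restriction of scalars) $g_*$ to $g^*$, observe that $g_*g^*$ is tensoring with $\ell$ and hence multiplication by $[\ell:k]$ on $E$, deduce a splitting/extra degeneracy of the \v{C}ech cosimplicial diagram so that $E(\sC)$ computes the totalization, identify the latter with $E(\sC\otimes_{\Perf(k)}\Perf(\ell))^{h\Gamma}$ via the Amitsur isomorphism, and finally use rationality of $E$ (equivalently, vanishing of the Tate construction for the finite group $\Gamma$) to pass from homotopy fixed points to honest $\Gamma$-invariants of $\pi_i$. The only cosmetic difference is that you add an explicit reduction to the finite Galois case via finitariness; the paper's proof tacitly treats the finite case directly, and that is the only case used downstream, so your "main obstacle" (checking $g_*\in\Cat^{\perf}_k$ and promoting the $\tfrac{1}{n}$-splitting to descendability) is the same well-known step the paper also passes over without comment.
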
	
\begin{proof}
	The pull-back $f^*\colon \Perf(k)\ra \Perf(\ell)$ has a natural right adjoint functor $f_*\colon \Perf({\ell})\ra \Perf({k})$. They induce functors $g^*\colon \mc C \ra \sC\otimes_{\Perf({k})} \Perf({\ell})$ and $g_* \colon \sC\otimes_{\Perf({k})} \Perf({\ell}) \ra \mc C$. The composition $g_*g^*$ is given by tensoring $-\otimes_k \ell$ which induces multiplication by $\dim({\ell/k})$ on $E(\sC)$ which is an isomorphism. This splits $E(\sC)$ off the \v Cech nerve $E(\mc C\otimes_{\Perf({k})} \Perf({\ell})^{\otimes \bullet})$ which evaluates the $\Gamma_{\ell/k}$-fixed points of $E(\sC\otimes_{\Perf({k})} \Perf({\ell}))$ considered as a spectrum with $\Gamma_{\ell/k}$-action. The result for homotopy groups then follows since rationally taking $\Gamma$-invariants is $t$-exact.
\end{proof}	

\begin{exam}
	Let $\sX$ be a perfect stack over $k$. Then (e.g. by \cite[Theorem 1.2(1)]{BFN}) $\Perf(\sX)\otimes_{\Perf(k)} \Perf(\ell)\simeq \Perf(\sX_{\ell})$ with $\sX_{\ell}:=\sX\times_k \ell$. Thus we get that for any $\mrm{D}({\mbb Q})$-valued localizing invariant $E\colon  \Cat^{\perf}_{k} \rightarrow \mrm{D}({\mbb Q})$ we have 
	$$
	\pi_i(E(\sX))\simeq \pi_i(E(\sX_{\ell}))^{\Gamma_{\ell/k}}.
	$$
	In particular, taking $E=K_{\mbb Q}$ and $\sX=BG$ for $G$ reductive we get an isomorphism of rings
	$$
	K_0(BG)_{\mbb Q} \simeq K_0(BG_{\ell})_{\mbb Q}^{\Gamma_{\ell/k}}. 
	$$
One also gets that $I_{G,\mbb Q}:=I_G\otimes_{\mbb Z} \mbb Q\simeq I_{G_{\ell},\mbb Q}^{\Gamma_{\ell/k}}$.
\end{exam}	

Let $I_{G,\mbb Q}:=I_G\otimes_{\mbb Z} \mbb Q \subset K_0(BG)_{\mbb Q}$; it is clear that $I_{G,\mbb Q}$ maps inside $I_{G_{\ell},\mbb Q}$ under the map $K_0(BG)_{\mbb Q} \ra K_0(BG_{\ell})_{\mbb Q}$. Even though it doesn't necessarily generate it, it is true that their radicals coincide:
\begin{cor}\label{cor:AS-ideal for base change}
	For any reductive $k$-group $G$ and $n\gg 0$ we have 
	$$
	{(I_{G_{\ell},\mbb Q})}^n \subset I_G\cdot K_0(BG_{\ell})_{\mbb Q}
	$$
\end{cor}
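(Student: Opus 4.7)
The plan is to exhibit $K_0(BG_\ell)_{\mbb Q}$ as a finite (hence integral) extension of $K_0(BG)_{\mbb Q}$, so that the quotient $A := K_0(BG_\ell)_{\mbb Q}/(I_{G,\mbb Q}\cdot K_0(BG_\ell)_{\mbb Q})$ is a finite-dimensional $\mbb Q$-algebra, hence Artinian. The desired inclusion then follows once I show that the image of $I_{G_\ell,\mbb Q}$ in $A$ is the unique maximal ideal, since nilpotence of the maximal ideal of an Artinian local ring is automatic.

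First I would verify the finiteness claim. Using the identification $K_0(BG)_{\mbb Q}\simeq K_0(BG_\ell)_{\mbb Q}^{\Gamma_{\ell/k}}$ from the preceding example, every element $s\in K_0(BG_\ell)_{\mbb Q}$ satisfies the monic equation $\prod_{\gamma\in \Gamma_{\ell/k}}(X-\gamma(s))=0$ with coefficients in the invariant ring, so the extension is integral. Since $K_0(BG_\ell)_{\mbb Q}\simeq \sR(G_\ell)_{\mbb Q}$ is a finitely generated $\mbb Q$-algebra (a fact recalled in the discussion preceding Example \ref{exam:as-ideal}), integrality upgrades to module-finiteness over $K_0(BG)_{\mbb Q}$, and therefore $A$ is Artinian.

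Next I identify the primes of $A$ with the primes of $K_0(BG_\ell)_{\mbb Q}$ lying over the maximal ideal $I_{G,\mbb Q}$. For an integral extension of the form $R=S^\Gamma\subset S$ with $\Gamma$ finite, a classical Galois-theoretic fact asserts that $\Gamma_{\ell/k}$ acts transitively on this set of primes. On the other hand, $I_{G_\ell,\mbb Q}$ is itself $\Gamma_{\ell/k}$-stable, since the dimension map $K_0(BG_\ell)_{\mbb Q}\to \mbb Q$ is Galois-equivariant, and it lies over $I_{G,\mbb Q}$ because the dimension on $\sR(G)$ factors through $\sR(G_\ell)$. Its $\Gamma_{\ell/k}$-orbit is thus a singleton, and by transitivity it is the only such prime.

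Consequently $A$ is a local Artinian $\mbb Q$-algebra whose maximal ideal is the image of $I_{G_\ell,\mbb Q}$. Nilpotence of this maximal ideal in $A$ gives the desired $(I_{G_\ell,\mbb Q})^n\subset I_{G,\mbb Q}\cdot K_0(BG_\ell)_{\mbb Q}$ for $n\gg 0$. The only potentially delicate ingredient is the Galois-transitivity on primes lying above a given prime, but this is a standard statement for integral extensions of invariant-ring form; the remaining steps are routine commutative algebra.
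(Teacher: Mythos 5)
Your proof is correct and follows essentially the same route as the paper, which compresses the argument into a one-line appeal to ``Hilbert's theorem in invariant theory''; you have simply unpacked the finite-group case of that theorem directly, via integrality of $K_0(BG)_{\mbb Q}\subset K_0(BG_\ell)_{\mbb Q}$ and the standard Galois transitivity on primes lying over a fixed prime of the invariant ring. Both arguments terminate in the same observation, namely that $I_{G,\mbb Q}\cdot K_0(BG_\ell)_{\mbb Q}$ is $I_{G_\ell,\mbb Q}$-primary, which in a Noetherian ring yields the power containment.
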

\begin{proof} 
	This follows from Hilbert's theorem in invariant theory. 
\end{proof}	

\section{Atiyah-Segal completion for $KH$ and other $\mathbb{A}^1$-invariant localizing invariants}\label{sec:askh}

In this section, we extend the results of \cite[Theorem 1.5]{tabuada2020motivic} to a larger class of stacks, at least when the base field is assumed to be characteristic zero; this restriction comes from our use of resolution of singularities. As we already mentioned in the introduction, Theorems~\ref{thm:main}~and~\ref{thm:main_2} have 
been proved for $X$ a smooth projective variety; more generally it is proved in \cite[Corollary 1.3]{tabuada2020motivic} for stacks of the form $[X/G]$ where $X$ is a smooth, separated $k$-scheme, $G$ a reductive group with a $k$-split maximal torus $T$ such that the action of $T$ on $X$ is filtrable. 


%

\subsection{Reminders on equivariant resolution of singularities} To proceed, we recall some results on functorial, equivariant resolution of singularities, mostly following \cite{kollar-resolution}.  

\begin{defn}\cite[Section 3.1]{kollar-resolution} \label{def:resolution}
Let $Y$ be a reduced, separated finite type $k$-scheme. Then a morphism of $k$-varieties
\[
X \stackrel{\pi}\to Y
\]
is said to be a {\bf resolution of singularities} if 
\begin{enumerate}
\item
$\pi$ is projective birational morphism,
\item 
$\pi$ is an isomorphism over 
the smooth locus of $Y$;
\item 
$X$ is smooth over $k$.
\end{enumerate}
We say that $\pi$ is a \textbf{strong resolution of singularities} if moreover the preimage of the singular locus 
$\pi^{-1}(\mrm{Sing} Y)$ is a simple normal crossings divisor.
\end{defn}

The existence of a strong resolution  of singularities which is ``functorial enough'' to be accommodate equivariance is recorded below. 

\begin{thm}\label{thm:resolution}
Let $k$ be a field of characteristic zero and let $X$ be a reduced separated scheme of finite type over $k$, endowed with the action of an algebraic group $G$. 
Then there exists a $G$-equivariant map $\tilde{X} \to X$ which is a strong resolution of 
singularities. 
\end{thm}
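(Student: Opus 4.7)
The plan is to invoke a functorial version of Hironaka's resolution of singularities in characteristic zero (as developed by Bierstone--Milman, Villamayor, W{\l}odarczyk, and Kollár), and then transport the functoriality into $G$-equivariance via the action diagram. Since $\chr(k)=0$, the algebraic group $G$ is automatically smooth, so all structure maps of the action groupoid will be smooth, which is what allows functoriality to bite.

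First I would fix, for every reduced separated finite-type $k$-scheme $Y$, a strong resolution of singularities $\pi_Y\colon \widetilde Y \to Y$ in the sense of Definition~\ref{def:resolution}, produced by a functorial algorithm. The relevant functoriality property is that for every smooth morphism $f\colon Y' \to Y$ of such schemes, there is a canonical isomorphism
\[
\widetilde{Y'} \;\cong\; Y'\times_Y \widetilde Y
\]
compatible with composition of smooth morphisms; this is exactly \cite[Theorems 3.35 and 3.36]{kollar-resolution}. Moreover, by construction, $\pi_Y$ is a finite composition of blow-ups along smooth centers, hence projective.

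Next, applying functoriality to the two smooth morphisms $p_2\colon G\times X \to X$ (projection) and $\sigma\colon G\times X\to X$ (action), I obtain canonical identifications
\[
G\times \widetilde X \;\cong\; (G\times X)\times_{p_2, X}\widetilde X \;\cong\; \widetilde{G\times X} \;\cong\; (G\times X)\times_{\sigma, X}\widetilde X.
\]
Composing this chain with the projection $(G\times X)\times_{\sigma, X}\widetilde X \to \widetilde X$ produces a lift $\widetilde\sigma\colon G\times \widetilde X\to \widetilde X$ of $\sigma$. The action axioms (associativity and unitality) then reduce to uniqueness of the functorial resolution: both pentagons/triangles on $G\times G\times \widetilde X$ and on $\widetilde X$ provide two canonical lifts of the same smooth-pullback diagram on $G\times G\times X$ and on $X$, which must therefore agree. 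Equivariance of $\pi$ is built into the construction, and the remaining properties (projectivity, birationality, being an isomorphism over the smooth locus, and the SNC condition on the preimage of $\mrm{Sing}(X)$) are inherited directly from the non-equivariant statement.

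The main subtlety here is not the passage to equivariance, which is completely formal once the right input is available, but the choice of resolution algorithm: naive Hironaka is only functorial for isomorphisms, whereas one genuinely needs functoriality along \emph{all} smooth morphisms to make the diagrammatic argument above work. Once this stronger functoriality is granted from \cite{kollar-resolution}, the theorem is immediate; accordingly I would expect the actual proof in the paper to be little more than a pointer to Kollár.
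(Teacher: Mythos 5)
Your proposal is correct and follows essentially the same route as the paper: both appeal to Kollár's functorial resolution along smooth morphisms. The only difference is that the paper directly cites \cite[Proposition~3.9.1]{kollar-resolution} for the statement that a group action lifts to the functorial resolution, whereas you rederive that proposition by hand from smooth functoriality applied to the action and projection maps $G\times X\rightrightarrows X$ — a sound but slightly longer path to the same endpoint.
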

\begin{proof}
The existence of $\tilde{X}$ follows from \cite[Theorem~3.36]{kollar-resolution} (see also \cite[Corollary~3.43]{kollar-resolution}). Now \cite[Proposition~3.9.1]{kollar-resolution} yields that the action of $G$ 
on $X$ lifts to the action on its functorial resolution $\tilde{X}$. 
\end{proof}

We can interpret Theorem~\ref{thm:resolution} in terms of abstract blow-up squares of stacks as in Definition~\ref{defn:cdh} as follows. If $X$ is a reduced, separated scheme of finite type over $k$, equipped with a $G$-action and $Z \hookrightarrow X$ is a $G$-invariant subscheme of $X$ containing the singular locus of $\tilde{X}$, then there exists an abstract blow-up square

\begin{equation}\label{eq:resolv-sq}
	\begin{tikzcd}
		\left[ W/G \right] \arrow[d,"g"] \arrow[r,"i'"]& \left[ \tilde{X}/G \right]\arrow[d,"f"] \\
		\left[ Z/G\right] \arrow[r,"i"]&\left[ X/G \right]
	\end{tikzcd}	
\end{equation}

The following result of the the first and third authors \cite[Corollary 5.2.6]{dgm-elden-vova} will play a key role for us:

\begin{prop}\label{prop:cdh} Let $E$ be a truncating invariant and consider the square as in~\eqref{eq:resolv-sq}. Further assume that $G$ is reductive. Then the square
\begin{equation}
\begin{tikzcd}
E\left( \left[ X/G \right]\right) \arrow[d,"g"] \arrow[r,"i'"]& E\left(\left[ \tilde{X}/G \right] \right)\arrow[d,"f"] \\
E\left( \left[ Z/G\right]\right) \arrow[r,"i"]& E\left( \left[ W/G \right]\right),
\end{tikzcd}
\end{equation}
is cartesian. 
\end{prop}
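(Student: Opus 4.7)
My plan is to reduce to Land--Tamme's cdh descent theorem for truncating invariants on schemes via smooth descent along the $G$-torsor $\pi\colon X \to [X/G]$. The first step exploits the truncating hypothesis to replace each stack by its classical truncation; although everything is already classical in the setup of the proposition so this reduction is vacuous, the resulting nilinvariance of $E$ becomes essential when controlling scheme-theoretic thickenings in the iterated fiber products appearing below.

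Next, I push the abstract blow-up square of stacks down to schemes via the smooth affine cover $\pi$. Since $G$ is affine and $\pi$ is a $G$-torsor, the iterated self-fiber products satisfy $X \times_{[X/G]} \cdots \times_{[X/G]} X \simeq G^{\times n} \times X$ and are schemes. Pulling back the abstract blow-up square along each term of the Cech nerve yields $G^{\times n}$ times the original scheme-level abstract blow-up square
\[
\begin{tikzcd}
W \arrow[d] \arrow[r] & \tilde{X} \arrow[d] \\
Z \arrow[r] & X,
\end{tikzcd}
\]
which is again an abstract blow-up square of schemes. Applying Land--Tamme's theorem levelwise, $E$ converts each into a Cartesian square, and since totalizations over $\Delta$ preserve Cartesianness, the resulting limit is Cartesian.

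The last step is to identify this totalization with $E$ applied to the original stacky square, i.e.\ to establish $E$-descent along the $G$-torsor $\pi$. Since $G$ is reductive in characteristic zero, $[X/G]$ is a perfect stack (via \cite[Proposition 3.22]{BFN}), and one may attempt to reduce smooth descent along $\pi$ to Nisnevich descent on perfect stacks (Corollary \ref{cor:Nisnevich descent for perfect stacks}) by the standard device of embedding $G \hookrightarrow GL_n$, where torsors trivialize Nisnevich-locally. The main obstacle is precisely this smooth descent step: smooth descent is not formally automatic for general localizing invariants, and the argument here crucially combines nilinvariance, cdh descent on schemes (both consequences of truncating), and the equivariant geometry of reductive group actions. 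This is exactly the content of \cite[Corollary 5.2.6]{dgm-elden-vova}, whose proof carries out the reduction in the appropriate generality.
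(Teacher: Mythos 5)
The paper does not give a proof of this proposition at all; it is stated and immediately attributed to \cite[Corollary 5.2.6]{dgm-elden-vova}, so strictly speaking there is nothing to reproduce here beyond the citation.

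Your proposed argument, however, contains a genuine gap in the middle step, and I want to flag it because it is not merely a technicality. You propose to identify $E([X/G])$ with the totalization $\lim_{\Delta} E(G^{\times \bullet}\times X)$ by descending along the $G$-torsor $\pi\colon X \to [X/G]$. But $E(\Perf(-))$ does \emph{not} satisfy descent along maps of this type. The present paper goes out of its way to illustrate exactly this failure: Examples~\ref{exam:mun} and~\ref{exam:bgm} show that for $E = HH(-/k)$ (which is a truncating localizing invariant after base change), descent fails already for $\Spec k \to B\mu_n$ and $\Spec k \to B\bbG_m$, both of which are instances of $X \to [X/G]$ with $G$ reductive. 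This is not a peripheral obstacle that the cited result quietly handles --- it is precisely the phenomenon the Atiyah--Segal completion theorem is designed to quantify. If smooth descent held, $E(\Perf(-))$ would coincide with $R^{\mrm{dAff}}E$ on quotient stacks and the completion theorem would be vacuous.

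The fallback you suggest --- embed $G\hookrightarrow GL_n$, so that $X\times_G GL_n \to [X/GL_n]$ becomes a Nisnevich surjection, then invoke Corollary~\ref{cor:Nisnevich descent for perfect stacks} --- also does not close the gap. That corollary gives descent only for \emph{Nisnevich covers} (Definition~\ref{def:nis}), which are representable \'etale maps that are completely decomposed. A Nisnevich surjection need not be a Nisnevich cover, and Remark~\ref{rem:nis-covers} explicitly points out that $\Spec k \to B\bbG_m$ is a Nisnevich surjection but \emph{not} a Nisnevich cover, since the induced map on residual gerbes is not an isomorphism. So there is no way to leverage the $\tau$-descent machinery of Section~\ref{sec:derived} for $E(\Perf(-))$ here; that machinery applies to $R^{\mrm{dAff}}E$ (via Proposition~\ref{prop:spc}), which is a different functor. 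The actual proof of cdh excision for $E(\Perf([X/G]))$ with $G$ reductive in \cite[Corollary 5.2.6]{dgm-elden-vova} necessarily proceeds by a more categorical route --- working with the structure of $\Perf([X/G])$ as a module over $\Perf(BG)$ and exploiting linear reductivity to decompose the problem --- rather than by a \v{C}ech descent argument.
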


\begin{lem}\label{lem:localization}
Let $E$ be an $\mathbb{A}^1$-invariant and truncating\footnote{Although many homotopy invariant theories 
are in practice nilinvariant and truncating, it is not expected to be true in general.} 
localizing invariant, let $f:U \to X$ be a $T$-equivariant open immersion of smooth algebraic spaces such that 
the complement $Z = X - U$ is a normal crossings divisor, where $T$ is a split torus. 
Then there exists a fiber sequence 
\[
E([Z/T]) \to E([X/T]) \stackrel{f^*}\to E([U/T]) 
\]
\end{lem}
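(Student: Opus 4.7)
The plan is to induct on $n$, the number of smooth irreducible components of the normal crossings divisor $Z = Z_1 \cup \cdots \cup Z_n$. The base case $n = 1$ amounts to the localization sequence for a single smooth divisor in a smooth ambient space, and the inductive step combines cdh-excision for truncating invariants (cf.~\propref{prop:cdh}) with several applications of the induction hypothesis.

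For the base case, I would establish the fiber sequence $E([Z_1/T]) \to E([X/T]) \to E([(X\setminus Z_1)/T])$ in one of two ways. The first uses the Thomason--Trobaugh localization of perfect complexes with support, combined with a Koszul / devissage identification of $\Perf_{Z_1}([X/T])$ with $\Perf([Z_1/T])$ up to idempotent completion, using that $Z_1 \hookrightarrow X$ is regularly immersed of codimension one so that pushforward of perfect complexes is well-defined and generates $\Perf_{Z_1}([X/T])$ as a thick subcategory. The second, which makes crucial use of the $\bbA^1$-invariance hypothesis, is via motivic purity: deformation to the normal cone together with $\bbA^1$-invariance reduces the question to the model situation where $X$ is the total space of the normal line bundle $L$ on $Z_1$, and then compactifying to $\mbb P(L \oplus \sO)$ and applying the projective bundle / semi-orthogonal decomposition yields the desired fiber sequence.

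For the inductive step with $n \geq 2$, I would write $Z = Z_1 \cup Z'$ where $Z'$ has $n-1$ components, and observe that $Z' \setminus Z_1 \subset X \setminus Z_1$ is a NCD with at most $n-1$ components in the smooth scheme $X \setminus Z_1$, while $Z_1 \cap Z' \subset Z_1$ is a NCD with at most $n-1$ components in the smooth scheme $Z_1$. Applying the base case to $Z_1 \subset X$ gives a fiber sequence $E([Z_1/T]) \to E([X/T]) \to E([(X \setminus Z_1)/T])$, and applying the induction hypothesis to $Z' \setminus Z_1 \subset X \setminus Z_1$ gives $E([(Z' \setminus Z_1)/T]) \to E([(X \setminus Z_1)/T]) \to E([U/T])$; composing these yields a fiber sequence $E([Z_1/T]) \to \fib(E([X/T]) \to E([U/T])) \to E([(Z' \setminus Z_1)/T])$. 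On the other hand, cdh-excision for the abstract blow-up square with $Z' \hookrightarrow Z$ closed and $Z_1 \to Z$ the proper birational map that is an isomorphism outside $Z'$ gives $E([Z/T]) \simeq E([Z_1/T]) \times_{E([(Z_1 \cap Z')/T])} E([Z'/T])$, and the induction hypothesis applied to $Z_1 \cap Z' \subset Z_1$ provides the identification required to assemble these fiber sequences via the octahedral axiom into an equivalence $\fib(E([X/T]) \to E([U/T])) \simeq E([Z/T])$.

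The main obstacle will be the base case: isolating the smooth divisor localization sequence for a general truncating $\bbA^1$-invariant localizing invariant, as opposed to $K$-theory where it follows from classical Quillen devissage. The motivic-purity route uses both hypotheses on $E$ most transparently, whereas the devissage approach requires a delicate identification at the level of idempotent-completed stable $\infty$-categories that is not automatic for a general localizing invariant.
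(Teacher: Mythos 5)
Your proposal has the same backbone as the paper's proof: induct on the number $n$ of smooth irreducible components of $Z$, and in the inductive step combine cdh-excision for an abstract blow-up square (in the paper, the square $[Z_3/T]\to[Z_1/T]$, $[Z_2/T]\to[Z/T]$ where $Z_1=D_1$, $Z_2$ is the union of the remaining components, $Z_3=Z_1\cap Z_2$) with two applications of the induction hypothesis. The paper organizes the bookkeeping via a $3\times3$ diagram rather than the octahedral axiom, but this is cosmetic; your decomposition and your use of \propref{prop:cdh} match theirs.

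The real difference, and the one place where your sketch has a gap, is the base case $n=1$. The paper simply cites \cite[Theorem~6.1]{tabuada2020motivic} for the single-smooth-divisor localization sequence. Your first proposed route (Thomason--Trobaugh localization with supports plus a Koszul/devissage identification $\Perf_{Z_1}([X/T])\simeq\Perf([Z_1/T])$ up to idempotent completion) does not work for a general localizing invariant: the pushforward $i_*\colon\Perf([Z_1/T])\to\Perf_{Z_1}([X/T])$ generates the target as a thick subcategory, but it is not essentially surjective even after idempotent completion --- for instance, in the model case $X=\Spec k[t]$, $Z_1=\{0\}$, the object $k[t]/t^2$ is not in the essential image of $i_*$. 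Devissage is a $K$-theoretic (and $G$-theoretic) phenomenon, not a statement about Karoubi equivalences of stable $\infty$-categories, so one cannot conclude $E(i_*)$ is an equivalence for arbitrary $E$. You flag this yourself at the end, which is good. Your second route, via deformation to the normal cone, $\bbA^1$-invariance, and the projective bundle formula on $\mbb P(L\oplus\sO)$, is the one that actually works and is in the spirit of the Tabuada--van den Bergh argument; it uses both the $\bbA^1$-invariance and the truncating (hence nilinvariance) hypothesis on $E$ in an essential way. If you wanted a self-contained proof, this is the route to pursue; otherwise, simply citing the existing result, as the paper does, is the cleanest option.

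One small point worth flagging: since $T$ is a split torus and hence connected, each $D_i$ is automatically $T$-invariant ($T\cdot D_i=D_i$); the paper records this explicitly so that all the intermediate quotient stacks $[Z_i/T]$, $[Z_i\cap Z_j/T]$, etc.\ make sense, and you would need the same observation.
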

\begin{proof}
First of all, observe that since $Z$ is a divisor in $X$, the pushforward along the inclusion 
defines a map $\Perf_{[Z/T]} \to \Perf_{[X/T]}$ and induces the first map in the fiber sequence. 

By definition, $Z$ is a union of $n$ smooth divisors $\bigcup_{i=1}^n D_i$ 
that have smooth scheme-theoretic intersections of expected codimensions. 
Note that since $T$ is connected, $T\cdot D_i = D_i$ (it contains $D_i$ and has as many irreducible components $D_i$). 
%
We prove the desired claim using induction on $n$. 
If $n=1$, then the result is the usual localization sequence \cite[Theorem~6.1]{tabuada2020motivic}. 
Now set $Z_1 = D_1$ and $Z_2 = \bigcup_{i=1}^{n-1} D_i$. Denote by $Z_3$ 
the intersection $Z_1 \cap Z_2$. 

From base change formula for quasi-coherent sheaves (and transversality of intersections) we have a commutative diagram
\[
\begin{tikzcd}
E([Z_1/T]) \arrow[r, "="]\arrow[d] & E([Z_1/T]) \arrow[r]\arrow[d] & 0 \arrow[d]\\
E([Z/T]) \arrow[r]\arrow[d] & E([X/T]) \arrow[r]\arrow[d] & E([X\setminus Z/T]) \arrow[d, "="]\\
E([Z \setminus Z_1/T]) \arrow[r] & E([X\setminus Z_1/T]) \arrow[r] & E([X\setminus Z/T]).
\end{tikzcd}
\]
By induction, we know that the bottom row is a fiber sequence, as well as the middle column. To 
show that the middle row is a fiber sequence, it suffices to show that the left column is a fiber sequence. 
As above, we will use induction on the amount $n$ of smooth components of $Z$. If $n=1$ then 
$Z=Z_1$ and the claim is clear. In general, using 
blow-up excision \ref{prop:cdh} for the square 
\[
\begin{tikzcd}
{[Z_3/T]} \arrow[r]\arrow[d] & {[Z_1/T]}\arrow[d]\\
{[Z_2/T]} \arrow[r] & {[Z/T]}
\end{tikzcd}
\]
we may rewrite the sequence as a pullback of sequences:
\[
\begin{tikzcd}
E([Z_1/T]) \arrow[r,"="]\arrow[d] &E([Z_1/T]) \arrow[r]\arrow[d] &E(\emptyset)\arrow[d]\\
E([Z_3/T]) \arrow[r,"="] &E([Z_3/T]) \arrow[r] &E(\emptyset)\\
E([Z_3/T]) \arrow[r]\arrow[u] &E([Z_2/T]) \arrow[r]\arrow[u] &E([Z_2-Z_1/T])\arrow[u].
\end{tikzcd}
\]
The top and the middle sequences are fiber sequences since $E(\emptyset) = 0$. The bottom sequence is a fiber sequence 
by induction.
\end{proof}

In the case of schemes, any Zariski sheaf which is $\mathbb{A}^1$-invariant is automatically invariant for torsors under a vector bundle. This is a bit more subtle in case of stacks. 

\begin{lem}\label{lem:affine_bundle_invariance}
Let $E$ be an $\mathbb{A}^1$-invariant and truncating localizing invariant. 
Let $\sX$ be an ANS derived algebraic stack of finite type and let $p:\sV \to \sX$ be an affine bundle (i.e. a torsor over a 
vector bundle). 
Then the pullback map 
\[
E(\sX) \to E(\sV)
\]
is an equivalence.
\end{lem}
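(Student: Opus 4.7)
The strategy is to reduce, via Nisnevich and cdh descent together with equivariant resolution of singularities, to the case of a vector bundle over a smooth quotient stack, where a standard projective bundle and localization argument applies.

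First, since $p$ is representable and $\sX$ is ANS, so is $\sV$ by \remref{rem:ANS passes through representable maps}; hence both are perfect (\examref{ex:perfect stacks}), and $E$ satisfies Nisnevich descent on them by \corref{cor:Nisnevich descent for perfect stacks}. \thmref{thm:niscover} provides a Nisnevich cover of $\sX$ by quotient stacks $[\Spec A_i/G_i]$ with each $G_i$ nice, and the pullback of $\sV$ retains its affine-bundle structure. Descent then reduces us to the case $\sX = [\Spec A/G]$ with $G$ nice. Since $G$ is linearly reductive, the global sections functor on $\sX$ is $t$-exact on quasi-coherent sheaves, so $H^1(\sX, V) = 0$ for the underlying vector bundle $V$ of the torsor, and the torsor $\sV \to \sX$ is trivial: $\sV$ is the total space of a $G$-equivariant vector bundle on $\Spec A$. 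Since $E$ is truncating, we may further assume that $A$ is discrete.

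Next, we remove the singularities of $\Spec A$. Applying equivariant resolution of singularities (\thmref{thm:resolution}) to the underlying reduced scheme of $\Spec A$ yields an abstract blow-up square \eqref{eq:resolv-sq} of $G$-quotient stacks with smooth total space; iterating \propref{prop:cdh} (cdh excision, which holds for truncating $E$) and inducting on the dimension of the singular locus, we reduce to the case $\sX = [U/G]$ with $U$ smooth. For such a smooth quotient stack and a $G$-equivariant vector bundle $V$ on $U$, one embeds $V$ as the open complement of $\mbb P(V) \hookrightarrow \mbb P(V \oplus \mc O)$. Combining the projective bundle formula (available for any localizing invariant from Beilinson's semi-orthogonal decomposition) with the localization sequence of \lemref{lem:localization} applied to the smooth Cartier divisor $\mbb P(V) \subset \mbb P(V \oplus \mc O)$, and collapsing the extra summand via $\mbb A^1$-invariance, yields the desired equivalence $E(\sX) \simeq E(V)$.

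\textbf{Main obstacle.} The localization sequence of \lemref{lem:localization} as stated is for split-torus actions; the main technical point is to deploy it in the generality of a nice group $G$. This should follow either by reducing to the maximal torus via a standard Weyl-invariants argument after base change to a splitting field, or by rerunning the proof of \lemref{lem:localization} directly in the nice-group setting, where its essential inputs---$\mbb A^1$-invariance of $E$ and the usual localization sequence for smooth divisors---remain available.
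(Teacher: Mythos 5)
Your opening two reductions (Nisnevich descent via Theorem~\ref{thm:niscover} and Corollary~\ref{cor:Nisnevich descent for perfect stacks} to reduce to $[\Spec A/G]$ with $G$ nice, then cdh excision via Proposition~\ref{prop:cdh} and equivariant resolution of singularities to reduce to the smooth case) are exactly the paper's. Where you diverge is in the endgame: the paper simply invokes Hoyois' observation from \cite{hoyois-sixops} that $\mathbb{A}^1$-invariant Nisnevich sheaves on $\Sm_{BG}$ are affine-bundle-invariant, whereas you insert an extra trivialization step — using linear reductivity of $G$, hence $t$-exactness of $\Gamma([\Spec A/G],-)$, hence vanishing of $H^1$ — to upgrade the torsor to an honest vector bundle, and then argue for that vector bundle via the projective bundle formula and a localization sequence. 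The trivialization step is a nice observation and you deploy it at the right moment (it must precede resolution of singularities, since $[\tilde X/G]$ is no longer quasi-affine, and you get this ordering right). What it buys you is that you only need \emph{vector}-bundle invariance in the smooth case rather than \emph{affine}-bundle invariance; what it costs is an explicit final argument in place of a citation.

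Your acknowledged gap is real, and your first proposed fix should be approached with caution. ``Reducing to the maximal torus via a Weyl-invariants argument'' is exactly what the paper does in Step 2 of the proof of Theorem~\ref{thm:main_2_text} — and that reduction, in passing from $B$ to $T$, explicitly invokes Lemma~\ref{lem:affine_bundle_invariance}, i.e.\ the very statement you are proving. You cannot reuse it here without circularity. A non-circular variant does exist: for a nice group $G$ sitting in $1 \to T \to G \to H \to 1$ with $H$ finite of invertible order, the covering $[X/T] \to [X/G]$ is finite \'etale of degree $|H|$, and the transfer map splits $E([X/G]) \to E([X/T])$ naturally in $X$, letting you descend the localization fiber sequence from the torus case (where Lemma~\ref{lem:localization} applies) to the nice-group case. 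Your second fix — rerunning the $n=1$ case of Lemma~\ref{lem:localization} directly for nice $G$ — is also viable, since the underlying input (the Karoubi sequence $\Perf_Z(\mc X) \to \Perf(\mc X) \to \Perf(\mc U)$ plus the identification $\Perf_Z(\mc X) \simeq \Perf(\mc Z)$ for a smooth closed immersion of smooth quotient stacks) is not specific to torus actions, but that identification does rest on a d\'evissage-type argument that needs to be carefully sourced rather than asserted.
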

\begin{proof}
By Theorem~\ref{thm:niscover} any $\sX$ admits an affine Nisnevich cover by stacks of the form $[\Spec R/G]$. So it suffices to prove the claim for quotient stacks $[X/G]$ where $X$ is 
separated of finite type and $G$ is nice. 
By Theorem~\ref{thm:resolution} any such quotient stack $[X/G]$ admits an abstract blow-up square 
\[
\begin{tikzcd}
	\left[ W/G \right] \arrow[d,"g"] \arrow[r,"i'"]& \left[ \tilde{X}/G \right]\arrow[d,"f"] \\
	\left[ Z/G\right] \arrow[r,"i"]&\left[ X/G \right]
\end{tikzcd}
\]
with $Z$ and $W$ of smaller dimension and $\tilde{X}$ smooth. By \cite[Remark 3.13, Lemma 2.17]{hoyois-sixops}, the result is true for smooth quotients; Hoyois' observation asserts that any $\mathbb{A}^1$-invariant Nisnevich sheaf on $\Sm_{BG}$ is invariant for affine bundles. So the general claim follows by induction on dimension of $X$ and Proposition~\ref{prop:cdh}. 
\end{proof}

\subsection{Proof of Theorem~\ref{thm:main_2}} We are now ready to prove Theorem~\ref{thm:main_2}. For the reader's convenience we recall how the theorem is stated:

\begin{thm}\label{thm:main_2_text}  Fix $G$ a reductive group over a characteristic zero field $k$ and $E$ a truncating localizing invariant which is also $\mathbb{A}^1$-invariant. Let $\sX \rightarrow BG$ be representable morphism where $\sX$.  Assume that $\sX$ is ANS and $\sX^{\mrm{cl}} \rightarrow BG$ is finite type. Then we have an equivalence:
\begin{equation}\label{eq:main}
E(\sX)^{\wedge}_{I_G} \xrightarrow{\simeq} R^{\mrm{dAff}}E(\sX).
\end{equation}
\end{thm}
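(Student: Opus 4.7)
The plan is to prove \eqref{eq:main} by three successive reductions: to the classical case (using that $E$ is truncating), to the smooth case (using equivariant resolution of singularities and cdh excision), and then to settings already handled by Tabuada--Van den Bergh \cite[Theorem~1.5]{tabuada2020motivic} via Nisnevich descent combined with the change-of-groups lemma \ref{lem:comparing Atiyah-Segal completions}.

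First I would reduce to $\sX$ classical and finite type over $k$. On the right hand side, Lemma \ref{lem:truncating} applied to the restriction of $E\circ \Perf$ to derived affine schemes (which is nilinvariant since $E$ is truncating) yields $R^{\mrm{dAff}}E(\sX) \simeq R^{\mrm{dAff}}E(\sX^{\mrm{cl}})$. For the left hand side, I would pick a Nisnevich cover $\{[\Spec A_i/H_i] \to \sX\}$ by quotients by nice groups (Theorem \ref{thm:niscover}, available since $\sX$ is ANS), invoke Nisnevich descent for $E\circ \Perf$ on perfect stacks (Corollary \ref{cor:Nisnevich descent for perfect stacks}; note $\sX$ is perfect by Example \ref{ex:perfect stacks}), and apply truncating termwise to the Čech totalization. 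The $I_G$-completion commutes with this totalization because each higher term of the Čech nerve is a derived algebraic space (since $\sX \to BG$, and in particular $[\Spec A_i/H_i] \to \sX$, is representable), so by Proposition \ref{prop:localizing-nis} its value under $E\circ \Perf$ agrees with $R^{\mrm{dAff}}E$ and is therefore already $I_G$-complete by Corollary \ref{cor:Kan extension is I_G-complete}.

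Next, I would induct on $\dim \sX^{\mrm{cl}}$ to reduce to the smooth case. The induction step uses equivariant strong resolution of singularities (Theorem \ref{thm:resolution}), applied to a Nisnevich atlas as above, to construct an abstract blow-up square of the form \eqref{eq:resolv-sq} with $\tilde X$ smooth and $Z, W$ of strictly smaller dimension. Both sides of \eqref{eq:main} send such a square to a cartesian square: the right hand side by Proposition \ref{lem:localizing-invt}, and the left hand side by Proposition \ref{prop:cdh} together with the fact that $I_G$-completion is a symmetric monoidal localization and hence preserves finite limits. Inducting on dimension thus reduces us to $\sX^{\mrm{cl}}$ smooth.

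In the smooth case, the change-of-groups lemma \ref{lem:comparing Atiyah-Segal completions} allows us to replace completion at $I_G$ with completion at $I_H$ on each chart $[X/H]$ of a Nisnevich cover by smooth quotients by nice groups. Writing $1 \to T \to H \to F \to 1$ with $T$ a torus and $F$ finite \'etale of order invertible in $k$, a Galois-descent/transfer argument (using that $|F|$ acts invertibly on rationalized localizing invariants modulo a finite \'etale cover) reduces to the case of a split torus $T$. At that point, Lemma \ref{lem:localization} and Lemma \ref{lem:affine_bundle_invariance} allow us to stratify $X$ into $T$-equivariant filtrable pieces and assemble the statement from the Tabuada--Van den Bergh completion theorem \cite[Corollary~1.3]{tabuada2020motivic} for smooth filtrable torus actions. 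The main obstacle I anticipate is bookkeeping the interaction of $I_G$-completion with the Čech totalizations and cdh squares used in the reductions; this is controlled uniformly by the observation above that all higher cosimplicial terms reduce to right Kan extensions from affines, hence are $I_G$-complete, so that completion passes through the relevant (finite or Čech) limits.
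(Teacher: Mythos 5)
Your overall skeleton (truncating to pass to classical, resolution of singularities and abstract blow-up excision to pass to smooth, reduce the group and invoke Tabuada--Van den Bergh) is in the right spirit, but you invert the order of the two main reductions and, more importantly, you substitute a different and unjustified mechanism for the group reduction. The paper reduces from $G$ to a split torus $T$ \emph{first} (Step 2 of the proof) and only then runs the double dimension induction (Steps 3--5); the crucial ingredients in Step 2 are the Morita replacement $[X/G]\simeq[X'/GL_n]$, the Kempf vanishing $p_*\sO_{[X'/B]}\simeq\sO_{[X'/GL_n]}$ giving a retraction $\Perf_{[X'/GL_n]}\to\Perf_{[X'/B]}\to\Perf_{[X'/GL_n]}$, the affine bundle invariance $E([X'/B])\simeq E([X'/T])$ (Lemma~\ref{lem:affine_bundle_invariance}), and --- for the $R^{\mrm{dAff}}$ side --- the passage to the geometric classifying space approximations $U_\nu/GL_n$ together with Khan--Ravi's identification of the lisse extension with the right Kan extension. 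None of these appear in your argument.

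The concrete gaps in your proposal are the following. First, your claim that each higher term of the \v{C}ech nerve of a Nisnevich cover $\{[\Spec A_i/H_i]\to\sX\}$ is a derived algebraic space is false: since the cover itself consists of quotient \emph{stacks}, the iterated fibre products over $\sX$ are again algebraic stacks, not algebraic spaces, so Proposition~\ref{prop:localizing-nis} does not apply to them and your justification for $I_G$-completion commuting with the totalization breaks down. (The paper instead invokes a smooth-Nisnevich cover by an affine \emph{scheme}, via Theorem~\ref{thm:deshmukh}, whose \v{C}ech nerve does consist of algebraic spaces, when it needs terms where $E\circ\Perf$ agrees with $R^{\mrm{dAff}}E$.) Second, your ``Galois-descent/transfer'' step to pass from a nice group $H=T\rtimes F$ to the torus $T$ is not a proof: the push-pull composite along the finite \'etale cover $[X/T]\to[X/H]$ is multiplication by the class $[k[F]]\in K_0(BH)$, not by the scalar $|F|$, and (even granting that $[k[F]]$ becomes a unit after $I_H$-completion) you have not constructed the corresponding transfer on $R^{\mrm{dAff}}E$ --- this is precisely the technical problem the paper's Step 2 solves with the $U_\nu$ approximations, and invoking it for the right Kan extension without such a model would be circular with the pushforward structure of Theorem~\ref{thm:pushforward}. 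Your parenthetical ``rationalized localizing invariants'' also narrows the statement far beyond what is being claimed. Finally, even granting the reduction to $T$ in the smooth case, you would still need the paper's Step~4 (a $T$-equivariant compactification and the localization sequence of Lemma~\ref{lem:localization}) to pass from smooth separated to smooth projective, which you do not address; ``stratify into filtrable pieces'' is not an available move for a general smooth $T$-space.
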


\begin{proof}  To start, we note that having a representable morphism $\sX \rightarrow BG$ whose restriction to the classical locus of the domain is finite type implies that $\sX = [X/G]$ where $X$ is a derived algebraic space, whose classical locus is finite type over $k$. We now prove the results in a few steps. 

\begin{enumerate}

\item[(Step 1)] {\bf First, we reduce to the case that $X$ is classical and reduced}. Because $E$ is truncating, it is nilinvariant on quotient stacks where the group is embeddable  \cite[Proposition 5.1.10]{dgm-elden-vova}. The same comment applies for the right Kan extended theory by Lemma~\ref{lem:localizing-invt} since it is cdh excisive on algebraic stacks. Hence we may assume $X$ is classical and reduced. 

\item[(Step 2)] {\bf We first reduce our claim to the case that $G = T$ a $k$-split torus.} 
First we note that for any linear algebraic group $G$, there is an embedding $G \to \mrm{GL}_n$. In particular, 
we have an isomorphism of stacks $[X/G] \simeq [X \times_G \mrm{GL}_n / \mrm{GL}_n]$. Denote $X \times_G \mrm{GL}_n$ by $X'$.

For a Borel subgroup $B \subset \mrm{GL}_n$ consider the induced map 
$p: [X'/B] \rightarrow [X'/GL_n]$, we claim that the the induction and restriction functors induce a retraction diagram
\[
\Perf_{[X'/\mrm{GL}_n]} \xrightarrow{p^*} \Perf_{[X'/B]} \xrightarrow{p_*}  \Perf_{[X'/\mrm{GL}_n]}.
\]
Indeed, by the projection formula it suffices to prove that the canonical map 
\[
\sO_{[X'/GL_n]} \rightarrow p_{*}\sO_{[X'/B]}
\]
is an equivalence. It suffices to check that this map is an equivalence after base change along 
$f:X' \rightarrow [X'/G]$. To check this, consider the Cartesian square
\[
\begin{tikzcd}
  X' \times G/B \ar{r}{p'} \ar[swap]{d}{f'} & X' \ar{d}{f}\\
\left[X'/B\right] \ar{r}{p} &  \left[X'/G\right].
\end{tikzcd}
\]
By base change, we get an equivalence $f^*p_{*}\sO_{[X'/B]} \simeq p'_*f^{'*}\sO_{[X'/B]} \simeq \sO_{X'} \otimes_k p_{G/B*}\sO_{G/B}$, where $p_{G/B}: G/B \rightarrow \Spec k$ is the canonical map. Now Kempf's theorem \cite[Proposition 4.6(a)]{jantzen} asserts that $p_{G/B*}\sO_{G/B} \simeq \sO_k$, thus we conclude that $f^*p_{*}\sO_{[X'/B]} \simeq \sO_{X'}$.

We claim that this gives us a diagram of retractions:
\begin{equation}\label{eq:retract}
\begin{tikzcd}
E([X'/\mrm{GL}_n])^{\wedge}_{I_{\mrm{GL}_n}} \arrow[d]\arrow[r]& E([X'/B])^{\wedge}_{I_{\mrm{GL}_n}}\arrow[d]\arrow[r] & E([X'/\mrm{GL}_n])^{\wedge}_{I_{\mrm{GL}_n}}\arrow[d]\\
R^{\mrm{dAff}}E([X'/\mrm{GL}_n]) \arrow[r]&R^{\mrm{dAff}}E([X'/B]) \arrow[r] & R^{\mrm{dAff}}E([X'/\mrm{GL}_n]).
\end{tikzcd}
\end{equation}
That the top row is a retraction is immediate from the previous paragraph. For the bottom row: choose a faithful representation of $GL_n$ (concretely, we may choose the space of $n \times n$-matrices $M_n$), $\sE$. For each $\nu \geq 1$, we set $U_{\nu}$ to be the open of $\bbV(\sE)^{\oplus \nu}$ where the diagonal action of $GL_n$ is free; note that the quotient stack $[U_{\nu}/GL_n]$ is schematic and so we simply write it as $U_{\nu}/GL_n$. We can also consider $U_{\nu}$ as a $B$-scheme with a free $B$-action and the same comment applies to the quotient stack $[U_{\nu}/B]$. We consider the diagram, functorial in $\nu$:
\begin{equation}\label{eq:approx}
\begin{tikzcd}
E([X'/\mrm{GL}_n] \times_{BGL_n} U_{\nu}/GL_n) \arrow[r]& E([X'/B] \times_{BB} U_{\nu}/B) \arrow[r] & E([X'/\mrm{GL}_n] \times_{BGL_n} U_{\nu}/GL_n).
\end{tikzcd}
\end{equation}
The composite is an equivalence from the previous paragraph, hence the limit of composites is an equivalence. But now a result of Khan and Ravi \cite[Theorem 3.6]{khan-ravi-2}, coupled with \cite[Theorem 12.27]{khan2021generalized} to ensure that their ``lisse-extension'' coincides with our right Kan extension shows that the limit of~\eqref{eq:approx} is the bottom row of~\eqref{eq:retract}.

Now, observe that the map $[X'/T] \to [X'/B]$ is an affine bundle and $[X'/B]$ is ANS of finite type since it is 
affine over $[X'/GL_n] = [X/G]$. By Lemma~\ref{lem:affine_bundle_invariance} 
we have an equivalence $E([X'/B]) \simeq E([X'/T])$ and by Lemma~\ref{lem:comparing Atiyah-Segal completions} we have the horizontal equivalences in the commutative diagram
\[
\begin{tikzcd}
E([X/B])^{\wedge}_{I_{\mrm{GL}_n}} \arrow[d]\arrow[r,"\simeq"]& E([X/T])^{\wedge}_{I_{\mrm{GL}_n}}\arrow[d]\\
R^{\mrm{dAff}}E([X/B]) \arrow[r,"\simeq"] &R^{\mrm{dAff}}E([X/T]).
\end{tikzcd}
\]
Therefore, combining this observation with Lemma~\ref{lem:comparing Atiyah-Segal completions} we reduce to showing the claim for $G=T$ a split torus. 

\item[(Step 3)] {\bf Assume that the completion theorem holds for all smooth algebraic spaces of dimension $\leq d$ then it holds for any quasi-separated finite type algebraic space of dimension $d$.}

Let $X$ be a quasi-separated algebraic space of dimension $d$. 
Consider the comparison map as a map on the category of \'etale morphisms over $X$
\[
E([-/T])^{\wedge}_{I_T} \rightarrow R^{\mrm{dAff}}E([-/T]): \left(\Et^{BT}_{X}\right)^{\op} \rightarrow \Spt.
\]
By Nisnevich descent and Theorem~\ref{thm:cover} (this uses the quasi-separated assumption on $X$), 
we may assume that $X$ is separated.

By Theorem~\ref{thm:resolution} a separated algebraic space $X$ admits an equivariant resolution of singularities map 
$\tilde{X} \to X$. This yields an abstract blow-up square 
\[
\begin{tikzcd}
{[\tilde{Z}/T]} \arrow[r]\arrow[d] & {[\tilde{X}/T]}\arrow[d]\\
{[Z/T]}\arrow[r] & {[X/T]},
\end{tikzcd}
\]
where $Z$ is the subscheme of singularities and $\tilde{Z}$ is the exceptional divisor. 
Using the abstract blow-up excision for $E$ (\ref{prop:cdh}) and the fact that completion commutes 
with finite limits we reduce to proving the completion theorem for $[\tilde{Z}/T]$, $[Z/T]$ and $[\tilde{X}/T]$. The 
dimensions of $Z$ and $\tilde{Z}$ are less than $d$ and $\tilde{X}$ is smooth, so in all of these cases the completion 
theorem is known by assumption. 

\item[(Step 4)] {\bf Assume that the completion theorem holds for all quasi-separated, finite type algebraic spaces of 
dimension $\leq d-1$ and that it holds for all smooth projective schemes of dimension $d$, then the completion theorem holds 
for any quasi-separated algebraic space of dimension $d$.}

By Step 3, it suffices to show the claim for $X$ a smooth algebraic space of dimension $d$. Consider the comparison map as a 
map on the category of $T$-equivariant \'etale morphisms over $X$
\[
E([-/T])^{\wedge}_{I_T} \rightarrow R^{\mrm{dAff}}E([-/T]): \left(\Et^{BT}_{X}\right)^{\op} \rightarrow \Spt.
\]
By Nisnevich descent and Theorem~\ref{thm:cover}, 
it suffices to prove the result for $X = \Spec A$, a smooth affine scheme with a $T$-action.

Consider a finite dimensional subspace $V \subset A$ that is invariant under the action 
of $T$ and generates $A$ as a $k$-algebra. This exists because any finite dimensional subspace of a representation is contained in a finite 
dimensional representation of $T$. Therefore, we have a $T$-equivariant closed immersion 
\[
\iota: X \to \mathbf{P}(V)
\]
The closure of the image of $\iota$ is a $T$-equivariant 
projective variety. Resolving its singularities Theorem~\ref{thm:resolution} we obtain an equivariant embedding 
$X\to \bar{X}$ where $\bar{X}$ is a smooth projective variety. The complement of $Z=\bar{X} \setminus X$ is a simple normal 
crossings divisor. 
Using Lemma~\ref{lem:localization} we reduce the completion theorem to the case of $Z$ which has smaller dimension and the 
case of $\bar{X}$ which is a smooth projective, finite type $k$-scheme. 
Both cases are handled by our assumptions.

\item[(Step 5)] {\bf Verify the cases for $d=0$ and for $X$ a smooth projective $k$-scheme.} This then follows from the main theorem of \cite{tabuada2020motivic}; in particular we use \cite[Theroem 1.5]{tabuada2020motivic} and appeal to \cite{bb-groups, hesselink} for the fact that any smooth projective $k$-scheme with a $T$-action are $T$-filtrable.
\end{enumerate}
\end{proof}

\begin{rem}\label{rem:invariants} As pointed out in Examples~\ref{rem:examples} and~\ref{rem:exam2}, the above result holds 
for $KH, L_{K(1)}K$ and $HP$ (for $\mathbb{Q}$-linear categories). Since our arguments here bootstrap from the main results 
of \cite{tabuada2020motivic}, they equally well  apply to the examples discussed in the introduction there. 
\end{rem}

\begin{rem}[The ANS hypothesis]\label{rem:ans} Our proof reduces the completion theorem for stacks of the form $[X/G]$ to 
stacks of the form $[X/T]$. What allows this to happen is Lemma~\ref{lem:comparing Atiyah-Segal completions} on how 
completions compare along immersions of groups as well as affine bundle invariance. The latter is known only to hold for ANS 
stacks by Lemma~\ref{lem:affine_bundle_invariance}. This hypothesis will appear again when we patch together Theorem~\ref{thm:main_2_text} with the Hochschild homology results of the next section. The authors do not have a counterexample for a more general affine bundle invariance statement and this is the only bottleneck to generalizing the AS completion theorem for $\mathbb{A}^1$-invariant, localizing invariants. 
\end{rem}
\section{Atiyah-Segal completion theorem for Hochschild invariants}\label{sec:as-hh}

The goal of this section is to show that in case $E$ is either $HH$, $HC^-$, $HC$, or $HP$ the comparison map 
\[
E([X/G])^\wedge_{I_G} \ra R^{\mrm{dAff}}E([X/G])
\]
is an equivalence if $\sX=[X/G]$ is ANS and satisfies some further (very mild) assumptions. Our results generalize those of Chen's \cite{Chen_2020}, where the completion theorem is formulated 
in the language of {\it loop stacks} and {\it formal completions} thereof. 

Let $\sX$ be a perfect geometric stack over $\mbb Q$, then $HH(\Perf(\sX))$ is naturally identified with derived functions on the loop stack $\sL_{\mbb Q} \sX:= \underline{\sM\mrm{aps}}(\underline{S^1},\sX)$, defined as the mapping stack from $S^1$ to $\sX$. As already explained in \cite{Chen_2020}, there are at least two natural completions of $\sL_{\mbb Q} \sX$ that one can look at. On the one hand, we can take completion along the map $\sX \rightarrow \mathcal \sL_{\mbb Q}\sX$ embedding $\sX$ into $\sL \sX$ as constant loops. On the other hand, we can consider the unipotent loops $\sL^u_{\mbb Q} \sX:= \underline{\sM\mrm{aps}}(B\mathbb{G}_a,\sX)$ which itself looks as a completion of $\sL_{\mbb Q}\sX$ at a (in general different) closed substack of $\sL_{\mbb Q} \sX$. Functions on each of the completions produce variants of completed Hochschild homology. In case that $\sX = [X/G]$, there is yet a third option: one can complete $HH(\sX)$ along the Atiyah-Segal ideal. 

For Hochschild homology, the main results of this section clarifies these various completions.
\begin{enumerate}
\item in large generality, functions the completion along constant loops always computes the right Kan extended theory; Proposition~\ref{prop:Ran as functions on formal loops}.
\item in the case that $\sX = [X/G]$, the Atiyah-Segal completion agrees with functions on the unipotent completion; Proposition~\ref{prop:ig-uni}, Remark~\ref{rem:relation to Chen}.
\end{enumerate}

The Atiyah-Segal completion theorem for HH then holds when the completions along unipotent and constant loop agree, as happens for example if $G$ is nice (Theorem~\ref{thm:completion-th-}) or, under some nice assumptions, if $[X/G]$ is ANS (see Theorem~\ref{thm:general completion thm for HH}). We note (see Remark~\ref{rem:no AS in general}) that these completions \emph{do not agree in general}, if we do not assume that $G$ is nice. 

\subsection{Reminders and complements on formal completions}\label{sec:reminders-completion}

In this section we recall the notion of formal completion along a map of derived prestacks. In applications, the prestacks we consider will \emph{not} necessarily be of locally of finite presentation over the base; in this context various definitions of de Rham prestack (and its relative version which by definition is the formal completion) all differ. The original version of de Rham stack (which we will call ``thin de Rham stack") was introduced by Simpson and is probably better known. However, without finite presentation assumptions, as we are forced to in this paper, it is better to work with Lurie's \textit{absolute de Rham stack} (\cite[Definition 18.2.1.1]{SAG}).


\begin{defn}\label{defn:de Rham prestack and completion}
	Let $\sX\in \mrm{dPStk}$ be a derived prestack. 
	\begin{enumerate}
		
		\item The {\bf thin de Rham prestack} $\sX_{\widecurlywedge{\mrm{dR}}}\colon \CAlg^{\mrm{an}}_{\mbb Z} \ra \Spc$  is defined via formula
		$$
		\sX_{\widecurlywedge{\mrm{dR}}}(R):= \sX(\pi_0(R)^\mrm{red}).
		$$
		This is the standard Simpson's definition of the de Rham prestack.
		
	The (Lurie's) {\bf de Rham prestack} $\sX_{{\mrm{dR}}}\colon \CAlg^{\mrm{an}}_{\mbb Z}\ra \Spc$ (called  the ``absolute de Rham prestack" in \cite[Definition 18.2.1.1]{SAG}) is defined by assigning to any $R\in \CAlg^{\mrm{an}}_{\mbb Z}$ the space
		$$
		\sX_{{\mrm{dR}}}(R):= \colim_{I}\sX(\pi_0(R)/I),
		$$ 
		where the colimit runs over the filtered system of ideals $I\subset \pi_0(R)$ that are nilpotent (meaning $I^n=0$ for some $n$). The natural maps $R\ra R/I\ra \pi_0(R)^{\mrm{red}}$ induce maps
		$$\sX \ra \sX_{\mrm{dR}}\ra \sX_{\widecurlywedge{\mrm{dR}}}.$$
		\item Let $f\colon \sX \ra \sY$ be a map of prestacks. The {\bf thin derived formal completion} of $\sY$ along $f$ is a prestack defined as the fiber product
		$$
		{\sY}_{f}^\curlywedge:= \sY\underset{\sY_{\widecurlywedge{\mrm{dR}}}}{\times} \sX_{\widecurlywedge{\mrm{dR}}}.
		$$
		By construction, for $R\in \CAlg^{\mrm{an}}_{\mbb Z}$ we have
		$$
		{\sY}^\curlywedge_{f}(R)\simeq \sY(R)\underset{\sY(\pi_0(R)^{\mrm{red}})}\times \sX(\pi_0(R)^{\mrm{red}}).
		$$

		\noindent Similarly, we consider the (absolute) {\bf derived formal completion}, which we define as
		$$
		{\sY}_{f}^\wedge:= \sY\underset{\sY_{{\mrm{dR}}}}{\times} \sX_{{\mrm{dR}}}.
		$$
		By construction, $R$-points of ${\sY}^\wedge_{f}$ are given by 
		$$
		{\sY}^\wedge_{f}(R)\simeq \colim_{I\subset \pi_0(R)}\left(\sY(R)\underset{\sY(\pi_0(R)/I)}\times \sX(\pi_0(R)/I)\right).
		$$
		where $I$ runs over nilpotent ideals of $\pi_0(R)$. We have a natural commutative diagram of maps
		$$
	\begin{tikzcd}
		&& {\sY}^\wedge_{f}\arrow[rrd]\arrow[dd]&&\\
			\sX \arrow[rru] \arrow[rrd]& && & \sY\\
	&& {\sY}^\curlywedge_{f}\arrow[rru]&&
	\end{tikzcd}
		$$
		where both upper and lower compositions are canonically equivalent to the original map $f\colon \sX\ra \sY$.
	\end{enumerate}	
\end{defn}	

\begin{rem} The formalism of the two de Rham prestacks provides a robust geometric, functor-of-points approach towards formal completions. Among other advantages, note that it makes sense to complete along morphisms and not just immersions. To the authors' knowledge this viewpoint was first offered by Rozenblyum in his thesis \cite[Definition 2.4.5]{nick-thesis}. 
\end{rem}

\begin{rem}[Completion of a pull-back]\label{rem:pull-back of a formal completion}
	Given a commutative square
	$$
	\begin{tikzcd}
		\sW \arrow[r,"g"]\arrow[d] & \sZ\arrow[d]\\
		\sX \arrow[r,"f"] & \sY
	\end{tikzcd}
	$$
	in $\mrm{PStk}$ such that the corresponding square of classical stacks
	$$
	\begin{tikzcd}
		\sW^{\mrm{cl}} \arrow[r,"g"]\arrow[d] & \sZ^{\mrm{cl}}\arrow[d]\\
		\sX^{\mrm{cl}} \arrow[r,"f"] & \sY^{\mrm{cl}}
	\end{tikzcd}
	$$
	is a fiber square, there are natural equivalences
	$$
	\sZ^\curlywedge_{g}\simeq \sZ \times_{\sY}\sY^\curlywedge_{f}, \qquad \sZ^\wedge_{g}\simeq \sZ \times_{\sY}\sY^\wedge_{f}.
	$$
	Similarly, given $f\colon \sX\ra \sY$ and $g\colon \sZ \ra \sW$ we have natural equivalences 
	$$
	(\sY\times \sW)^\curlywedge_{f\times g}\simeq \sY^\curlywedge_f\times \sW^\curlywedge_g, \qquad (\sY\times \sW)^\wedge_{f\times g}\simeq \sY^\wedge_f\times \sW^\wedge_g.
	$$
\end{rem}

\begin{rem}[Other properties of derived formal completions] \label{rem:properties of completion}
	\begin{enumerate}
		
		\item \label{rem:laftnes and convergence of formal completion} Since the $R$-points of prestacks $\sX_{\widecurlywedge{\mrm{dR}}}$ and $\sX_{{\mrm{dR}}}$ depend only on $\pi_0(R)$, these prestacks are convergent for any $\sX$. Consequently, both  $\sY^\curlywedge_f$ and $\sY^\wedge_f$ are convergent if $\sY$ is.

		\item 	\label{rem:dependence of the formal completion} Thin formal completion ${\sY}^\curlywedge_{f}$ only depends on $\sY$ and the map 
		$$f^{\mrm{red}}\colon \sX^{\mrm{red}} \ra \sY^{\mrm{red}}$$ between the corresponding reduced prestacks (meaning prestacks on reduced rings). Moreover, if $f^{\mrm{red}}$ is an embedding\footnote{Meaning that it $(-1)$-truncated, or in other words, for any $R\in \CAlg^{\mrm{cn}}$ the map of spaces  $\sX(R)\ra \sY(R)$ is fully faithfull as a map of $\infty$-groupoids (meaning in turn that it is the embedding of a subset of connected components).} (e.g. open or closed embedding), then so is the map $\sY_{f}^\curlywedge \ra \sY$. In this case the data of a lifting of $y\in \sY(R)$ to a point in $\sY_{f}^\curlywedge(R)\subset \sY(R)$ is not a ``structure" but a property: namely, $y$ belongs to $\sY_{f}^\curlywedge(R)$ if and only if the restriction $y|_{\Spec (\pi_0(R)^{\mrm{red}})}$ lands in $\sX(\pi_0(R)^{\mrm{red}})\subset  \sY(\pi_0(R)^{\mrm{red}})$.
		
		\item \label{item:lurie's completion} Similarly, the absolute formal completion $\sY^\wedge_f$ only depends on $\sY$ and the corresponding map of classical stacks $$f^{\mrm{cl}}\colon \sX^{\mrm{cl}}\ra \sY^{\mrm{cl}}.$$ 
		If $f^{\mrm{cl}}$ is an embedding, then so is the map $\sY_{f}^\wedge \ra \sY$. In this case an $R$-point $y\in \sY(R)$ lies in $\sY_{f}^\wedge(R)\subset \sY(R)$ if for some nilpotent ideal $I\subset \pi_0(R)$ the restriction $y|_{\Spec (\pi_0(R)/I)}$ lands in $\sX(\pi_0(R)/I)\subset  \sY(\pi_0(R)/I)$.
		
		\item If $\sX$ is a derived stack, then so is $\sX_{\widecurlywedge{\mrm{dR}}}$. Indeed, to check that $\sX_{\widecurlywedge{\mrm{dR}}}\colon \CAlg^{\mrm{cn}}\ra \mrm{Spc}$  has \'etale descent it is enough to check that it commutes with products and that given an \'etale cover $R\ra R'$ the natural map
		$$
		\sX(\pi_0(R)^\mrm{red})\simeq \sX_{\widecurlywedge{\mrm{dR}}}(R)\ra \lim_{[\bullet]\in \Delta }\sX_{\widecurlywedge{\mrm{dR}}}(R'^{\otimes \bullet})\simeq \sX_{\widecurlywedge{\mrm{dR}}}(\pi_0(R'^{\otimes \bullet})^\mrm{red}) 
		$$
		is an equivalence. 
		This follows form the fact that $R\mapsto \pi_0(R)^{\red}$ commutes with products, and that the cosimplicial object $\pi_0(R'^{\otimes \bullet})^\mrm{red}$ is identified with the \v Cech object for the map $\pi_0(R)^\mrm{red}\ra \pi_0(R')^\mrm{red}$.
		
		More generally, if we have a map of derived stacks $f\colon \sX \ra \sY$ then the thin formal completion $\sY^\curlywedge_f$ is also a derived stack.
		
		\item \label{item:completion of the colimit} Let $f\colon \sX \ra \sY$ be a map of derived stacks and assume we have a presentation $$\sY = L_\et(\colim_{i\in I}\sY_i)$$ as a colimit over some diagram of derived stacks $I\ra \mrm{PStk}$. Let $\sX_i:=\sY_i\times_\sY \sX$ and consider $f_i\colon \sX_i \ra \sY_i$. Then the natural map 
		$$
		L_\et(\colim_{i\in I}(\sY_i)^\curlywedge_{f_i}) \ra \sY^\curlywedge_f 
		$$
		is an equivalence. Indeed, we have
		$$
		\sY^\curlywedge_f \simeq \sY^\curlywedge_f\times_{\sY}L_\et(\colim_{i\in I}\sY_i)\simeq L_\et( \sY^\curlywedge_f\times_\sY \colim_{i\in I}\sY_i)\simeq L_\et (\colim_{i\in I}(\sY^\curlywedge_f\times_\sY \sY_i)) \simeq  L_\et (\colim_{i\in I}(\sY_i)^\curlywedge_{f_i})
		$$
		which gives the inverse identification. Here we used that $L_\et$ commutes with fiber products and that colimits in prestacks are universal. 
	\end{enumerate}
\end{rem}

\medskip

The following example illustrates the potential difference between $\sY^\wedge_f$ and $\sY^\curlywedge_f$.

\begin{exam}\label{ex:affine completion} Let $f\colon \sX\ra \sY$ be a closed embedding of derived affine schemes (so $\sY:=\Spec A$ and $\sX:=\Spec B$ and $f^*\colon A\ra B$ is surjective on $\pi_0$). Let $I\subset \pi_0(A)$ be the kernel of $\pi_0(f^*)\colon \pi_0(A)\twoheadrightarrow \pi_0(B)$. 
	
	Following Remark \ref{rem:properties of completion}, both $\sY^\curlywedge_f(R)$ and $\sY^\wedge_f(R)$ are embedded in $\sY(R)=\mrm{Map}_\CAlg(A,R)$; more precisely, $\sY^\curlywedge_f(R)$ is given by the subspace of maps $A\ra R$ that send $I\subset \pi_0(A)$ to the nil-radical  $\mrm{Nil}(\pi_0(R))\subset \pi_0(R)$ while $\sY^\wedge_f(R)$ is the space of those maps $A\ra R$ for which the image of $I\subset \pi_0(A)$ generates a \textit{nilpotent} ideal of $R$. These subspaces are the same if we assume that $I$ is finitely generated, but not necessarily otherwise.

\end{exam}	

\begin{exam}[Completion at finitely generated ideal]\label{exam:completion of affine thing at finitely generated ideal}
	
	Let $f$ be as above and assume $I \subset \pi_0(A)$ is a finitely generated ideal.  Let $a_1,\ldots,a_n\in I$ be a set of generators. In that case, by the argument in \cite[Proposition 6.7.4]{Gaitsgory2014} we have an explicit colimit presentation of the completion :
	\[
\mc Y^\wedge_f\simeq 	\colim_{i_1,\ldots,i_n\ge 0} \Spec ([A/a_1^{i_1}]\otimes_A\ldots \otimes_A [A/a_n^{i_n}]) \ra \Spec A,
	\]
	where $[A/a]$ denotes the pushout $A\otimes_{\mbb Z[x]}\mbb Z$, where $x$ maps to $a$ in $A$ and 0 in $\mbb Z$.
\end{exam}	

\begin{rem}[Comparison with Section~\ref{sec:sag-completion}] \label{rem:compare} On the level of functions there is compatibility between formal completion via de Rham prestacks and the completion discussed in Section~\ref{sec:sag-completion} via semiorthogonal decomposition of categories. More precisely, we have 
	$$
	\mc O(\mc Y^\wedge_f) \simeq \lim_{i_1,\ldots,i_n\ge 0} [A/a_1^{i_1}]\otimes_A\ldots \otimes_A [A/a_n^{i_n}],
	$$
 which is naturally identified with the derived $I$-adic completion $A^\wedge_I$ of $A$ as a module over itself (see Remark \ref{rem:formula for derived completion on module}).

\end{rem}

\begin{exam}[Comparison with classical completion] \label{ex:classical completion agrees with derived}	
	In the case when $A$ is classical and noetherian, the derived completion also agrees with the classical one: namely, there is a natural equivalence of prestacks
	$$
	\sY^\wedge_f \simeq \colim_n \Spec(A/I^n),
	$$
	see \cite[Proposition 6.8.2]{Gaitsgory2014}. In particular, for the embedding $\{0\}\hookrightarrow \mbb A^n_{\mbb Z}\simeq \Spec (\Sym_{\mbb Z}(\mbb Z^{\oplus m}))$ we get
	$$
	(\mbb A^m_{\mbb Z})^\wedge_{0}\simeq \colim_n \Spec (\Sym^{\le n}_{\mbb Z}(\mbb Z^{\oplus m}))
	$$
	where $\Sym_{\mbb Z}^{\le n}(\mbb Z^{\oplus m}):=\Sym_{\mbb Z}({\mbb Z}^{\oplus m})/I_0^{n+1}$ and $I_0$ is the ideal of $\{0\}\in \mbb A^m_{\mbb Z}$.
\end{exam}	

\sssec{Comparing ${\sY}^\wedge_{f}$ and ${\sY}^\curlywedge_{f}$}

The following notion will be useful to identify ${\sY}^\wedge_{f}$ with ${\sY}^\curlywedge_{f}$ in some examples: 
\begin{defn}\label{def:locally of finite presentation} A map of classical prestacks $f\colon \sX \ra \sY$ is called \textbf{locally of finite presentation} if for any filtered system $\{R_\alpha\}$ of classical rings with $R\simeq \colim_\alpha {R_\alpha}$ the natural map
	$$
	\colim_\alpha \sX(R_\alpha) \ra \sX(R)\times_{\sY(R)} (\colim_\alpha \sY(R_\alpha) )
	$$
	is an equivalence.
\end{defn}	

\begin{rem}\label{rem:lfp is enough to check over all points}
	If $\sY=\Spec R$ is affine and classical, then $f\colon \sX \ra \sY$ is locally of finite presentation if and only if for any filtered colimit $A=\colim_{\alpha} A_\alpha$ of classical $R$-algebras one has 
	$$
	\sX(A)=\colim_{\alpha} \sX(A_\alpha).
	$$
	Moreover, a general map of classical prestacks $f\colon \sX \ra \sY$ is locally of finite presentation if and only if all fibers $\sX_y:=\Spec R\times_\sY \sX \ra \Spec R$ over all points $y\colon \Spec R\ra \sY$ are. Indeed, assume the latter; we need to show that for any filtered colimit $R=\colim_\alpha R_\alpha$ the natural map
	$$
	\colim_\alpha \sX(R_\alpha) \ra \sX(R)\times_{\sY(R)} (\colim_\alpha \sY(R_\alpha) )
	$$
	is an equivalence. This is enough to do on all fibers over $\colim_\alpha \sY(R_\alpha)$ and then further over all points of $y_\alpha\in \sY(R_\alpha)$ for some $\alpha$. The map on the corresponding fibers is given by 
	$$
	\colim_{\alpha \ra \alpha'} \sX_{y_{\alpha}}(R_{\alpha'}) \rightarrow \sX_{y_{\alpha}}(R)
	$$
	where we have put $\sX_{y_{\alpha}}:=\Spec R_{\alpha} \times_\sY \sX$.
\end{rem}	

\begin{rem}[Comparison to \cite{SAG}] In \cite[17.4.1]{SAG}, a notion of locally of finite presentation is defined where one demands that natural map in Definition~\ref{def:locally of finite presentation} is an equivalence for a filtered system of derived rings. We caution the reader that these two definitions are not, in general, equivalent; our notion is clearly weaker and could perhaps be called ``locally of finite presentation on classical rings.'' Definition~\ref{def:locally of finite presentation} is the only one that will be used in this paper. Our weakening is motivated by comparison lemmas between different completions; see for example Lemma~\ref{lem:two completions coincide} where the absolute de Rham and thin completions are proved to be equivalent when the map in question has truncation which is locally of finite presentation as in Definition~\ref{def:locally of finite presentation}.
\end{rem}

The following lemma allows to control maps of finite presentation after \'etale sheafification.
\begin{lem}\label{lem:etale sheafification preserves locally finite presentation}
	Let $f\colon \sX\ra \sY$ be a map of classical prestacks which is locally of finite presentation and assume that there exists $n\ge -2$ such that for all classical algebras $R$ the fibers of the map $\sX(R)\ra \sY(R)$ are $n$-truncated. Then the induced map 
	$$
	L_\mrm{et}(f)\colon L_\mrm{et}(\sX) \ra L_\mrm{et}(\sY)
	$$
	is also locally of finite presentation.
\end{lem}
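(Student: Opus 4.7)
The plan is to reduce the statement to a computation on fibers, and then to exploit the truncatedness hypothesis to control the \'etale sheafification by finite-presentation data.

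First, by Remark \ref{rem:lfp is enough to check over all points}, it suffices to verify the local-finite-presentation condition fiber-wise. Fix a filtered presentation $R \simeq \colim_\alpha R_\alpha$ and a point $(y_\alpha) \in \colim_\alpha L_\et\sY(R_\alpha)$ with image $y \in L_\et\sY(R)$. Since filtered colimits commute with finite limits in spaces, the required equivalence reduces to
$$
\colim_\alpha (L_\et\sX)_{y_\alpha}(R_\alpha) \xrightarrow{\sim} (L_\et\sX)_y(R),
$$
where the subscripts denote fibers of the maps $L_\et\sX(-) \to L_\et\sY(-)$. Because \'etale covers of $\Spec R$ are pulled back from \'etale covers of some $\Spec R_\alpha$ (the finite presentation of \'etale morphisms), we may, after replacing $R_\alpha$ by an \'etale cover cofinally, assume each $y_\alpha$ lifts to a genuine element of $\sY(R_\alpha)$. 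The left-exactness of $L_\et$ then identifies $(L_\et\sX)_{y_\alpha}$ with $L_\et(\sX_{y_\alpha})$, where $\sX_{y_\alpha} := \sX \times_{\sY,y_\alpha} \Spec R_\alpha$.

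Second, each $\sX_{y_\alpha}$ is a classical $R_\alpha$-prestack with $n$-truncated values, and is locally of finite presentation over $R_\alpha$ by hypothesis. The \'etale sheafification of an $n$-truncated presheaf can be obtained by finitely many iterations of the plus construction, each a filtered colimit of $(n{+}1)$-truncated limits over \v{C}ech nerves of \'etale covers. Two standard ingredients combine here: (a) \'etale covers over $\Spec R$ descend, cofinally, to \'etale covers of some $\Spec R_\alpha$; and (b) finite limits commute with filtered colimits of spaces. It follows that each iteration of the plus construction preserves both $n$-truncatedness and local finite presentation, so after finitely many iterations we obtain that $L_\et(\sX_{y_\alpha})$ is locally of finite presentation over $R_\alpha$, yielding the desired fiber-wise equivalence.

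The main obstacle is to track cofinality carefully in the combined filtered system indexed by both $\alpha$ and the choices of \'etale refinement, so that one can genuinely commute the colimit $\colim_\alpha$ past the plus-construction colimits. The $n$-truncatedness of the fibers is used in an essential way to cut off the sheafification at finite level; without it the argument fails, since the transfinite \'etale sheafification need not interact well with filtered colimits of rings.
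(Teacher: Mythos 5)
The paper gives no proof of its own here: it simply says ``the same argument as in \cite[Lemma 17.4.3.2]{SAG} works,'' so there is no genuinely distinct route for you to diverge from. Your sketch captures the standard ingredients of that argument: reduce to fibers over points via Remark~\ref{rem:lfp is enough to check over all points}, use that \'etale maps are finitely presented so covers descend cofinally in the filtered system, and use $n$-truncatedness to replace the transfinite sheafification by finitely many plus-construction steps, each of which is a filtered colimit of finite limits and hence commutes with filtered colimits of rings. That is the right strategy.

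One place where your writing is imprecise enough to look like a gap: the sentence ``we may, after replacing $R_\alpha$ by an \'etale cover cofinally, assume each $y_\alpha$ lifts to a genuine element of $\sY(R_\alpha)$'' conflates two things. You cannot literally replace $R_\alpha$ and then compute the fiber of $L_\et\sX(R_\alpha)\to L_\et\sY(R_\alpha)$ over the original $y_\alpha$; rather, the fiber over $y_\alpha$ is recovered from the fibers over the lifted points on the \v{C}ech nerve of an \'etale cover $R_\alpha\to R'_\alpha$ by taking the cosimplicial limit, and it is precisely the $n$-truncatedness hypothesis that makes this a \emph{finite} limit (one can stop at level $n+2$), so that it commutes with the filtered colimit over $\alpha$. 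In your write-up the use of truncatedness is only surfaced at the plus-construction stage, but it is already needed at the lifting stage. This is easily repaired and is in the spirit of the cofinality remark you make at the end, but as written that step does not quite parse. With that fix the argument goes through as you describe.
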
	

\begin{proof}
	The same argument as in \cite[Lemma 17.4.3.2]{SAG} works here (with the simplification that we only consider classical algebras as the source category for $\sX$ and $\sY$).
\end{proof}	
\begin{exam} \label{ex:maps of lfp} Let us also give some examples of maps locally of finite presentation:
	
	\begin{enumerate}	\item
		If $f\colon \sX\ra \sY$ is a finitely presented affine morphism (meaning that for any $R$-point of $\sY$ the classical affine scheme $\tau^{\mrm{cl}}(\Spec A\times_{\sY}\sX)$ is given by the spectrum of a finitely presented $A$-algebra $B$) then it is locally of finite presentation in the sense of Definition \ref{def:locally of finite presentation}. Indeed, this follows from Remark \ref{rem:lfp is enough to check over all points}. 
		\item Any colimit $\sX:=\colim \sX_\alpha \ra \sY$ of maps $\sX_\alpha\ra \sY$ locally of finite presentation is again locally of finite presentation (this uses the fact that colimits in $\mrm{Spc}$ are universal).
		\item Recall that any classical $n$-Artin stack $\sX$ over a classical ring $R$ is obtained as the \'etale sheafification of an $n$-skeletal simplicial diagram $X_\bullet\colon \Delta^\op \ra \mrm{PStk}$ where each $X_i$ is a union of affine schemes (and the boundary maps $X_i\ra X_j$ are smooth). If each $X_i$ is a disjoint union of affine schemes that are actually \textit{finitely presented} over $R$ then by \ref{lem:etale sheafification preserves locally finite presentation} and above two examples we get that $\sX \ra \Spec R$ is locally of finite presentation. 
		Note that since $X_\bullet$ is $n$-skeletal, $\sX=L_\mrm{et}(|X_\bullet|)$ as a functor on $\CAlg^{\mrm{cl}}$ takes values in $n$-truncated spaces. Thus, more generally, having a map $f\colon \sX \ra \sY$ of classical prestacks which is representable in $n$-Artin stacks that are locally of finite presentation, it follows from Remark \ref{rem:lfp is enough to check over all points} that $f$ is locally of finite presentation. 
		
		In particular, if $G$  is a finitely presented smooth affine group scheme over a classical ring $R$ then $BG$ or, more generally, $[X/G]$ for an algebraic space $X$ of locally finite presentation over $R$ is again of locally finite presentation over $R$.
	\end{enumerate}
\end{exam}	
Let us now return to the comparison of thin and absolute formal completions.
\begin{lem}\label{lem:two completions coincide}
	Let $f\colon \sX \ra \sY$ be a map of derived prestacks such that the underlying map $\sX^{\mrm{cl}}\ra \sY^{\mrm{cl}}$ is locally of finite presentation. Then the natural map 
	$$
	{\sY}^\wedge_{f} \ra {\sY}^\curlywedge_{f}
	$$
	is an equivalence.
\end{lem}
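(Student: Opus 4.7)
The plan is to check that the natural map $\sY^\wedge_f \to \sY^\curlywedge_f$ induces an equivalence on $R$-points for every $R \in \CAlg^{\mrm{an}}_\mbb Z$. Both prestacks factor through $\pi_0$ (see Remark~\ref{rem:properties of completion}\itemref{rem:laftnes and convergence of formal completion}), so one may assume $R$ is a classical ring. Write $\mrm{Nil}(R)\subset R$ for the nilradical. The first observation is that $R^{\mrm{red}} \simeq \colim_I R/I$ as a filtered colimit of classical rings, where $I$ ranges over nilpotent ideals of $R$: the system is filtered because $I^n=0$ and $J^m=0$ imply $(I+J)^{n+m-1}=0$, and every element of $\mrm{Nil}(R)$ sits in a principal (hence nilpotent) ideal.

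Next, unwinding the definition of $\sY^\wedge_f$ and using that filtered colimits of spaces commute with finite limits gives
\[
\sY^\wedge_f(R) \;\simeq\; \colim_I\bigl(\sY(R)\times_{\sY(R/I)}\sX(R/I)\bigr) \;\simeq\; \sY(R)\times_{\colim_I \sY(R/I)} \colim_I \sX(R/I).
\]
Now I invoke the hypothesis that $f^{\mrm{cl}}\colon \sX^{\mrm{cl}}\to\sY^{\mrm{cl}}$ is locally of finite presentation, applied to the filtered system $\{R/I\}_I$ with colimit $R^{\mrm{red}}$ (Definition~\ref{def:locally of finite presentation}). This produces the equivalence
\[
\colim_I \sX(R/I) \;\simeq\; \sX(R^{\mrm{red}})\times_{\sY(R^{\mrm{red}})} \colim_I \sY(R/I).
\]

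Substituting this identification into the previous line and applying the standard pasting/associativity law $A\times_B(D\times_C B)\simeq A\times_C D$ for iterated pullbacks (with $A=\sY(R)$, $B=\colim_I \sY(R/I)$, $C=\sY(R^{\mrm{red}})$, $D=\sX(R^{\mrm{red}})$), I obtain
\[
\sY^\wedge_f(R) \;\simeq\; \sY(R)\times_{\sY(R^{\mrm{red}})} \sX(R^{\mrm{red}}) \;=\; \sY^\curlywedge_f(R),
\]
naturally in $R$. The comparison is built out of the canonical maps $\colim_I \sY(R/I)\to \sY(R^{\mrm{red}})$ and $\colim_I \sX(R/I)\to \sX(R^{\mrm{red}})$, so one can verify that the equivalence just produced agrees with the natural map from the statement.

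There is no genuine hard step here; the proof is essentially a one-line diagram chase once one writes the two prestacks in terms of the filtered system of nilpotent ideals. The only thing to keep straight is that $\sY$ itself need \emph{not} be lfp (so $\colim_I \sY(R/I)\not\to \sY(R^{\mrm{red}})$ need not be an equivalence), and this is precisely why the final pullback reshuffling is both necessary and sufficient: the non-invertible map $\colim_I \sY(R/I)\to \sY(R^{\mrm{red}})$ cancels out between the two places where it appears.
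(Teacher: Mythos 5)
Your proof is correct and takes essentially the same route as the paper: identify $\pi_0(R)^{\mrm{red}}$ as the filtered colimit of $\pi_0(R)/I$ over nilpotent ideals $I$, apply the lfp hypothesis to that system to get a cartesian square comparing the two de Rham prestacks, and conclude by pasting pullbacks. One small phrasing slip: $\sY^\wedge_f$ and $\sY^\curlywedge_f$ do not themselves factor through $\pi_0$ (they contain $\sY(R)$ as a factor), so you cannot literally ``assume $R$ is classical''; what is true is that the comparison map is the identity on the $\sY(R)$ factor while the remaining terms depend only on $\pi_0(R)$, which is all the argument actually uses.
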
	
\begin{proof}
	Let $A\in \CAlg^{\mrm{an}}_{\mbb Z}$ be an animated commutative ring. Note that one has an equivalence (of animated rings) $$\pi_0(A)^{\mrm{red}}\simeq \colim_{I\subset \pi_0(A)} \pi_0(A)/I,$$
	where the (homotopy) colimit runs over all nilpotent ideals of $\pi_0(A)$. Since $\tau^{\mrm{cl}}(f)\colon \tau^{\mrm{cl}}(\sX)\ra \tau^{\mrm{cl}}(\sY)$  is of finite presentation and both versions of de Rham prestack of a prestack $\sX$ only depend on $\tau^{\mrm{cl}}(\sX)$, we get that the commutative square
	$$
	\begin{tikzcd}\sX_{\mrm{dR}}(A) \arrow[r]\arrow[d]& \sX_{\widecurlywedge{\mrm{dR}}}(A)\arrow[d]\\
		\sY_{\mrm{dR}}(A) \arrow[r]& \sY_{\widecurlywedge{\mrm{dR}}}(A)
	\end{tikzcd}
	$$
	is a fiber square for any $A$. From this it  follows that the map 
	$$
	{\sY}^\wedge_{f}:=\mc Y\times_{\sY_{\mrm{dR}} }\sX_{\mrm{dR}} \ra \mc Y\times_{\sY_{\widecurlywedge{\mrm{dR}}}}\sX_{\widecurlywedge{\mrm{dR}}}=:{\sY}^\curlywedge_{f}
	$$
	is an equivalence.
\end{proof}

We finish with two more computations of formal completions which will be needed later. 
First, let us note that Example~\ref{ex:affine completion} descends to a computation of the completion of quotient stacks.

\begin{cor}\label{cor:completion of quotients}
	Let $G$ be a classical $k$-algebraic group. Let $f\colon X \ra Y$ be a $G$-equivariant closed embedding of derived affine 
	$k$-schemes such that $X^{\mrm{cl}}\ra Y^{\mrm{cl}}$ is finitely presented. Then the completion $Y^{\wedge}_f$ admits a natural $G$-action and for 
	the corresponding map of quotient stacks $f/G\colon [X/G] \ra [Y/G]$ there is a natural equivalence 
	\[
	[Y/G]^\wedge_{f/G}\simeq [Y^{\wedge}_f/G].
	\]
\end{cor}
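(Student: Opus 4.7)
The plan is to derive the corollary essentially formally from Remark~\ref{rem:pull-back of a formal completion} on pullback-compatibility of derived formal completions. First I would observe that the commutative diagram
\[
\begin{tikzcd}
X \arrow[r, "f"] \arrow[d] & Y \arrow[d, "q"] \\
{[X/G]} \arrow[r, "f/G"] & {[Y/G]}
\end{tikzcd}
\]
is Cartesian in derived prestacks: the vertical maps are the canonical $G$-torsors, and the pullback of $q$ along $f/G$ recovers $X$ by descent, because $f$ is $G$-equivariant. In particular the induced square of classical truncations is also a fiber square, which is the hypothesis needed to apply Remark~\ref{rem:pull-back of a formal completion}.

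Applying that remark with $f/G$ playing the role of the horizontal map being completed, I would obtain a natural equivalence
\[
Y \times_{[Y/G]} [Y/G]^\wedge_{f/G} \xrightarrow{\,\sim\,} Y^\wedge_f.
\]
This identifies $Y^\wedge_f$ as the total space of a $G$-torsor over $[Y/G]^\wedge_{f/G}$, pulled back from the torsor $q$. In one stroke this yields both (i) the natural $G$-action on $Y^\wedge_f$, compatible with the embedding $Y^\wedge_f \hookrightarrow Y$ furnished by Remark~\ref{rem:properties of completion}(\itemref{item:lurie's completion}) (and agreeing with the $G$-action inherited from $Y$, since $Y^\wedge_f$ is cut out inside $Y$ by a $G$-invariant closed condition), and (ii) the desired equivalence $[Y^\wedge_f/G] \simeq [Y/G]^\wedge_{f/G}$ obtained by taking $G$-quotients of both sides of the torsor identification.

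I do not foresee any serious obstacle: once the Cartesian square is in place, the corollary follows from a formal application of the pullback-compatibility of completion. The only mild verification is checking the square of prestacks is Cartesian in the derived sense, which is standard for $G$-torsor presentations of quotient stacks; the finite-presentation hypothesis on $X^{\mrm{cl}} \to Y^{\mrm{cl}}$ is not needed for this corollary (it was used only in Lemma~\ref{lem:two completions coincide} to ensure that the absolute and thin completions agree, which is orthogonal to the present descent argument).
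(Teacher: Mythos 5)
Your proof is correct, and it takes a genuinely different route from the paper's.

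The paper's argument first invokes the finite-presentation hypothesis on $X^{\mrm{cl}}\to Y^{\mrm{cl}}$ (via Lemma~\ref{lem:two completions coincide}, Lemma~\ref{lem:etale sheafification preserves locally finite presentation} and Example~\ref{ex:maps of lfp}) to replace both absolute completions $\wedge$ by thin completions $\curlywedge$, then cites the external result \cite[Lemma~B.1.8]{BKRS} to write $[Y/G]^\curlywedge_{f/G}$ as a colimit $\colim_n[X(n)/G]$ of quotients of $G$-equivariant derived schemes, and finally commutes the quotient functor past the colimit. Your argument bypasses all of this: you observe that the $G$-torsor square
\[
\begin{tikzcd}
X \arrow[r,"f"]\arrow[d] & Y \arrow[d,"q"]\\
{[X/G]} \arrow[r,"f/G"] & {[Y/G]}
\end{tikzcd}
\]
is Cartesian, and since restriction to classical rings commutes with fiber products (it is a limit-preserving functor of prestacks), the induced classical square is also Cartesian; hence Remark~\ref{rem:pull-back of a formal completion} applies directly to give $Y^\wedge_f \simeq Y\times_{[Y/G]}[Y/G]^\wedge_{f/G}$, exhibiting $Y^\wedge_f$ as the total space of the pullback $G$-torsor and yielding the desired equivalence on passing to quotients. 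Your observation that the finite-presentation hypothesis is not actually needed for this corollary is correct: Remark~\ref{rem:pull-back of a formal completion} requires nothing of that sort, and it applies to $\wedge$ directly without routing through $\curlywedge$. The paper's hypothesis appears to be there because its own proof needs it (and because, downstream, the comparison $\wedge\simeq\curlywedge$ is used anyway). Your route is cleaner, purely internal to the lemmas of Section~\ref{sec:reminders-completion}, and proves a strictly more general statement.
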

\begin{proof} Note that since the map  $X^{\mrm{cl}}\ra Y^{\mrm{cl}}$ is finitely presented, by Lemma \ref{lem:two completions coincide} we have that $Y^{\wedge}_f\simeq Y^{\curlywedge}_f$. Moreover, by Lemma~\ref{lem:etale sheafification preserves locally finite presentation} and Example~\ref{ex:maps of lfp}(3) we have that $[X/G]^{\mrm{cl}} \ra [Y/G]^{\mrm{cl}}$ is also locally finitely presented, so $[Y/G]^\wedge_{f/G}\simeq [Y/G]^\curlywedge_{f/G}$. Thus it is enough to prove the statement for thin formal completions. Here, by \cite[Lemma~B.1.8]{BKRS} we have an equivalence 
	\[
	[Y/G]^\curlywedge_{f/G}\simeq {\colim_n}^{\mrm{Stk}}  [X(n)/G] 
	\]
	for a sequence of $G$-equivariant derived schemes $X(n)$ satisfying 
	\[
	\colim_n (X(n) \to X(n+1) \to \cdots ) \simeq Y^{\curlywedge}_f.
	\]
	The result follows since taking quotient stacks commutes with colimits (in stacks).
\end{proof}

\begin{lem}\label{lem:affine completion vs derived}
	Let $A\in \CAlg^{\mrm{cn}}$ and let $I\subset \pi_0(A)$ be a finitely generated ideal. Let $\Spf A^\wedge_I$ denote the formal completion of $\Spec A$ along the map $\Spec (\pi_0(A)/I) \ra \Spec A$. Let $\mc Y$ be a prestack with a map $\sY\ra \Spec A$ and denote by $\sY^\wedge_I$ the pullback
	\[
	\begin{tikzcd}
		\sY^\wedge_I \arrow[r]\arrow[d] & \sY\arrow[d]\\
		\Spf A^\wedge_I \arrow[r] & \Spec A.
	\end{tikzcd}
	\]
	Then there is an equivalence 
	\[
	\mc O(\sY^\wedge_I) \simeq \mc O(\sY)^\wedge_I
	\]
	where the right-hand is the derived $I$-completion as an $A$-module.
\end{lem}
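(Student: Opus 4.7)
The plan is to reduce the statement to the explicit colimit presentation of $\Spf A^\wedge_I$ recorded in Example~\ref{exam:completion of affine thing at finitely generated ideal}. Pick generators $a_1,\ldots,a_n$ of $I$ and, for a multi-index $\vec{i}=(i_1,\ldots,i_n)\in \mbb N^n$, set
\[
B_{\vec{i}} := [A/a_1^{i_1}]\otimes_A\cdots\otimes_A [A/a_n^{i_n}].
\]
By Example~\ref{exam:completion of affine thing at finitely generated ideal} we have $\Spf A^\wedge_I\simeq \colim_{\vec{i}} \Spec B_{\vec{i}}$ in prestacks. Since colimits of prestacks are universal, pulling back along $\sY\to \Spec A$ yields a colimit presentation of $\sY^\wedge_I$, namely $\sY^\wedge_I\simeq \colim_{\vec{i}} (\sY\times_{\Spec A}\Spec B_{\vec{i}})$.

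Next, I would apply the global sections functor $\mc O$, which turns colimits of prestacks into limits. This gives $\mc O(\sY^\wedge_I)\simeq \lim_{\vec{i}} \mc O(\sY\times_{\Spec A}\Spec B_{\vec{i}})$. To evaluate each term, write $\sY\simeq \colim_{\Spec R\to \sY}\Spec R$; by universality of colimits again, $\sY\times_{\Spec A}\Spec B_{\vec{i}}\simeq \colim_{\Spec R\to \sY}\Spec(R\otimes_A B_{\vec{i}})$, whence
\[
\mc O(\sY\times_{\Spec A}\Spec B_{\vec{i}}) \simeq \lim_{\Spec R\to\sY}(R\otimes_A B_{\vec{i}}).
\]

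The key remaining step — and the only one where care is needed — is to commute the limit past the tensor product to obtain $\mc O(\sY)\otimes_A B_{\vec{i}}$. This is where the hypothesis on $I$ enters: each $[A/a_j^{i_j}]$ is a two-term complex of free $A$-modules, so $B_{\vec{i}}$ is a perfect (hence dualizable) $A$-module, and therefore $(-)\otimes_A B_{\vec{i}}$ preserves arbitrary limits. Combining,
\[
\mc O(\sY^\wedge_I)\simeq \lim_{\vec{i}} \bigl(\mc O(\sY)\otimes_A B_{\vec{i}}\bigr).
\]
By Remark~\ref{rem:formula for derived completion on module}, the right-hand side is precisely the derived $I$-adic completion $\mc O(\sY)^\wedge_I$ of $\mc O(\sY)$ as an $A$-module, completing the argument. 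The main obstacle is bookkeeping around the perfectness/dualizability argument; everything else is a formal manipulation with colimits of prestacks and the definition of $\mc O$.
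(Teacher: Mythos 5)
Your argument is correct, but it follows a genuinely different route from the one in the paper. The paper's proof is a two-line reduction: both assignments $\sY\mapsto\mc O(\sY^\wedge_I)$ and $\sY\mapsto\mc O(\sY)^\wedge_I$ send colimits of prestacks to limits of spectra (the first because $-\times_{\Spec A}\Spf A^\wedge_I$ preserves colimits by universality, the second because derived $I$-completion preserves limits), so one reduces immediately to the affine case $\sY=\Spec B$, where both sides are $B^\wedge_I$ by Remark~\ref{rem:compare}. You instead fix the explicit colimit presentation $\Spf A^\wedge_I\simeq\colim_{\vec{i}}\Spec B_{\vec{i}}$, pull it back to $\sY$, apply $\mc O$ to get a double limit, and then commute the inner limit $\lim_{\Spec R\to\sY}$ past $-\otimes_A B_{\vec{i}}$, justified by the observation that each $B_{\vec{i}}$ is a perfect (hence dualizable) $A$-module so that $-\otimes_A B_{\vec{i}}$ is a right adjoint. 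This step is the crux, and it is correct: dualizability of $B_{\vec{i}}$ is exactly what you need, and this is where finite generation of $I$ enters your version of the argument. Both arguments are purely formal and of comparable length. The paper's version bundles the relevant exactness properties into the abstract fact that $I$-completion preserves limits and avoids unwinding the Koszul-type colimit; yours unwinds it and trades that for the concrete dualizability of the $B_{\vec{i}}$'s, which is arguably more transparent to a reader who has not internalized the SAG machinery. One small presentational point: it is worth making explicit that $\lim_{\Spec R\to\sY}$ here is a limit in $\mrm{D}(A)$ (not, say, in animated rings), since that is the category in which the dualizability argument lives.
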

\begin{proof} Fiber product $-\times_{\Spec A} \Spf A^\wedge_I$ commutes with all colimits since colimits in any $\infty$-topos, in particular $\mrm{PStk}$ or $\mrm{PStk}/\! \Spec A$, are universal. Thus the functor $\sY\mapsto \mc O(\sY^\wedge_I)$ sends colimits to limits; since derived $I$-completion preserves limits, the same is true for $\mc Y\mapsto \mc O(\sY)^\wedge_I$. Any prestack $\sY$ is identified with the colimit $\colim_{\Spec B\ra \sY} \Spec B$, thus by the above we can reduce to the case $\mc Y=\Spec B$, where we have canonical the identification  
	$\mc O(\sY^\wedge_I) \simeq B^\wedge_{I}$ (see Remark~\ref{rem:compare}).
\end{proof}	

\begin{rem}
	Note that in the context of Lemma \ref{lem:affine completion vs derived} we have an isomorphism $\sY^\wedge_I \simeq \sY^\curlywedge_I $, since in this case $\Spf A^\wedge_I \simeq \Spf A^\curlywedge_I $ by Lemma \ref{lem:two completions coincide}.
\end{rem}

\sssec{Completed vector bundles} Recall that for an animated $\mbb Q$-algebra $R$ we have an equivalence $\CAlg_{R}^{\mrm{an}}\simeq \CAlg(D(R)_{\ge 0})$, where $\CAlg$ denotes the category of $\mbb E_\infty$-algebras.

 Let $\sX$ be a prestack over $\mbb Q$. Consider the free $\mbb E_\infty$-algebra functor $\Sym_{\mc O_\sX}\colon \mrm{D}_{\mrm{qc}}(\sX) \ra \CAlg(\mrm{D}_{\mrm{qc}}(\sX))$; in the limit presentations $$\mrm{D}_{\mrm{qc}}(\sX)\simeq \lim_{\Spec R\ra \sX} D(R) \qquad \qquad \CAlg(\mrm{D}_{\mrm{qc}}(\sX))\simeq \lim_{\Spec R\ra \sX} \CAlg( D(R))$$ it is given by $\lim_{\Spec R\ra \sX} \Sym_R$ where $\Sym_R$ is the free $\mbb E_\infty$-algebra functor on $ D(R)$. Functor $\Sym_{\mc O_\sX}$ is naturally $\mbb N$-graded: $\Sym_{\mc O_\sX}\!(\mc F)\simeq \oplus_{i=0}^\infty \Sym_{\mc O_\sX}^i(M)\simeq \oplus_{i=0}^\infty (\underbrace{\mc F\otimes_{\mc O_{\sX}} \ldots \otimes_{\mc O_{\sX}} \mc F}_i)_{\Sigma_i}$ and promotes to a functor with values in commutative algebras in the $\mbb N$-graded category\footnote{With the monoidal structure given by Day's convolution: $(\mc F\otimes \mc G)_n=\otimes_{i+j=n} \mc F_i\otimes \mc G_j$.} $\mrm{D}_{\mrm{qc}}(\sX)^{\mbb N\mrm{-gr}}:= \mrm{Fun}(\mbb N, \mrm{D}_{\mrm{qc}}(\sX))$. Since the subcategory $\mrm{D}_{\mrm{qc}}(\sX)_{\ge 0}\subset \mrm{D}_{\mrm{qc}}(\sX)$ is closed under the monoidal structure, $\Sym_{\mc O_\sX}$ also restricts to a functor $$\mrm{D}_{\mrm{qc}}(\sX)_{\ge 0} \ra \CAlg(\mrm{D}_{\mrm{qc}}(\sX)^{\mbb N\mrm{-gr}}_{\ge 0}).$$
 
 Let $n\ge 0$ be a number and also consider the category 
 $$\mrm{D}_{\mrm{qc}}(\sX)^{[0,n]\mrm{-gr}}:=\mrm{Fun}([0,n], \mrm{D}_{\mrm{qc}}(\sX))$$ 
of quasi-coherent sheaves graded by numbers from $0$ to $n$. There is a restriction functor $$(-)^{\le n}\colon \mrm{D}_{\mrm{qc}}(\sX)^{\mbb N\mrm{-gr}}\ra \mrm{D}_{\mrm{qc}}(\sX)^{[0,n]\mrm{-gr}}$$ which has a fully faithful left and right adjoint $\iota_n\colon \mrm{D}_{\mrm{qc}}(\sX)^{[0,n]\mrm{-gr}} \subset \mrm{D}_{\mrm{qc}}(\sX)^{\mbb N\mrm{-gr}}$ (with $\iota_n(\mc F)_i = \mc F_i$ if $0\le i \le n$ and $\iota_n(\mc F)_i=0$ for $i>n$). In fact, $\mrm{D}_{\mrm{qc}}(\sX)^{[0,n]\mrm{-gr}}$ is a localization of $\mrm{D}_{\mrm{qc}}(\sX)^{\mbb N\mrm{-gr}}$ with respect to the class of morphisms $f$ such that $\mrm{fib}(f)_i=0$ for $i\le n$. The monoidal structure on $\mrm{D}_{\mrm{qc}}(\sX)^{\mbb N\mrm{-gr}}$ preserves that class and so induces a monoidal structure on $\mrm{D}_{\mrm{qc}}(\sX)^{[0,n]\mrm{-gr}}$, such that the functors $(-)^{\le n}$ and $\iota_n$ lift to a strong and lax symmetric monoidal functors correspondingly. In particular, they induce functors
$$
\iota_n\colon  \CAlg(\mrm{D}_{\mrm{qc}}(\sX)^{[0,n]\mrm{-gr}}) \leftrightarrows \CAlg(\mrm{D}_{\mrm{qc}}(\sX)^{\mbb N\mrm{-gr}}) \ \! :\! (-)^{\le n}
$$
with $(-)^{\le n}$ being the left adjoint to $\iota_n$. Functor $\iota_n$ is fully-faithful and its image is spanned by those commutative algebras in $\mrm{D}_{\mrm{qc}}(\sX)^{\mbb N\mrm{-gr}}$ whose underlying object lies in $\mrm{D}_{\mrm{qc}}(\sX)^{[0,n]\mrm{-gr}}$.

We define $\Sym^{\le n}_{\mc O_\sX}\colon \mrm{D}_{\mrm{qc}}(\sX) \ra \CAlg(\mrm{D}_{\mrm{qc}}(\sX)^{[0,n]\mrm{-gr}})$ as the composition $(-)^{\le n}\circ \Sym^\star_{\mc O_\sX}$. For a given $\sE\in  \mrm{D}_{\mrm{qc}}(\sX)$ one has:
\begin{equation}\label{eq:maps from graded truncation}
\mrm{Map}_{\CAlg(\mrm{D}_{\mrm{qc}}(\sX)^{\mbb N\mrm{-gr}})}(\Sym^\star_{\mc O_X}(\mc E), \iota_n(\sA))\simeq \mrm{Map}_{\CAlg(\mrm{D}_{\mrm{qc}}(\sX)^{[0,n]\mrm{-gr}})}(\Sym^{\le n}_{\mc O_\sX}(\sE), \sA)
\end{equation}
for any $\sA\in  \CAlg(\mrm{D}_{\mrm{qc}}(\sX)^{[0,n]\mrm{-gr}})$. The same discussion applies to $\mrm{D}_{\mrm{qc}}(\sX)$ replaced by $\mrm{D}_{\mrm{qc}}(\sX)_{\ge 0}$.

%

\begin{defn}Let $\sX$ be a prestack over $\mbb Q$ and let $\sE\in \mrm{D}_{\mrm{qc}}(\sX)_{\ge 0}$ be a connective quasi-coherent sheaf. We define the corresponding \textbf{(generalized) vector bundle} to be the relative spectrum
$$
\mbf{V}_{\sX}(\sE):=\Spec_{\sX}(\Sym_{\mc O_{\mc X}}\!(\sE)).
$$
For any $n\in \mbb N$ will also denote by 
$$
\mbf V_{\sX}^{\le n}(\sE):=\Spec_{\sX}(\Sym^{\le n}_{\mc O_{\mc X}}\!(\sE))
$$
the relative spectrum of the corresponding truncation. Note that $\mbf V_{\sX}^{\le 0}(\sE)\simeq \Spec_{\sO_{\sX}}(\mc O_{\sX})\simeq \sX$. 
\end{defn}
\begin{exam}
	In the case $\sE\in \Perf(\sX)$ one can view $\mbf{V}_{\sX}(\sE)$ as the total space of the dual perfect complex $\sE^\vee \in \Perf(\sX)$. In general we do not impose any dualizability assumptions on $\sE$ and thus call the corresponding vector bundles ``generalized".
\end{exam}	
\begin{rem}
By the definition of relative spectrum we have an affine map $$p\colon \mbf V_{\sX}(\sE)\ra \sX$$ whose fiber over a given point $(f\colon \Spec R \ra \sX)\in \sX(R)$ is naturally identified with the derived affine scheme $\Spec(\Sym_R(f^*\sE)))$. Similarly, we have an affine map $\mbf V^{\le n}_{\sX}(\sE)\ra \sX$  whose fiber over $f\colon \Spec R \ra \sX$ is naturally identified with $\Spec(\Sym_R^{\le n} (f^*\sE)))$. Commutative algebra maps $\Sym_{\mc O_{\mc X}}\!(\sE) \ra \Sym^{\le n}_{\mc O_{\mc X}}\!(\sE)$ induce closed embeddings 
$$
z_n\colon \mbf V^{\le n}_{\sX}(\sE) \ra \mbf V_{\sX}(\sE)
$$
of prestacks over $\sX$. The map $z_0\colon \sX =\mbf V^{\le 0}_{\sX}(\sE)\ra \mbf V_{\sX}(\sE)$ models the ``embedding of the zero section".
\end{rem}

\begin{defn}
	The \textbf{completed (generalized) vector bundle} $\widehat{\mbf V}_{\sX}(\sE)$ is the absolute formal completion $\mbf V_{\sX}(\sE)^\wedge_{z_0}$ of $\mbf V_{\sX}(\sE)$ along the zero section $z_0\colon \sX \ra \mbf V_{\sX}(\sE)$.
\end{defn}	

\begin{rem} 
	For any $f\colon \Spec A \ra \sX$ and $n\ge 0$, 
	$$\pi_0(\Sym_A^{\le n}(f^*\sE))\simeq \Sym_{\pi_0(A)}^{\le n} \pi_0(f^*\sE).$$
	We have that the kernel $I$ of the natural surjection 
	$$\pi_0(\Sym_A^{\le n}(f^*\sE)) \twoheadrightarrow \Sym_{\pi_0(A)}^{\le 0}\! \pi_0(f^*\sE)\simeq \pi_0(A)$$ is nilpotent (of order $n$). Consequently, given any $f\colon \Spec A \ra \mbf V_{\sX}^{\le n}(\sE)$, the preimage $f^{-1}(I)\subset A$ is also a nilpotent ideal. It follows that the maps $z_n\colon \mbf V_{\sX}^{\le n}(\sE) \ra \mbf V_{\sX}(\sE)$ all factor uniquely through the formal completion $\widehat{\mbf V}_{\sX}(\sE)$.
\end{rem}	

By the above remark, taking colimit over $n$, we get a natural map 
$$
\colim_n \mbf V^{\le n}_\sX(\sE) \ra \widehat{\mbf V}_{\sX}(\sE)
$$
of prestacks over $\sX$. Even though this map is not necessarily an isomorphism (see Remark \ref{rem:not a completion in general}) it is if we consider the source and target as functors on bounded rings:
\begin{prop}\label{prop:completion at 0 section} Let $\sX$ be a convergent prestack and let $\sE\in \mrm{D}_{\mrm{qc}}(\sX)_{\ge 0}$. Assume that for any $f\colon \Spec A \ra \sX$ the $\pi_0(A)$-module $\pi_0(f^*\sE)$ is finitely generated. Then the natural map
	\[
	{\colim_n}^{\mrm{conv}} \mbf V^{\le n}_\sX(\sE)\ra \widehat{\mbf V}_\sX(\sE)
	\]
	is an isomorphism, where $\colim^{\mrm{conv}}$ denotes the colimit in convergent prestacks.
\end{prop}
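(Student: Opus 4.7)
My plan is to check the statement on truncated test rings after reducing to the affine case. Both sides of the comparison map are convergent prestacks: the target by Remark~\ref{rem:properties of completion}\itemref{rem:laftnes and convergence of formal completion} (as a pullback of convergent prestacks, since de Rham prestacks are always convergent), and the source by the very construction of the convergent colimit. So it suffices to check that the map is an equivalence after evaluation on each truncated $A \in \CAlg^{\mrm{an},[0,\infty)}$. Both $\mbf V_\sX^{\le n}(\sE)$ and $\widehat{\mbf V}_\sX(\sE)$ commute with base change along $f\colon \Spec A \to \sX$---the former from the pullback compatibility of $\Sym^{\le n}_{\mc O_\sX}$, the latter by Remark~\ref{rem:pull-back of a formal completion}---so one reduces to the following affine statement: for truncated $A$ and a connective $A$-module $M := f^*\sE$ with $\pi_0 M$ finitely generated over $\pi_0 A$, the natural map
\[
\colim_n{}^{\mrm{conv}} \Spec(\Sym_A^{\le n}(M)) \to \widehat{\Spec(\Sym_A(M))}
\]
(completion along the zero section) is an equivalence.

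Next, write $R := \Sym_A(M)$. The zero section $\Spec A \hookrightarrow \Spec R$ is cut out by the augmentation ideal $J \subset \pi_0 R$, which is finitely generated by the images $m_1, \ldots, m_k$ of a generating set of $\pi_0 M$. Example~\ref{exam:completion of affine thing at finitely generated ideal} then provides the explicit Koszul-type presentation
\[
\widehat{\Spec R} \simeq \colim_{i_1, \ldots, i_k \ge 0} \Spec\bigl([R/m_1^{i_1}] \otimes_R \cdots \otimes_R [R/m_k^{i_k}]\bigr).
\]
The main task is to compare this with $\colim_n \Spec(\Sym_A^{\le n}(M))$. On the classical level ($\pi_0$) there is a cofinality of the associated ideal-power systems: one has $J^{n+1} \subset (m_1^{i_1}, \ldots, m_k^{i_k})$ whenever $n+1 \ge k \cdot \max_j i_j$ (by a pigeonhole argument on monomials of degree $\ge k \cdot \max_j i_j$), and conversely $(m_1^{n+1}, \ldots, m_k^{n+1}) \subset J^{n+1}$. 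Both sides correspond to $R$-algebras $S$ in which $J$ maps to a nilpotent ideal of $\pi_0 S$, and this classical cofinality, combined with the universal properties of the graded truncation $\Sym_A^{\le n}(M)$ and of the Koszul-type derived quotients, produces mutually cofinal directed systems in the $\infty$-category of $\mbb E_\infty$-$R$-algebras, inducing the desired equivalence of convergent colimits.

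The main obstacle lies in lifting the classical cofinality to the derived level. Indeed, $\Sym_A^{\le n}(M)$ is defined via graded truncation rather than as a derived quotient of $R$ by $J^{n+1}$, so matching the universal properties requires a careful analysis. Concretely, one should realize $\Sym_A^{\le n}(M)$ as the pushout $\Sym_A(M) \otimes_{\Sym_A(\Sym^{n+1}_A(M))} A$ of $\mbb E_\infty$-$A$-algebras, so that a map $\Sym_A^{\le n}(M) \to S$ is equivalent to an $A$-module map $\phi\colon M \to S$ together with a coherent nullhomotopy of $\phi^{n+1}\colon \Sym_A^{n+1}(M) \to S$. Using finite generation of $\pi_0 M$ to reduce such nullhomotopy data to the statement that the ideal generated by $\phi(m_1), \ldots, \phi(m_k)$ is nilpotent in $\pi_0 S$, one obtains on truncated test rings an identification of both colimits with the set of pairs $(f, \phi)$ where $\phi\colon M \to A$ has image generating a nilpotent ideal in $\pi_0 A$---and this is exactly the functor of points of $\widehat{\mbf V}_\sX(\sE)$.
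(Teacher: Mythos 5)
Your reduction to the affine case is fine and coincides with the paper's: both sides are convergent and affine over $\sX$, so it suffices to check the map on $R$-points for truncated $R$, fiberwise over $\sX(R)$. Your strategy thereafter, however, diverges from the paper's and contains a genuine error.

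The gap is the claimed pushout presentation
\[
\Sym_A^{\le n}(M) \;\simeq\; \Sym_A(M) \otimes_{\Sym_A(\Sym^{n+1}_A(M))} A
\]
in $\mbb E_\infty$-$A$-algebras. This identity is false in general. The right-hand side is the derived quotient of $\Sym_A(M)$ by the ideal generated by $\Sym^{n+1}_A(M)$, whereas $\Sym_A^{\le n}(M)$ is the graded truncation $\oplus_{i=0}^n\Sym^i_A(M)$. These agree on $\pi_0$, but the derived quotient typically has higher homotopy. Concretely, take $A=\mbb Q$, $M=\mbb Q^2$, $n=1$. Then $\Sym^{\le 1}_A(M)=\mbb Q\oplus\mbb Q^2$ is discrete, but the proposed pushout is $\mbb Q[x,y]\otimes^{\mbb L}_{\mbb Q[u,v,w]}\mbb Q$ with $u\mapsto x^2$, $v\mapsto xy$, $w\mapsto y^2$, i.e.\ the Koszul complex of the non-regular sequence $(x^2,xy,y^2)$ on $\mbb Q[x,y]$. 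The Eagon–Northcott relations $(y,-x,0)$, $(0,y,-x)$ give nonzero classes in $H_1$, so the pushout is not discrete and the two sides differ. Consequently the identification of $\mrm{Map}_{\CAlg_A}(\Sym^{\le n}_A(M),B)$ with pairs $(\phi,\text{nullhomotopy of }\phi^{n+1})$ does not hold, and the cofinality argument you build on top of it does not go through. Your classical cofinality observation (that the two ideal systems $\{J^{n+1}\}$ and $\{(m_1^{i_1},\ldots,m_k^{i_k})\}$ interleave) is correct, but it lives entirely on $\pi_0$ and does not by itself control the derived colimit of affine schemes.

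Notice that $\Sym^{\le n}_A(M)$ only has a clean universal property for maps into \emph{$[0,n]$-graded} algebras (this is the content of \eqref{eq:maps from graded truncation}), not for maps into arbitrary $\mbb E_\infty$-$A$-algebras; this is precisely why the ungraded pushout description fails. The paper's proof works around this by choosing a surjection $R^{\oplus m}\twoheadrightarrow\pi_0 M$, embedding $\mbf V_{\Spec A}(M)$ into $\Spec A\times\mbb A^m$ (which gives a description of $\widehat{\mbf V}$ not via general Koszul data but via $(\mbb A^m)^\wedge_0$), and then showing, for each $s\ge 0$, that the pro-systems of $\tau_{\le s}$-truncations of $\Sym^{\le n}_R(R^{\oplus m})\otimes_{\Sym_R(R^{\oplus m})}\Sym_R(M)$ and of $\Sym^{\le n}_R(M)$ agree. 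The crucial step---producing a factorization of $\tau_{\le s}\Sym_R(M)\to\tau_{\le s}(\Sym^{\le n}_R(R^{\oplus m})\otimes\cdots)$ through $\tau_{\le s}\Sym^{\le N}_R(M)$ for $N\gg n$---exploits the \emph{graded} universal property together with a filtration on $\Sym_R(M)$ whose associated graded involves $\Sym^\star_R(M'[1])$ for $M'[1]=\mrm{cofib}(R^{\oplus m}\to M)$, plus the connectivity bound that $\Sym^i_R(M'[1])$ is $i$-connective. None of this appears in your argument, and without some substitute controlling the discrepancy between the graded truncation and naive derived quotients, the proof does not close.
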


\begin{proof} 
See Appendix \ref{app:completion at 0 section}.
\end{proof}

\begin{rem}\label{rem:not a completion in general}
	It is \textit{not} necessarily true that 
	$$
	\colim_n \mbf V^{\le n}_\sX(\sE) \simeq \widehat{\mbf V}_\sX(\sE).
	$$ 
	For example, consider $\sX=\Spec \mbb Q$ and $\sE=k[2]\in \mrm{D}(\mbb Q)_{\ge 0}$. Then $\pi_*(\Sym_{\mbb Q}(\mbb Q[2]))\simeq \mbb Q [x]$ and $\pi_*(\Sym_{\mbb Q}^{\le n}(\mbb Q[2]))\simeq \mbb Q[x]/x^{n+1}$ with $\deg x=2$. Since $\pi_0(\Sym_{\mbb Q}( \mbb Q[2]))\simeq  \mbb Q$, the zero section map is a closed embedding which is an isomorphism on $\pi_0$, and thus (by Example \ref{ex:affine completion})
	$$
	\widehat{\mbf V}_\sX(\sE)\simeq \mbf V_\sX(\sE).
	$$
	The identity map $\Sym_{ \mbb Q}( \mbb Q[2])\xrightarrow{\mrm{id}} \Sym_{ \mbb Q}( \mbb Q[2])$ gives a $\Sym_{ \mbb Q}( \mbb Q[2])$-point of $\widehat{\mbf V}_\sX(\sE)$; however it doesn't factor through $\colim_n \mbf V_\sX^{\le n}(\sE)$ since for any algebra map $\Sym_{ \mbb Q}( \mbb Q[2]) \ra \Sym^{\le n}_{ \mbb Q}( \mbb Q[2])$ the image of the generator $x\in \pi_2(\Sym_{\mbb Q}(\mbb Q[2]))$ would necessarily be nilpotent, while its image under the identity map is clearly not.
\end{rem}

\subsection{Loop prestacks and their completions} We now discuss Hochschild homology under its geometric guise of loop prestacks. We will work towards the Atiyah-Segal theorem for Hochschild homology as formulated in Theorem~\ref{thm:completion-th-}. For this, it will be necessary to explain the relationship between various completions of the loop prestack. We note that even if our stacks are over a field $k$, for applications to $K$-theory we are still forced to consider Hochschild homology of $\Perf(\sX)$ as a $\mbb Q$-linear category.

\sssec{Hochschild homology as functions on loops}\label{sec:loops}
When $\sX$ is a perfect stack, the Hochschild homology $HH(\sX/\mbb Q):= HH({\Perf}(\sX)/\mbb Q)$ relative to $\mbb Q$ admits a 
geometric description as functions on another stack whose definition we will recall below. 

Let $\underline{S^1}$ denotes the constant prestack corresponding to the circle; this is an instance of Example \ref{ex:constant prestack}.
\begin{defn} Let $\sX\in \mrm{dPStk}_{\mbb Q}$ be a derived prestack.
	\begin{enumerate}
\item  The {\bf free loop prestack} $\sL_{\mbb Q}\sX$ is defined as the mapping prestack
		$$
		\mathcal L_{\mbb Q}\sX := \underline{\sM\mrm{aps}}(\underline{S^1},\sX).
		$$ 
		Since (see Example \ref{ex:constant prestack}) one has $$\underline{S^1}\simeq \Spec \mbb Q\!\!\!\!\!\!\!\underset{\Spec \mbb Q\sqcup \Spec \mbb Q}{\bigsqcup}\!\!\!\!\!\!\! \Spec \mbb Q\in \mrm{PStk}_{\mbb Q},$$ 
		the prestack $\mathcal L_{\mbb Q}\sX$  is explicitly described as the fiber product
		$$\mathcal L_{\mbb Q}\sX \simeq \sX\underset{\sX\times \sX}{\times} \sX$$
		with respect to the diagonal maps $\Delta_{\sX}\colon \sX \ra \sX\times \sX$.
		
	\item	Restriction along the projection map $S^1 \to \mrm{pt}$ induces an inclusion of \textbf{constant loops} inside all loops:
		\[
		c_\sX\colon \sX \to \sL_{\mbb Q} \sX.
		\]

		\end{enumerate}
\end{defn}

\begin{rem}\label{rem:properties of the loop stack}
	Since sheaves are closed under limits, if $\sX$ is a derived stack, then so is $\sL_{\mbb Q}\sX$. Similarly, since convergent prestacks are closed under finite limits, if $\sX$ is convergent, so is $\sL_{\mbb Q}\sX$. Finally, if $\sX$ is a geometric stack, then so is $\sL_{\mbb Q}\sX$ (see \cite[Lemma 2.1.12]{Chen_2020}).
\end{rem}

\begin{rem}\label{rem:relative loop stacks} If $\sX$ comes with a map to a prestack $\sS$, we can also consider the relative version of the free loop prestack:
	$$
	\sL_{\sS}\sX \simeq \sX\underset{\sX\times_{\sS} \sX}{\times} \sX,
	$$
	that is the mapping prestack from $S^1$ to $\sX$ in the category of prestacks over $\sS$. 
	One has a natural map $\sL_{\sS}\sX\ra \sL_{\mbb Q}\sX$ whose composition with $
	\sL_{\mbb Q}\sX \ra \sL_{\mbb Q} \sS
	$ factors naturally through $c_\sS\colon \sS \ra  \sL_{\mbb Q} \sS$, and it's not hard to see that in fact
	$$
	\sL_{\sS}\sX\simeq \sL_{\mbb Q} \sX \times _{\sL_{\mbb Q} \sS} \sS.
	$$
	When $\sS=\Spec R$ for some $R\in \CAlg^{\mrm{an}}_{\mbb Q}$ we will denote $\sL_{\sS}\sX$ by $\sL_R\sX$.
\end{rem}

Recall that a derived stack $\sX$ is called perfect if it has affine diagonal and the natural functor $\Ind(\Perf(\sX))\ra \mrm{D}_{\mrm{qc}}(\sX)$ is an equivalence. As explained in \cite[Example~2.2.20]{Chen_2020}, if $\sX$ is a perfect stack then global functions on $\sL_{\mbb Q}\sX$ compute Hochschild homology of $\sX$: 
\begin{prop}\label{prop:loops_hh}
For a perfect stack $\sX$ over $\mbb Q$ we have a natural equivalence 
\[
HH(\sX/\mbb Q) \simeq \mathcal{O}(\sL_{\mbb Q}\sX).
\]
\end{prop}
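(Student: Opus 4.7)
The plan is to deduce this statement from the Ben-Zvi--Francis--Nadler machinery \cite{BFN}, by expressing Hochschild homology as the trace of the identity on a self-dual category and identifying that trace geometrically.

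First, I would recall that Hochschild homology admits a categorical description: for a symmetric monoidal presentable $\infty$-category $\sC$ dualizable over $\mrm{D}(\mbb Q)$, one has a canonical equivalence
\[
HH(\sC/\mbb Q) \simeq \on{ev}_\sC \circ \on{coev}_\sC,
\]
where $\on{coev}_\sC\colon \mrm{D}(\mbb Q) \to \sC \otimes \sC^\vee$ and $\on{ev}_\sC\colon \sC^\vee \otimes \sC \to \mrm{D}(\mbb Q)$ exhibit the self-duality. Applied to $\sC = \mrm{D}_{\mrm{qc}}(\sX) = \mrm{Ind}(\Perf(\sX))$, which is dualizable since $\sX$ is perfect (this is part of the definition, or BFN Proposition~3.24), this gives $HH(\Perf(\sX)/\mbb Q) \simeq HH(\mrm{D}_{\mrm{qc}}(\sX)/\mbb Q)$ by passage to compact objects.

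Second, the key geometric input is the BFN tensor product theorem: for perfect stacks $\sX, \sY$ over $\mbb Q$, external product induces an equivalence
\[
\mrm{D}_{\mrm{qc}}(\sX) \otimes_{\mrm{D}(\mbb Q)} \mrm{D}_{\mrm{qc}}(\sY) \xrightarrow{\simeq} \mrm{D}_{\mrm{qc}}(\sX \times_\mbb Q \sY).
\]
Using this with $\sY = \sX$, the duality data for $\mrm{D}_{\mrm{qc}}(\sX)$ is realized geometrically: the coevaluation becomes pushforward along the diagonal $\Delta_*\colon \mrm{D}(\mbb Q) \to \mrm{D}_{\mrm{qc}}(\sX) \to \mrm{D}_{\mrm{qc}}(\sX \times \sX)$ applied to $\sO_\sX$, while the evaluation is the composition $\mrm{D}_{\mrm{qc}}(\sX \times \sX) \xrightarrow{\Delta^*} \mrm{D}_{\mrm{qc}}(\sX) \xrightarrow{p_*} \mrm{D}(\mbb Q)$.

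Third, I would compute $\on{ev}_\sC \circ \on{coev}_\sC$ applied to the unit $\mbb Q \in \mrm{D}(\mbb Q)$. This composite is
\[
p_* \Delta^* \Delta_* \sO_\sX,
\]
and base change along the Cartesian square defining $\sL_{\mbb Q}\sX$
\[
\begin{tikzcd}
\sL_{\mbb Q}\sX \ar[r] \ar[d] & \sX \ar[d,"\Delta"] \\
\sX \ar[r,"\Delta"'] & \sX \times \sX
\end{tikzcd}
\]
gives $\Delta^* \Delta_* \sO_\sX \simeq \pi_* \sO_{\sL_{\mbb Q}\sX}$ where $\pi\colon \sL_{\mbb Q}\sX \to \sX$ is the projection (the base change uses that $\Delta$ is affine since $\sX$ has affine diagonal, so both sides of the square are perfect and the projection formula applies). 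Composing with $p_*$ and using that $p \circ \pi$ is the structure map $\sL_{\mbb Q}\sX \to \Spec \mbb Q$, we obtain global sections $\sO(\sL_{\mbb Q}\sX)$.

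The main subtlety will be checking that these identifications are functorial enough to match the symmetric monoidal structure inducing $HH$---in particular that the abstract trace really is computed by the above pull-push composite. This is what BFN formalize in \cite[\S 4]{BFN} (their ``character theory''), and I would cite that reference (together with the perfectness hypothesis ensuring the tensor product formula holds) rather than reproving it. The naturality in $\sX$ is automatic from the functoriality of all constructions used.
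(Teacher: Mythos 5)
Your argument is correct and follows essentially the same route the paper takes: the paper's proof is a citation to \cite[Example~2.2.20]{Chen_2020}, whose content is exactly the Ben-Zvi--Francis--Nadler identification of the categorical trace with functions on the self-intersection of the diagonal that you spell out. The only cosmetic point is that $\on{ev}_\sC \circ \on{coev}_\sC$ should be read as that composite applied to the unit $\mbb Q$, which you do correctly in the third step.
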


\begin{rem}
		In the case a perfect stack $\sX$ is equipped with a map to $\Spec R$ one also has an equivalence
	$$
	HH(\sX/R):=HH(\Perf(\sX)/R)\simeq \sO(\sL_R\sX).
	$$
	
\end{rem}

\sssec{Formal loops}
Our next goal is to give a similar geometric description for the right Kan extension of Hochschild homology. 
Following \cite[Section~6]{Ben_Zvi_2012} and \cite[Section~2.1]{Chen_2020} we recall the following definition.

\begin{defn}\label{defn:completed_loops}
Let $\sX\in \mrm{PStk}_{\mbb Q}$ be a prestack. 
The {\bf formal loop}  prestack $\widehat{\sL}_{\mbb Q}\sX$ is defined as the \textit{absolute} derived formal completion of $\sL_{\mbb Q}\sX$ along constant loops $c_\sX\colon \sX \ra \widehat{\sL}_{\mbb Q}\sX$ (in the sense of Definition~\ref{defn:de Rham prestack and completion}).

\end{defn}

\begin{rem}One can instead consider the thin completion $({\sL}_{\mbb Q}\sX)^{\curlywedge}_{c_\sX}$. However, for a geometric stack this does not make a difference:  in this case, by Lemma \ref{lem:two completions of loops are the same} the natural map $({\sL}_{\mbb Q}\sX)^{\curlywedge}_{c_\sX} \simeq  \widehat{\sL}_{\mbb Q}\sX$ is an isomorphism.
	\end{rem}
	

Let us discuss some basic examples of loop prestacks, as well as their completions.

\begin{exam}\label{ex:formal loops in some examples}  
	Let $S=\Spec R$ be a derived affine scheme over $\mbb Q$.
	
	\begin{enumerate}
		\item Let us denote $R\otimes R:=R\otimes_{\mbb Q}R$. By definition, we have $$\sL_{\mbb Q}S:= S\underset{S\times S}{\times} S \simeq \Spec(R\otimes_{R\otimes R}R).$$ 
The embedding of constant loops $c_S\colon S \hookrightarrow \sL_{\mbb Q} S$ corresponds to the multiplication map $$R\otimes_{R\otimes R}R \ra R$$ 
on the level of functions. Note that this map is an isomorphism on $\pi_0$; thus, via Example \ref{ex:affine completion} ${\sL }_{\mbb Q}S$ is already complete along $c_S$; in other words
$$
\widehat{\sL}_{\mbb Q} S\xrightarrow{\sim} \sL_{\mbb Q}S.
$$

\item Let $\sX\ra S$ be a prestack over $S$; let $\sL_R\sX$ be the corresponding relative loop stack. By the above, we have that the map $c_S^{\mrm{cl}}\colon S^{\mrm{cl}}\ra (\widehat{\sL}_{\mbb Q} S)^{\mrm{cl}}$ of the underlying classical stacks is an isomorphism. Following Remark \ref{rem:relative loop stacks}, we then get that the natural map of classical stacks
$$
(\sL_R \sX)^{\mrm{cl}} \ra (\sL_{\mbb Q} \sX)^{\mrm{cl}}
$$
is an isomorphism.

\end{enumerate}
\end{exam}
\begin{exam}[Loop stacks on $BG$]\label{ex:formal loops in some examples2}
Let $G$ be an algebraic group over a field $k\supset \mbb Q$, and let $\sX=BG:= [\Spec k/G]$ be the corresponding classifying stack. We would like to describe the relative loop stacks $\mc L_{k} (BG)$ and $\widehat{\mc L}_{k} (BG)$, as well as say something about $\mc L_{\mbb Q} (BG)$. Let $S:=\Spec k$. 

\begin{enumerate} \item First, we describe $\mc L_k (BG)$. By definition, we have 
$$
\mc L_{k} (BG):=BG\times_{BG\times_S BG}BG.
$$
Let us identify $BG$ with the quotient $[G/(G\times_{S} G)]$ by the action $(h_1,h_2)\circ g=h_1gh_2^{-1}$. This gives an identification
$$\mc L_k (BG)\simeq [(G\times_{S}G)/(G\times_{S} G)],$$
where the action on the right is given by $(h_1,h_2)\circ (g_1,g_2)= (h_1g_1h_2^{-1}, h_1g_2h_2^{-1})$. The quotient of $G\times_{k} G$ by the action of $\{1\}\times G\subset G\times_{S} G$ can be identified with $G$, on which the remaining action of $G\times \{1\}\subset G\times_{k} G$ is given by conjugation $h\circ g= hgh^{-1}$. This gives an identification
$$
\mc L_k (BG)\simeq [G/_{\! \mrm{ad}}G]
$$
as the quotient stack for the adjoint action of $G$ on itself.

 Via this identification the inclusion of constant loops $c_{\sX} \colon \sX\ra \mc L_k \sX$ is given by the embedding 
$$
[e/G]\colon BG \hookrightarrow [G/G]
$$
where $e\colon S\hookrightarrow G$ is the unit element. By Corollary \ref{cor:completion of quotients} the corresponding derived completion $\widehat{\mc L}_k \sX \ra \mc L_k\sX$ is given by the embedding 
$$
\widehat{\mc L}_k(BG)\simeq [G^\wedge_e/G] \hookrightarrow [G/G]\simeq \mc L_k(BG)
$$
of the quotient of the corresponding formal group by the conjugation of $G$.

\item \label{item:relative loops of BG}
By Example \ref{ex:formal loops in some examples}(2) we also get an isomorphism of classical stacks
$$
(\mc L_{\mbb Q} (BG))^{\mrm{cl}} \xrightarrow{\sim} (\mc L_{k} (BG))^{\mrm{cl}}\simeq [G/G];
$$
the map $c_{BG}\colon BG \ra \mc L_{\mbb Q} (BG)$ factors through $\tau^{\mrm{cl}}(\mc L_{\mbb Q} (BG))\simeq [G/G]$.
The formal loops $\widehat{\mc L}_{\mbb Q} (BG)$ can be described as the completion of $\mc L_{\mbb Q} (BG)$ along $BG\hookrightarrow [G/G]$ in its classical locus.
\end{enumerate}
\end{exam}


More generally, for $\sX=[X/G]$ the difference between $\widehat{\sL}_{\mbb Q}\sX$ and ${\sL}_{\mbb Q}\sX$ is controlled by the difference between $\widehat{\sL}_{\mbb Q}(BG)$ and ${\sL}_{\mbb Q}(BG)$:

\begin{prop}\label{prop:pull-back square for formal loops} Let $\mbb Q\subset k$ be a field.  Let $X$ be a derived algebraic space endowed with an action of an algebraic group $G$, both over $k$. 
	Then $\widehat{\sL}_{\mbb Q}([X/G])$ is identified with the fiber product
	$$
	\begin{tikzcd}
		\widehat{\sL}_{\mbb Q}([X/G])\arrow[r]\arrow[d]& {\sL}_{\mbb Q}([X/G])\arrow[d]\\
		\widehat{\mc L}_{\mbb Q}(BG)  \arrow[r] & \mc L_{\mbb Q}(BG).
	\end{tikzcd}	
	$$
	
\end{prop}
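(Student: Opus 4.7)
The plan is to apply Remark~\ref{rem:pull-back of a formal completion} to the commutative square in question, with horizontal arrows the constant-loops inclusions and vertical arrows induced by $[X/G]\ra BG$. That remark produces the desired identification $\widehat{\sL}_{\mbb Q}([X/G])\simeq \sL_{\mbb Q}([X/G])\times_{\sL_{\mbb Q}(BG)}\widehat{\sL}_{\mbb Q}(BG)$ as soon as the corresponding square of classical stacks is Cartesian.

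Next I plan to check classical Cartesian-ness. First, I will replace $\sL_{\mbb Q}$ by the $k$-relative loop stack $\sL_k$ throughout, using Example~\ref{ex:formal loops in some examples}(2) which shows $(\sL_k\sY)^{\mrm{cl}}\xrightarrow{\sim}(\sL_{\mbb Q}\sY)^{\mrm{cl}}$ for any $\sY$ over $k$ (applied to $\sY=BG$ and $\sY=[X/G]$). By Remark~\ref{rem:relative loop stacks}, the pullback of the right column along $c_{BG}$ is the $BG$-relative loop stack $\sL_{BG}([X/G])$, so the question reduces to showing $[X/G]\ra \sL_{BG}([X/G])$ is an equivalence on classical truncations.

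To compute $\sL_{BG}([X/G])$ directly, I observe that $[X/G]\times_{BG}[X/G]\simeq [X\times_k X/G]$ (with $G$ acting diagonally), whence
\[
\sL_{BG}([X/G])\simeq [X/G]\times_{[X\times_k X/G]}[X/G]\simeq [\sL_k X/G]
\]
for $\sL_k X:=X\times_{X\times_k X}X$ the derived loop space of $X$. So the classical Cartesian-ness reduces to showing $c_X\colon X\to \sL_k X$ is a classical isomorphism; the $G$-quotient version then follows formally.

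This final claim is straightforward since $X$ is a derived algebraic space: for any classical ring $T$, the value $X(T)=X^{\mrm{cl}}(T)$ is a set, so the fiber product $\sL_k X(T)=X(T)\times_{X(T)\times X(T)}X(T)$ collapses via the diagonal to $X(T)$, with the resulting equivalence being precisely $c_X^{\mrm{cl}}$. I expect no real obstacles; the main content of the proof is the geometric identification $\sL_{BG}([X/G])\simeq [\sL_k X/G]$, and the rest is a formal application of the completion--pullback machinery developed earlier in the paper.
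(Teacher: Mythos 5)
Your proof is correct, but it takes a genuinely different route from the paper's. The paper disposes of this proposition in one line by citing Lemma~\ref{lem:completion passes along representable maps}, which asserts the Cartesian-ness of the square $\widehat{\sL}_S\sX \to \sL_S\sX \leftarrow \sL_S\sY \to \widehat{\sL}_S\sY$ for \emph{any} morphism $\sX\to\sY$ of derived algebraic $S$-stacks that is representable in derived algebraic spaces. The proof of that lemma goes through Remark~\ref{rem:pull-back of a formal completion} (as you do) and then verifies classical Cartesian-ness by rewriting $(\sL_S\sZ)^{\mrm{cl}}$ as the relative inertia stack $\sI(\sZ^{\mrm{cl}})$; the essential input is the Stacks-project fact (tag 04YY) that a morphism representable in algebraic spaces induces an injection on automorphism groups, so that the relative inertia of $f^{\mrm{cl}}$ is trivial.

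Your proof unwinds this in the special case at hand instead of quoting the general lemma. The reduction to $\sL_{BG}([X/G])$ via Remark~\ref{rem:relative loop stacks}, the identification $[X/G]\times_{BG}[X/G]\simeq[(X\times_k X)/G]$ leading to $\sL_{BG}([X/G])\simeq[\sL_k X/G]$, and the final collapse of $\sL_k X(T)\to X(T)$ for classical $T$ using that a derived algebraic space takes set values on classical rings — this chain is sound, and each reduction (compatibility of $(-)^{\mrm{cl}}$ with quotients and fiber products, functoriality of $c_{(-)}$) is standard. What you trade away is generality: the paper's Lemma~\ref{lem:completion passes along representable maps} applies to any representable map and is reused in the appendix to compare unipotent and constant loop completions, whereas your argument is tailored to $[X/G]\to BG$. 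What you gain is a hands-on computation that makes the triviality of the relative inertia completely explicit, without needing to invoke the inertia-stack formalism. Either route is acceptable; only be aware that if you later need the statement for a general representable morphism (as the paper does elsewhere), you will want the lemma as stated rather than your ad hoc computation.
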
	
\begin{proof}
	This follows from Lemma \ref{lem:completion passes along representable maps} applied to $\sX=[X/G]$ and $\sY=BG$.
\end{proof}

\sssec{Exponential map} 
Recall that for a derived algebraic stack $\sX$ over $\mbb Q$  the suspension 
 $\mbb L_{\sX/\mbb Q}[1]\in \mrm{D}_{\mrm{qc}}(\sX)$ is connective (\cite[Chapter 1, Proposition 7.4.2]{GRderalg2}). 
 In particular, we can consider the corresponding generalized vector bundle $\mbf{V}_{\sX}(\mbb L_{\sX/\mbb Q}[1])$.

\begin{thm}[{\cite[Theorem~1.23]{Ben_Zvi_2012}}]\label{thm:exponent}
Let $\sX$ be a derived geometric stack over  $\mbb Q$. 
Then there is a natural equivalence 
\[
\mrm{exp}_\sX\colon \widehat{\mbf{V}}_{\sX}(\mbb L_{\sX/\mbb Q}[1]) \simeq \widehat{\sL}_{\mbb Q}\sX.
\]
\end{thm}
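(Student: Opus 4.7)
The strategy follows Ben-Zvi--Nadler \cite{Ben_Zvi_2012}, reduced to the affine case and then extended by a Kan-extension argument, with care taken in the non-finite type setting as flagged in Remark~\ref{rem:intro:completions}.

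\textbf{Step 1 (Affine case).} For $A \in \CAlg^{\mrm{an}}_{\mbb Q}$, classical HKR furnishes an equivalence of commutative $A$-algebras
\[
A \otimes_{A \otimes_{\mbb Q} A} A \;\simeq\; \Sym_A(\mbb L_{A/\mbb Q}[1]),
\]
which geometrically translates to $\sL_{\mbb Q}\Spec A \simeq \mbf V_{\Spec A}(\mbb L_{A/\mbb Q}[1])$. By Example~\ref{ex:formal loops in some examples}(1) the constant-loop map $c_{\Spec A}$ is an isomorphism on classical truncations, so $\widehat{\sL}_{\mbb Q}\Spec A = \sL_{\mbb Q}\Spec A$. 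On the other hand, since $\mbb L_{A/\mbb Q}[1]$ is connective with $\pi_0 = \pi_{-1}(\mbb L_{A/\mbb Q}) = 0$ (as $A$ is connective over $\mbb Q$), the zero section is an isomorphism on $\pi_0$ and Example~\ref{ex:affine completion} yields $\widehat{\mbf V}_{\Spec A}(\mbb L_{A/\mbb Q}[1]) = \mbf V_{\Spec A}(\mbb L_{A/\mbb Q}[1])$. Together these produce an equivalence $\exp_{\Spec A}$ natural in $A$.

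\textbf{Step 2 (Both sides are left Kan extended from $\mrm{dAff}_{\mbb Q}$).} Any derived geometric stack $\sX$ can be written as an appropriate colimit of its smooth affine atlas, and the claim is that both $\sX \mapsto \widehat{\sL}_{\mbb Q}\sX$ and $\sX \mapsto \widehat{\mbf V}_{\sX}(\mbb L_{\sX/\mbb Q}[1])$ preserve this presentation. For the vector bundle side, this is obtained from flat/smooth descent for the cotangent complex (Proposition~\ref{prop:etale-descen-cot}), compatibility of relative $\Spec$ and the formal completion at the zero section with the relevant colimits (cf.\ Remark~\ref{rem:pull-back of a formal completion} and Remark~\ref{rem:properties of completion}\itemref{item:completion of the colimit}), and the already established fact that $\widehat{\mbf V}$ agrees with $\mbf V$ on affines. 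For the loop side, one uses the description of $\widehat{\sL}_{\mbb Q}\sX$ as a fiber product involving the absolute de Rham prestack $\sX_{\mrm{dR}}$ (Definition~\ref{defn:de Rham prestack and completion}); the key point is that the construction $\sX \mapsto \sX_{\mrm{dR}}$ and hence the fiber product defining $\widehat{\sL}_{\mbb Q}$ interact well with the smooth descent presentation, since the relevant completion is controlled by the (co)tangent complex.

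\textbf{Step 3 (Conclusion).} The exponential map $\exp_{(-)}$, constructed on affines in Step~1 and natural there, extends uniquely (via Step~2) to a natural transformation on derived geometric stacks. Since both functors are left Kan extended from $\mrm{dAff}_{\mbb Q}$ and the transformation is an equivalence on affines, it is an equivalence on all derived geometric stacks.

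\textbf{Main obstacle.} The delicate part is Step~2, specifically the left Kan extension property for $\widehat{\sL}_{\mbb Q}$. In the non-finite type setting (which is unavoidable here, since we must regard $k$-stacks as stacks over $\mbb Q$), Simpson's de Rham prestack $\sX_{\widecurlywedge{\mrm{dR}}}$ and the resulting thin completion do not behave well under the colimit presentation; by Lemma~\ref{lem:two completions coincide} they only agree with the absolute version when the underlying classical map is locally finitely presented. Thus one must systematically use Lurie's absolute de Rham prestack throughout, and check that the resulting $\widehat{\sL}_{\mbb Q}\sX$ has the expected deformation theory (with cotangent complex along constant loops given by $\mbb L_{\sX/\mbb Q}[1]$) and descends properly along the smooth atlas. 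This revisiting of the Ben-Zvi--Nadler argument, avoiding the shortcuts they took implicitly, is what the paper defers to Appendix~\ref{sec:bzn}.
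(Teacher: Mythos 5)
Your proposal takes a genuinely different route from the paper and has a real gap in Step~2.

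The paper does \emph{not} prove the theorem by showing that both functors are left Kan extended from $\mrm{dAff}_{\mbb Q}$. Instead (Appendix~\ref{sec:bzn}) it constructs $\exp_\sX$ explicitly as a map of prestacks (\constrref{constr:exponential map}): one uses HKR on affines and the limit formula of \propref{prop:etale-descen-cot} to produce algebra maps $p_*\mc O_{\sL_S\sX}\to \Sym^{\le m}_{\mc O_\sX}(\mbb L_{\sX/S}[1])$, then passes to relative $\Spec$ and takes the convergent colimit over $m$. Having the map in hand, it is shown to be an equivalence by \emph{deformation theory}: both sides admit corepresentable deformation theory (being formal completions of algebraic stacks), and one reduces, by a finite induction over square-zero extensions, to showing that the relative cotangent complex $\mbb L_{\exp_\sX}$ vanishes. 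This vanishing is itself reduced (via the Nakayama-type \lemref{lem:conservativity of pull-back to zero section}) to a check along the zero section, where it again comes down to \propref{prop:etale-descen-cot} on symmetric powers of the cotangent complex.

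The gap in your Step~2 is that the claimed Kan-extension property is not a consequence of the references you cite, and at least in the naive form it is false. For the vector-bundle side, \propref{prop:etale-descen-cot} is a statement about the pushforward sheaves $x_*\Sym^n_{\mc O_T}(\mbb L_{T/S}[1])$; it does \emph{not} say that $x^*\Sym^n_{\mc O_\sX}(\mbb L_{\sX/S}[1])\simeq \Sym^n_{\mc O_T}(\mbb L_{T/S}[1])$. Indeed these differ whenever $x$ is not \'etale, since $x^*\mbb L_{\sX}\to \mbb L_T$ is not an equivalence (its cofiber is $\mbb L_{T/\sX}$). Consequently the fiber of $\mbf V_\sX(\mbb L_{\sX/\mbb Q}[1])\to\sX$ over a point $x\colon\Spec A\to\sX$ is $\mbf V_{\Spec A}(x^*\mbb L_{\sX/\mbb Q}[1])$, \emph{not} $\mbf V_{\Spec A}(\mbb L_{A/\mbb Q}[1])$, so the colimit of the latter over $\mrm{dAff}_{/\sX}$ is not obviously the former, even after completing. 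On the loop side, $\sL_{\mbb Q}(-)$ is built from a fiber product and does not commute with colimits, and while $\sX\mapsto\sX_{\mrm{dR}}$ does, the fiber product defining $\widehat{\sL}_{\mbb Q}\sX$ does not inherit this; \remref{rem:properties of completion}\itemref{item:completion of the colimit} applies to colimit presentations of the ambient stack being completed, which is $\sL_{\mbb Q}\sX$ and not $\sX$. Finally, note that \remref{rem:intro:completions} presents ``$\widehat{\sL}$ is left Kan extended from affines'' as a \emph{consequence} of the theorem (via the identification with the completed shifted tangent bundle), not as an independently available input; using it to prove the theorem risks circularity. If you want to salvage the Kan-extension strategy you would need to show directly that completing a (shifted) cotangent bundle along the zero section is insensitive to the difference $\mbb L_{T/\sX}$ contributed by a smooth atlas, which in effect is a deformation-theoretic statement of the same flavour as the paper's actual argument.
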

\begin{proof}
	See Appendix \ref{thm:Nadler-Ben-Zvi}.
\end{proof}	

This leads to the following:
\begin{prop}\label{prop:Ran as functions on formal loops} 
Let $\sX$ be a geometric stack over $\mbb Q$. 
Then there is a natural equivalence
$$
R^{\mrm{\mrm{dAff}}} HH(\sX/\mbb Q)\simeq \mc O(\widehat{\sL}_{\mbb Q}\sX).
$$
\end{prop}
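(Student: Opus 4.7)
The strategy is to show that both sides of the claimed equivalence define functors from geometric prestacks over $\mbb Q$ to spectra which are right Kan extended from derived affines, and which agree when restricted there. For $\sX = \Spec A$ with $A \in \CAlg^{\mrm{an}}_{\mbb Q}$, the embedding of constant loops $c_{\Spec A}$ induces a surjection on $\pi_0$, so by Example~\ref{ex:formal loops in some examples}(1) the formal completion is trivial: $\widehat{\sL}_{\mbb Q}\Spec A \simeq \sL_{\mbb Q}\Spec A$. Combined with the affine case of Proposition~\ref{prop:loops_hh}, this yields $\mc O(\widehat{\sL}_{\mbb Q}\Spec A) \simeq HH(A/\mbb Q) \simeq R^{\mrm{dAff}}HH(\Spec A/\mbb Q)$, settling the affine case. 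The left-hand side is tautologically right Kan extended from $\mrm{dAff}_{\mbb Q}$, so it remains to exhibit the same property for $\sX \mapsto \mc O(\widehat{\sL}_{\mbb Q}\sX)$ on geometric $\sX$.

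For this Kan-extension property, I would invoke Theorem~\ref{thm:exponent} (Ben-Zvi--Nadler), naturally in $\sX$, to reduce the claim to establishing
\[
\mc O(\widehat{\mbf V}_\sX(\mbb L_{\sX/\mbb Q}[1])) \simeq \lim_{\Spec A \to \sX} \mc O(\widehat{\mbf V}_A(\mbb L_{A/\mbb Q}[1])).
\]
Descent of the cotangent complex assembles $\mbb L_{\sX/\mbb Q}$ from the affine presentation of $\sX$; the formalism of Section~\ref{sec:reminders-completion} (in particular Lemma~\ref{lem:affine completion vs derived}) controls how $\mc O \circ \widehat{\mbf V}_{-}$ interacts with such a presentation; and the fact that $\mc O$ converts colimits of prestacks to limits of spectra then yields the desired equivalence.

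The delicate point is precisely this last reduction. For $f\colon \Spec A \to \sX$, the cotangent complexes $\mbb L_{A/\mbb Q}$ and $f^*\mbb L_{\sX/\mbb Q}$ are not equal --- they are related by the cofiber sequence $f^*\mbb L_{\sX/\mbb Q} \to \mbb L_{A/\mbb Q} \to \mbb L_{A/\sX}$ --- so a naive pointwise comparison of completed vector bundles fails, and one must instead argue via descent for the whole formal loop construction, while navigating the distinction between thin and absolute derived completions (cf.\ Remark~\ref{rem:not a completion in general} and Remark~\ref{rem:intro:completions}). This careful bookkeeping, ensuring compatibility of the Ben-Zvi--Nadler identification with the passage to the limit over affine points of $\sX$, is the main content of Appendix~\ref{sec:bzn}, which the proof of this proposition will invoke.
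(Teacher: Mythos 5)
Your outline matches the paper's proof step for step: the affine case via Example~\ref{ex:formal loops in some examples}(1) and Proposition~\ref{prop:loops_hh}, then reduction of the Kan-extension claim via Theorem~\ref{thm:exponent} to a statement about $\mc O(\widehat{\mbf V}_\sX(\mbb L_{\sX/\mbb Q}[1]))$, then descent for the cotangent complex. One small calibration: the tool actually used to control $\mc O\circ \widehat{\mbf V}_{-}$ is not Lemma~\ref{lem:affine completion vs derived} (which concerns $I$-adic completion of module spectra) but Corollary~\ref{app:functions on formal completion at 0}, which computes $\mc O(\widehat{\mbf V}_\sX(\sE))\simeq\prod_{i\ge 0}R\Gamma(\sX,\Sym^i_{\mc O_{\sX}}\!(\sE))$; after that, Proposition~\ref{prop:etale-descen-cot} supplies the descent, and it is formulated precisely in terms of $\Sym^n_{\mc O_T}(\mbb L_{T/S}[1])$ for affine $T\to\sX$ rather than pullbacks $x^*\mbb L_{\sX/S}$, which is exactly how the cofiber-sequence subtlety you flag is handled.
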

\begin{proof}
Let $\sX$ be an affine scheme. We have natural equivalences
$$
R^{\mrm{\mrm{dAff}}} HH(\sX/\mbb Q)\simeq HH(\sX/\mbb Q)\simeq  \mc O({\sL}_{\mbb Q}\sX)\simeq  \mc O(\widehat{\sL}_{\mbb Q}\sX)
$$
where second and third isomorphism are given by Proposition~\ref{prop:loops_hh} and Example~\ref{ex:formal loops in some examples} correspondingly. 
It thus suffices to show that the functor $\sX \mapsto \mc O(\widehat{\sL}_{\mbb Q}\sX)$ is right Kan extended from derived affine schemes.
By Theorem~\ref{thm:exponent} (and Remark \ref{rem:for algebraic stack pi_0 L_X shifted is finitely generated}) we know that $\mc O(\widehat{\sL}_{\mbb Q} \sX) \simeq \mc O (\widehat{\mbf V}_{\sX}(\mbb L_{\sX/\mbb Q}[1]))$ and by Lemma \ref{app:functions on formal completion at 0} 
we may compute 
\[
\mc O (\widehat{\mbf V}_{\sX}(\mbb L_{\sX/\mbb Q}[1])) \simeq 
\prod_{i=0}^\infty R\Gamma(\sX ,\Sym^i_{\mc O_{\mc X}}\!(\mbb L_{\sX/\mbb Q}[1])).
\]
The right-hand side now is right Kan extended from derived affine schemes by Proposition \ref{prop:etale-descen-cot}.
\end{proof}

\begin{rem}
	Following the construction of the isomorphism in Proposition \ref{prop:Ran as functions on formal loops}, in the case $\sX$ is a perfect stack we can identify the map 
	$$
	HH(\sX/\mbb Q) \ra R^{\mrm{\mrm{dAff}}} HH(\sX/\mbb Q)
	$$
	with the pull-back map $\mc O(\sL_{\mbb Q}\sX) \ra \sO(\widehat{\sL}_{\mbb Q}\sX)$ for the inclusion $\widehat{\sL}_{\mbb Q}\sX \ra \sL_{\mbb Q}\sX$.
\end{rem}

\sssec{AS-completion $HH$ and unipotent loops}\label{ssec:AS-completion of HH and unipotent loops}

We will now identify the AS-completion of Hochschild homology. Let $G$ be a reductive algebraic group over a field $k$. The stack $BG$ is both geometric and perfect. Following Proposition~\ref{prop:loops_hh} 
and Example~\ref{ex:formal loops in some examples2} we can identify 
\[
HH(BG/k)\simeq \mc O([G/G])\simeq \mc O(G)^G.
\]

The Dennis trace map $K_0(BG) \ra HH_0(BG/k)$, via the identifications $\mc R(G)\simeq K_0(BG)$ and 
$HH_0(BG)\simeq \mc O(G)^G$, is given by the map 
\[
\tr\colon \mc R(G) \ra \mc O(G)^G,
\]
sending a representation $V\in \mrm{Rep}^{\mrm{f.g.}}(G)$ to the corresponding trace function 
$\tr_V\colon g\mapsto \tr(g|V)\in \mc O(G)^G$. 

Let us denote by $J_{G}$ the kernel of the map $e^*\colon \mc O(G)^G\ra k$ given by evaluation at the unit element $e\in G(k)$. Note that its composition with $\tr$ is given by the dimension map $\dim\colon \mc R(G) \ra k$; in particular, $I_G$ maps to $J_G$ under $\tr$; this is recorded in the following diagram of exact sequences
\[
\begin{tikzcd}
0 \ar{d} & 0 \ar{d}\\
I_G \ar{r} \ar{d} & J_G \ar{d} \\
\mc R(G) \cong K_0(BG) \ar{r}{\mrm{tr}} \ar[swap]{d}{\mrm{dim}} & HH_0(BG) \cong \mc O(G)^G \ar{d}{e^*} \\
\mathbb{Z} \ar{r} \ar{d} & k \ar{d}\\
0 & 0. 
\end{tikzcd}
\]
It will be useful to know that completions at either of the ideals coincide. Often, the ideals coincide on the nose:

\begin{lem}\label{lem:G-reps vs G-invariant functions}  Let $G$ be a reductive group over a field $k$ of characteristic 0. \begin{enumerate}
		\item If $k$ is algebraically closed the map 
		$$
		\tr\colon \sR(G)\otimes_{\mbb Z}k \ra \mc O(G)^G
		$$
		is an isomorphism.
		\item The map $\tr$
is also an isomorphism if $G$ connected and split. 
\item Assume that $G^\circ$ is split and $\pi_0(G)$ is a constant group scheme. Then $\tr$ induces an isomorphism
$$
\tr(I_G) \cdot  \mc O(G)^G \simeq J_G.
$$
\end{enumerate}
\end{lem}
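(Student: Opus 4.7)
Plan. I would treat parts (1), (2), (3) in that order, with (2) deduced from (1) by faithfully flat descent and (3) requiring an additional Galois-theoretic step.

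For (1), the plan is to apply the Peter--Weyl decomposition $\mc O(G) \simeq \bigoplus_{V \in \mrm{Irr}(G)} V^\vee \otimes_k V$ as a $G \times G$-module, valid since $G$ is linearly reductive in characteristic zero. Taking $G$-invariants along the diagonal (= conjugation) action and applying Schur's lemma over algebraically closed $k$ yields $(V^\vee \otimes V)^G = \mrm{End}_G(V) = k$, with generator the trace character $\chi_V = \tr_V$. Since $\mc R(G)$ is $\mbb Z$-free on isomorphism classes of irreducibles, this produces the claimed isomorphism $\tr\otimes k\colon \mc R(G)\otimes k \xrightarrow{\sim} \mc O(G)^G$.

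For (2), I would descend (1) along $k \to \bar k$. Two ingredients are used: $G$-invariants commute with flat base change, giving $\mc O(G)^G \otimes_k \bar k \simeq \mc O(G_{\bar k})^{G_{\bar k}}$; and for $G$ split connected reductive every absolutely irreducible $G_{\bar k}$-representation is defined over $k$ by highest-weight theory applied to the $k$-split maximal torus, yielding $\mc R(G)\otimes\bar k \simeq \mc R(G_{\bar k})\otimes\bar k$. Hence $\tr_k$ base-changes to $\tr_{\bar k}$, which is an iso by (1), and faithful flatness descends the result to $k$.

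For (3), the inclusion $\tr(I_G)\subset J_G$ is clear since $\tr[V]$ evaluates to $\dim V$ at $e$. For the reverse, I would pass to $\bar k$ by faithful flatness, identify $\mc O(G_{\bar k})^{G_{\bar k}} \simeq \mc R(G_{\bar k})\otimes\bar k$ and $J_{G_{\bar k}} \simeq I_{G_{\bar k}}\otimes\bar k$ via (1), and thereby reduce to proving the ideal equality $I_G \cdot \mc R(G_{\bar k})\otimes\bar k = I_{G_{\bar k}}\otimes\bar k$ inside $\mc R(G_{\bar k})\otimes\bar k$. Using the Galois descent identification $\mc R(G)\otimes\mbb Q \simeq (\mc R(G_{\bar k})\otimes\mbb Q)^{\mrm{Gal}(\bar k/k)}$ (applicable since $G^\circ$ split and $\pi_0(G)$ constant force Galois to act only by permuting $\bar k$-irreducibles, and which follows from the Galois descent statement for $\mrm{D}(\mbb Q)$-valued localizing invariants recorded just above), together with the Galois-invariance of the augmentation point on $\mrm{Spec}\,\mc R(G_{\bar k})\otimes\mbb Q$, I would conclude $\mc R(G_{\bar k})\otimes_{\mc R(G)}\mbb Q \simeq \mbb Q$: the primitive idempotents corresponding to Galois orbits of closed points disjoint from the augmentation map to $0$ in the fiber. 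Scaling up to $\bar k$ then yields the required ideal equality.

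The main obstacle I anticipate is the last step of (3): upgrading the set-theoretic agreement of $I_G\cdot \mc R(G_{\bar k})\otimes\mbb Q$ and $I_{G_{\bar k}}\otimes\mbb Q$ (equality of radicals, provided by Corollary~\ref{cor:AS-ideal for base change}) to the scheme-theoretic equality $\mc R(G_{\bar k})\otimes_{\mc R(G)}\mbb Q \simeq \mbb Q$. This refinement uses reducedness of $\mc R(G_{\bar k})\otimes\mbb Q$ together with the Galois descent identification; alternatively it admits a more hands-on proof via Clifford theory, writing every $\bar k$-irreducible of $G_{\bar k}$ as an induced representation along the subgroup $\pi^{-1}(\Gamma_\lambda) \subset G$ attached to the stabilizer of a highest weight, and noting that Galois orbits of such representations sum to $k$-defined irreducibles whose classes in $I_G$ generate $I_{G_{\bar k}}\otimes\bar k$ up to unit multiples (the orbit sizes and Schur indices, which are nonzero integers).
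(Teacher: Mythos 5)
Your arguments for parts (1) and (2) coincide with the paper's: Peter--Weyl plus Schur for (1), and for (2), base change to $\bar k$ together with the fact that for $G$ connected and split every irreducible is of the form $H^0(G/B,\mc O(\lambda))$, so $\sR(G)\to\sR(G_{\bar k})$ is an isomorphism.

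For part (3) you take a genuinely different route, but one with a real gap. You base-change to $\bar k$, apply (1), and reduce to showing $\sR(G_{\bar k})\otimes_{\sR(G)}\mbb Q\simeq\mbb Q$. Corollary~\ref{cor:AS-ideal for base change} only gives the set-theoretic statement: the fiber is an Artinian local $\mbb Q$-algebra with residue field $\mbb Q$. Your proposed refinement --- that ``reducedness of $\sR(G_{\bar k})\otimes\mbb Q$ together with the Galois descent identification'' upgrades this to the ideal-theoretic equality --- does not go through: reducedness of the total ring $\sR(G_{\bar k})_{\mbb Q}$ says nothing about the fiber, and an invariant subring $A^\Gamma\subset A$ (even with $A$ a polynomial ring and $\Gamma$ finite) readily has non-reduced fibers over the image of a $\Gamma$-fixed point (e.g.\ the fiber of $\mbb Q[x,y]$ over the origin of $\Spec \mbb Q[x^2,xy,y^2]$ is $\mbb Q[x,y]/(x^2,xy,y^2)$). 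Reducedness of \emph{this} fiber is precisely the content of (3), so the argument as stated is circular, and the Clifford-theoretic alternative you sketch is too vague to assess. The paper sidesteps the issue by working through $1\to G^\circ\to G\to\pi_0(G)\to 1$: it first handles the finite constant case directly (showing $\tr([k[G]]-|G|)$ generates $J_G$ inside $k^{\mrm{Conj}(G)}$), then decomposes $\Spec\mc O(G)^G$ over $\mrm{Conj}(\pi_0(G))$ to reduce to the fiber over the identity conjugacy class, and computes that fiber by a transfer/retraction argument identifying $\sR(G)_k\otimes_{\sR(\pi_0(G))}k\cong\sR(G^\circ)_k^{\pi_0(G)}$, after which part (2) applies to the connected split group $G^\circ$.
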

\begin{rem}
	As will soon be clear from the proof, in general case the main subtlety is in the potential existence of irreducible representations $V$ of $G$ such that $V_{\overline k}$ is not irreducible (as a $G_{\overline k}$-representation). In this case $\End(V)^G\not\simeq k\cdot \mrm{Id}_V$ and this is exactly what prevents $\tr$ from being surjective.
\end{rem}	
\begin{proof} (1) By (the algebraic version of) Peter-Weyl theorem $\mc O(G)$ decomposes $G$-equivariantly into the 
	direct sum 
	$$\mc O(G)\simeq \oplus_{V\in \mrm{Irr}(G)} \End(V)$$ with $G$-action on the right given by conjugation; here $A\in \End(V)$ defines a function $f_A(g) :=\tr(\rho_V(g)A)$. In particular, $\mrm{Id}_V\in \End(V)$ corresponds to the function $\tr_V$ (sending $g\mapsto \tr_V(\rho_V(g))$).
	We have $\mc O(G)^G\simeq \oplus_{V\in \mrm{Irr}(G)} \End(V)^G$  and, by Schur's lemma, we also have
	$\End(V)^G\simeq k\cdot \mrm{Id}_V$, which shows that $\mc O(G)^G$ is linearly spanned over $k$ by functions of the form $\tr_V$ (and thus $\tr$ is an isomorphism). 
	
	(2) Note that it would be enough to show that $\tr$ becomes an isomorphism after base change to $\overline k$. Identifying $\mc O(G)^G\otimes_k\overline k\simeq \mc O(G_{\overline k})^{G_{\overline k}}$ and using part (1) it then remains to show that the natural map $\sR(G)\ra \sR(G_{\overline k})$ is an isomorphism. Indeed, for a connected split reductive group $G$ in characteristic 0, all the irreducible representations are of the form $H^0(G/B, \mc O(\lambda))$ for a fixed choice $B\subset G$ of a Borel subgroup, $\lambda$ running through dominant weights of the maximal torus $T\subset G$ and $\mc O(\lambda)$ being the corresponding $G$-equivariant line bundle over $G/B$. By base change, one has $H^0((G/B)_{\overline k}, \mc O(\lambda))\simeq H^0(G/B, \mc O(\lambda))\otimes_k \overline k$, which shows that basis of irreducibles for $G$ in $\sR(G)$ maps identically to the basis of irreducibles for $G_{\overline k}$. 
	
	(3) Let us first consider the case $G=\pi_0(G)$; so $G$ here is a finite, constant group. Let $k[G]$ denote the regular representation; the element $[k[G]]-|G|\in \sR(G)$ lies in $I_G$. Note that $\sO(G)^{G}\simeq k^{\mrm{Conj}(G)}$ is the ring of $k$-valued functions on the set of conjugacy classes on $G$. The function $\tr([k[G]]-|G|)=\tr_{k[G]}-|G|\in \sO(G)^{G}$ is 0 on $\{e\}$ but equal to $-|G|$ on any other conjugacy class; in particular it generates $J_G$ (which is the zero ideal of the subset $\{e\}\subset \mrm{Conj}(G)\simeq \Spec \sO(G)^{G}$). Consequently, $\tr(I_G) \cdot  \mc O(G)^G \simeq J_G$ as desired.
	
In general, the projection $G\ra \pi_0(G)$ induces a commutative diagram
$$
\begin{tikzcd}
	\Spec \sR(G)_k  \arrow[d,"p"] & \Spec \sO(G)^G \arrow[l,"\tr"']\arrow[d,"q"] &\\
	\Spec \sR(\pi_0(G))_k & \Spec \sO(\pi_0(G))^{\pi_0(G)}\arrow[l, "\tr_{\pi_0}"']\arrow[r,"\sim"]& \mrm{Conj}(\pi_0(G)).
\end{tikzcd}	
$$ 
Our claim amounts to saying that the pullback along $\tr$ of $I_{G,k}$ agrees with $J_G$. The affine scheme $\Spec \sO(G)^G$ splits as disjoint union of fibers over points of $\mrm{Conj}(\pi_0(G))$ and since $J_{\pi_0(G)}$ maps inside $J_G$ under $q^*$ we see that $J_G$ is supported on the preimage of $\{e\}\in \mrm{Conj}(\pi_0(G))$. Since $I_{\pi_0(G),k}$ maps inside $I_{G,k}$ under $p^*$ and the pullback of $I_{\pi_0(G),k}$ under $\tr_{\pi_0}$ is $J_{\pi_0(G)}$ we get that $\tr^*I_{G,k}$ is supported on the fiber over $\{e\}$ as well. Thus we only need to check if $\tr^*I_{G,k}$ agrees with $J_G$ on the fiber over the conjugacy class of $e$.

There  we have a commutative diagram
$$
\begin{tikzcd}
\sR(G)_k\otimes_{\sR(\pi_0(G))}k \arrow[r]\arrow[d]& \sO(G)^G\otimes_{\sO(\pi_0(G))^{\pi_0(G)}} k\arrow[d]\\
\sR(G^\circ)_k \arrow[r]& \sO(G^\circ)^{G^\circ}
\end{tikzcd}
$$
where the vertical maps are induced by the commutative (in fact fiber) square
$$
\begin{tikzcd}
	BG^\circ \arrow[r,"f"]\arrow[d]& BG \arrow[d,"p"]\\
	\{*\} \arrow[r,"q"]& B\pi_0(G)
\end{tikzcd}
$$
upon applying $K_0(-)_k$ and $HH(-)$ correspondingly. The map $f\colon BG^\circ \ra BG$ is a $\pi_0(G)$-torsor, in particular it is finite \'etale. So the push-forward $f_*$ (in terms of representations given by the induction $\Ind_{G_0}^G$) restricts to a functor $\Perf(BG^\circ) \ra \Perf(BG)$ giving ``transfer" maps $\sR(G^\circ)_k\ra \sR(G)_k$ and $\sO(G^\circ)^{G^\circ}\ra \sO(G)^G$ after taking $K_0$ and $HH_0$ respectively. 

By projection formula the composition 
$$
\Perf(BG) \xrightarrow{f^*} \Perf(BG^\circ)\xrightarrow{f_*}\Perf(BG)
$$
is given by tensoring with $f_*f^*\mc O_{BG}$, which then can also be identified with $p^* q_* \mc O_{\{*\}}$ via base change. In the language of representations the latter object is the group algebra $k[\pi_0(G)]$ considered as a representation of $G$.

We claim that $f^*$ induces an isomorphism $\sR(G)_k\otimes_{\sR(\pi_0(G))}k  \cong \sR(G^\circ)_k^{\pi_0(G)}$. The endofunctor $f_*f^*$ induces on $\sR(G)_k\otimes_{\sR(\pi_0(G))}k$ multiplication on $p^*(k[G])\in \sR(G)_k$. This is equivalent to multiplication by $|G|$ in the above tensor product. Since $k$ is of characteristic zero, the vector space $\sR(G)_k\otimes_{\sR(\pi_0(G))}k$ splits off $\sR(G^\circ)_k$. We further claim that, we can identify its image in $\sR(G)_k$ exactly with $\pi_0(G)$-invariants. 

First, the image of $f^*$ lies in the subgroup of invariants $\sR(G^\circ)_k^{\pi_0(G)} \subset \sR(G)_k$. Indeed, the action of $G$ by conjugation on $G^\circ$ induces an action of $G$ on $BG^\circ$ whose restriction to $G^\circ$ is trivial. This action induces a non-trivial actions of $\pi_0(G)=G/G^\circ$ on $K_0(BG)\simeq \sR(G)$.  Since the action of $G$ on $BG$ is trivial, given any $V\in \sR(G)$ its pull-back $f^*V\in \sR(G)$ is $\pi_0(G)$-invariant. 

Next, we claim that any element in the group of invariants $\sR(G^\circ)_k^{\pi_0(G)}$ also lies in the image of $f^*$: indeed,
$$
\sR(G^\circ)_k^{\pi_0(G)}\simeq \left(\frac{1}{|G|}\cdot \sum_{g\in \pi_0(G)} g \right)\cdot \sR(G^\circ)_k,
$$
while for any $G^\circ$-representation $V$, one has $f^*f_*V\simeq \mrm{Res}^G_{G^\circ}(\Ind_{G^\circ}^G(V))\simeq \oplus_{g\in \pi_0(G)} V^g$ (where $V^g$ denotes the pull-back of $V$ via the outer automorphism of $G^\circ$ induced by $g$). Therefore, we see that the class of $f^*f_*V$ in $\sR(G^\circ)$ is given by $\sum_{g\in \pi_0(G)} g \cdot [V]$, and so the claim is proved. 

Applying the same argument with $K_0(-)_k$ replaced with $HH(-)$ we get that $$
\sO(G)^G\otimes_{\sO(\pi_0(G))^{\pi_0(G)}} k\simeq (\mc O(G^\circ)^{G^\circ})^{\pi_0(G)}\simeq \mc O(G^\circ)^G
$$
(which is also more or less obvious to see directly).

Summarizing, we get a commutative diagram
$$
\begin{tikzcd}
	\sR(G)_k\otimes_{\sR(\pi_0(G))}k \arrow[r]\arrow[d, "\wr" ]& \sO(G)^G\otimes_{\sO(\pi_0(G))^{\pi_0(G)}} k\arrow[d,"\wr"]\\
	\sR(G^\circ)_k^{\pi_0(G)} \arrow[r]& (\mc O(G^\circ)^{G^\circ})^{\pi_0(G)}
\end{tikzcd}
$$
Now by part (2), the low horizontal map is an isomorphism, thus so is the top horizontal one. It follows that $I_G\otimes k$ maps isomorphically to $J_G\otimes k$, as we wanted.
\end{proof} 
	
We now deduce the result for a general reductive group $G$. 

\begin{prop}\label{prop:AS-ideal vs unit ideal}
	Let $G$ be a reductive group over $k$. For $n\gg 0$ we also have 
	$$
	J_G^n \subset \tr(I_G)\cdot \mc O(G)^G\subset J_G.
	$$
	Consequently, given a prestack $\sX\ra BG$, the AS-completion $
	HH(\sX)^{\wedge}_{I_G}
	$ coincides with $J_G$-adic completion of $HH(\sX)$ as a $HH(BG)$-module.
\end{prop}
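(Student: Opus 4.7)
The plan is to reduce to the case covered by Lemma \ref{lem:G-reps vs G-invariant functions}(3) (where $G^\circ$ is split and $\pi_0(G)$ is constant) via Galois descent, and then use the change-of-groups result in Corollary \ref{cor:AS-ideal for base change} to close the gap between the ideal $I_G \cdot K_0(BG_\ell)$ and $I_{G_\ell}$.

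The inclusion $\tr(I_G)\cdot \mc O(G)^G \subset J_G$ is immediate, since for any $[V]-\dim V \in I_G$ the function $\tr_V - \dim V$ vanishes at the unit element. For the other inclusion, I would first observe that $\mc O(G)^G$ is a finitely generated (hence Noetherian) $k$-algebra, and $J_G$ is a finitely generated ideal; thus to obtain $J_G^n \subset \tr(I_G)\cdot \mc O(G)^G$ for some $n$ it suffices to show that both ideals have the same radical. Equivalently, the inclusion need only be checked after a faithfully flat base change $k \to \ell$, using that for reductive $G$ formation of invariants commutes with flat base change so that $\mc O(G)^G \otimes_k \ell \simeq \mc O(G_\ell)^{G_\ell}$, and that $J_G \otimes_k \ell \simeq J_{G_\ell}$ since evaluation at $e$ is preserved under base change.

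I would therefore choose a finite Galois extension $\ell/k$ such that $G^\circ_\ell$ is split and $\pi_0(G)_\ell$ is constant; such an $\ell$ exists because $\pi_0(G)$ is a finite \'etale $k$-group and $G^\circ$ splits over a finite separable extension. Over $\ell$, Lemma \ref{lem:G-reps vs G-invariant functions}(3) gives the exact identity $\tr(I_{G_\ell})\cdot \mc O(G_\ell)^{G_\ell} = J_{G_\ell}$. Applying the (multiplicative) trace map to the inclusion from Corollary \ref{cor:AS-ideal for base change}, namely $I_{G_\ell,k}^m \subset I_G \cdot K_0(BG_\ell)_k$ for $m \gg 0$ (this follows from the rational version by tensoring with $k$, since $k \supset \mbb Q$), yields $(\tr(I_{G_\ell})\cdot \mc O(G_\ell)^{G_\ell})^m \subset \tr(I_G)\cdot \mc O(G_\ell)^{G_\ell}$, and combining this with the equality above gives $J_{G_\ell}^m \subset \tr(I_G)\cdot \mc O(G_\ell)^{G_\ell}$. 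By faithful flatness this descends to $J_G^{m} \subset \tr(I_G)\cdot \mc O(G)^G$, finishing the first part.

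The second assertion then follows formally: since $I_G$ is finitely generated and its image $\tr(I_G)\cdot \mc O(G)^G$ in $HH_0(BG)=\mc O(G)^G$ has the same radical as the finitely generated ideal $J_G$, the $I_G$-adic and $J_G$-adic completions of any $HH(BG)$-module coincide, by the standard fact (see Section~\ref{sec:sag-completion}) that derived completion at a finitely generated ideal depends only on its radical. I expect the main technical obstacle to be ensuring that Corollary \ref{cor:AS-ideal for base change} (stated rationally) tensors up cleanly to $k$ and interacts correctly with the trace map; this is not deep, but requires care since $\tr$ is a priori only a ring map on classical $K_0$, and one must track that the inclusion of ideals persists after $\otimes_{\mbb Q} k$ and after applying $\tr$.
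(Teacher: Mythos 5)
Your proof is correct and follows essentially the same route as the paper's: reduce to the split case over a Galois extension $\ell/k$ via Lemma~\ref{lem:G-reps vs G-invariant functions}(3), bound $I_{G_\ell}$ by $I_G$ using Corollary~\ref{cor:AS-ideal for base change}, push through $\tr$, and descend by faithful flatness of $k\to\ell$; the final reduction to the statement about completions via the standard fact that derived $I$-adic completion at finitely generated ideals depends only on the radical is likewise the intended argument.
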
	
\begin{proof}
	Let as before $G^\circ$ be the connected component of $e$ and $\pi_0(G)$ be the ``group of connected components"; the latter is a finite \'etale group scheme. We have a short exact sequence
	$$1\ra G^\circ \ra G \ra\pi_0(G)\ra 1.$$
We will split the argument in two steps. First we discuss the case when the AS-ideal generates $J_G$ on the nose.
Now, for a general reductive $G$, there is a Galois extension $\ell/k$ such that part (3) of Lemma \ref{lem:G-reps vs G-invariant functions} applies to the base change $G_\ell$ (considered as a group scheme over $\ell$). To get back to $G$ we can look at the commutative diagram 
$$
\begin{tikzcd}
	\sR(G) \arrow[r,"\tr_k"] \arrow[d]&  \mc O(G)^G \arrow[d]\\
	\sR(G_{\ell}) \arrow[r, "\tr_\ell"] & \mc O(G_{\ell})^{G_{\ell}}.
\end{tikzcd}	
$$
Note that $\mc O(G_{\ell})^{G_{\ell}}\simeq \mc O(G)^G \otimes_k \ell$ and that $J_{G_\ell}\simeq J_G\otimes_k\ell$. Thus, $J_G^n\subset \tr_k(I_G)\cdot \mc O(G)^G$ if and only if $J_{G_\ell}^n\subset  \tr_k(I_G)\cdot \mc O(G_\ell)^{G_\ell}$. But, by Corollary \ref{cor:AS-ideal for base change} we have $I_{G_{\ell}}^n\subset I_G\cdot \sR(G_{\ell})$ for $n\gg 0$ and by Lemma \ref{lem:G-reps vs G-invariant functions} it follows that 
$$
J_{G_\ell}^n = \tr_{\ell}(I_{G_\ell}^n)\cdot \mc O(G_\ell)^{G_\ell} \subset \tr_k(I_G) \cdot \mc O(G_\ell)^{G_\ell}.
$$
\end{proof}
We next would like to interpret the ideal $J_G$ (as well as the $J_G$-adic completion) more geometrically. 
	
\begin{defn}
	Let $G$ be a reductive group over $k$ and denote by $G/\!\!/G:=\Spec \mc O(G)^G$ the GIT quotient (by conjugation). We have a point $[e]\in (G/\!\!/G)(k)$ corresponding to $e^*\colon  \mc O(G)^G \ra k$ and a map $G\ra G/\!\!/G$ corresponding to the embedding $\mc O(G)^G\subset \sO(G)$. We define \textbf{the locus of unipotent elements} $\mrm{Uni}(G)\subset G$ as the (a priori derived) fiber product
	$$
		\begin{tikzcd}
		\mrm{Uni}(G)\arrow[r]\arrow[d]& G\arrow[d]\\
		{[e]}  \arrow[r] & G/\!\!/G.
	\end{tikzcd}	
	$$
	
\end{defn}	

\begin{rem}
	The above definition diverges in general from the classical notion of ``unipotent cone of $G$" (which is typically considered in the case of algebraically closed field). Namely, our $\mrm{Uni}(G)$ might in general neither be reduced nor classical. However, we will be interested in the completion of $G$ along $\mrm{Uni}(G)$; the latter construction only depends on the reduced locus $\mrm{Uni}(G)^{\mrm{red}}$. Moreover, since over $\overline k$ the ring of $G$-invariants $\sO(G)^G$ is generated by functions of the form $\tr_V$, the $\overline k$-points $\mrm{Uni}(G)(\overline k)$ are given exactly by group elements $g\in G(\overline k)$ whose characteristic polynomial in any $G$-representation $V$ is the same as for the identity element $e\in G(\overline k)$; these are exactly the unipotent elements in $G(\overline k)$ .
\end{rem}	

\begin{exam}\label{ex:unipotent loci}\begin{enumerate}\item Let $G$ be abelian (and reductive). Then $\mc O(G)^G\simeq \mc O(G)$ and the map $G\ra G/\!\!/G$ is the identity map, so $\mrm{Uni}(G)$ is given by the unit element $e\in G$.
		
\item	Let $G$ be nice. Then we claim that the map $\{e\}\ra \mrm{Uni}(G)^{\mrm{red}}$ is an isomorphism. Indeed, we have a short exact sequence
$$
1\ra G^\circ \ra G \xrightarrow{p} \pi_0(G) \ra 1
$$
where $G^\circ$ is a torus (in particular abelian). It is enough to check the statement after base change to $\overline k$, in particular we can assume that $\pi_0(G)$ is constant. We have a commutative diagram 
$$
\begin{tikzcd}
G \arrow[r,"p"]\arrow[d,"\pi_G"]& \pi_0(G)\arrow[d,"\pi_{\pi_0(G)}"]&\\
	G/\!\!/G \arrow[r]& \pi_0(G)/\!\!/\pi_0(G)&\mrm{Conj}(\pi_0(G))\arrow[l,"\sim"']
\end{tikzcd}
$$
from which one sees that $\mrm{Uni}(G)=\pi_G^{-1}([e])\subset G$ is in fact a subscheme of $p^{-1}(e)\simeq G^\circ$. More precisely, it is identified with the spectrum of $\mc O(G^\circ)\otimes_{\mc O(G^\circ)^{\pi_0(G)}} k$ as a $\mc O(G^\circ)$-algebra. Since the defining ideal $I_e\subset \mc O(G^\circ)$ of $e\in G^\circ$ is $\pi_0(G)$-invariant, by Hilbert's theorem in invariant theory it is true that, up to taking radicals, $I_e$ generated by elements in $\mc O(G^\circ)^{\pi_0(G)}$; in other words
\[
\sqrt{I_e} = \sqrt{(\mc O(G^\circ)^{\pi_0(G)})}.
\]
Thus $\mc O(G^\circ)\otimes_{\mc O(G^\circ)^{\pi_0(G)}} k$ is a nilpotent extension of $k$, or, in other words, $\mrm{Uni}(G)^{\mrm{red}}=\{e\}$.
\end{enumerate}
\end{exam}	

We now will describe the AS-completion of $HH(\sX/\mbb Q)$ as functions on the completion along a certain substack of a loop stack. Below let us denote by $G_{\mrm{uni}}^\wedge\subset G$ the formal completion of $G$ along $\mrm{Uni}(G)$. 

\begin{constr}[Unipotent loops]\label{constr:unipotent loops} Recall the identification $\tau^{\mrm{cl}}(\sL_{\mbb Q}BG)\simeq [G/G]$ (see Example \ref{ex:formal loops in some examples2}). Let $(\sL_{\mbb Q}BG)_{\mrm{uni}}$ denote the completion of $\sL_{\mbb Q}BG$ along the composition 
	$$
	[\mrm{Uni}(G)/G]\hookrightarrow [G/G]\simeq  \tau^{\mrm{cl}}(\sL_{\mbb Q}BG) \hookrightarrow \sL_{\mbb Q}BG.
	$$
More generally, for a prestack $\sX\ra BG$ over $BG$ let $(\sL_\mbb Q\sX)_{\mrm{uni}}$ be the fiber product
$$
	\begin{tikzcd}
	(\sL_{\mbb Q}\sX)_{\mrm{uni}}\arrow[r]\arrow[d]& \sL_{\mbb Q}\sX\arrow[d]\\
	 (\sL_{\mbb Q}BG)_{\mrm{uni}} \arrow[r] & \sL_{\mbb Q}BG.
\end{tikzcd}	
$$
\end{constr}	

Note that $HH(BG/\mbb Q)$ is connective. Indeed, $\Perf(BG)$ splits into direct sum of copies of $\Vect(k)$ indexed by irreducible representations of $G$, and so $HH(BG/\mbb Q)\simeq \oplus_{[V_\lambda\in \mrm{Irr}(G)]} HH(k/\mbb Q)$. Thus we can consider the derived affine scheme $\Spec HH(BG/\mbb Q)$. Moreover, since $HH_0(k/\mbb Q)\simeq k$, we also see that $HH_0(BG/\mbb Q)\simeq HH_0(BG/k)\simeq \mc O(G)^G$. We denote by $\Spf (HH(BG/\mbb Q)^{\wedge}_{I_G})$ the derived completion of $\Spec HH(BG/\mbb Q)$ with respect to the (finitely generated) Atiyah--Segal ideal $I_G \subset HH_0(BG/\mbb Q)$.
\begin{lem}\label{lem:describing unipotent completion of loops of BG}There is a natural fiber square 
	$$
		\begin{tikzcd}
		(\sL_{\mbb Q}BG)_{\mrm{uni}} \arrow[r]\arrow[d]& \sL_{\mbb Q}BG\arrow[d]\\
		\Spf (HH(BG/\mbb Q)^{\wedge}_{I_G})\arrow[r] & \Spec (HH(BG/\mbb Q)).
	\end{tikzcd}	
	$$
\end{lem}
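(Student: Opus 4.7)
The plan is to proceed in three main steps, reducing the claim to a pullback-stability property of absolute formal completions that has already been established in Section \ref{sec:reminders-completion}.

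First, I will reduce from $I_G$ to $J_G$. By Proposition \ref{prop:AS-ideal vs unit ideal} the two ideals have the same radical inside $HH_0(BG/\mbb Q) \simeq \mc O(G)^G$, so the derived completions $HH(BG/\mbb Q)^\wedge_{I_G}$ and $HH(BG/\mbb Q)^\wedge_{J_G}$ agree and give rise to the same formal prestack; it thus suffices to produce the fiber square with $I_G$ replaced by $J_G$.

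Second, I will identify the classical truncation of the affinization map $\sL_{\mbb Q}BG \to \Spec HH(BG/\mbb Q)$ (which exists because $\mc O(\sL_{\mbb Q}BG) \simeq HH(BG/\mbb Q)$ is connective by Proposition \ref{prop:loops_hh}). Using Example \ref{ex:formal loops in some examples2}(\ref{item:relative loops of BG}) together with the identification $HH_0(BG/\mbb Q) \simeq \mc O(G)^G$ from Section \ref{ssec:AS-completion of HH and unipotent loops}, the induced map on classical truncations
\[
\tau^{\mrm{cl}}(\sL_{\mbb Q}BG) \simeq [G/G] \longrightarrow \Spec HH_0(BG/\mbb Q) \simeq G/\!\!/G
\]
is exactly the natural GIT quotient. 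Since $V(J_G) = \{[e]\} \subset G/\!\!/G$ and by definition $\mrm{Uni}(G) = G \times_{G/\!\!/G} \{[e]\}$, the classical square
\[
\begin{tikzcd}
{[\mrm{Uni}(G)/G]} \arrow[r]\arrow[d] & \{[e]\} \arrow[d] \\
{[G/G]} \arrow[r] & G/\!\!/G
\end{tikzcd}
\]
is a fiber square.

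Third, I will apply Remark \ref{rem:pull-back of a formal completion}. Since $\Spf HH(BG/\mbb Q)^\wedge_{J_G}$ is by construction the absolute derived formal completion of $\Spec HH(BG/\mbb Q)$ along the closed immersion $V(J_G) \hookrightarrow \Spec HH_0(BG/\mbb Q) \hookrightarrow \Spec HH(BG/\mbb Q)$, and the square above is cartesian on classical truncations by step 2, Remark \ref{rem:pull-back of a formal completion} identifies the fiber product
\[
\sL_{\mbb Q}BG \times_{\Spec HH(BG/\mbb Q)} \Spf HH(BG/\mbb Q)^\wedge_{J_G}
\]
with the absolute formal completion of $\sL_{\mbb Q}BG$ along $[\mrm{Uni}(G)/G] \to [G/G] \hookrightarrow \sL_{\mbb Q}BG$; by Construction \ref{constr:unipotent loops} this is exactly $(\sL_{\mbb Q}BG)_{\mrm{uni}}$.

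The main obstacle lies in step 2: one must carefully check that the truncation of the affinization map coincides with the GIT quotient, and in particular that the truncation of $\Spec HH(BG/\mbb Q)$ is given by $\Spec HH_0(BG/\mbb Q) \simeq G/\!\!/G$ rather than some larger object. Once this is in place, step 3 is a direct application of the formalism of Section \ref{sec:reminders-completion}, with Remark \ref{rem:properties of completion}(\ref{item:lurie's completion}) ensuring that the absolute formal completion depends only on the underlying map of classical stacks.
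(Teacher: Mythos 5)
Your proposal is correct and follows essentially the same route as the paper: identify the classical fiber square relating $[\mrm{Uni}(G)/G]$, $[G/G]$, the closed point of $G/\!\!/G \simeq \Spec HH_0(BG/\mbb Q)$, and $G/\!\!/G$ itself (using the definition of $\mrm{Uni}(G)$ and $HH_0(BG/\mbb Q) \simeq \mc O(G)^G$), then invoke Remark \ref{rem:pull-back of a formal completion} to produce the formal-completion fiber square, and finally replace $J_G$ by $I_G$ via Proposition \ref{prop:AS-ideal vs unit ideal}. The only differences are cosmetic: you perform the $I_G \leftrightarrow J_G$ reduction first rather than last, and you spell out the identification of the affinization map's classical truncation with the GIT quotient more explicitly than the paper, which leaves that step implicit.
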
	
\begin{proof} By definition, the top arrow is the completion of $\sL_{\mbb Q}BG$ along the embedding $[\mrm{Uni}(G)/G]\hookrightarrow [G/G]$ into the classical locus. By definition of $\mrm{Uni}(G)$ (and isomorphism $HH_0(BG/\mbb Q)\simeq \mc O(G)^G$) the latter map fits into a fiber square
	$$
	\begin{tikzcd}
		{[\mrm{Uni}(G)/G]} \arrow[r]\arrow[d]& {[G/G]}\arrow[d]\\
	\Spec (HH_0(BG/\mbb Q)/J_G)\arrow[r] & \Spec (HH_0(BG/\mbb Q)).
	\end{tikzcd}	
	$$
	Thus, by Remark \ref{rem:pull-back of a formal completion} we get that there is a fiber square for formal completions
	$$
			\begin{tikzcd}
		(\sL_{\mbb Q}BG)_{\mrm{uni}} \arrow[r]\arrow[d]& \sL_{\mbb Q}BG\arrow[d]\\
		\Spf (HH(BG/\mbb Q)^{\wedge}_{J_G})\arrow[r] & \Spec (HH(BG/\mbb Q)).
	\end{tikzcd}	
	$$
	Now, by Proposition \ref{prop:AS-ideal vs unit ideal}, $\Spf (HH(BG/\mbb Q)^{\wedge}_{I_G}) \simeq \Spf (HH(BG/\mbb Q)^{\wedge}_{J_G})$, and the statement of the lemma follows.
\end{proof}	

As a consequence we get the following:

\begin{prop}\label{prop:ig-uni}
	Let $\sX$ be a perfect stack endowed with a map $\sX\ra BG$. Then 
	$$
	HH(\sX/\mbb Q)^\wedge_{I_G} \simeq \sO((\sL_{\mbb Q}\sX)_{\mrm{uni}}).
	$$
\end{prop}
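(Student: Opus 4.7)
The plan is to combine Construction~\ref{constr:unipotent loops} with Lemma~\ref{lem:describing unipotent completion of loops of BG} to identify $(\sL_{\mbb Q}\sX)_{\mrm{uni}}$ as a pullback along a completion of affine schemes, and then use Lemma~\ref{lem:affine completion vs derived} to compute its global functions.

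First, I would paste the two defining fiber squares on top of each other. By Lemma~\ref{lem:describing unipotent completion of loops of BG} the bottom arrow $(\sL_{\mbb Q}BG)_{\mrm{uni}} \to \sL_{\mbb Q}BG$ is the base change of $\Spf(HH(BG/\mbb Q)^{\wedge}_{I_G}) \to \Spec(HH(BG/\mbb Q))$ along the affinization map. Pulling back further along $\sL_{\mbb Q}\sX \to \sL_{\mbb Q} BG$ (which factors the latter through $\Spec HH(BG/\mbb Q)$), Construction~\ref{constr:unipotent loops} yields a fiber square
\[
\begin{tikzcd}
(\sL_{\mbb Q}\sX)_{\mrm{uni}} \ar[r]\ar[d] & \sL_{\mbb Q}\sX \ar[d] \\
\Spf(HH(BG/\mbb Q)^{\wedge}_{I_G}) \ar[r] & \Spec(HH(BG/\mbb Q)).
\end{tikzcd}
\]

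Second, I would apply Lemma~\ref{lem:affine completion vs derived} to this square with $A = HH(BG/\mbb Q)$, the finitely generated ideal $I = I_G \subset HH_0(BG/\mbb Q) \simeq \mc O(G)^G$ (here I use that the AS-ideal maps to $J_G$ and by Proposition~\ref{prop:AS-ideal vs unit ideal} the two completions agree up to equivalence, so that $\Spf(HH(BG/\mbb Q)^\wedge_{I_G})$ is indeed the derived formal completion of $\Spec HH(BG/\mbb Q)$ at the finitely generated ideal in question), and $\sY = \sL_{\mbb Q}\sX$. The lemma yields
\[
\sO((\sL_{\mbb Q}\sX)_{\mrm{uni}}) \simeq \sO(\sL_{\mbb Q}\sX)^{\wedge}_{I_G}.
\]

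Third, since $\sX$ is a perfect stack, Proposition~\ref{prop:loops_hh} identifies $\sO(\sL_{\mbb Q}\sX) \simeq HH(\sX/\mbb Q)$. The $HH(BG/\mbb Q)$-module structure agrees with the one induced from the $K(BG)$-action via the trace map $K(BG) \to HH(BG/\mbb Q)$, so by the change-of-rings compatibility for derived completions the $I_G$-completion as $HH(BG/\mbb Q)$-module coincides with the Atiyah--Segal completion $HH(\sX/\mbb Q)^\wedge_{I_G}$ as $K(BG)$-module (see also Proposition~\ref{prop:AS-ideal vs unit ideal}). Assembling these identifications gives the desired equivalence.

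I do not anticipate a serious obstacle: the content has essentially been packaged into Lemma~\ref{lem:describing unipotent completion of loops of BG}, Proposition~\ref{prop:AS-ideal vs unit ideal}, Lemma~\ref{lem:affine completion vs derived}, and Proposition~\ref{prop:loops_hh}. The only mild subtlety is bookkeeping between the $K(BG)$-module and $HH(BG/\mbb Q)$-module structures, but this is handled by the standard change-of-rings formula for derived completion recalled at the beginning of Section~\ref{sec:sag-completion}.
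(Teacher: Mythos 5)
Your proposal is correct and follows essentially the same route as the paper's proof: extract the fiber square over $\Spf(HH(BG/\mbb Q)^\wedge_{I_G}) \to \Spec(HH(BG/\mbb Q))$ from Lemma~\ref{lem:describing unipotent completion of loops of BG} and Construction~\ref{constr:unipotent loops}, apply Lemma~\ref{lem:affine completion vs derived}, and identify $\sO(\sL_{\mbb Q}\sX)$ with $HH(\sX/\mbb Q)$ via Proposition~\ref{prop:loops_hh}. You are a bit more explicit than the paper about the change-of-rings bookkeeping between the $K(BG)$- and $HH(BG/\mbb Q)$-module structures, which the paper leaves implicit (cf.\ the ``Change of rings'' remark in Section~\ref{sec:sag-completion}), but this is a small refinement, not a different argument.
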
	
\begin{proof}
From Lemma \ref{lem:describing unipotent completion of loops of BG} and the definition of $(\sL_{\mbb Q}\sX)_{\mrm{uni}}$ we get a fiber square
$$
\begin{tikzcd}
(\sL_{\mbb Q}\sX)_{\mrm{uni}}\arrow[r]\arrow[d]& \sL_{\mbb Q}\sX\arrow[d]\\
	\Spf (HH(BG/\mbb Q)^{\wedge}_{I_G})\arrow[r] & \Spec (HH(BG/\mbb Q)).
\end{tikzcd}
$$
By Proposition \ref{prop:loops_hh}, using that $\sX$ is perfect, we have a natural identification $\mc O(\sL_{\mbb Q}\sX)\simeq HH(\sX/\mbb Q)$. By Lemma \ref{lem:affine completion vs derived}, we then have that $\sO((\sL_{\mbb Q}\sX)_{\mrm{uni}})\simeq HH(\sX)^\wedge_{I_G}$, as desired.

\end{proof}	

\begin{rem}\label{rem:relation to Chen}
	In \cite[Proposition 2.1.25]{Chen_2020} Chen discusses that\footnote{Strictly speaking, in his situation $k$ is algebraically closed and $\sX \ra BG$ is representable in derived schemes.} the stack $(\sL\sX)_{\mrm{uni}}$ (defined as above) is an instance of the \textit{unipotent loop stack} as defined by Ben-Zvi--Nadler (\cite[Definition 6.1]{Ben_Zvi_2012}). In other words he identifies $(\sL\sX)_{\mrm{uni}}$ with the following mapping stack
	$$
	(\sL\sX)_{\mrm{uni}} \simeq \underline{\sM\mrm{aps}}(B\mbb G_a,\sX).
	$$
	The embedding $(\sL\sX)_{\mrm{uni}}\ra \sL\sX$ in these terms is induced by the natural \textit{affinization} map $S^1 \ra B\mbb G_a$ (see Appendix \ref{sec:appendix_formal loops} for a short reminder on the latter). The key computation in his argument is the identification
	$$
	\underline{\sM\mrm{aps}}(B\mbb G_a,BG) \simeq [G_{\mrm{Uni}(G)}^\wedge/G],
	$$
	and then one deduces a result for $\sX=[X/G]$ via a ``unipotent analogue" of Lemma \ref{lem:completion passes along representable maps}.
\end{rem}	
\ssec{AS completion theorem for $HH$} Let $G$ be a reductive group over a field $k$ of characteristic 0 and let $\sX\ra BG$ be a representable map of derived stacks; in other words $\sX=[X/G]$ where $X$ is a derived algebraic space. The closed embeddings $e\hookrightarrow \mrm{Uni}(G)\hookrightarrow G$ induce maps of quotients $[e/G]\hookrightarrow [\mrm{Uni}(G)/G]\hookrightarrow [G/G]$. Identifying $[G/G]\simeq \tau^{\mrm{cl}}(\mc L_{\mbb Q}BG)$ and completing $\mc L_{\mbb Q}BG$ along these closed substacks gives inclusions
$$
\widehat{\sL}_{\mbb Q}BG \ra (\sL_{\mbb Q}BG)_{\mrm{uni}} \ra \sL_{\mbb Q}BG
$$
(see Example \ref{ex:formal loops in some examples2}(2) and Construction \ref{constr:unipotent loops}). Pulling back along $\sL_{\mbb Q}\sX\ra \sL_{\mbb Q}BG$ gives
$$
\widehat{\sL}_{\mbb Q}\sX \ra (\sL_{\mbb Q}\sX)_{\mrm{uni}} \ra \sL_{\mbb Q}\sX 
$$
(see Proposition \ref{prop:pull-back square for formal loops} and Construction \ref{constr:unipotent loops}). In the case $\sX=[X/G]$ is perfect and geometric (e.g. if it is quasi-compact and ANS), we then get the natural maps
$$
HH(\sX/\mbb Q) \ra HH^\wedge_{I_G}(\sX/\mbb Q) \ra R^{\mrm{dAff}}HH(\sX/\mbb Q)
$$
passing to global functions. Atiyah-Segal theorem, if true, tells that the second map is an equivalence. Even though this is \textit{not} true in general (Remark \ref{rem:no AS in general}), it is true if we assume $G$ to be nice.

\begin{thm}\label{thm:completion-th-}
	Let $\sX=[X/G]$ where $X$ is a quasi-compact derived algebraic space and $G$ is a nice algebraic group. Then the natural map
	\[
	HH(\sX/\mbb Q)^\wedge_{I_G} \ra (R^{\mrm{dAff}} HH)(\sX/\mbb Q)
	\]
	is an equivalence.
\end{thm}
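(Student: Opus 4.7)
The plan is to translate both sides into global functions on appropriate stacks built from $\sL_{\mbb Q}\sX$, and then show that these stacks agree when $G$ is nice.

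First I would reduce the theorem to a geometric comparison. Since $\sX = [X/G]$ with $G$ nice is quasi-compact and ANS, it is both perfect (Example \ref{ex:perfect stacks}) and geometric. By Proposition \ref{prop:ig-uni} we have $HH(\sX/\mbb Q)^\wedge_{I_G}\simeq \sO((\sL_{\mbb Q}\sX)_{\mrm{uni}})$, while Proposition \ref{prop:Ran as functions on formal loops} gives $R^{\mrm{dAff}}HH(\sX/\mbb Q)\simeq \sO(\widehat{\sL}_{\mbb Q}\sX)$. Hence it suffices to show that the natural map $\widehat{\sL}_{\mbb Q}\sX \to (\sL_{\mbb Q}\sX)_{\mrm{uni}}$ over $\sL_{\mbb Q}\sX$ is an equivalence of prestacks.

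Next I would reduce the problem to $\sX = BG$. By Proposition \ref{prop:pull-back square for formal loops} we have $\widehat{\sL}_{\mbb Q}\sX \simeq \sL_{\mbb Q}\sX \times_{\sL_{\mbb Q}BG} \widehat{\sL}_{\mbb Q}BG$, and by Construction \ref{constr:unipotent loops} the analogous identification $(\sL_{\mbb Q}\sX)_{\mrm{uni}} \simeq \sL_{\mbb Q}\sX \times_{\sL_{\mbb Q}BG} (\sL_{\mbb Q}BG)_{\mrm{uni}}$ holds essentially by definition. So it is enough to show that $\widehat{\sL}_{\mbb Q}BG \simeq (\sL_{\mbb Q}BG)_{\mrm{uni}}$ as prestacks over $\sL_{\mbb Q}BG$.

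For this last comparison I would use the niceness hypothesis. Both are formal completions of $\sL_{\mbb Q}BG$ along closed substacks of the classical locus $\tau^{\mrm{cl}}(\sL_{\mbb Q}BG) \simeq [G/G]$ (see Example \ref{ex:formal loops in some examples2}): $\widehat{\sL}_{\mbb Q}BG$ is the completion along $BG \simeq [e/G]$, while $(\sL_{\mbb Q}BG)_{\mrm{uni}}$ is the completion along $[\mrm{Uni}(G)/G]$. Since $G$ is nice, Example \ref{ex:unipotent loci}(2) gives $\mrm{Uni}(G)^{\mrm{red}}=\{e\}$, so these two closed substacks of $[G/G]$ have the same underlying reduced stack. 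By Remark \ref{rem:properties of completion}(2) the \emph{thin} formal completion depends only on the reduced locus, so the thin completions of $\sL_{\mbb Q}BG$ along $BG$ and along $[\mrm{Uni}(G)/G]$ coincide. Both embeddings into $[G/G]$ are locally of finite presentation (the classical stacks involved are of finite type over $k$; cf.\ Example \ref{ex:maps of lfp}(3)), and so by Lemma \ref{lem:two completions coincide} the absolute and thin completions agree for each of them, yielding the desired equivalence $\widehat{\sL}_{\mbb Q}BG \simeq (\sL_{\mbb Q}BG)_{\mrm{uni}}$.

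The main obstacle is verifying that the technical hypotheses (convergence, finite presentation, geometricity) are met so that Proposition \ref{prop:pull-back square for formal loops}, Remark \ref{rem:properties of completion}(2), and Lemma \ref{lem:two completions coincide} apply cleanly; concretely, one needs $\mc O(G)^G$ to be a finite-type $k$-algebra so that $\mrm{Uni}(G)$ is a finitely presented classical closed subscheme of $G$. Alternatively, one could argue directly at the level of quotient stacks by invoking Corollary \ref{cor:completion of quotients} to write $\widehat{\sL}_{\mbb Q}BG \simeq [G^\wedge_e/G]$ and $(\sL_{\mbb Q}BG)_{\mrm{uni}}\simeq [G^\wedge_{\mrm{Uni}(G)}/G]$, and then observe that the two completions of the scheme $G$ coincide since $\mrm{Uni}(G)$ and $\{e\}$ share the same reduced locus.
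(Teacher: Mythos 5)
Your proof is correct and takes essentially the same approach as the paper, which is equally terse at this point; you have usefully unpacked the implicit step that passing from $\mrm{Uni}(G)^{\mrm{red}}\simeq\{e\}$ to equality of the \emph{absolute} formal completions requires a finiteness argument (which you route through thin completions and Lemma~\ref{lem:two completions coincide}), since the absolute completion depends a priori on the classical locus and not merely on the reduced one. A small imprecision in your final ``alternatively'' remark: the identification $[G^\wedge_e/G]$ from Corollary~\ref{cor:completion of quotients} gives $\widehat{\sL}_{k}BG$, not $\widehat{\sL}_{\mbb Q}BG$ (cf.\ Example~\ref{ex:formal loops in some examples2}(1)--(2)), as the $\mbb Q$-relative loop stack differs from $[G/G]$ as a derived stack even though their classical loci agree --- but this does not affect the validity of your main argument, which only needs the two closed substacks of $[G/G]$ to share a reduced locus.
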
	
\begin{proof} Recall that by our convention any derived algebraic space has affine diagonal, so $\sX$ is ANS. Since $\sX$ is also quasi-compact it follows that it is perfect (see Example \ref{ex:perfect stacks}). Thus, it is enough to show that 
	$$
	\widehat{\sL}_{\mbb Q}\sX \xrightarrow{\sim} (\sL_{\mbb Q}\sX)_{\mrm{uni}}.
	$$
	Since $G$ is nice, by Example \ref{ex:unipotent loci}(2), $\mrm{Uni}(G)^{\mrm{red}}\simeq \{e\}$. From this it follows that the corresponding completions $\widehat{\sL}_{\mbb Q}BG \ra (\sL_{\mbb Q}BG)_{\mrm{uni}}$, and consequently $\widehat{\sL}_{\mbb Q}\sX \ra (\sL_{\mbb Q}\sX)_{\mrm{uni}}$, are isomorphic.
\end{proof}	

\begin{rem} \label{rem:no AS in general}
	If $G$ is reductive but not necessarily nice, neither the map 
	\[
	 \widehat{\sL}_{\mbb Q}\sX\ra (\sL_{\mbb Q}\sX)_{\mrm{uni}},
	\]
	nor the induced map on global functions need to be isomorphisms in general.
	
	As a concrete example, let 
	$\mbb G_a=
	\{\begin{pmatrix}1&*\\
		0& 1\end{pmatrix} \}
	\subset SL_2$  over $\mbb Q$
	and consider $\sX=[X/G]$ for
	$X:=SL_2/\mbb G_a$ with the action of $G=SL_2$ by left translations. Note that 
	$\sX\simeq [(SL_2/\mbb G_a)/SL_2]\simeq  B\mbb G_a.$ By Example \ref{ex:formal loops in some examples2} we can identify
	\[
	\sL_{\mbb Q}\sX\simeq [\mbb G_a/\mbb G_a]\simeq \mbb G_a\times B\mbb G_a
	\;\;\;\;\;\;\;\; \text{ and } \;\;\;\;\;\;\;\;
	\widehat{\sL}_{\mbb Q}\sX\simeq \widehat{\mbb G_a} \times B\mbb G_a,
	\]
	with $\widehat{\mbb G_a}$ being the completion of $\mbb G_a$ at $0$. 
	
	On the other hand, note that $\mbb G_a\subset \mrm{Uni}(G)$, so the composite map $\sL_{\mbb Q}\sX\ra [G/G]\ra G/\!\!/G$ factors 
	though $[e]\in G/\!\!/G$. It follows that all loops in $\sX$ are unipotent:
	\[
		(\sL_{\mbb Q}\sX)_{\mrm{uni}}\simeq  \sL_{\mbb Q}\sX\times_{G/\!\!/G}(G/\!\!/G)^\wedge_{[e]}\simeq \sL_{\mbb Q}\sX^\wedge_{\sL_{\mbb Q}\sX}\simeq \sL_{\mbb Q}\sX.
	\]
	We get that the map $\widehat{\sL}_{\mbb Q}\sX\ra (\sL_{\mbb Q}\sX)_{\mrm{uni}}$ is explicitly described by the natural embedding
	$$
	\widehat{\mbb G_a} \times B\mbb G_a \subset {\mbb G_a} \times B\mbb G_a.
	$$
	Passing to global functions on both sides we then also get 
	\[
	R^{\mrm{dAff}} HH(\sX/\mbb Q)\simeq \mbb Q[[x]] \oplus \mbb Q[[x]][-1], \quad \text{ while } \quad HH(\sX/\mbb Q)^\wedge_{I_G}\simeq \mc O({\mbb G_a} \times  B\mbb G_a)\simeq \mbb Q[x]\oplus \mbb Q[x][-1].
	\]
	This illustrates that the AS-completion in general is weaker than the one given by $R^{\mrm{dAff}}$.
\end{rem}	

For further applications to $K$-theory we will need a slight variant of Theorem \ref{thm:completion-th-}. 

\begin{thm} \label{thm:general completion thm for HH}
	Let $\sX=[X/G] \rightarrow BG$ be a representable morphism from an ANS stack $\sX$. Assume that $\sX^{\mrm{cl}} \rightarrow BG$ is of finite type. Then the comparison map
	\[
	HH(\sX/\mbb Q)^\wedge_{I_G} \ra (R^{\mrm{dAff}} HH)(\sX/\mbb Q)
	\]
	is an equivalence.
\end{thm}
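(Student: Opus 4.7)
The plan is to reduce via Nisnevich descent to the case of nice groups covered by Theorem~\ref{thm:completion-th-}, and then to compare the two Atiyah-Segal completions geometrically through the loop-stack interpretation. I would first show that both functors $HH(-/\mbb Q)^\wedge_{I_G}$ and $R^{\mrm{dAff}}HH(-/\mbb Q)$ satisfy Nisnevich descent on qc ANS stacks over $BG$. The right-hand side does so on arbitrary prestacks by Proposition~\ref{prop:spc}. For the left-hand side, qc ANS stacks are perfect (Example~\ref{ex:perfect stacks}), so $HH(-/\mbb Q)$ is a Nisnevich sheaf on them by Corollary~\ref{cor:Nisnevich descent for perfect stacks}; the derived $I_G$-completion preserves the relevant limits because, as in Remark~\ref{rem:formula for derived completion on module}, it is computed as a limit of finite tensor products $M \otimes_{K(BG)} [K(BG)/r_1^{i_1}] \otimes \cdots \otimes [K(BG)/r_n^{i_n}]$, each of which is a finite iteration of fiber sequences linear in $M$. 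Applying Theorem~\ref{thm:niscover} to cover $\sX$ by affine pieces of the form $[\Spec A/H]$ with $H$ nice, the claim reduces to proving the theorem for each such piece $\sY = [\Spec A/H] \to BG$.

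For such a $\sY$, using Propositions~\ref{prop:Ran as functions on formal loops} and~\ref{prop:ig-uni}, the comparison map is identified with the pullback along the closed embedding
\[
\widehat{\sL}_{\mbb Q}\sY \hookrightarrow (\sL_{\mbb Q}\sY)_{\mrm{uni},G}
\]
of absolute formal completions of $\sL_{\mbb Q}\sY$ along the classical substacks $\sY^{\cl} \subset \sZ_G \subset \tau^{\cl}\sL_{\mbb Q}\sY$, where $\sZ_G$ is the preimage of $[\mrm{Uni}(G)/G]$ under the natural map $\sL_{\mbb Q}\sY \to \sL_{\mbb Q}BG$. Note that since $\sY \to BG$ need not factor through $BH$, Lemma~\ref{lem:comparing Atiyah-Segal completions} cannot be applied directly to compare $I_G$- and $I_H$-completions; routing through loop stacks is what bypasses this obstruction.

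The main geometric claim to establish is that $\sY^{\cl} \hookrightarrow \sZ_G$ is a nilpotent thickening, since absolute formal completion is insensitive to such thickenings. At each classical point $a \in \Spec A$, the Nisnevich covering property gives an isomorphism of residual gerbes, identifying the automorphism group $H_a$ with an inner form of a closed subgroup of $G_{\kappa(\bar a)}$. Since $\sY$ is ANS, $H_a$ is nice, and any element $h \in H_a$ whose image in $G$ is unipotent must be unipotent in $H_a$ as well (by restricting a faithful $G$-representation to $H_a$, which is again faithful for $H_a$, and using the characteristic-polynomial characterization of unipotency); by Example~\ref{ex:unipotent loci}(2), such an $h$ equals the identity up to nilpotents. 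This shows $\sZ_G$ and $\sY^{\cl}$ have the same underlying topological space, and the finite-type hypothesis on $\sY^{\cl} \to BG$, together with the quasi-compactness of $\sZ_G$ it implies, upgrades this reduced equality to a globally, uniformly nilpotent thickening.

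The hard part will be controlling the absolute (rather than thin) formal completions globally: the pointwise stabilizer analysis only gives equality on reduced loci, and it is the finite-type hypothesis that promotes this to a nilpotent thickening, so that the two absolute completions literally coincide (and not merely their underlying thin variants).
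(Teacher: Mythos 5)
Your proof is essentially correct and its spine---reduce to the geometric statement that the completion of $\sL_{\mbb Q}\sY$ along the constant-loop section agrees with the unipotent completion relative to $G$, then check this after passing to stabilizers via a pointwise analysis over $\bar{k}$---matches the paper's. But you take a genuinely different route to the finish, and it is worth spelling out where the two proofs diverge and what each buys.

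The paper does not perform a Nisnevich reduction at all. It works directly with $\sX=[X/G]$, identifies $(\sL_{\mbb Q}\sX)^{\cl}$ with the inertia stack $\sI(\sX^{\cl})$, uses the simplicial presentation $\sX^{\cl}\simeq L_{\et}(|Z_\bullet|)$ by finite-type affine schemes (which exists by the finite-type hypothesis) together with Remark~\ref{rem:properties of completion}(5) to reduce the comparison of completions to the affine finite-type schemes $\sI(\sX^{\cl})\times_{\sX^{\cl}}Z_i$, and then checks equality of radicals fibrewise over $\bar{k}$-points. For the fibrewise step the paper invokes the change-of-groups Lemma~\ref{lem:comparing Atiyah-Segal completions} applied to the embedding $G_x\hookrightarrow G$ of the stabilizer, together with Proposition~\ref{prop:AS-ideal vs unit ideal}, to see that $J_G\cdot\mc O(G_x)$ and $J_{G_x}$ have the same radical.

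Your version instead (a) runs a Nisnevich-descent reduction using Theorem~\ref{thm:niscover} and the fact that derived $I_G$-completion commutes with totalizations (your justification via Remark~\ref{rem:formula for derived completion on module} and dualizability of each $[K(BG)/r_i^{j_i}]$ is correct), and (b) replaces the change-of-groups argument by a direct representation-theoretic one: a $\bar{k}$-point of $H_a$ whose image in $G$ is unipotent is unipotent when viewed in a faithful $G$-representation, hence unipotent in $H_a$, hence trivial modulo nilpotents by Example~\ref{ex:unipotent loci}(2). Step (b) is a cleaner, more elementary way to get the same reduced-locus equality the paper extracts from Lemma~\ref{lem:comparing Atiyah-Segal completions} and Proposition~\ref{prop:AS-ideal vs unit ideal}. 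Step (a), by contrast, is a bit of a detour: after reducing to $\sY=[\Spec A/H]$ over $BG$ you correctly observe that $\sY$ need not map to $BH$ compatibly, so Theorem~\ref{thm:completion-th-} is not directly applicable and you must still carry out the full loop-stack analysis. The Nisnevich reduction therefore does not shorten the argument; it trades the paper's simplicial presentation for the Čech nerve of a Nisnevich cover, and the Čech-nerve terms are quotients $[Z/H]$ of (non-affine) algebraic spaces, so you would need to phrase your final completion comparison for those terms anyway rather than only for $[\Spec A/H]$.

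You correctly identify the one step that needs the most care---passing from pointwise equality of reduced loci to equality of absolute formal completions---and the outline you give (finite type over $k$ $\Rightarrow$ finitely generated ideal sheaves of the two closed substacks of $\sI(\sY^{\cl})$, same radical fibrewise over $\bar{k}$-points $\Rightarrow$ same radical, quasi-compactness $\Rightarrow$ uniform nilpotency exponent $\Rightarrow$ same completion) is the right one. Two small clarifications: (i) the parenthetical ``and not merely their underlying thin variants'' is a bit confused---if the inclusion of reduced loci is a globally nilpotent thickening then \emph{both} the thin and absolute completions agree, and what the finite-type/finite-presentation hypothesis buys is precisely this upgrade, in line with Lemma~\ref{lem:two completions coincide}; (ii) to conclude that the stacky completions coincide, and not merely their pullbacks to an affine cover, you should invoke Corollary~\ref{cor:completion of quotients} or Remark~\ref{rem:properties of completion}(5) to reduce to the affine case, exactly as the paper does. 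With those two points patched, your argument closes.
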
	
\begin{proof}
Again, it is enough to show that the map $\widehat{\sL}_{\mbb Q}\sX \ra (\sL_{\mbb Q}\sX)_{\mrm{uni}}$  is an isomorphism. Note that $({\sL}_{\mbb Q}\sX)^{\mrm{cl}} \simeq \sI(\sX^{\mrm{cl}})$ (the inertia stack). We have a natural map $f\colon  \sI(\sX^{\mrm{cl}})\ra \sI(BG)\simeq [G/G]$ and an affine projection $p\colon \sI(\sX^{\mrm{cl}}) \ra \sX^{\mrm{cl}}$. Note that by \cite[Tag 04XS]{stacks} $p$ is locally of finite type. By construction and Proposition \ref{prop:pull-back square for formal loops}, the two loop stacks in question are the completions of ${\sL}_{\mbb Q}\sX$ along the two closed substacks in the classical locus $\sI(\sX^{\mrm{cl}})$ given by preimages $f^{-1}([\{e\}/G]) $ and $f^{-1}([\mrm{Uni(G)}/G])$ correspondingly. Note that by our assumption $\sX^{\mrm{cl}}$ itself is also of finite type over the field $k$. Consequently, there is a simplicial (classical) affine scheme of finite type $Z_\bullet$ such that $\sX^{\mrm{cl}}\simeq L_\et (|Z_\bullet|)\in \mrm{Stk}^\mrm{cl}$. By Remark \ref{rem:properties of completion}\ref{item:completion of the colimit} it is enough to show that for any $i$ the completion along substacks $f^{-1}([\{e\}/G]) $ and $f^{-1}([\mrm{Uni(G)}/G])$ agree after we pull-back further along $q_i\colon \sI(\sX^{\mrm{cl}})\times_{\sX^{\mrm{cl}}}Z_i  \ra \sI(\sX^{\mrm{cl}})$. Both $(f\circ q_i)^{-1}([\{e\}/G]) $ and $(f\circ q_i)^{-1}([\mrm{Uni(G)}/G])$ are some closed subschemes of the finite type affine scheme $\sI(\sX^{\mrm{cl}})\times_{\sX^{\mrm{cl}}}Z_i$ and to check that the corresponding completions agree it's enough to check that the reduced parts of all fibers over $\overline k$-points of $Z_i$ coincide. Let $z\in Z_i(\overline k)$ and let $x\in \sX^{\mrm{cl}}$ be its image; then the fiber $\sI(\sX^{\mrm{cl}})\times_{\sX^{\mrm{cl}}}\{x\}$ is identified with the stabilizer $G_x:= \underline{\mrm{Aut}}_{\sX^{\mrm{cl}}}(\{x\})$. Moreover, the projection to $ \sI(BG)\simeq [G/G]$ is identified with the natural compostion $G_x\hookrightarrow G \ra [G/G]$. Note that since $\sX$ is ANS the group $G_x$ is a nice $\overline k$-group scheme. The two closed subschemes in question are then identified with the preimage of $[\{e\}/G]$ and $[\mrm{Uni}(G)/G]$ correspondingly; they are given by the unit $\{e\}\in G_x$ and zeros of the ideal $J_G\cdot \mc O(G_x)\subset \mc O(G_x)$ correspondingly. By the change-of-groups lemma for AS-ideals (Lemma \ref{lem:comparing Atiyah-Segal completions}) and Proposition \ref{prop:AS-ideal vs unit ideal} we get $J_G\cdot \mc O(G_x)$ agrees with $J_{G_x}$ up to passing to radicals. By Example \ref{ex:unipotent loci}, using that $G_x$ is nice, we then get that the reduced loci of the fibers over $\{x\}$ are the same, which concludes the proof.
\end{proof}

\ssec{Cyclic homology}$HH(\sC/\mbb Q)$ admits a canonical action of $S^1$ which leads to the three other localizing invariants:
\[
\begin{tikzcd}[row sep = 0.1]
	\text{ negative cyclic homology:} &HC^-(\sC/\mbb Q) := HH(\sC/\mbb Q)^{hS^1};\\
	\text{ periodic cyclic homology:} &HP(\sC/\mbb Q) := HH(\sC/\mbb Q)^{tS^1}; \\
	\text{ cyclic homology:}  &HC(\sC/\mbb Q) := HH(\sC/\mbb Q)_{hS^1}.
\end{tikzcd}
\]

We observe that Theorem~\ref{thm:completion-th-} also implies the AS completion theorem for all the above versions of cyclic homology.

\begin{cor}\label{cor:completion-th-HP}
Let $G$ be a reductive group over a field of characteristic 0. Let $\sX=[X/G] \rightarrow BG$ be a representable morphism from an ANS stack $\sX$, and assume that $\sX^{\mrm{cl}} \rightarrow BG$ is of finite type.
Then the maps 
\begin{align*}
	HC^-(\sX/\mbb Q)^\wedge_{I_G} &\to R^{\mrm{dAff}}HC^-(\sX/\mbb Q)\\
HP(\sX/\mbb Q)^\wedge_{I_G} &\to R^{\mrm{dAff}}HP(\sX/\mbb Q)\\
HC(\sX/\mbb Q)^\wedge_{I_G} &\to R^{\mrm{dAff}}HC(\sX/\mbb Q)
\end{align*}
are all equivalences.
\end{cor}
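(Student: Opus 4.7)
The plan is to deduce the three equivalences one at a time, bootstrapping from Theorem~\ref{thm:general completion thm for HH} (the $HH$-case) and Theorem~\ref{thm:main_2_text} (the $\mathbb{A}^1$-invariant case), and exploiting the fact that both $(-)^\wedge_{I_G}$ and $R^{\mrm{dAff}}$ are compatible with the standard limit and fiber-sequence constructions relating the four Hochschild-type invariants.

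The case of $HP$ is immediate: since the base field has characteristic zero, Example~\ref{rem:exam2}(3) records that $HP(-/\mathbb{Q})$ is both $\mathbb{A}^1$-invariant and truncating (via Goodwillie's theorem), so the statement is a direct instance of Theorem~\ref{thm:main_2_text} applied to $E = HP(-/\mathbb{Q})$.

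For $HC^-$, I would use that $HC^-(\sX/\mathbb{Q}) = HH(\sX/\mathbb{Q})^{hS^1}$ is defined as a limit. Two observations are needed. First, derived $I_G$-adic completion of $K(BG)$-modules commutes with limits: the subcategory $\mathrm{Mod}^{I_G\mathrm{-cpl}}_{K(BG)}$ is reflective by \cite[Proposition~7.3.1.7]{SAG} and therefore closed under limits in the ambient category, so any limit of $I_G$-complete modules is already $I_G$-complete. Second, $R^{\mrm{dAff}}$ commutes with limits by its very definition as a right Kan extension. Combining these with Theorem~\ref{thm:general completion thm for HH} produces the chain
\[
HC^-(\sX/\mathbb{Q})^\wedge_{I_G} \simeq \bigl(HH(\sX/\mathbb{Q})^\wedge_{I_G}\bigr)^{hS^1} \xrightarrow{\simeq} \bigl(R^{\mrm{dAff}} HH(\sX/\mathbb{Q})\bigr)^{hS^1} \simeq R^{\mrm{dAff}} HC^-(\sX/\mathbb{Q}).
\]

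Finally, for $HC$, I would invoke the fiber sequence $HC(\sX/\mathbb{Q})[1] \to HC^-(\sX/\mathbb{Q}) \to HP(\sX/\mathbb{Q})$ and apply both $(-)^\wedge_{I_G}$ and $R^{\mrm{dAff}}$ to it. Preservation of this fiber sequence by $R^{\mrm{dAff}}$ is automatic since it is itself a limit; preservation by derived $I_G$-adic completion follows from the explicit formula recalled in Remark~\ref{rem:formula for derived completion on module}, which expresses completion as a limit of smash products with Koszul-type quotients $[K(BG)/x_i^{n_i}]$ that are visibly exact in the module variable. Comparing the two resulting fiber sequences via the equivalences already established for $HC^-$ and $HP$, the fact that an equivalence on two out of three terms of a map of fiber sequences forces the third to be an equivalence delivers the result for $HC$. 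The only genuine care required throughout is to ensure that the $K(BG)$-module structures implicit in the completions on $HC^-$, $HP$, and $HC$ are the expected ones inherited from Construction~\ref{constr:k-lax}, and that the connecting maps in the fiber sequence are $K(BG)$-linear; this is the least automatic step but is routine to verify from the lax monoidal structure of each trace invariant.
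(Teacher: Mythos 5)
Your proposal follows the same route as the paper's proof: $HP$ is handled by Theorem~\ref{thm:main_2} via $\mathbb{A}^1$-invariance and the truncating property, $HC^-$ is obtained by applying $S^1$-homotopy fixed points to the $S^1$-equivariant equivalence of Theorem~\ref{thm:general completion thm for HH}, and $HC$ is then deduced from the other two via the norm fiber sequence. This is precisely the paper's decomposition of the corollary, just presented in a different order (the paper states the fiber-sequence reduction first).

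One small remark on the $HC^-$ step: the observation that $\mathrm{Mod}^{I_G\text{-}\mathrm{cpl}}_{K(BG)}$ is a reflective subcategory and hence closed under limits shows that $\bigl(R^{\mrm{dAff}}HH\bigr)^{hS^1}$ is $I_G$-complete, but by itself it does not yet give the first equivalence $HC^-(\sX/\mathbb{Q})^\wedge_{I_G} \simeq \bigl(HH(\sX/\mathbb{Q})^\wedge_{I_G}\bigr)^{hS^1}$, which requires that completion genuinely commutes with the $BS^1$-indexed limit. The cleanest justification is the one you already deploy for $HC$: by Remark~\ref{rem:formula for derived completion on module}, $(-)^\wedge_{I_G}$ is a limit of tensor products against perfect (hence dualizable) Koszul-type modules, each of which commutes with all limits, so the completion functor itself commutes with limits and in particular with $(-)^{hS^1}$. (Alternatively, one can pair the closure of $I_G$-complete modules under limits with the analogous closure of $I_G$-local modules and run a fiber-sequence argument.) With that adjustment the argument is complete, and it is essentially identical to the published one.
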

\begin{proof}
Recall that there is a fiber sequence 
\[
HC \to HC^- \to HP
\]
of functors, so to prove the claim it suffices to prove it for $HC^-$ and for $HP$. The map in Theorem~\ref{thm:general completion thm for HH} is $S^1$-equivariant by construction, and so induces an equivalence of $S^1$-equivariant $k$-modules. Applying $S^1$-fixed points commutes both with right Kan extension and completion, this gives the claim for $HC^-$. The result for $HP$ follows from Theorem~\ref{thm:main_2} (recall that 
$HP$ is $\mathbb{A}^1$-invariant \cite[Theorem~4.1(iii)]{farfromA1_weibel} and is truncating 
\cite[Theorem~IV.2.1]{Goodwillie_derivations}).
\end{proof}

\section{Atiyah-Segal completion theorem for \texorpdfstring{$K$}{K}-theory}\label{sec:ask} We finally arrive at a proof of our main results. We begin by extending the relationship between $K$-theory, homotopy $K$-theory and negative cyclic homology to the case of derived algebraic stacks.


%
%
%

\subsection{Relations between K-theory, homotopy K-theory and negative cyclic homology of stacks} Recall that we are working in characteristic zero; to simplify matters we simply set $k = \bbQ$. One of the main tools in accessing the algebraic $K$-theory of non-smooth schemes in characteristic zero is the \textbf{Goodwillie Jones chern character} $K \rightarrow HC^{-}(-/\bbQ)$ which refines the Dennis trace map along the map $HC^-(-/\bbQ) \rightarrow HH(-/\bbQ)$. This map fits as the top horizontal arrow of the following square, natural in qcqs schemes:

\begin{equation}\label{eq:goodwillie}
\begin{tikzcd}
K(X) \arrow[r]\arrow[d] & HC^-(X/\bbQ) \arrow[d] \\
KH(X) \arrow[r] & L_{\mrm{cdh}}HC^-(X/\bbQ)\footnote{The notation $L_{\mrm{cdh}}HC^-(X/\bbQ)$ is a slight abuse of notation: it should really be $L_{\mrm{cdh}}HC^-(-/\bbQ)(X)$.}. \\
\end{tikzcd}
\end{equation}

The bottom horizontal map is the $\cdh$-sheafification of the top horizontal map and the square itself is cartesian; these results are highly non-obvious. They encode three ``miracles'' on algebraic $K$-theory: 1) that $KH$ satisfies cdh descent \cite{Cisinski}, 2) that $L_{\cdh}\K$ is $\mathbb{A}^1$-invariant \cite{KST} and that 3) the fiber of the chern character $K \rightarrow HC^-(-/\bbQ)$ is a cdh sheaf, a result due to Corti\~{n}as \cite{cortinas}. We note that 1) and 2) in characteristic zero is one of the key results in Haesemeyer's thesis \cite{Haesemeyer}, which gives the first hints that 1) and 2) can hold in very general settings.

In the next proposition, we extend~\eqref{eq:goodwillie} to certain derived algebraic stacks. As we have avoided speaking of the cdh topology on derived algebraic stacks, we will make use of the following definition instead; see also \cite[Lemma 5.7]{e-morrow}.

\begin{defn}\label{def:hc-} Define the localizing invariant $L_{\cdh}HC^-(-/\bbQ)$ to be the pushout of functors
\[
L_{\cdh}HC^-(-/\bbQ):= \colim\left(HC^-(-/\bbQ) \leftarrow K \rightarrow KH\right): \Cat^{\perf}_k \rightarrow \Spt.
\]
Noting that pushouts commute with fibre sequences, the pushout above can be taken pointwise in the $\infty$-category $\Fun(\Cat^{\perf}_{\bbQ}, \Spt)$. 
\end{defn}

\begin{prop}\label{prop:agreement} The following hold for $L_{\cdh}HC^{-}(-/\bbQ)$:
\begin{enumerate}
\item on qcqs derived $k$-schemes, $L_{\cdh}HC^{-}(-/\bbQ)$ agrees with the $\cdh$-sheafification of $HC^{-}(-/\bbQ)$;
\item on a $k$-smooth algebraic stack $\sX$, the map $HC^-(\sX/\bbQ) \rightarrow L_{\cdh}HC^{-}(\sX/\bbQ)$ is an equivalence;
\item  $L_{\cdh}HC^{-}(-/\bbQ)$ is a localizing invariant, is truncating and, in particular, nilinvariant;
\item on ANS algebraic $k$-stacks, $L_{\cdh}HC^{-}(-/\bbQ)$ is cdh-excisive.
\end{enumerate}
\end{prop}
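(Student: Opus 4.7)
The proof will be organized around the cocartesian/cartesian duality in the stable $\infty$-category of spectra: by Definition \ref{def:hc-}, the square of functors
\[
\begin{tikzcd}
K \arrow[r]\arrow[d] & HC^{-}(-/\bbQ) \arrow[d] \\
KH \arrow[r] & L_{\cdh}HC^{-}(-/\bbQ)
\end{tikzcd}
\]
is cocartesian, hence also cartesian. Much of the proposition will be obtained by feeding known properties of the three ``input'' corners through this square.

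For (1), the plan is to quote the Kerz-Strunk-Tamme/Land-Tamme theorem (in characteristic zero a combination of Haesemeyer's cdh-descent for $KH$ and Corti\~nas's theorem): on qcqs derived $k$-schemes, the analogous square with bottom-right being the actual $\cdh$-sheafification of $HC^{-}$ is cartesian. Since the fourth vertex of a cartesian (equivalently cocartesian) square is determined, this identifies the pushout of Definition~\ref{def:hc-} with the $\cdh$-sheafification. For (2), the observation is that on a $k$-smooth algebraic stack the map $K \to KH$ is an equivalence. On smooth affine $k$-schemes this is Quillen's $\mathbb{A}^1$-invariance; for a smooth ANS stack it is bootstrapped from the Nisnevich cover by $[\Spec A/G]$ with $A$ smooth and $G$ nice (Theorem~\ref{thm:niscover}) together with Thomason's $\mathbb{A}^1$-invariance of equivariant $K$-theory of smooth schemes with reductive group actions. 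With $K \simeq KH$ on smooth stacks, the pushout collapses so that $HC^- \xrightarrow{\simeq} L_{\cdh}HC^-$.

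For (3), being a localizing invariant is formal since localizing invariants span a stable sub-$\infty$-category of functors, closed under finite colimits, so any pushout of such is again one. For the truncating property, I would feed the natural transformation $A \mapsto (A \to \pi_0 A)$ through the cocartesian square and use that fibers commute with colimits in spectra. The char-$0$ Goodwillie theorem, in its derived form for connective $\mathcal{E}$-algebras (Levy/Corti\~nas, as used in \cite{dgm-elden-vova}), identifies the fiber of $K(A) \to K(\pi_0 A)$ with that of $HC^{-}(A) \to HC^{-}(\pi_0 A)$; combined with $KH$ being truncating (so its fiber vanishes), the pushout of fibers collapses to $0$, yielding truncating and hence nilinvariance.

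For (4), I would use that a square in $\Spt$ is cartesian iff its total cofiber is zero, and that total cofiber commutes with colimits. Applied to an abstract blow-up square $S$ of ANS stacks,
\[
\mathrm{tcof}(L_{\cdh}HC^{-}(S)) \simeq \mathrm{tcof}(HC^{-}(S)) \sqcup_{\mathrm{tcof}(K(S))} \mathrm{tcof}(KH(S)).
\]
The $KH$ factor vanishes by cdh excision for $KH$ on ANS (which itself follows from $KH$ being truncating and the equivariant resolution argument of Proposition~\ref{prop:cdh}, plus \cite[Corollary 5.2.6]{dgm-elden-vova}). What remains is to show $\mathrm{tcof}(K(S)) \to \mathrm{tcof}(HC^{-}(S))$ is an equivalence, i.e.\ that the fiber of the Chern character $K \to HC^{-}(-/\bbQ)$ is cdh-excisive on ANS stacks. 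This last step is the hard one and is the main obstacle: it packages the characteristic-zero extension of Corti\~nas's theorem to ANS stacks, and will be obtained by combining the stacky DGM theorem of \cite{dgm-elden-vova} (so that relative $K$ and relative $HC^{-}$ agree along nil-extensions on ANS), truncating-type reductions (Lemma~\ref{lem:truncating} to pass to classical reduced stacks), and the equivariant resolution machinery of Section~\ref{sec:askh} to handle general blow-up squares.
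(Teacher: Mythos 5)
Your proposal is correct in overall strategy, but it departs from the paper's argument in parts (2) and (4), and in one place assumes less generality than is claimed.

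For (2), the paper proves that $K(\sX)\simeq KH(\sX)$ for \emph{any} $k$-smooth algebraic stack $\sX$ (not just ANS) by comparing $K$-theory with $G$-theory. The key steps are: $G(\sX\times\bbA^r)\simeq G(\sX)$ for any noetherian derived Artin stack (Khan's theorem), and the identification $\Perf(\sX\times\bbA^r)\simeq\mrm{Coh}(\sX\times\bbA^r)$, which is checked by flat descent since it holds for smooth $k$-schemes. Your route via the Nisnevich cover by $[\Spec A/G]$ and Thomason's $\bbA^1$-invariance only applies when $\sX$ is ANS: Theorem~\ref{thm:niscover} is an ANS statement, and you also need $A$ smooth, which requires an extra word. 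Although the downstream application (Corollary~\ref{prop:hc-as}) is indeed ANS, the statement of Proposition~\ref{prop:agreement}(2) is about general smooth algebraic $k$-stacks, and the paper's $G$-theory argument delivers that. Your restricted version would still suffice for the paper's purposes, but it is a genuinely different and narrower argument.

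For (3) and (4), the conclusion you reach is correct but you are working harder than necessary and partly re-deriving a quoted result. The paper's proof of (3) rewrites the defining pushout as a cofiber sequence
\[
\Fib\bigl(K\to HC^-(-/\bbQ)\bigr)\;\longrightarrow\;KH\;\longrightarrow\;L_{\cdh}HC^-(-/\bbQ),
\]
then notes $KH$ is truncating (Land--Tamme) and the fiber is truncating by Goodwillie; a cofiber of truncating is truncating. Your argument by "fiber of $A\mapsto\pi_0A$ commutes with colimits" is the same calculation presented less economically. For (4), the step you flag as "the hard one" --- that $\Fib(K\to HC^-)$ is cdh-excisive on ANS stacks --- is not a new piece of work: it follows immediately because this fiber is truncating (already established in (3)) and \cite[Corollary~5.2.6]{dgm-elden-vova} says truncating invariants are cdh-excisive on ANS stacks. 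The paper then simply observes $L_{\cdh}HC^-$ is a cofiber of a map between two cdh-excisive functors. Your total-cofiber manipulation reaches the same conclusion, but the sketched plan of re-proving cdh-excision of the fiber via equivariant resolution and Lemma~\ref{lem:truncating} is essentially a re-derivation of the cited corollary and should be replaced by a direct citation.
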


\begin{proof} 
\begin{enumerate}
\item This follows from the fact that~\eqref{eq:goodwillie} is a pushout as well on the category of qcqs derived $k$-schemes.
\item By construction, it suffices to prove that if $\sX$ is a smooth algebraic $k$-stack, then $K(\sX) \simeq KH(\sX)$. In this case, we just need to prove that $K(\sX) \simeq K(\sX \times \mathbb{A}^n)$ for all $n \geq 0$. There is a map $K(\sX \times \mathbb{A}^r) \rightarrow G(\sX \times \mathbb{A}^r)$ induced by the functor of stable $\infty$-categories $\Perf(\sX \times \mathbb{A}^r) \rightarrow \mrm{Coh}(\sX \times \mathbb{A}^r)$. As explained in \cite[3.5]{khan-kg}, $G(\sX \times \mathbb{A}^r) \simeq G(\sX)$ for any noetherian derived Artin stack, hence it suffices to prove that the functor $\Perf(\sX \times \mathbb{A}^r) \rightarrow \mrm{Coh}(\sX \times \mathbb{A}^r)$ is an equivalence. By flat descent for both $\Perf$ and $\mrm{Coh}$, the functor agrees since they agree on smooth $k$-schemes.
\item By construction we have a fiber sequence
\[
\mathrm{fib}\left(K \rightarrow HC^{-}(-/\bbQ)\right) \rightarrow KH \rightarrow  L_{\cdh}HC^{-}(-/\bbQ).
\]
Since $KH$ is truncating \cite[Proposition 3.14]{land-tamme}, the result then follows from the fact that the fibre is truncating by Goodwillie's theorem \cite{goodwillie-theorem}. 
\item By \cite[Corollary 5.2.6]{dgm-elden-vova}, both $\mathrm{fib}\left(K \rightarrow HC^{-}(-/k)\right)$ and $KH$ are cdh-excisive since they are truncating. This implies that $L_{\cdh}HC^{-}(-/\bbQ)$ is as well. 
\end{enumerate}

\end{proof}

\begin{rem}\label{rem:tc} We note that some aspects of Proposition~\ref{prop:agreement} holds more generally. On the $\infty$-category of derived algebraic stacks, we can also define $L_{\cdh}TC$ of stacks via the pushout of $TC \leftarrow K \rightarrow KH$ and the analogs of Proposition~\ref{prop:agreement} will also hold; here we use a version of~\eqref{eq:goodwillie} that holds for qcqs schemes and involves $TC$ \cite[Theorem 3.8]{e-morrow}. We also note that, as one can see from the proof, the agreement between $HC^-(\sX/\bbQ)$ and $ L_{\cdh}HC^{-}(\sX/\bbQ)$ holds for stacks which are $K$-regular: those that satisfy $K(\sX \times \bbA^r) \simeq K(\sX)$ for all $r \geq 0$. However, it is not clear how to prove the next corollary without resolution of singularities. In particular, it establishes an Atiyah-Segal completion theorem for a truncating invariant which is not $\mathbb{A}^1$-invariant.
\end{rem}

\begin{cor}\label{prop:hc-as} Fix $G$ a reductive group over a characteristic zero field $k$. Let $\sX \rightarrow BG$ be representable morphism where $\sX$ is derived algebraic stack, such that the morphism $\sX^{\mrm{cl}} \rightarrow BG$ is finite type. Assume further that $\sX$ is ANS. Then the canonical map
\[
L_{\cdh}HC^{-}(\sX/\bbQ)^{\wedge}_{I_G} \rightarrow R^{\mathrm{dAff}}L_{\cdh}HC^{-}(-/\bbQ)(\sX)
\]
is an equivalence.
\end{cor}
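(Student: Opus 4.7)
The plan is to leverage the truncating and cdh-excisive properties of $L_{\cdh}HC^{-}$ established in Proposition \ref{prop:agreement} to run a resolution-of-singularities induction, ultimately reducing the statement to the case of smooth ANS quotient stacks. For smooth stacks Proposition \ref{prop:agreement}(ii) identifies $L_{\cdh}HC^{-}$ with $HC^{-}$, so the left-hand side of the comparison collapses to $HC^{-}(\sX/\bbQ)^{\wedge}_{I_G}$ which is handled by Corollary \ref{cor:completion-th-HP}; the remaining task will be to match the right-hand side $R^{\mrm{dAff}}L_{\cdh}HC^{-}(\sX)$ with $R^{\mrm{dAff}}HC^{-}(\sX)$ via a smooth descent argument.

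First, both sides are truncating: the left-hand side via Proposition \ref{prop:agreement}(iii) together with the fact that derived $I_G$-adic completion is exact between stable $\infty$-categories, and the right-hand side via Lemma \ref{lem:truncating} and Proposition \ref{lem:localizing-invt}. Following Step 1 of the proof of Theorem \ref{thm:main_2_text}, this reduces the claim to the case where $\sX = [X/G]$ with $X$ a classical reduced algebraic space of finite type over $k$. Proceeding by induction on $\dim X$, the equivariant strong resolution $\tilde X \to X$ provided by Theorem \ref{thm:resolution} furnishes an abstract blow-up square
\[
\begin{tikzcd}
\left[W/G\right] \arrow[d] \arrow[r] & \left[\tilde X/G\right] \arrow[d]\\
\left[Z/G\right] \arrow[r] & \left[X/G\right]
\end{tikzcd}
\]
with $\tilde X$ smooth and with $Z,W$ of strictly smaller dimension. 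Both sides of the comparison map are cdh-excisive on ANS stacks (by Proposition \ref{prop:agreement}(iv) and Proposition \ref{lem:localizing-invt}, together with the fact that completion preserves finite limits), so one obtains a map of pullback squares. The inductive hypothesis handles the corners $[Z/G]$ and $[W/G]$, and the problem reduces to establishing the equivalence for the smooth quotient $[\tilde X/G]$.

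For smooth $\sX = [\tilde X/G]$, Proposition \ref{prop:agreement}(ii) gives $L_{\cdh}HC^{-}(\sX/\bbQ) \simeq HC^{-}(\sX/\bbQ)$, so the left-hand side of the comparison becomes $HC^{-}(\sX/\bbQ)^{\wedge}_{I_G}$, which by Corollary \ref{cor:completion-th-HP} is canonically identified with $R^{\mrm{dAff}}HC^{-}(\sX)$. It therefore suffices to show that the natural transformation $HC^{-} \to L_{\cdh}HC^{-}$ induces an equivalence $R^{\mrm{dAff}}HC^{-}(\sX) \xrightarrow{\simeq} R^{\mrm{dAff}}L_{\cdh}HC^{-}(\sX)$. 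Fix an embedding $G \hookrightarrow \mrm{GL}_n$ and set $\tilde X' := (\tilde X \times \mrm{GL}_n)/G$: freeness of the diagonal $G$-action makes $\tilde X'$ a smooth algebraic space, and $\tilde X' \to \sX$ is a $\mrm{GL}_n$-torsor, hence Zariski-locally trivial by Hilbert 90 and a fortiori a Nisnevich surjection of stacks. Since $HC^{-}$ and $L_{\cdh}HC^{-}$ are Nisnevich sheaves on derived affines (being localizing invariants), Proposition \ref{prop:spc} yields universal descent for both $R^{\mrm{dAff}}HC^{-}$ and $R^{\mrm{dAff}}L_{\cdh}HC^{-}$ along $\tilde X' \to \sX$. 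The terms of the resulting Cech nerve are smooth algebraic spaces of the form $\tilde X' \times \mrm{GL}_n^{\bullet}$; on these, Proposition \ref{prop:localizing-nis} identifies $R^{\mrm{dAff}}E$ with $E \circ \Perf$, and by smoothness the natural transformation $HC^{-} \to L_{\cdh}HC^{-}$ is a term-by-term equivalence. Taking totalizations delivers the desired equivalence.

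The main obstacle is bookkeeping consistency of the ANS hypothesis and the change of groups across the induction: one must check that the intermediate stacks $[Z/G]$, $[W/G]$, and $[\tilde X/G]$ remain ANS (they do, as they are representable over $\sX$), and when substituting $[\tilde X'/\mrm{GL}_n]$ for $[\tilde X/G]$ in the smooth step one should invoke Lemma \ref{lem:comparing Atiyah-Segal completions} to ensure that the $I_G$-adic and $I_{\mrm{GL}_n}$-adic completions agree. The extension of the resolution-of-singularities step from schemes to algebraic spaces can be carried out by a preliminary Nisnevich cover argument using Theorem \ref{thm:cover} and Theorem \ref{thm:niscover}, which reduces to local affine quotient models.
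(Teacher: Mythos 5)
Your proof is correct and in fact a bit more self-contained than the paper's own argument, though the overall inductive shape (reduce to classical reduced, induct on dimension via equivariant resolution and cdh-excision, finish in the smooth case) is the same.

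The genuine divergence is in the final step, where you establish the comparison $R^{\mrm{dAff}}HC^{-}(\sX) \simeq R^{\mrm{dAff}}L_{\cdh}HC^{-}(\sX)$ for smooth ANS quotient stacks. The paper delegates this to a lemma asserting $R^{\mrm{dAff}}HC^{-} \simeq R^{\mrm{dAff}}L_{\cdh}HC^{-}$ for arbitrary smooth algebraic stacks, whose proof needs \'etale descent for $L_{\cdh}HC^{-}$ on affines; this is nontrivial and is justified either through the HKR filtration or by citing van der Kallen's theorem that $\mrm{fib}(K\to KH)$ has \'etale descent. You avoid this entirely: since the stack in question is, by hypothesis, a quotient $[\tilde X/G]$, you can embed $G\hookrightarrow \mrm{GL}_n$, form $\tilde X' = \tilde X\times_G \mrm{GL}_n$, and observe that $\tilde X'\to\sX$ is a $\mrm{GL}_n$-torsor and hence a \emph{Nisnevich} surjection (this is exactly the observation the paper itself makes in Remark \ref{rem:nis-covers}). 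Both $R^{\mrm{dAff}}HC^{-}$ and $R^{\mrm{dAff}}L_{\cdh}HC^{-}$ then have descent along $\tilde X'\to\sX$ by Proposition \ref{prop:spc}, the Cech nerve consists of smooth qcqs algebraic spaces where Proposition \ref{prop:localizing-nis} identifies the right Kan extensions with $E\circ\Perf$, and Proposition \ref{prop:agreement}(2) gives the termwise equivalence. This trades the generality of the paper's Lemma \ref{lem:agree-daff} (valid for any smooth algebraic stack) for a lighter input, which is fine here because the Corollary only concerns quotient stacks. The one place you should be a touch more careful is the invocation of the change-of-groups Lemma \ref{lem:comparing Atiyah-Segal completions}: it is not actually needed in this final descent step (you are comparing two right Kan extensions, neither of which is an $I_G$-adic completion), so it is redundant there; its role is consumed earlier via Corollary \ref{cor:completion-th-HP}, which already handles the $I_G$-completion for any reductive $G$ (so unlike the $KH$ argument, no reduction to a torus is required).
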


\begin{proof} Since both functors are cdh-excisive (by Proposition~\ref{prop:agreement} for the domain and Lemma~\ref{lem:localizing-invt} for the target) we can reduce to the case of $\sX$ a smooth, ANS $k$-stack by an inductive argument similar to the proof of Theorem~\ref{thm:main_2}. In this case, Proposition~\ref{prop:agreement}(3) (resp. Lemma~\ref{lem:agree-daff}) shows that $L_{\cdh}HC^{-}(\sX/\bbQ) \simeq HC^{-}(\sX/\bbQ)$ (resp. $R^{\mathrm{dAff}}L_{\cdh}HC^{-}(\sX/\bbQ) \simeq HC^{-}(\sX/\bbQ)$). Therefore, the result follows from the analogous statement for $HC^{-}(-/\bbQ)$ in place of $L_{\cdh}HC^{-}(-/\bbQ)$, which is Corollary~\ref{cor:completion-th-HP}. 


\end{proof}

\begin{rem}[Corollary~\ref{prop:hc-as} without resolution of singularities]\label{rem:resolution} It might be possible to prove Corollary~\ref{prop:hc-as} without resolution of singularities; such a proof method will surely establish an analogous result for $L_{\cdh}TC$ in mixed and positive characteristics. A possible strategy is to develop seriously the cdh topology on algebraic stacks, prove a hypercompleteness result just as in \cite{EHIK} and characterize the points of such a topology. The role of valuation rings in a stacky situation should be played by the root stacks. A hint that this is true is given by the valuative criterion for proper maps of tame stacks, recently established by Bresciani and Vistoli \cite{bresciani-vistoli}. 
\end{rem}

We have used the following lemma in Corollary~\ref{prop:hc-as}.

\begin{lem} \label{lem:agree-daff} Let $\sX$ be a $k$-smooth, algebraic stack. Then we have a canonical equivalence
\[
R^{\mathrm{dAff}}HC^{-}(\sX/\bbQ) \rightarrow R^{\mathrm{dAff}}L_{\cdh}HC^{-}(-/\bbQ)(\sX).
\]
\end{lem}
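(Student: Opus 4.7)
The plan is to reduce both sides to a \v Cech limit over a smooth affine cover, on whose terms the two functors manifestly agree. The key observation is that on any smooth derived affine $\Spec A$ over $k$, the map $HC^-(A/\bbQ) \to L_{\cdh}HC^-(A/\bbQ)$ is already an equivalence: writing $L_{\cdh}HC^- = HC^- \sqcup_K KH$ via \defref{def:hc-}, this follows from $K(A) \simeq KH(A)$, i.e.\ $\bbA^1$-invariance of $K$-theory on smooth schemes.

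To execute this, I would first produce a Nisnevich affine cover $U := \Spec A \to \sX$ via the derived Deshmukh's \thmref{thm:deshmukh}, implicitly assuming $\sX$ is qcqs with separated diagonal (which is the case in the application to \corref{prop:hc-as}). Since $\sX$ is $k$-smooth and the structure map of a Nisnevich cover is \'etale, $A$ is smooth over $k$; moreover the \v Cech nerve $U_\bullet := U^{\times_\sX(\bullet+1)}$ consists of smooth derived algebraic spaces, as \'etaleness is stable under pullback and separation of $\Delta_\sX$ keeps the fiber products within algebraic spaces.

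Both $HC^-(-/\bbQ)|_{\mrm{dAff}}$ and $L_{\cdh}HC^-(-/\bbQ)|_{\mrm{dAff}}$ are Nisnevich sheaves---the former by fppf descent for Hochschild homology, the latter because by \propref{prop:agreement}(1) it agrees on derived affines with the cdh-sheafification of $HC^-$. By \propref{prop:spc}, their right Kan extensions satisfy Nisnevich descent on stacks, so both sides of the claimed equivalence compute as $\lim_{[n]\in\Delta} R^{\mrm{dAff}}(-)(U_n)$. On each derived algebraic space $U_n$, \propref{prop:localizing-nis}---applicable since $L_{\cdh}HC^-$ is a localizing invariant by \propref{prop:agreement}(3)---identifies $R^{\mrm{dAff}}HC^-(U_n) \simeq HC^-(U_n/\bbQ)$ and $R^{\mrm{dAff}}L_{\cdh}HC^-(U_n) \simeq L_{\cdh}HC^-(U_n/\bbQ)$. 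Since each $U_n$ is smooth, \propref{prop:agreement}(2) forces these to agree, and passing to the limit over $\Delta$ yields the lemma. The only real subtlety is ensuring the \v Cech nerve of the cover lives in smooth algebraic spaces, but this is automatic once Deshmukh's cover is known to have an \'etale structure map; the characteristic-zero hypothesis plays no role in this reduction beyond what is already needed for the existence of $L_{\cdh}HC^-$.
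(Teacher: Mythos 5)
Your proposal is correct, but it takes a genuinely different and arguably leaner route than the paper's. The paper's proof establishes \emph{\'etale} descent for $L_{\cdh}HC^{-}(-/\bbQ)$ on affines as the key step---this is nontrivial, requiring either the theorem of van der Kallen that $\mathrm{fib}(K\to KH)$ satisfies \'etale descent, or an HKR-filtration argument---and then uses the ordinary smooth affine atlas of $\sX$ (an \'etale effective epimorphism, not a Nisnevich surjection). You instead invoke Deshmukh's \thmref{thm:deshmukh} to produce a smooth-\emph{Nisnevich} surjective affine cover, after which Nisnevich descent suffices, and Nisnevich descent for both $HC^-$ and $L_{\cdh}HC^-$ on affines comes essentially for free (localizing invariants are Nisnevich sheaves, and finite (co)limits of Nisnevich sheaves are Nisnevich sheaves). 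So your approach trades one piece of nontrivial input (van der Kallen) for another (derived Deshmukh), but avoids having to establish any \'etale descent, which is the most delicate part of the paper's proof.

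One small inaccuracy you should correct: you write that ``the structure map of a Nisnevich cover is \'etale, so $A$ is smooth over $k$.'' The cover produced by \thmref{thm:deshmukh} is a \emph{smooth}-Nisnevich covering---its structure map is smooth, not \'etale (it is a Nisnevich surjection in the sense of \propref{prop:spc}, not a Nisnevich cover in the sense of \defref{def:nis}). The conclusion you want survives unchanged: smooth-over-smooth is still smooth, and smoothness is preserved under base change, so the \v Cech nerve does consist of smooth qcqs derived algebraic spaces, and \propref{prop:localizing-nis} plus \propref{prop:agreement}(2) apply term-by-term exactly as you say. But the intermediate claim about \'etaleness is false and should be replaced by a remark about smoothness.
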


\begin{proof} We first claim that any \'etale surjection of algebraic stacks $f:\sY \rightarrow \sX$ is of universal descent for $R^{\mathrm{dAff}}L_{\cdh}HC^{-}(-/\bbQ)$. Using Proposition~\ref{prop:spc}, it suffices to prove that $L_{\cdh}HC^{-}(-/\bbQ)$ satisfies \'etale descent for classical commutative rings (since it is nilinvariant); by Proposition~\ref{prop:agreement} this is computed by $\cdh$-sheafifying $HC^{-}(-/\bbQ)$ and then restricting to affine schemes. To see this we can appeal to \cite[Theorem A.3]{e-morrow}, applying it to the HKR filtration on $HC^{-}(-/\bbQ)$ \cite[Theorem 4.2]{e-morrow}. Note that we can reduce to the quasisyntomic case for $L_{\cdh}HC^{-}(-/\bbQ)$ by resolution of singularities. Alternatively, we know that the fibre of the map $HC^{-}(-/\bbQ) \rightarrow L_{\cdh}HC^{-}(-/\bbQ)$ is the fibre of $K \rightarrow KH$ by~\eqref{eq:goodwillie}. The fibre of $K \rightarrow KH$ then satisfies \'etale descent by a theorem of van der Kallen \cite{vdk-descent}.

Now, since $\sX$ is a smooth algebraic $k$-stack, it admits an \'etale surjection $\Spec R \rightarrow \sX$ where $R$ is a smooth $k$-algebra. Since any \'etale surjection is of universal $HC^{-}(-/\bbQ)$-descent it suffices to note that $HC^{-}(X/\bbQ)\simeq L_{\cdh}HC^{-}(X/\bbQ)$ on smooth $k$-schemes\footnote{The argument of the first paragraph of \cite[Theorem 3.12]{CHSW} applies, using resolution of singularities.}. 
\end{proof}

\subsection{{Proof of Theorem~\ref{thm:main}}} Now we are ready to prove our main theorem. We restate it for the reader's convenience. 

\begin{thm}\label{thm:main} Fix $G$ a reductive group over a characteristic zero field $k$. Let $\sX \rightarrow BG$ be representable morphism where $\sX$ is derived algebraic stack, such that the morphism $\sX^{\mrm{cl}} \rightarrow BG$ is finite type. Assume further that $\sX$ is ANS. Then we have an equivalence:
\begin{equation}\label{eq:main}
K(\sX)^{\wedge}_{I_G} \simeq R^{\mrm{dAff}}K(\sX),
\end{equation}
where the limit is taken across all morphisms from an derived affine scheme to $\sX$.
\end{thm}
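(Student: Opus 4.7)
The plan is to deduce the theorem from the three Atiyah-Segal completion theorems already established in the paper by combining them via the characteristic-zero fracture square relating $K$-theory to $KH$, $HC^{-}$ and $L_{\cdh}HC^{-}$. Since the hypotheses force $\sX$ to be a quasi-compact ANS stack, Example \ref{ex:perfect stacks} guarantees that $\sX$ is perfect, so all the localizing invariants in sight make sense when evaluated at $\Perf(\sX)$ and carry compatible $K(BG)$-module structures (Construction \ref{constr:k-lax}).

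First, I would invoke Definition \ref{def:hc-}, which by construction presents the square
\[
\begin{tikzcd}
K \ar[r] \ar[d] & HC^{-}(-/\mbb Q) \ar[d] \\
KH \ar[r] & L_{\cdh}HC^{-}(-/\mbb Q)
\end{tikzcd}
\]
as a pushout of localizing invariants $\Cat^{\perf}_{\mbb Q}\to \Spt$. In the stable setting this square is simultaneously cartesian, so evaluating at $\Perf(\sX)$ gives a cartesian square of $K(BG)$-module spectra. Then I would apply two operations to this square: on the one hand, $I_G$-adic completion, which as a Bousfield localization of $K(BG)$-modules preserves fiber sequences and hence the cartesian square; on the other hand, right Kan extension $R^{\mrm{dAff}}$, which preserves all limits tautologically. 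The Atiyah-Segal comparison map \eqref{eq:asmap} assembles, naturally in the cohomology theory, into a morphism between two cartesian squares.

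Next, I would apply the known completion theorems at three of the four corners. For $HC^{-}$ this is Corollary \ref{cor:completion-th-HP}; for $KH$, which is truncating and $\mbb A^1$-invariant, this is Theorem \ref{thm:main_2}; and for $L_{\cdh}HC^{-}$ this is Corollary \ref{prop:hc-as}. Each of these is stated exactly under the hypotheses imposed on $\sX$ and $G$ in Theorem \ref{thm:main}. Since a map of cartesian squares of spectra which is an equivalence on three corners is automatically an equivalence on the fourth, the induced map on $K$-theory
\[
K(\sX)^{\wedge}_{I_G}\longrightarrow R^{\mrm{dAff}}K(\sX)
\]
is also an equivalence, completing the proof.

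The conceptual content of the argument has already been absorbed into the three inputs, so the only real obstacle is making sure those inputs are indeed available: Corollary \ref{prop:hc-as} depends on equivariant resolution of singularities (used in Section \ref{sec:askh} and implicit in Proposition \ref{prop:agreement}), Corollary \ref{cor:completion-th-HP} depends on the comparison between unipotent loops $(\sL_{\mbb Q}\sX)_{\mrm{uni}}$ and formal loops $\widehat{\sL}_{\mbb Q}\sX$ arranged in Section \ref{sec:as-hh} via the ANS hypothesis (Theorem \ref{thm:general completion thm for HH}), and Theorem \ref{thm:main_2} depends on Tabuada-van den Bergh plus cdh excision. Once those are in place, the present theorem is purely formal, reflecting the fact that the AS completion theorem is a statement about cohomology theories that respects the Corti\~nas/Land-Tamme/Kerz-Strunk-Tamme fracture structure on $K$-theory.
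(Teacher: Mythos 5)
Your proposal is correct and is essentially the same argument as the paper's own proof: the paper also applies $I_G$-completion and $R^{\mrm{dAff}}$ to the fracture square \eqref{eq:goodwillie} (with $L_{\cdh}HC^{-}$ defined by Definition~\ref{def:hc-}), notes both sides remain cartesian, and concludes the $K$-theory corner via Theorem~\ref{thm:main_2}, Corollary~\ref{prop:hc-as}, and Corollary~\ref{cor:completion-th-HP}.
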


\begin{proof}

First, take the right Kan extension of the square~\eqref{eq:goodwillie} to obtain the cartesian square on derived algebraic stacks over $\bbQ$:
\[
\begin{tikzcd}
R^{\mrm{dAff}}K \arrow[d]\arrow[r] & R^{\mrm{dAff}}HC^{-}(-/\bbQ)\arrow[d]\\
R^{\mrm{dAff}}KH \arrow[r] & R^{\mrm{dAff}}L_{\cdh}HC^{-}(-/\bbQ)
\end{tikzcd}
\]
We have a comparison map of cartesian squares

%
\[
\begin{tikzcd}
K(\sX)^\wedge_{I_G} \arrow[d]\arrow[r] & HC^{-}(\sX/\bbQ)^\wedge_{I_G} \arrow[d]\\
KH(\sX)^\wedge_{I_G} \arrow[r] & L_{\cdh}HC^{-}(\sX/\bbQ)^\wedge_{I_G}
\end{tikzcd}
\Rightarrow
\begin{tikzcd}
R^{\mrm{dAff}}K(\sX) \arrow[d]\arrow[r] & R^{\mrm{dAff}}HC^{-}(\sX/\bbQ)\arrow[d]\\
R^{\mrm{dAff}}KH(\sX) \arrow[r] & R^{\mrm{dAff}}L_{\cdh}HC^{-}(\sX/\bbQ).
\end{tikzcd}
\]
We remark that the cartesian-ness of the left-hand-side is by Definition~\ref{def:hc-}. 

It thus suffices to prove that the map on the bottom left, bottom right and top right vertices are equivalences. This follows from Theorem~\ref{thm:main_2}, Corollary~\ref{prop:hc-as} and Corollary~\ref{cor:completion-th-HP} respectively.

\end{proof}

\ssec{A counterexample}\label{sec:counterexample}

Let $R\in \CAlg^{\mrm{an}}_{\mbb Q}$ be an animated $\mbb Q$-algebra and denote by $B\mbb G_{a, R}:= B\mbb G_a \times \Spec R$. Let $\mbb Q[\varepsilon]:=\mbb Q[x]/x^2$. By $HH$ (resp. $HC^-$) below we mean Hochschild homology (resp. negative cyclic homology) relative to $\mbb Q$. Our goal in this section is to show that AS-completion theorem for $B\mbb G_{a, \mbb Q[\varepsilon]}$ (viewed as a stack over $B\mrm{SL_2}$, as in Remark \ref{rem:no AS in general}) does \textit{not} hold. Our 
argument will rely on Levy's generalization of Dundas--Goodwillie--McCarthy theorem for $(-1)$-connective rings 
\cite{levy2022algebraic} and the identification of $S^1$-equivariant structure on $HH(B\mbb G_a)$.

1. Note that $B\mbb G_{a,\mbb Q}$ is an affine stack in the sense of To{\"e}n, corepresented by the algebra 
$\Sym_{\mbb Q}(\Omega \mbb Q)$ (see \cite[D{\'e}finition~2.2.4~and~Lemma~2.2.5]{Ton2006}). 
In particular, $\mathrm{D}_{\mrm{qc}}(B\mbb G_{a,\mbb Q})$ is generated under colimits and shifts by the 
structure sheaf $\mc O$, and $\End_{\Perf(B\mbb G_{a, \mbb Q})}\simeq \Sym_{\mbb Q}(\Omega \mbb Q)$ 
(see \cite[Proposition~4.5.2]{dagVIII}). Since by \cite[Theorem~A(1)]{HallRydh2} perfect complexes over $B\mbb G_{a,\mbb Q}$ 
compactly generate $\mathrm{D}_{\mrm{qc}}(B\mbb G_{a,\mbb Q})$, this induces an equivalence of categories 
$$\Perf(B\mbb G_{a,\mbb Q})\simeq \Perf(\Sym_{\mbb Q}(\Omega \mbb Q)),$$
and a further $S^1$-equivariant identification 
$$
HH(B\mbb G_{a,\mbb Q})\simeq HH(\Sym_{\mbb Q}(\Omega \mbb Q))\simeq (\Sym_{\mbb Q}(\Omega \mbb Q))^{\otimes S^1} \simeq \Sym_{\mbb Q}(\Omega \mbb Q[S^1]),
$$ 
where $\mbb Q[S^1]$ is the regular $S^1$-representation (given by singular homology $C_*^{\mrm{sing}}(S^1,\mbb Q)$). Note that $H^*(\Sym_{\mbb Q}(\Omega \mbb Q[S^1]))\simeq \mbb Q[x] \oplus \Omega \mbb Q[x]$ and that $\Sym_{\mbb Q}(\Omega \mbb Q[S^1])$ is an $S^1$-equivariant module over $\mbb Q[x]$ (with the trivial $S^1$-action on the latter). 

Let $\mrm{pt} \ra B\mbb G_{a,\mbb Q} \ra \mrm{pt}$ be the natural maps; they split $HH(B\mbb G_{a,\mbb Q})$ as 
$$
HH(B\mbb G_{a,\mbb Q})\simeq \mbb Q \oplus \overline{HH}(B\mbb G_{a,\mbb Q}).
$$
We have a natural $S^1$-equivariant map $\Omega \mbb Q[S^1]\ra \Sym_{\mbb Q}(\Omega\mbb Q[S^1])\simeq HH(B\mbb G_{a,\mbb Q})$; it's easy to see (looking at what this map does on cohomology) that the map 
$$
\mbb Q[x]\otimes \Omega \mbb Q[S^1] \ra HH(B\mbb G_{a,\mbb Q})
$$
extended via $\mbb Q[x]$-module structure is an isomorphism onto $\overline{HH}(B\mbb G_{a,\mbb Q})$. To summarize, we get an $S^1$-equivariant identification 
$$
HH(B\mbb G_{a,\mbb Q})\simeq \mbb Q \oplus \left(\Omega \mbb Q[x]\otimes \mbb Q[S^1]\right),
$$
where the actions on $\mbb Q$ and $\mbb Q[x]$ are trivial. Taking $S^1$-fixed points then yields an identification
$$
HC^-(B\mbb G_{a,\mbb Q})\simeq HC^-(\mbb Q) \oplus \Omega \mbb Q[x]
$$
where $HC^-(\mbb Q)\simeq  C^*_{\mrm{sing}}(BS^1,\mbb Q)$. Here recall that $S^1$-fixed points of $V\otimes  \mbb Q[S^1]$ are given by $V$ for any $S^1$-module $V$.

2. 
By \cite[Theorem 1.2(1)]{BFN} we have $\Perf(B\mbb G_{a,R})\simeq \Perf(B\mbb G_{a,\mbb Q})\otimes \Perf(R)$; this gives (again, $S^1$-equivariant) splitting as
$$
HH(B\mbb G_{a,R})\simeq HH(B\mbb G_{a,\mbb Q})\otimes HH(R)\simeq HH(R) \oplus \left(\Omega HH(R)[x]\otimes \mbb Q[S^1]\right).
$$
Taking $S^1$-fixed points gives 
$$
HC^{-}(B\mbb G_{a,R})=HC^-(R) \oplus \Omega HH(R)[x].
$$

3. Recall from Remark \ref{rem:no AS in general} that we have $R^\mrm{dAff}HH(B \mbb G_{a,\mbb Q})\simeq \mbb Q[[x]] \oplus 
\Omega \mbb Q[[x]]$. Moreover, the natural map $HH(\mbb G_{a,\mbb Q}) \ra R^\mrm{dAff}HH(\mbb G_{a,\mbb Q})$ can be identified 
with the map 
\[
\mbb Q[x] \oplus \Omega \mbb Q[x]\ra  \mbb Q[[x]] \oplus \Omega \mbb Q[[x]]
\] 
sending $x$ to $x$. The $S^1$-module $R^\mrm{dAff}HH(\mbb G_{a,\mbb Q})$ is a module over its 0-th cohomology $\mbb Q[[x]]$ (on which the action is trivial) and, arguing as for $HH(\mbb G_{a,\mbb Q})$, we get an $S^1$-equivariant decomposition 
$$
R^\mrm{dAff}HH(B\mbb G_{a,\mbb Q}) \simeq \mbb Q \oplus \left(\Omega \mbb Q[[x]]\otimes \mbb Q[S^1]\right).
$$
Passing to $S^1$-fixed points (and commuting them through the right Kan extension) we get 
$$
R^\mrm{dAff}HC^-(B\mbb G_{a,\mbb Q})\simeq HC^-(\mbb Q) \oplus \Omega \mbb Q[[x]].
$$

4. Analogously, for $R^\mrm{dAff}HH(B\mbb G_{a,R})$ we get an $S^1$-equivariant splitting as 
$$
R^\mrm{dAff}HH(B\mbb G_{a,R})\simeq HH(R) \oplus \left(\Omega HH(R)[[x]]\otimes \mbb Q[S^1]\right);
$$
indeed, left hand side is given by functions on the formal loop stack of $B\mbb G_{a,R}$. Arguing as in Example \ref{ex:formal loops in some examples2}\ref{item:relative loops of BG}, the classical locus of $\sL B\mbb G_{a,R}$ is identified with $\mbb G_{a,\mbb Q} \times B\mbb G_{a,\mbb Q}\times \Spec \pi_0(R)$, with constant loops given by $\{e\}\times B\mbb G_{a,\mbb Q}\times \Spec \pi_0(R)$. This allows to identify $R^\mrm{dAff}HH(B\mbb G_{a,R})$ with the $x$-adic completion of $HH(R)$, giving the formula above.  
Applying $S^1$-fixed points gives a formula for negative cyclic homology
$$
R^\mrm{dAff}HC^-(B\mbb G_{a,R})\simeq HC^-(R) \oplus \Omega HH(R)[[x]].
$$

5. Following Remark \ref{rem:no AS in general} we view $B\mbb G_{a,\mbb Q}$ as the quotient (of the general affine space $SL_2/\mbb G_a$) by $SL_2$. We also have a map $B\mbb G_{a,\mbb Q[\varepsilon]} \ra B\mbb G_{a,\mbb Q} \ra BSL_2$. As discussed in loc.cit. the image of the AS-ideal $I_{SL_2}\subset K_0(BSL_2)$ in $HH_0(B\mbb G_{a,\mbb Q})$ (and, consequently, $HH_0(B\mbb G_{a,\mbb Q[\varepsilon]})$) is zero.

6. To pass to K-theory, consider the commutative diagram 
\[
\begin{tikzcd}
K(B\mbb G_{a,\mbb Q[\varepsilon]})^\wedge_{I_{SL_2}}\arrow[d]\arrow[r] & HC^-(B\mbb G_{a,\mbb Q[\varepsilon]})^\wedge_{I_{SL_2}}\arrow[d]\arrow[r] & R^\mrm{dAff}HC^-(B\mbb G_{a,\mbb Q[\varepsilon]})\arrow[d]\\
K(B\mbb G_{a,\mbb Q})^\wedge_{I_{SL_2}}\arrow[r] & HC^-(B\mbb G_{a,\mbb Q})^\wedge_{I_{SL_2}}\arrow[r] & R^\mrm{dAff}HC^-(B\mbb G_{a,\mbb Q}).
\end{tikzcd}
\]
It follows from \cite[Theorem~B]{levy2022algebraic} that the functor 
\[
\sC \mapsto \Fib\left(K(\Perf(B\mbb G_{a}) \otimes \sC) \to HC^-(\Perf(B\mbb G_{a}) \otimes \sC) \right)
\]
is a truncating localizing invariant. Hence by \cite[Corollary~3.6]{land-tamme} it is also nilinvariant. 
Hence, the left square in the diagram is a pullback square. 
If the AS completion theorem were true in this case, then the outer rectangle would be a pullback as well, by 
Proposition~\ref{lem:localizing-invt} applied to $\Fib(K \to HC^-)$. 
By the pasting law for pullbacks, this means that the right square would be a pullback. 
However, it follows from the computations above that that commutative square may be identified with the square 
\[
\begin{tikzcd}
HC^-(\mbb Q[\varepsilon]) \oplus \Omega HH(\mbb Q[\varepsilon])[x] \arrow[r]\arrow[d] &HC^-(\mbb Q[\varepsilon]) \oplus \Omega HH(\mbb Q[\varepsilon])[[x]]\arrow[d]\\ 
HC^-(\mbb Q) \oplus \Omega HH(\mbb Q)[x] \arrow[r] &HC^-(\mbb Q) \oplus \Omega HH(\mbb Q)[[x]]
\end{tikzcd}
\]
which is not pullback. This follows, for example, by noting that the homotopy groups of the left vertical fibre is countable while the ones on the right are the uncountable. 
\section{Applications} \label{sec:apps}

In this application section, we offer two consequences of our main theorem. One application reads the theorem from right to left and the other from left to right. 

\subsection{Functorial pushforwards}  For this application, we fix a field $k$ of charateristic zero, although some of the preliminary discussion works more generally. We begin by recalling some functoriality of the formation of perfect complexes. We first work in the generality of perfect derived stacks; given a morphism  $f: \sX \rightarrow \sY$ we have the pullback functor $f^*: \mrm{D}_{\mrm{qc}}(\sY) \rightarrow \mrm{D}_{\mrm{qc}}(\sX)$. This functor preserves perfect complexes and thus defines a functor $f^*: \Perf(\sX) \rightarrow \Perf(\sY)$. If $f$ is furthermore representbale, proper, laft and is of finite tor-amplitude then the right adjoint preserves perfect complexes producing the functor
\[
f_*: \Perf(\sX) \rightarrow \Perf(\sY).
\]
We might as well define a morphism $f: \sX \rightarrow \sY$ of perfect derived stacks to be \textbf{quasiperfect} $f_*$ preserves perfect complexes; this class of morphisms evidently includes those which are representable, proper, laft and is of finite tor-amplitude. Fixing a base prestack $\sX$, we contemplate the following $\infty$-category of spans
\[
\Span(\mrm{dAlgStk}^{\mathrm{qcqs,perf}}_{\sX}, \mrm{all}, \mrm{quasiperfect}),
\]
whose objects are representable morphisms $\sY \rightarrow \sX$ where the domain is a perfect derived stack and morphisms are spans
\[
\begin{tikzcd}
& \sZ \ar[swap]{dl}{f} \ar{dr}{g}& \\
\sY & & \sY',
\end{tikzcd}
\]
where $f$ is any representable morphism and $g$ is a quasiperfect and representable. For a precise construction of this $\infty$-category see \cite{BarwickMackey}. 

\begin{constr}\label{constr:mackey} Let $\sX$ be perfect stack. As explained by Barwick in \cite[Example D]{BarwickMackey}, using the base change explained in \cite[Proposition 6.3.4.1]{SAG} the formation of perfect complexes assembles into a functor
\[
\Perf:\Span(\mrm{dAlgStk}^{\mathrm{qcqs,perf}}_{\sX}, \mrm{all}, \mrm{quasiperfect}) \rightarrow \Cat^{\mathrm{perf}}_{\infty}(\Perf(\sX)).
\]
Postcomposing with the functor of algebraic $K$-theory we get
\[
K \circ \Perf: \Span(\mrm{dAlgStk}^{\mathrm{qcqs,perf}}_{\sX}, \mrm{all}, \mrm{quasiperfect}) \rightarrow \Cat^{\mathrm{perf}}_{\infty}(\Perf(\sX)),
\]
with a coherent pushforward functoriality. 
\end{constr}

Now, we work with $\sX = BG$, where $G$ is a reductive group in characteristic zero. 

\begin{lem}\label{lem:spans} There is a natural factorization: 
\begin{equation}\label{eq:kbg}
\begin{tikzcd}
 & \Mod_{K(BG)} \ar{d}{\mrm{forget}} \\
 \Span(\mrm{dAlgStk}^{\mathrm{qcqs,perf}}_{BG}, \mrm{all}, \mrm{quasiperfect}) \ar[dashed]{ur} \ar{r}{K \circ \Perf} & \Spt.
\end{tikzcd}
\end{equation}
\end{lem}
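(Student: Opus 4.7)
The plan is to upgrade $K \circ \Perf$ of Construction~\ref{constr:mackey} to a functor valued in $\Mod_{K(BG)}(\Spt)$ by first refining $\Perf$ itself to land in $\Perf(BG)$-linear stable $\infty$-categories, and then invoking the lax monoidal structure on $K$-theory afforded by Construction~\ref{constr:k-lax}.

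First I would observe that $\Perf(BG)$ is a rigid object of $\CAlg(\Cat^\perf_\infty)$: it inherits its symmetric monoidal structure from $\mrm{D}_{\mrm{qc}}(BG)$, and every perfect complex is, by definition, dualizable. Consequently Construction~\ref{constr:k-lax} applies with $\sE = \Perf(BG)$ and produces a canonical factorization
\[
K\colon \Cat^\perf(\Perf(BG)) \longrightarrow \Mod_{K(BG)}(\Spt)
\]
of the $K$-theory functor on $\Perf(BG)$-linear categories.

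Next, I would refine the span functor of Construction~\ref{constr:mackey} so that it lands in $\Cat^{\perf}(\Perf(BG))$. For any object $(\sY \to BG)$ of $\mrm{dAlgStk}^{\mrm{qcqs,perf}}_{BG}$, pullback endows $\Perf(\sY)$ with a canonical $\Perf(BG)$-module structure. Pullback along an arbitrary representable morphism $\sY' \to \sY$ over $BG$ is manifestly $\Perf(BG)$-linear, being symmetric monoidal. For pushforward along a quasiperfect morphism $g\colon \sZ \to \sY$ over $BG$, the $\Perf(BG)$-linearity of $g_*$ is exactly the projection formula, which for perfect stacks holds by \cite{BFN}. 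Assembling these into an $\infty$-functor out of the span category requires the same base change identity used in Construction~\ref{constr:mackey} (provided by \cite[Proposition 6.3.4.1]{SAG}), now interpreted internally in the symmetric monoidal $\infty$-category $\Mod_{\Perf(BG)}(\Cat^\perf_\infty)$; this base change is itself a formal consequence of the projection formula combined with the one already available in $\Cat^\perf_\infty$. Composing the refined
\[
\Perf\colon \Span(\mrm{dAlgStk}^{\mrm{qcqs,perf}}_{BG}, \mrm{all}, \mrm{quasiperfect}) \longrightarrow \Cat^{\perf}(\Perf(BG))
\]
with the factorization of $K$ from the previous paragraph then supplies the desired dashed arrow in~\eqref{eq:kbg}.

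The main obstacle is the homotopy-coherent bookkeeping in the second step: one must verify that Barwick's construction \cite{BarwickMackey} of $\Perf$ as a functor out of the span category lifts from $\Cat^\perf_\infty$ to the module category $\Mod_{\Perf(BG)}(\Cat^\perf_\infty)$. Conceptually this is automatic from the projection formula for quasiperfect morphisms between perfect stacks, but organizing the coherences requires some care; once it is done, the factorization through $K(BG)$-modules is formal from Construction~\ref{constr:k-lax}.
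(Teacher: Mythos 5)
Your argument follows essentially the same route as the paper's: both hinge on Construction~\ref{constr:k-lax} applied to $\sE=\Perf(BG)$ together with the projection formula for pushforward along quasiperfect morphisms. The paper cites the projection formula from \cite[Chapter 3, Lemma 3.2.4]{GRderalg2}, noting that the relevant maps are schematic and quasicompact as morphisms of stacks over $BG$, whereas you invoke \cite{BFN}; this is a cosmetic difference.

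The one place you are more explicit than the paper is in the coherence step: you refine $\Perf$ to a span functor valued in $\Cat^{\perf}(\Perf(BG))$ and then compose with the factored $K$, flagging that Barwick's construction must be lifted from $\Cat^{\perf}_\infty$ to $\Mod_{\Perf(BG)}(\Cat^{\perf}_\infty)$. The paper instead phrases the verification as ``the pushforward $f_*\colon K(\sX)\to K(\sY)$ is a $K(BG)$-module map,'' which, read literally, only checks the module structure pointwise on morphisms rather than coherently; your organization is the cleaner one and is presumably what the terse statement in the paper intends. Either way, the mathematical input is identical, so this is not a genuinely different proof.
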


\begin{proof} After the factorization  explained in~\eqref{eq:k-factors}, the only thing to verify is that for any quasiperfect map $f: \sX \rightarrow \sY$ the pushforward map $f_*:K(\sX) \rightarrow K(\sY)$ is a $K(BG)$-module map. This boils down to the projection formula, which is a consequence of the fact that $f$ is schematic and quasicompact (since it is a $BG$-morphism between two representable derived algebraic stacks which are assumed to be qc) and thus the projection formula holds by \cite[Chapter 3, Lemma 3.2.4]{GRderalg2}.
\end{proof}

By the functoriality of completion of modules along ideals explained in Section~\ref{sec:sag-completion} Lemma~\ref{lem:spans} gives us a functor:
\[
(-)^{\wedge}_{I_G} \circ K \circ \Perf: \Span(\mrm{dAlgStk}^{\mathrm{qcqs,perf}}_{BG}, \mrm{all}, \mrm{quasiperfect}) \rightarrow \Mod_{K(BG)} \xrightarrow{\text{forget}} \Spt.
\]
Now, replacing $\mrm{dAlgStk}^{\mathrm{perf}}_{BG}$ with the smaller, full subcategory, $\mrm{ANS}^{\mrm{clft}}_{BG}$, ANS-stacks equipped with a map to $BG$ such that $\mrm{\sX}^{\mrm{cl}} \rightarrow BG$ is finite type, our main theorem identifying $R^{\mrm{dAff}}K$ with the Atiyah-Segal completion of $K$-theory shows that
\begin{thm}[Pushforward structure]\label{thm:pushforward} The functor
\[
R^{\mrm{dAff}}K:(\mrm{ANS}^{\mrm{clft}}_{BG})^{\mathrm{op}} \rightarrow \Spt,
\]
admits a canonical extension along the faithful functor $\mrm{ANS}^{\mrm{clft}}_{BG} \rightarrow \Span(\mrm{ANS}^{\mrm{clft}}_{BG}, \mrm{all}, \mrm{quasiperfect})$:
\[
R^{\mrm{dAff}}K:\Span(\mrm{ANS}^{\mrm{clft}}_{BG}, \mrm{all}, \mrm{quasiperfect}) \rightarrow \Spt,
\]
such that, on $\mrm{dSch}^{\mrm{qcqs}} \cap \mrm{ANS}^{\mrm{clft}}_{BG} \subset  \mrm{ANS}^{\mrm{clft}}_{BG}$ (the subcategory of those $\sX \rightarrow BG$ where $\sX$ is a qcqs derived scheme), the pushforward functoriality agrees with the one induced by pushforward of perfect complexes. 
\end{thm}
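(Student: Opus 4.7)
The plan is to produce the span-category extension by transport of structure along the Atiyah-Segal completion theorem. Concretely: start with the functor
\[
K \circ \Perf\colon \Span(\mrm{dAlgStk}^{\mathrm{qcqs,perf}}_{BG}, \mrm{all}, \mrm{quasiperfect}) \too \Mod_{K(BG)}(\Spt)
\]
produced by Construction~\ref{constr:mackey} and Lemma~\ref{lem:spans}. First I would verify that the full subcategory $\mrm{ANS}^{\mrm{clft}}_{BG} \subset \mrm{dAlgStk}^{\mathrm{qcqs,perf}}_{BG}$ (this is a genuine inclusion, using that quasi-compact ANS stacks are perfect, Example~\ref{ex:perfect stacks}) is stable under the pullback/pushforward operations that appear in the span construction: stability under representable base change follows from Remark~\ref{rem:ANS passes through representable maps}, while the finite type condition on $\sX^{\mrm{cl}} \to BG$ is preserved by proper, finite-tor-amplitude, representable pushforwards. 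So we obtain by restriction a functor out of $\Span(\mrm{ANS}^{\mrm{clft}}_{BG}, \mrm{all}, \mrm{quasiperfect})$.

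Next I would post-compose with the $I_G$-adic completion functor on $\Mod_{K(BG)}(\Spt)$; this is a symmetric monoidal localization (left adjoint to the inclusion of $I_G$-complete modules, by the discussion in Section~\ref{sec:sag-completion}), so it is canonically functorial. The resulting composite
\[
\Phi\colon \Span(\mrm{ANS}^{\mrm{clft}}_{BG}, \mrm{all}, \mrm{quasiperfect}) \too \Mod_{K(BG)}(\Spt) \too \Spt
\]
sends $\sX$ to $K(\sX)^{\wedge}_{I_G}$. Theorem~\ref{thm:main} then supplies a natural equivalence $\eta \colon \Phi|_{(\mrm{ANS}^{\mrm{clft}}_{BG})^{\op}} \xrightarrow{\simeq} R^{\mrm{dAff}}K$ of presheaves on $\mrm{ANS}^{\mrm{clft}}_{BG}$; here the naturality of $\eta$ is built into the construction of the comparison map~\eqref{eq:asmap}. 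Inverting $\eta$ inside $\Fun((\mrm{ANS}^{\mrm{clft}}_{BG})^{\op}, \Spt)$ and transporting across the restriction functor $\Fun(\Span(\dots), \Spt) \to \Fun((\mrm{ANS}^{\mrm{clft}}_{BG})^{\op}, \Spt)$ produces the desired factorization of $R^{\mrm{dAff}}K$ through the span category, canonically equivalent to $\Phi$.

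For the compatibility clause, on the subcategory $\mrm{dSch}^{\mrm{qcqs}} \cap \mrm{ANS}^{\mrm{clft}}_{BG}$ Proposition~\ref{prop:localizing-nis} already provides a canonical equivalence $K \circ \Perf \simeq R^{\mrm{dAff}}K$ \emph{without} any completion, so the completion map is an equivalence there; under this identification the extension $\Phi$ restricts, tautologically, to the pushforward functoriality produced by Construction~\ref{constr:mackey}. The only real obstacle is the coherent transport of structure in the third paragraph: one must check that restriction along $\mrm{ANS}^{\mrm{clft}}_{BG} \hookrightarrow \Span(\dots)$ detects natural equivalences well enough to invert $\eta$ coherently. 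This is purely formal once one observes that $\eta$ is the image in $\Fun((\mrm{ANS}^{\mrm{clft}}_{BG})^{\op}, \Spt)$ of a natural transformation defined on all of $\mrm{dAlgStk}_{BG}$, so its pointwise invertibility (guaranteed by Theorem~\ref{thm:main}) suffices to invert it at the span-category level as well.
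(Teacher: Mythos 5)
Your proposal is correct and follows the same strategy as the paper: post-compose the span-category functor $K\circ\Perf$ with the $I_G$-completion endofunctor on $\Mod_{K(BG)}$, restrict to $\mrm{ANS}^{\mrm{clft}}_{BG}$, and identify the resulting contravariant restriction with $R^{\mrm{dAff}}K$ via Theorem~\ref{thm:main}, with the compatibility on schemes coming from the triviality of completion there. The checks you add (stability of $\mrm{ANS}^{\mrm{clft}}_{BG}$ under the span operations, and the remark that pointwise invertibility of the comparison natural transformation suffices) are correct and make explicit a couple of points the paper leaves tacit.
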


We remark that Theorem~\ref{thm:pushforward} makes no reference to the Atiyah-Segal ideal or the Atiyah-Segal completion. It produces a coherent transfer structure on the right Kan-extended $K$-theory for a large class of derived stacks. As emphasized throughout the text, the right Kan extended theory does not have any \emph{a priori} to extend to a localizing invariant or something close to it. Without identifying it with the completion of a localizing invariant, such a coherent transfer structure would be very difficult to produce.

\subsection{Motivic filtrations} In this section we answer the question of how to endow the $K$-theory of a derived stacks in characteristic zero with a motivic filtration. 

\subsubsection{Motivic filtrations on derived algebraic spaces} We refer to \cite{bouis-mixed, kelly-saito} for general accounts of the motivic filtration in mixed characteristic situations and Appendix~\ref{app:mot-algpsc} for a record of its extension to derived algebraic spaces. Since we work over $\mathbb{Q}$, we only need to refer to the motivic filrations of \cite{e-morrow}. We define a motivic filtration on derived algebraic stacks over $\mathbb{Q}$-as follows.

\begin{constr}\label{constr:mot-dalgstk} On the category of derived algebraic stacks over $\mathbb{Q}$, we define $\mathrm{Fil}_{\mot}^{\star}K$ as the following pullback in $\mathrm{CAlg}(\mathrm{FilSpt})$-valued presheaves (where the terms on the right are given the constant filtration):
\begin{equation}\label{eq:motfilt-stk}
\begin{tikzcd}
\mrm{Fil}_{\mot}^{\star}K \ar{d} \ar{r} & R^{\mrm{dAff}}\mrm{Fil}_{\mot}^{\star}K \ar{d}\\
K \ar{r} & R^{\mrm{dAff}}K.
\end{tikzcd}
\end{equation}

%
\end{constr}

Since the motivic filtration on algebraic spaces is $\mathbb{N}$-indexed, we have equivalences:
\[
\colim_{\star \rightarrow - \infty} \mathrm{Fil}_{\mot}^{\star}K \xrightarrow{\simeq} \mathrm{Fil}^0_{\mot} \xrightarrow{\simeq} K;
\]
in other words the filtration is exhaustive. Taking graded pieces gives us the following candidate:

\begin{defn}\label{def:mot} Let $\sX$ be a derived algebraic stack over $\mathbb{Q}$. We define its \textbf{motivic cohomology in weight $j$} as
\[
\mathbb{Z}(j)^{\mrm{mot}}(\sX) := \mathrm{gr}^j_{\mot}K(\sX)[-2j].
\]
In other words, it is given by the right Kan extension of motivic cohomology from derived algebraic spaces. 
\end{defn}

We can justify that Definition~\ref{def:mot} is a good definition at least for certain ANS stacks, due to our main theorem. More precisely the motivic cohomology of $\sX$ are the graded pieces of a certain non-complete filtration on the $K$-theory of $\sX$ whose completion coincides with the Atiyah-Segal completion. To formulate this theorem we recall how completion along a filtration is defined. Let $\mathrm{Fil}_{\mot}^{< j}K := \mathrm{cofib}(\mathrm{Fil}_{\mot}^{\geq j}K \rightarrow K)$ so that we have an inverse system
\[
K \rightarrow \cdots \mathrm{Fil}_{\mot}^{< j}K \rightarrow \mathrm{Fil}_{\mot}^{< j-1}K \rightarrow \cdots \mathrm{Fil}_{\mot}^{<1}K \rightarrow 0.
\]
The \textbf{motivic completed $K$-theory} of a derived algebraic stack $\sX$ is set to be
\[
K(\sX)^{\wedge}_{\mot} := \lim_j \mathrm{Fil}_{\mot}^{< j}K(\sX). 
\]
The next lemma shows that motivic completion of $K$-theory is equivalent to the right Kan extended theory:
\begin{lem}\label{lem:motivic-completion} Let $\sX$ be a $\bbQ$-derived algebraic stack such that $\sX$ can be written as a colimit in Nisnevich sheaves: $\sX \simeq \colim^{\mrm{Nis}} X_{\alpha}$ where $X_{\alpha}$ is a derived algebraic space for which $\mrm{Fil}_{\mot}^{\star}K(X_{\alpha})$ are completed filtrations. Then there is a canonical equivalence
\[
K(\sX)^{\wedge}_{\mot} \rightarrow R^{\mrm{dAff}}K(\sX).
\]
\end{lem}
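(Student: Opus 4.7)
The plan is to derive the equivalence directly from the pullback square~\eqref{eq:motfilt-stk} defining $\mrm{Fil}^{\star}_{\mot}K$, using that the motivic filtration on derived affine $\mbb Q$-schemes is complete. The first step is to apply the functor $\mrm{Fil}^{<j} := \mrm{cofib}(\mrm{Fil}^{\geq j} \to \mrm{Fil}^{-\infty})$ levelwise to~\eqref{eq:motfilt-stk}. Since this functor is exact---it is the composite of evaluation at level $j$ with a cofiber, both of which preserve limits in a stable setting---it sends the pullback square to a pullback. The bottom row of~\eqref{eq:motfilt-stk} carries the constant filtration $\mrm{Fil}^{\geq j}K = K$ and $\mrm{Fil}^{\geq j}R^{\mrm{dAff}}K = R^{\mrm{dAff}}K$, so $\mrm{Fil}^{<j}$ of the bottom row is identically zero. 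Consequently, for every $j$,
\[
\mrm{Fil}^{<j}_{\mot}K(\sX) \;\xrightarrow{\sim}\; R^{\mrm{dAff}}\mrm{Fil}^{<j}_{\mot}K(\sX).
\]

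Taking the limit over $j$ and using that $R^{\mrm{dAff}}$, being a right Kan extension, preserves all limits, one obtains
\[
K(\sX)^{\wedge}_{\mot} \;\simeq\; \lim_j R^{\mrm{dAff}}\mrm{Fil}^{<j}_{\mot}K(\sX) \;\simeq\; R^{\mrm{dAff}}(K^{\wedge}_{\mot})(\sX).
\]
The decisive input is now that on any derived affine $\mbb Q$-scheme $\Spec A$, the motivic filtration of \cite{e-morrow} is complete, so $K(\Spec A)^{\wedge}_{\mot} \simeq K(\Spec A)$. This yields $R^{\mrm{dAff}}(K^{\wedge}_{\mot}) \simeq R^{\mrm{dAff}}K$ and finishes the proof.

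Under the stated Nisnevich colimit hypothesis one gets a more geometric interpretation that is informative in its own right: combining Proposition~\ref{prop:spc} (Nisnevich descent of $R^{\mrm{dAff}}K$) and Proposition~\ref{prop:localizing-nis} (agreement $R^{\mrm{dAff}}K \simeq K$ on qcqs derived algebraic spaces) with the completeness assumption $K(X_\alpha)^{\wedge}_{\mot} \simeq K(X_\alpha)$, the equivalence factors as
\[
K(\sX)^{\wedge}_{\mot} \;\simeq\; R^{\mrm{dAff}}K(\sX) \;\simeq\; \lim_\alpha K(X_\alpha) \;\simeq\; \lim_\alpha K(X_\alpha)^{\wedge}_{\mot},
\]
exhibiting the motivic completion as a limit of already-complete motivic filtrations on the pieces of a Nisnevich presentation. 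The only delicate point I anticipate is the bookkeeping check that $(-)^{<j}$ genuinely preserves pullbacks of filtered spectra and that the convention ``constant filtration'' is indeed the one for which $\mrm{Fil}^{<j}$ vanishes---both are routine---so no serious obstacle is expected.
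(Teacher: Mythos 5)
Your steps (1)--(3) are sound, and in fact the identification $K(\sX)^{\wedge}_{\mot} \simeq \lim_j R^{\mrm{dAff}}\mrm{Fil}^{<j}_{\mot}K(\sX)$ that they produce is exactly the observation the paper makes at the start of its proof (``the right vertical map is an equivalence''). The gap is in your final step, where you invoke the claim that on \emph{any} derived affine $\mbb{Q}$-scheme $\Spec A$ the motivic filtration of Elmanto--Morrow is complete, so that $K^{\wedge}_{\mot}|_{\mrm{dAff}} \simeq K|_{\mrm{dAff}}$ and hence $R^{\mrm{dAff}}(K^{\wedge}_{\mot}) \simeq R^{\mrm{dAff}}K$. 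That claim is false: as recorded in Theorem~\ref{thm:deralgspc}(2), completeness of the motivic filtration on a derived algebraic space $X$ is asserted only under the hypothesis that $X^{\mrm{cl}}$ is Nisnevich-locally of finite valuative dimension, and there is no reason for an arbitrary $\Spec A \to \sX$ to satisfy it. Consequently the presheaf $K^{\wedge}_{\mot}$ restricted to $\mrm{dAff}$ is genuinely different from $K$, and the equivalence $R^{\mrm{dAff}}(K^{\wedge}_{\mot}) \simeq R^{\mrm{dAff}}K$ does not hold without further input.

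This is precisely why the statement of the lemma carries the hypothesis that $\sX$ is a Nisnevich colimit of derived algebraic spaces $X_\alpha$ whose motivic filtrations \emph{are} complete --- it is not an optional geometric gloss, as your last paragraph suggests, but the essential input. The paper uses it to prove that the \emph{bottom} horizontal map $R^{\mrm{dAff}}K(\sX) \to \lim_j R^{\mrm{dAff}}\mrm{Fil}^{<j}_{\mot}K(\sX)$ is an equivalence, via the chain
\[
R^{\mrm{dAff}}K(\sX) \simeq \lim_\alpha K(X_\alpha) \simeq \lim_\alpha \lim_j \mrm{Fil}^{<j}_{\mot}K(X_\alpha) \simeq \lim_j \lim_\alpha \mrm{Fil}^{<j}_{\mot}K(X_\alpha) \simeq \lim_j R^{\mrm{dAff}}\mrm{Fil}^{<j}_{\mot}K(\sX),
\]
where the second equivalence is exactly where the completeness assumption on the $X_\alpha$ enters. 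Your ``geometric interpretation'' in the final paragraph is close to this computation, but you present it as a consequence of an argument that did not actually need the hypothesis, when it is the whole argument.
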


\begin{proof} By construction, we have a commutative diagram
\[
\begin{tikzcd}
K(\sX) \ar{r} \ar{d} & K(\sX)^{\wedge}_{\mot} \ar{d} \\
R^{\mrm{dAff}}K(\sX) \ar{r} &  \lim_j R^{\mrm{dAff}}\mathrm{Fil}_{\mot}^{< j}K(\sX).
\end{tikzcd}
\]
By construction, the right vertical map is an equivalence. We claim that, under the stated hypotheses, the bottom horizontal map is an equivalence. Indeed, 
\begin{eqnarray*}
R^{\mrm{dAff}}K(\sX)  & \simeq & \lim_{\alpha} K(X_{\alpha})\\
& \simeq &\lim_{\alpha}  \lim_j \mathrm{Fil}_{\mot}^{< j}K(X_{\alpha})\\
& \simeq &  \lim_j  \lim_{\alpha} \mathrm{Fil}_{\mot}^{< j}K(X_{\alpha})\\
& \simeq &  \lim_j R^{\mrm{dAff}}\mathrm{Fil}_{\mot}^{< j}K(\sX),
\end{eqnarray*}
where we have used the hypothesis on the $X_{\alpha}$'s for the second equivalence.

\end{proof}

\begin{thm}\label{thm:main-motivic} Fix $G$ a reductive group over a characteristic zero field $k$. Let $\sX \rightarrow BG$ be representable morphism where $\sX$ is derived algebraic stack, such that the morphism $\sX^{\mrm{cl}} \rightarrow BG$ is finite type. Assume further that $\sX$ is ANS. Then we have canonical equivalences:
\begin{equation}\label{eq:main-mot}
K(\sX)^{\wedge}_{I_G} \xrightarrow{\simeq} K(\sX)^{\wedge}_{\mot}.
\end{equation}
\end{thm}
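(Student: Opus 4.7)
The proof will combine Theorem~\ref{thm:main} (identifying $K(\sX)^{\wedge}_{I_G} \simeq R^{\mrm{dAff}}K(\sX)$) with an independent identification $K(\sX)^{\wedge}_{\mot} \simeq R^{\mrm{dAff}}K(\sX)$. Composing these two equivalences gives the asserted map.

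To establish the second identification, the key observation is that the defining pullback square of Construction~\ref{constr:mot-dalgstk} lies in the stable $\infty$-category of filtered spectra, where pullback squares coincide with pushout squares. Evaluating at each weight $j \ge 0$, we obtain a bi-Cartesian square of spectra
\[
\begin{tikzcd}
\mrm{Fil}_{\mot}^{\geq j}K(\sX) \ar{r} \ar{d} & R^{\mrm{dAff}}\mrm{Fil}_{\mot}^{\geq j}K(\sX) \ar{d} \\
K(\sX) \ar{r} & R^{\mrm{dAff}}K(\sX),
\end{tikzcd}
\]
and comparing cofibers of the vertical arrows yields, for each $j$, an equivalence
\[
\mrm{Fil}_{\mot}^{<j}K(\sX) \simeq R^{\mrm{dAff}}\mrm{Fil}_{\mot}^{<j}K(\sX),
\]
where on the right we identify the cofiber of the right vertical arrow with $R^{\mrm{dAff}}\mrm{Fil}_{\mot}^{<j}K(\sX)$ using that $R^{\mrm{dAff}}$, being a right Kan extension, preserves the fiber sequence $\mrm{Fil}_{\mot}^{\geq j}K \to K \to \mrm{Fil}_{\mot}^{<j}K$ of presheaves on $\mrm{dAff}_{\bbQ}$. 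Taking the inverse limit over $j$ and commuting this limit past $R^{\mrm{dAff}}$ gives
\[
K(\sX)^{\wedge}_{\mot} = \lim_j \mrm{Fil}_{\mot}^{<j}K(\sX) \simeq R^{\mrm{dAff}}\!\left(\lim_j \mrm{Fil}_{\mot}^{<j}K\right)\!(\sX).
\]

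To conclude, it suffices to know that the motivic filtration on $K$-theory of derived affine schemes over $\mbb Q$ is complete, so that $\lim_j \mrm{Fil}_{\mot}^{<j}K(R) \simeq K(R)$ for every $\Spec R$, and hence $R^{\mrm{dAff}}(\lim_j \mrm{Fil}_{\mot}^{<j}K)(\sX) \simeq R^{\mrm{dAff}}K(\sX)$. This completeness is the content of the extension of the motivic filtration to derived algebraic spaces recorded in Appendix~\ref{app:mot-algpsc}, which builds on the HKR-type construction of \cite{e-morrow} over $\mbb Q$. The main obstacle in the proof is precisely this completeness input; absent it, the pullback-pushout formalism would only identify $K(\sX)^{\wedge}_{\mot}$ with the right Kan extension of motivically completed $K$-theory on affines rather than with $R^{\mrm{dAff}}K(\sX)$ itself. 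Notably, this approach bypasses the hypothesis of Lemma~\ref{lem:motivic-completion} of producing a Nisnevich presentation of $\sX$ by derived algebraic spaces, which in general need not exist for a quotient stack such as $B\bbG_m$.
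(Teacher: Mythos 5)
Your bi-Cartesian observation is correct: since the defining square of Construction~\ref{constr:mot-dalgstk} is a pullback (hence a pushout) in filtered spectra, its vertical cofibers agree, which, using that $R^{\mrm{dAff}}$ preserves cofiber sequences of presheaves of spectra, gives
\[
\mathrm{Fil}^{<j}_{\mot}K(\sX) \simeq R^{\mrm{dAff}}\mathrm{Fil}^{<j}_{\mot}K(\sX)
\]
for every $j$, and commuting $\lim_j$ past $R^{\mrm{dAff}}$ then yields $K(\sX)^{\wedge}_{\mot}\simeq R^{\mrm{dAff}}\bigl(\lim_j\mathrm{Fil}^{<j}_{\mot}K\bigr)(\sX)$. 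This reproduces the sentence ``By construction, the right vertical map is an equivalence'' in the proof of Lemma~\ref{lem:motivic-completion}; so far you are on the same track as the paper.

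The gap is in your final step. You assert that $\lim_j\mathrm{Fil}^{<j}_{\mot}K(R)\simeq K(R)$ for \emph{every} derived affine $\mbb Q$-scheme $\Spec R$, attributing this to Appendix~\ref{app:mot-algpsc}. But Theorem~\ref{thm:deralgspc}(2) only asserts completeness under a finiteness hypothesis (a Nisnevich cover by affines of finite valuative dimension), citing \cite[Proposition 4.50]{bouis-mixed}. There is no unconditional completeness statement, and none is expected: the motivic filtration of \cite{e-morrow} need not be complete on an arbitrary $\mbb Q$-algebra. Since $R^{\mrm{dAff}}(-)(\sX)=\lim_{\Spec R\to\sX}(-)(R)$ ranges over \emph{all} affines mapping to $\sX$ and not only the finite type ones, you cannot conclude $R^{\mrm{dAff}}\bigl(\lim_j\mathrm{Fil}^{<j}_{\mot}K\bigr)(\sX)\simeq R^{\mrm{dAff}}K(\sX)$ by invoking completeness termwise. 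This is precisely why the paper's argument routes through Lemma~\ref{lem:motivic-completion} and Theorem~\ref{thm:deshmukh}: one replaces the full index category $\mrm{dAff}_{/\sX}$ by the \v{C}ech nerve of a Nisnevich surjection $\Spec R\twoheadrightarrow\sX$ with $R^{\mrm{cl}}$ finite type over $k$, and completeness is then invoked only on those finite type terms.

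Your closing remark compounds the error. The map $\Spec k\to B\bbG_m$ is not a Nisnevich \emph{cover} in the sense of Definition~\ref{def:nis}, but it \emph{is} a Nisnevich surjection (Remark~\ref{rem:nis-covers}), so its \v{C}ech nerve $\bbG_m^{\times\bullet}$ does exhibit $B\bbG_m$ as a colimit in Nisnevich sheaves of smooth finite type $k$-schemes, and Lemma~\ref{lem:motivic-completion} applies directly. More generally, Theorem~\ref{thm:deshmukh} guarantees such a presentation for any qcqs derived algebraic stack with separated diagonal. The detour you attempted to avoid is both necessary (to confine the completeness input to finite type affines) and always available.
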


\begin{proof} After Lemma~\ref{lem:motivic-completion}, it is enough to prove that such a $\sX$ can be written as a Nisnevich colimit of derived algebraic spaces whose motivic filtration is complete. Indeed, by Theorem~\ref{thm:deshmukh} we may assume that $\sX$ admits a Nisnevich surjection from a derived algebraic space whose classical locus is finite type over $k$. Hence, the result follows from Theorem~\ref{thm:deralgspc}.(3).
\end{proof}

\appendix
\section{Geometric version of HKR isomorphism (after Ben-Zvi--Nadler)}\label{sec:bzn}

The goal of this appendix is to prove Theorem~\ref{thm:Nadler-Ben-Zvi} (originally due to Ben-Zvi and Nadler), as well as fill in some technical details that were left out from Section~\ref{sec:as-hh}. 

\subsection{Completions of vector bundles} The following result is Proposition~\ref{prop:completion at 0 section} in the main text, whose proof we now supply. 

\begin{prop}\label{app:completion at 0 section} Let $\sX$ be a convergent derived prestack over $\mbb Q$ and let $\sE\in \mrm{D}_{\mrm{qc}}(\sX)_{\ge 0}$. Assume that for any $f\colon \Spec R \ra \sX$ the $\pi_0(R)$-module $\pi_0(f^*(\sE))$ is finitely generated. Then the natural map
	\[
	{\colim_n}^{\mrm{conv}} \mbf V_\sX^{\le n}(\sE) \ra \widehat{\mbf V}_\sX(\sE), 
	\]
	where $\colim^{\mrm{conv}}$ denotes the colimit in convergent prestacks, is an equivalence.
\end{prop}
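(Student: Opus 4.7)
The plan is to reduce to the case $\sX = \Spec R$ and then identify both sides, evaluated on truncated test rings, with a common subspace of a mapping space cut out by a nilpotence condition. Both sides are convergent prestacks---the LHS by construction, and the RHS as a fiber product of convergent prestacks (Remark~\ref{rem:properties of completion}\ref{rem:laftnes and convergence of formal completion}, using that $\mbf V_\sX(\sE)$ is convergent whenever $\sX$ is). Both are also compatible with base change along any $f\colon \Spec R \to \sX$: the RHS by Remark~\ref{rem:pull-back of a formal completion}, and the LHS since $\mbf V^{\le n}_\sX(-)$ commutes with base change and colimits in the topos of convergent prestacks are universal. We may thus assume $\sX = \Spec R$ is affine and $\sE = M$ with $\pi_0(M)$ finitely generated over $\pi_0(R)$.

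Set $A = \Sym_R(M)$ and choose elements $m_1, \dots, m_r \in \pi_0(A)$ lifted from generators of $\pi_0(M)$; these generate the augmentation ideal $I = \ker(\pi_0(A) \to \pi_0(R))$. By Example~\ref{exam:completion of affine thing at finitely generated ideal},
\[
\widehat{\mbf V}_R(M) \simeq \colim_{(i_1, \dots, i_r) \in \mathbb{N}^r} \Spec\bigl([A/m_1^{i_1}] \otimes_A \cdots \otimes_A [A/m_r^{i_r}]\bigr).
\]
By convergence it suffices to check the comparison on $m$-truncated $R$-algebras $S$. Both $S$-point spaces embed into $\Map_R(A, S) \simeq \Map_R(M, S)$: the RHS is the subspace $U_R$ of maps $\phi\colon M \to S$ whose image in $\pi_0(S)$ generates a nilpotent ideal---this is a direct unfolding of the fiber-product definition of the absolute completion, using finite generation of $I$ to replace ``contained in some nilpotent ideal'' by ``itself nilpotent''. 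The LHS is the subspace $U_L$ of maps $\phi$ which factor, as animated $R$-algebra maps, through some $\Sym^{\le n}_R(M)$.

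The inclusion $U_L \subseteq U_R$ is immediate, since factoring through $\Sym^{\le n}_R(M)$ forces the image of $\pi_0(I)^{n+1}$ to vanish in $\pi_0(S)$. The reverse inclusion is the main obstacle. Given $\phi \in U_R$, the obstruction to producing a factorization through $\Sym^{\le n}_R(M)$ is a null-homotopy of the $A$-module map $\Sym^{>n}_R(M) \to S$ determined by $\phi$. On $\pi_0$ this null-homotopy exists as soon as $n$ exceeds the nilpotence exponent of the generated ideal; higher obstructions involve only finitely many monomial products in the $m_i$ (by finite generation of $\pi_0(M)$) and live in the bounded range $\pi_0(S), \dots, \pi_m(S)$ (by $m$-truncation of $S$). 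A uniform-bound argument---choosing $n$ large enough to kill all obstructions simultaneously---then completes the proof.
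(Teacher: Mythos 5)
Your reduction to the affine case $\sX=\Spec R$, $\sE = M$ is correct and identical to the paper's first step, and your description of the right-hand side as the union of connected components of $\Map_R(\Sym_R(M), S)$ on which the image of the augmentation ideal in $\pi_0(S)$ is nilpotent is also right (this is Example~\ref{ex:affine completion}). The problem is the treatment of the left-hand side.

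You assert that $\bigl(\colim_n \mbf V^{\le n}_R(M)\bigr)(S)$ ``is the subspace $U_L$'' of $\Map_R(\Sym_R(M),S)$ of maps factoring through some truncation. This presupposes that the natural map
\[
\colim_n \Map_{\CAlg^{\mathrm{an}}_R}\bigl(\Sym^{\le n}_R(M), S\bigr) \longrightarrow \Map_{\CAlg^{\mathrm{an}}_R}\bigl(\Sym_R(M), S\bigr)
\]
is a monomorphism of spaces, which is exactly what one has to prove; the individual maps $\Map(\Sym^{\le n}_R(M),S)\to\Map(\Sym_R(M),S)$ are not monomorphisms in general (a point $\phi$ can admit many non-homotopic factorizations through a fixed truncation), so the colimit being a subspace is not a bookkeeping fact but the heart of the statement. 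Once the left-hand side is \emph{not} known to be a subspace, comparing ``$U_L$'' and ``$U_R$'' by a mutual inclusion of subsets of $\pi_0$ is no longer the right shape of argument: one must show that the fibers of the map above over each point of the nilpotent locus are contractible.

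The obstruction-theoretic sketch you give for the reverse inclusion also breaks down. You describe the obstruction to factoring $\phi$ through $\Sym^{\le n}_R(M)$ as a null-homotopy of the $\Sym_R(M)$-module map $\Sym^{>n}_R(M)\to S$; but the quotient $\Sym_R(M)\to\Sym^{\le n}_R(M)$ is not a square-zero extension and the space of algebra-level factorizations is not controlled by a single module-level null-homotopy, so the stated obstruction theory is not available. The part of the proof you leave as ``a uniform-bound argument'' is precisely where the paper does the substantive work: it chooses a surjection $R^{\oplus m}\to M$ on $\pi_0$, uses the fibre sequence $R^{\oplus m}\to M\to M'[1]$ to put an exhaustive filtration on $\Sym_R(M)$ with associated graded $\Sym_R(R^{\oplus m})\otimes_R\Sym^\star_R(M'[1])$, and then exploits the connectivity estimate $\Sym^i_R(M'[1])\in D(R)_{\ge i}$ to show that, after a truncation $\tau_{\le s}$, the two pro-systems $\{\Sym^{\le n}_{\mathbb Q}(\mathbb Q^{\oplus m})\otimes_{\Sym_{\mathbb Q}(\mathbb Q^{\oplus m})}\Sym_R(M)\}_n$ and $\{\Sym^{\le n}_R(M)\}_n$ are pro-equivalent. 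That pro-equivalence is what simultaneously produces the factorization and shows it is unique up to contractible choice, and none of it is recoverable from finite generation of $\pi_0(M)$ and $m$-truncation of $S$ alone without the filtration device. As written the proposal therefore has a genuine gap at the step it labels as completing the proof.
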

\begin{rem} In formulation above we implicitly used the following fact: if $\mc Y\ra \sX$ is an affine map of prestacks and $\sX$ is convergent, then $\sY$ is convergent. In particular, $\widehat{\mbf V}_\sX(\sE)$ and $\mbf V^{\le n}_\sX(\sE)$ are convergent, if $\sX$ is.
\end{rem}	
\begin{proof} 
	By Remark \ref{rem:convergent prestacks} it is enough to show that the map induces an equivalence on $R$-points for all $R\in \CAlg^{\mrm{an},[0,\infty)}_{\mbb Q}$. Fixing $R$, this is enough to do check the equivalence on fibres over $\sX(R)$. Fixing a point $f\colon \Spec R\ra \sX$ and looking at the description of fibers of $(	{\colim}_n^{\mrm{conv}} \mbf V_\sX^{\le n}(\sE))(R)$ and $(\widehat{\mbf V}_\sX(\sE))(R)$ over it, we reduce to the affine case $\sX=\Spec R$ with $\sE$ given by the connective module $M:=f^*\sE\in \mrm{D}(R)_{\ge 0}$.

	By our assumption on $\sE$ there is a map $R^{\oplus m}\ra M$ inducing a surjection on $\pi_0$. It induces a closed embedding
	$$
	\mbf V_{\Spec R}(M) \ra \mbf V_{\Spec R}(R^{\oplus m})\simeq \Spec R\times  \mbb A^m.
	$$
	Moreover, if we take preimage of 0 under the resulting projection $\mbf V_{\Spec R}(M)\ra \mbb A^m$, the classical locus
	$(\mbf V_{\Spec R}(M)\times_{\mbb A^n} \{0\})^{\mrm{cl}}\hookrightarrow \mbf V_{\Spec R}(M)^{\mrm{cl}}$ is exactly identified with the zero section $\Spec \pi_0(R)\hookrightarrow \mbf V_{\Spec R}(M)^{\mrm{cl}}$. Thus, by Example \ref{ex:affine completion}, $\widehat{\mbf V}_{\Spec R}(M)$ agrees with derived completion of $\mbf V_{\Spec R}(M)$ along $\mbf V_{\Spec R}(M)\times_{\mbb A^n} \{0\}$. By Remark \ref{rem:pull-back of a formal completion} and Example \ref{ex:classical completion agrees with derived} we get an equivalence 
	$$
	\widehat{\mbf V}_{\Spec R}(M)\simeq \mbf V_{\Spec R}(M)\times_{\mbb A^m}(\mbb A^m)^\wedge_{0}\simeq \colim_n \Spec (\Sym^{\le n}_{\mbb Q}(\mbb Q^{\oplus m})\otimes_{\Sym_{\mbb Q}(\mbb Q^{\oplus m})} \Sym_R(M)).
	$$
	We will show that for each $s\ge 0$ the pro-systems 
	$$
	\{\tau_{\le s}(\Sym^{\le n}_{\mbb Q}(\mbb Q^{\oplus m})\otimes_{\Sym_{\mbb Q}(\mbb Q^{\oplus m})} \Sym_R(M))\}_{n\in \mbb N} \qquad \text{and} \qquad \{\tau_{\le s}(\Sym^{\le n}_R(M))\}_{n\in \mbb N}
	$$ 
	are equivalent; since we are only interested in values on the truncated algebras this will give the claim.
	
	The map from the left to the right is induced by scalar multiplication, and to construct the inverse it will be enough to show that the map 
	$$
	\tau_{\le s} \Sym_R(M) \ra \tau_{\le s}(\Sym^{\le n}_{\mbb Q}(\mbb Q^{\oplus m})\otimes_{\Sym_{\mbb Q}(\mbb Q^{\oplus m})} \Sym_R(M))
	$$
	naturally factors through $\tau_{\le s} \Sym^{\le N}_R(M)$ for $N \gg 0$. Note that by \eqref{eq:maps from graded truncation}, given an $\mbb N$-graded animated $R$-algebra $B^\star$ such that $B^m\simeq 0$ for $m> N$, there is a natural equivalence of spaces of graded algebra maps
	$$
	\mrm{Map}_{\mrm{gr}}(\Sym^\star_R(V), B^\star) \simeq \mrm{Map}_{\mrm{gr}}(\Sym^{\le N}_R(V), B^\star);
	$$
	in other words, any graded $R$-algebra map $\Sym^\star_R(V) \ra B$ canonically factors through $\Sym^{\le N}_R(V)$. Since the map above is naturally a graded map it remains to show that the $N$-th graded component of 
	$$\tau_{\le s}(\Sym^{\le n}_{\mbb Q}(\mbb Q^{\oplus m})\otimes_{\Sym^\star_{\mbb Q}(\mbb Q^{\oplus m})} \Sym^\star_R(M))\simeq \tau_{\le s}(\Sym^{\le n}_R(R^{\oplus m})\otimes_{\Sym^\star_R(R^{\oplus m})} \Sym^\star_R(M))$$ is 0 for $N\gg n$.

	Considering the cofiber of $R^{\oplus m} \ra M$ we get a fiber sequence 
	$$
	R^{\oplus m} \ra M \ra M'[1]
	$$
	for some connective $R$-module $M'$.  This endows $M$ with increasing 2-step filtration $F_\bullet (M)$ ($F_i(M)=0$ for $i<0$, $F_0(M):=R^{\oplus m}$ and $F_i(M)=M$ for $i>0$), which induces an increasing filtration on $\Sym_R(M)$ as $\Sym_R(R^{\oplus m})$-module. By \cite[Construction~25.2.5.4]{SAG} its associated graded is given by the tensor product
	$$
	\Sym_R(R^{\oplus m}) \otimes_R \Sym^\star_R(M'[1])
	$$
	with the filtration degree given by the degree on $\Sym^\star_A(M'[1])$.
	By \cite[Proposition~25.2.4.1]{SAG}, $\Sym^i_A(M'[1])$ is $i$-connective, thus the filtration above is exhaustive. As a result, after tensoring with $\Sym^{\le n}_R(R^{\oplus m})$ over $\Sym^\star_R(R^{\oplus m})$ we get an exhaustive filtration on $\Sym^{\le k}_R(R^{\oplus m})\otimes_{\Sym^\star_R(R^{\oplus m})}\Sym^\star_R(M)$ as graded $\Sym^{\le n}_R(R^{\oplus m})$-algebra with the associated graded $\Sym^{\le n}_R(R^{\oplus m}) \otimes_R \Sym^\star_R(M'[1])$ (and with the grading given by total grading). Since $\Sym^i_R(M'[1])$ is $i$-connective, we get that for a given $s\ge 0$ the $N$-th graded component of $\tau_{\le s}(\Sym^{\le n}_R(R^{\oplus m}) \otimes_R \Sym^\star_R(M'[1]))$ is 0 for $N\gg 0$ (more precisely $N>n-s$). Using long exact sequences of homotopy groups to pass back from the associated graded to the original object (or, equivalently, using the corresponding spectral sequence) we get that the same is true for the truncation 
	$$
	\tau_{\le s}(\Sym^{\le n}_R(R^{\oplus m})\otimes_{\Sym^\star_R(R^{\oplus m})} \Sym^\star_R(M)).
	$$
	
\end{proof}

\begin{rem}
	From the proof it also follows that the natural map 
	$$
	\widehat{\mbf V}_\sX(\sE) \ra \mbf V^\curlywedge_{\sX}(\sE)
	$$
	to the thin formal completion along the zero section is an isomorphism. Indeed, it is enough to check this after pulling back to every $\Spec R \ra \sX$, where it reduces to the fact that the map 
	$$
	(\mbb A^n)_0^\wedge \ra (\mbb A^n)_0^\curlywedge
	$$
	is an isomorphism (e.g. by Lemma \ref{lem:two completions coincide}).
\end{rem}

The next results are meant to assuage the reader of the worry that colimits in convergent derived prestacks could be destructive. The embedding $\mrm{dPStk}^{\mrm{conv}}\ra \mrm{dPStk}$ has a left adjoint which we denote $\sY \mapsto \sY^{\mrm{conv}}$. Via the identifications $\mrm{dPStk}^{\mrm{conv}} \simeq \mrm{Fun}(\CAlg^{[0,\infty)},\Spc)$, $\mrm{dPStk}\simeq \mrm{Fun}(\CAlg^{\mrm{cn}},\Spc)$ this functor is calculated by restricting $\sY\colon \CAlg^{\mrm{cn}} \ra \Spc$ to $\CAlg^{[0,\infty)}\subset \CAlg$. 

As noted in \cite[Chapter 3, 1.2.5]{GRderalg1} it is not true that the pull-back functor $$  D_{\mrm{qc}}(\sY) \ra  D_{\mrm{qc}}(\sY^{\mrm{conv}})
$$ 
induced by the unit map $\sY \ra \sY^{\mrm{conv}}$ is an equivalence. Nevertheless, we note that this is true for the full subcategories spanned by bounded below objects.
\begin{lem}\label{lem:functions on the convergencified stack}
	For any prestack $\mc Y$ the natural map $\sY \ra \sY^{\mrm{conv}}$ induces an equivalence
	$$
	  D_{\mrm{qc}}( \sY^{\mrm{conv}})_{>-\infty} \simeq  D_{\mrm{qc}}( \sY)_{>-\infty}.
	$$
Furthermore, it induces an equivalence
	$$
	\sO( \sY^{\mrm{conv}}) \simeq \sO(\sY).
	$$
\end{lem}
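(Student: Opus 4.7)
The plan is to deduce both assertions from a single general principle. Call a functor $F \colon \CAlg^{\mrm{an}} \to \sC$ (valued in an $\infty$-category with all small limits) \emph{convergent} if the natural map $F(A) \to \lim_n F(\tau_{\le n} A)$ is an equivalence for every $A$. When $F$ is convergent, the right Kan extension of $F$ along $\CAlg^{\mrm{an}} \simeq \mrm{dAff}^{\op} \hookrightarrow \mrm{dPStk}^{\op}$ coincides, by transitivity of right Kan extensions, with the right Kan extension of $F|_{\CAlg^{\mrm{an},[0,\infty)}}$ along $\mrm{dAff}^{[0,\infty), \op} \hookrightarrow \mrm{dPStk}^{\op}$. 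Granting this, since by construction $\sY$ and $\sY^{\mrm{conv}}$ have identical restrictions to $\CAlg^{\mrm{an},[0,\infty)}$, the slice categories $\mrm{dAff}^{[0,\infty)}/\sY$ and $\mrm{dAff}^{[0,\infty)}/\sY^{\mrm{conv}}$ coincide, and therefore so do the right Kan extensions of any convergent $F$ evaluated at $\sY$ and at $\sY^{\mrm{conv}}$.

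For the global-sections equivalence we apply this principle to the identity functor $A \mapsto A$ on $\CAlg^{\mrm{an}}$; its convergence $A \simeq \lim_n \tau_{\le n} A$ is a standard reflection of left-completeness of the natural $t$-structure on $\CAlg^{\mrm{an}}$. The associated right Kan extension evaluated at $\sY$ is by definition $\sO(\sY) = \lim_{\Spec A \to \sY} A$, so the principle yields $\sO(\sY) \simeq \sO(\sY^{\mrm{conv}})$.

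For the quasicoherent-sheaves equivalence we apply the principle to $A \mapsto \mrm{D}(A)_{>-\infty}$, viewed as a functor to $\Cat_\infty$. The key technical input is the convergence
\[
\mrm{D}(A)_{>-\infty} \xrightarrow{\sim} \lim_n \mrm{D}(\tau_{\le n} A)_{>-\infty}
\]
along the extension-of-scalars transition functors. The forward functor sends $M$ to $(M \otimes_A \tau_{\le n} A)_n$. For any connective $M$, the fibre sequence $F_n \to A \to \tau_{\le n} A$ has $F_n$ concentrated in degrees $\geq n+1$; tensoring with $M$ shows $M \otimes_A F_n$ is also $(n{+}1)$-connective, so $M \otimes_A \tau_{\le n} A$ agrees with $M$ in degrees $\le n$. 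Consequently $M \simeq \lim_n M \otimes_A \tau_{\le n} A$ for connective $M$, and more generally for any bounded below $M$ by shifting. This gives fully faithfulness of the forward functor, while essential surjectivity follows by noting that any compatible system $(M_n)$ in the limit category automatically has a common lower bound, forced by the transition identifications $M_n \otimes_{\tau_{\le n} A} \tau_{\le n-1} A \simeq M_{n-1}$.

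The main obstacle is controlling this boundedness uniformly in order to stay within $\mrm{D}(A)_{>-\infty}$; once that is settled, the convergence equivalence above is in hand, and the general principle described in the first paragraph delivers both equivalences asserted in the lemma.
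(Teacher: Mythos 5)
Your ``general principle'' -- that a right Kan extension from $\mrm{dAff}$ of a \emph{convergent} functor $F$ only sees the restriction of a prestack to truncated rings, hence cannot distinguish $\sY$ from $\sY^{\mrm{conv}}$ -- is a clean reformulation of the paper's mechanism (the paper phrases it via the intermediary $\colim_n \tau^{\le n}(\sY)$), and your treatment of the $\sO(-)$ claim is essentially the same as theirs. Your convergence claim $D(A)_{>-\infty}\simeq\lim_n D(\tau_{\le n}A)_{>-\infty}$ is in fact correct: essential surjectivity holds because the base change of any compatible system to $\pi_0(A)$ has some connectivity bound $-k$, and the square-zero fiber sequences for $\tau_{\le n}A\to\tau_{\le n-1}A$ (whose fibers are $n$-connective) propagate this bound up the tower, so a uniform bound is forced.

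The genuine gap is elsewhere, and the boundedness issue you flag in your last paragraph is a red herring. Applying your principle to $A\mapsto D(A)_{>-\infty}$ produces the statement
\[
\lim_{\Spec A\to\sY} D(A)_{>-\infty}\;\simeq\;\lim_{\Spec A\to\sY^{\mrm{conv}}} D(A)_{>-\infty},
\]
but the left-hand side is \emph{not} $D_{\mrm{qc}}(\sY)_{>-\infty}$. By definition $D_{\mrm{qc}}(\sY)_{>-\infty}=\colim_k D_{\mrm{qc}}(\sY)_{\ge -k}=\colim_k\lim_{\Spec A\to\sY}D(A)_{\ge -k}$: a compatible system must be \emph{uniformly} bounded below across the slice category. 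The object $\lim_{\Spec A\to\sY}D(A)_{>-\infty}=\lim_{\Spec A\to\sY}\colim_k D(A)_{\ge -k}$ is larger: it allows a lower bound which varies from point to point. These two expressions exchange a $\colim$ and a $\lim$ which do not commute, since $\mrm{dAff}_{/\sY}$ is not cofiltered. Concretely, for $\sY=\coprod_{n\in\mbb N}\Spec\mbb Q$ one has $D_{\mrm{qc}}(\sY)=\prod_n D(\mbb Q)$, and $D_{\mrm{qc}}(\sY)_{>-\infty}$ consists of uniformly bounded-below sequences, while $\lim_{\Spec A\to\sY}D(A)_{>-\infty}\simeq\prod_n D(\mbb Q)_{>-\infty}$; the sequence $(\mbb Q[-n])_n$ lies in the latter but not the former.

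The fix is small and brings your proof in line with the paper's: apply the general principle to $A\mapsto D(A)_{\ge -k}$ for each fixed $k$ (its convergence $D(A)_{\ge -k}\simeq\lim_n D(\tau_{\le n}A)_{\ge -k}$ is even simpler, since the bound is fixed in advance and the essential surjectivity worry disappears). Since $D_{\mrm{qc}}(\sY)_{\ge -k}$ is the honest right Kan extension $\lim_{\Spec A\to\sY}D(A)_{\ge -k}$ (connectivity in a limit of $t$-structured categories is detected termwise), the principle gives $D_{\mrm{qc}}(\sY)_{\ge -k}\simeq D_{\mrm{qc}}(\sY^{\mrm{conv}})_{\ge -k}$; then pass to the colimit over $k$ at the very end, exactly as the paper does. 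Working with $D(-)_{>-\infty}$ as a single fibrewise datum forgets exactly the uniformity that the lemma is asserting is preserved.
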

\begin{proof}
Following \cite[Chapter 2, Section 1.3]{GRderalg1} let us denote by $\tau^{\le n}(\sY)\in \mrm{dPStk}$ the left Kan extension of the restriction $\sY_{|\CAlg^{[0,n]}}$ (to $n$-truncated algebras) along the embedding $\CAlg^{[0,n]}\subset \CAlg^{\mrm{an}}$. For any prestack $\sY$ we claim that the natural map $\sY \ra \colim_{n} \tau^{\le n}(\sY)$ induces an equivalence
$$
 D_{\mrm{qc}}(\sY)_{>-\infty}\simeq  D_{\mrm{qc}}(\colim_{n} \tau^{\le n}(\sY))_{>-\infty}.
$$
Note that $ D_{\mrm{qc}}(\sY)_{>-\infty}=\colim_n  D_{\mrm{qc}}(\sY)_{\ge -n}$ and $ D_{\mrm{qc}}(\sY)_{\ge -n}\simeq  D_{\mrm{qc}}(\sY)_{\ge 0}$ via the shift $[n]$, so it is enough to show that 
$$
 D_{\mrm{qc}}(\sY)_{\ge 0}\simeq  D_{\mrm{qc}}(\colim_{n} \tau^{\le n}(\sY))_{\ge 0}.
$$

Since both sides send colimits to limits in the $\sY$-variable, it is enough to check this for $\sY=\Spec R$. In this case $\tau^{\le n}(\Spec R)$ is represented by $\Spec \tau_{\le n} R$ and we need to prove that the functor
$$
 D(R)_{\ge 0} \ra \lim_{n\ge 0}  D(\tau_{\le n}(R))_{\ge 0}
$$
sending $M$ to $\{M\otimes_R\tau_{\le n}(R)\}_n$ is an equivalence. Note that $D(A)$ is left $t$-complete for any $A\in \CAlg^{\mrm{cn}}$, and so $D(A)_{\ge 0}\simeq \lim_{m\ge 0} D(A)_{[0,m]}$ (with map induced by truncations $\{\tau_{\le m}\}$). We then have 
$$
\lim_{n\ge 0}  D(\tau_{\le n}(R))_{\ge 0} \simeq \lim_{n,m\ge 0}  D(\tau_{\le n}(R))_{[0,m]}\simeq \lim_{n\ge 0}  D(\tau_{\le n}(R))_{[0,n]}.
$$
Moreover, the corresponding functors $ D(R)_{\ge 0}\ra  D(\tau_{\le n}(R))_{[0,n]}$ are given by the truncations
$$
M\mapsto \tau_{\le n}(M\otimes_R \tau_{\le n} R)\simeq \tau_{\le n}M
$$
where the right equivalence holds since the fiber of the map $R\ra  \tau_{\le n} R$ is $(n+1)$-connective. This way (under the equivalence $D(R)_{[0,n]}\simeq D(\tau_{\le n}R)_{[0,n]}$ induced by $\tau_{\le n}$) the functor $ D(R)_{\ge 0}  \ra \lim_{n\ge 0}  D(\tau_{\le n}(R))_{[0,n]}$ can be identified with the left $t$-completion functor
$$
D(R)_{\ge 0}\simeq \lim_{n\ge 0} D(R)_{[0,n]}.
$$
It is an equivalence by left $t$-completeness of $D(R)$. Finally, for any $n\ge 0$ we have  $\tau^{\le n}(\sY)\simeq \tau^{\le n}(\sY^{\mrm{conv}})$ (since the restrictions of $\sY$ and $\sY^{\mrm{conv}}$ to $\CAlg^{<\infty}$ canonically agree) and so $$ D_{\mrm{qc}}( \sY^{\mrm{conv}})_{>-\infty} \simeq  D_{\mrm{qc}}( \sY)_{>-\infty}.$$

 The equivalence for $\sO(-)$ is proved similarly, by using that for $\sY=\Spec R$ one has
 $$
 R\simeq \lim_{n\ge 0} \tau_{\le n}R.
 $$

\end{proof}		
\begin{cor}\label{app:functions on formal completion at 0} Let $\sX$ be a prestack over $\mbb Q$. Let $\sE \in  D_{\mrm{qc}}(\sX)_{\ge 0}$ and assume that for any $f\colon \Spec R \ra \sX$ the $\pi_0(R)$-module $\pi_0(f^*(\sE))$ is finitely generated. Then the map $\colim_n \mbf V^{\le n}(\sE) \ra \widehat{\mbf V}(\sE)$ induces natural equivalences
	$$
	\mc O( \widehat{\mbf V}(\sE)) \simeq \mc O(\colim_n \mbf V^{\le n}(\sE)) \simeq \prod_{i=0}^{\infty}\Gamma (\sX, \Sym^i_{\mc O_{\sX}}\!(\sE))).
	$$
\end{cor}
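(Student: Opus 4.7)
The plan is to chain together the results established immediately before the corollary. First, Proposition~\ref{app:completion at 0 section} gives the equivalence of prestacks
\[
{\colim_n}^{\mrm{conv}} \mbf V_\sX^{\le n}(\sE) \xrightarrow{\sim} \widehat{\mbf V}_\sX(\sE),
\]
since the connective generalized vector bundle $\widehat{\mbf V}_\sX(\sE)$ is convergent (being affine over the convergent prestack $\sX$). Applying $\mc O(-)$ and using the right-adjoint property we get $\mc O(\widehat{\mbf V}_\sX(\sE)) \simeq \mc O({\colim_n}^{\mrm{conv}} \mbf V_\sX^{\le n}(\sE))$, so the first equivalence of the corollary will follow once I identify the latter with $\mc O(\colim_n \mbf V_\sX^{\le n}(\sE))$.

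For this identification, I would invoke Lemma~\ref{lem:functions on the convergencified stack}: writing $\sY := \colim_n \mbf V_\sX^{\le n}(\sE)$ as a colimit in all derived prestacks, the convergent colimit is canonically identified with $\sY^{\mrm{conv}}$, and the lemma yields $\mc O(\sY^{\mrm{conv}}) \simeq \mc O(\sY)$. Combined with the previous step, this produces the first required equivalence $\mc O(\widehat{\mbf V}_\sX(\sE)) \simeq \mc O(\colim_n \mbf V_\sX^{\le n}(\sE))$.

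It remains to compute the right-hand side. Since colimits of prestacks map to limits under $\mc O(-)$, and each $\mbf V_\sX^{\le n}(\sE)$ is affine over $\sX$ with structure sheaf $\Sym_{\mc O_\sX}^{\le n}(\sE)$, we have
\[
\mc O\bigl(\colim_n \mbf V_\sX^{\le n}(\sE)\bigr) \simeq \lim_n \Gamma\bigl(\sX, \Sym^{\le n}_{\mc O_\sX}(\sE)\bigr).
\]
The truncation maps $\Sym^{\le n+1}_{\mc O_\sX}(\sE) \to \Sym^{\le n}_{\mc O_\sX}(\sE)$ correspond, at the level of underlying quasi-coherent sheaves, to the projections $\bigoplus_{i=0}^{n+1} \Sym^i_{\mc O_\sX}(\sE) \to \bigoplus_{i=0}^{n} \Sym^i_{\mc O_\sX}(\sE)$ killing the top graded piece. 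Taking global sections commutes with finite direct sums, so the transition map is the truncation of a product, and passing to the limit yields
\[
\lim_n \Gamma\bigl(\sX, \Sym^{\le n}_{\mc O_\sX}(\sE)\bigr) \simeq \prod_{i=0}^\infty \Gamma\bigl(\sX, \Sym^i_{\mc O_\sX}(\sE)\bigr),
\]
as desired.

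The step most in need of care is the application of Lemma~\ref{lem:functions on the convergencified stack}, since that lemma is stated for the convergence functor applied to a single prestack, whereas here we are using it to move between two a priori different colimit constructions. The key point, however, is formal: the left adjoint $\sY \mapsto \sY^{\mrm{conv}}$ preserves colimits, so the convergent colimit of the $\mbf V_\sX^{\le n}(\sE)$ really is the convergentification of the ordinary colimit, and the lemma applies directly. All other steps are routine.
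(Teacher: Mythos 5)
Your proof is correct and follows the paper's argument essentially step for step: Proposition~\ref{app:completion at 0 section} supplies the identification with the convergent colimit, Lemma~\ref{lem:functions on the convergencified stack} (together with the observation that the convergentification functor, being a left adjoint, preserves colimits) removes the superscript, and the final computation is the same limit of finite direct sums; the only difference is the order in which you invoke the Proposition and the Lemma, which is immaterial. One small slip in your parenthetical: $\widehat{\mbf V}_\sX(\sE)$ is \emph{not} affine over $\sX$ (it is the absolute formal completion of the affine prestack $\mbf V_\sX(\sE)$, and formal completions are not affine maps); its convergence instead follows because $\mbf V_\sX(\sE)$ is affine over the convergent $\sX$, hence convergent, and formal completions of convergent prestacks are convergent by Remark~\ref{rem:properties of completion}. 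Since Proposition~\ref{app:completion at 0 section} already delivers the equivalence you need, this misattribution is harmless to the argument.
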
	

\begin{proof}	Functor $\sY \mapsto \sY^\mrm{conv}$ commutes with colimits and so 
	$$
(\colim_n \mbf V^{\le n}_\sX(\sE))^{\mrm{conv}}\simeq {\colim_n}^{\mrm{conv}} \mbf V^{\le n}_\sX(\sE)
$$
By Lemma \ref{lem:functions on the convergencified stack} we then get 
	$$
	\mc O({\colim_n}^{\mrm{conv}} \mbf V^{\le n}_\sX(\sE)) \simeq \mc O(\colim_n \mbf V^{\le n}_\sX(\sE)), 
	$$
	and the first isomorphism follows from Proposition \ref{app:completion at 0 section}.
	Since $\sY\mapsto \sO(\sY)\in  D(\mbb Q)$ sends colimits to limits we also get
	$$
	\mc O(\colim_n \mbf V^{\le n}_\sX(\sE))\simeq \lim_n \mc O(\mbf V_{\sX}^{\le n}(\sE))\simeq \lim_n(\oplus_{i=0}^{n} \Gamma(\sX, \Sym^i_{\mc O_{\mc X}}\!(\sE)))\simeq \prod_{i=0}^{\infty}\Gamma (\sX, \Sym^i_{\mc O_{\mc X}}\!(\sE))).
	$$
	
\end{proof}	

\begin{rem}\label{rem:for algebraic stack pi_0 L_X shifted is finitely generated}
	Let $\sX$ be a derived algebraic stack over an affine derived scheme $S$. Then for any $f\colon \Spec R\ra \sX$ the $\pi_0(R)$-module $\pi_0(f^*\mbb L_{\sX/S}[1])$ is finitely generated. Indeed, by \cite[Tag 03C4]{stacks} we can check finite generatedness flat locally. Let $p\colon U\ra \sX$ be a smooth cover by a 0-Artin stack $U$ and let
	$$
	\begin{tikzcd}
		Z \arrow[r,"q"]\arrow[d, "g"] & \Spec R \arrow[d, "f"]\\
		U\arrow[r,"p"]&\sX
	\end{tikzcd}	
	$$
	be the fiber product; since $\sX$ is algebraic $Z$ is a derived algebraic space. We have a fiber sequence 
	$$
	g^*\mbb L_{U/\sX} \ra g^*p^* \mbb L_{\sX/S}[1] \ra g^*\mbb L_{U/S}[1]
	$$ 
	where $\mbb L_{U/S}$ is connective (since $U$ is 0-Artin) and $\mbb L_{U/\sX}$ is a finite locally free sheaf on $U$ (by smoothness of $p$). Thus we get a surjection 
	$$
	\pi_0(g^*\mbb L_{U/\sX}) \twoheadrightarrow \pi_0(g^*p^* \mbb L_{\sX/S}[1])\simeq \pi_0(q^*f^*\mbb L_{\sX/S}[1])
	$$
	where $\pi_0(g^*\mbb L_{U/\sX})$ is locally finite free, which shows that $\pi_0(q^*f^*\mbb L_{\sX/S}[1])$ is locally finite.
\end{rem}

\begin{lem}[Nakayama lemma] \label{lem:conservativity of pull-back to zero section} Let $\sX$ be a prestack. Let $\sE \in  D_{\mrm{qc}}(\sX)_{\ge 0}$ and assume that for any $f\colon \Spec R \ra \sX$ the $\pi_0(R)$-module $\pi_0(f^*(\sE))$ is finitely generated. Let $z_\sX\colon \sX \ra \widehat{\mbf V}_{\sX}(\sE)$ be the zero section map. Then the pull-back functor
	$$
	z_\sX^*\colon  D_{\mrm{qc}}(\widehat{\mbf V}_{\sX}(\sE))_{<\infty} \ra  D_{\mrm{qc}}(\sX)_{<\infty}
	$$
	is conservative.
\end{lem}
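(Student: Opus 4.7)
The statement is a Nakayama-type result, and the plan is to reduce it to derived Nakayama on a formal affine. First, since formal completions are stable under arbitrary base change (Remark \ref{rem:pull-back of a formal completion}) and $D_{qc}$ of a prestack is a limit over affine charts, I reduce to the affine case $\sX = \Spec R$ and $\sE = M \in D(R)_{\ge 0}$ with $\pi_0(M)$ finitely generated over $\pi_0(R)$.

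In the affine case, the key input is the colimit presentation $\widehat{\mbf V}_{\Spec R}(M) \simeq {\colim_n}^{\mrm{conv}} \mbf V^{\le n}_{\Spec R}(M) = {\colim_n}^{\mrm{conv}} \Spec B_n$ with $B_n := \Sym^{\le n}_R(M)$, furnished by Proposition \ref{app:completion at 0 section}. A bounded-above sheaf $\sF$ on $\widehat{\mbf V}_{\Spec R}(M)$ restricts to a compatible system of bounded-above sheaves $\sF_n := j_n^*\sF \in D(B_n)_{<\infty}$, where $j_n \colon \mbf V^{\le n} \hookrightarrow \widehat{\mbf V}$, and the zero section pullback is $z^*\sF \simeq \sF_0$, since $z$ factors through $\mbf V^{\le 0} = \Spec R$. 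I then prove $\sF_n = 0$ for all $n$ by induction on $n$. The base case $\sF_0 = 0$ is the hypothesis. For the inductive step, the natural filtration gives a fiber sequence of $B_n$-modules
\[
\Sym^n_R(M) \to B_n \to B_{n-1},
\]
in which $\Sym^n_R(M)$ is the top graded piece, sitting as a (derived) square-zero ideal — all products $\Sym^{\ge 1}\cdot\Sym^n$ land in $\Sym^{>n} = 0$ inside $B_n$. In particular, the $B_n$-module structure on $\Sym^n_R(M)$ factors through the augmentation $B_n \twoheadrightarrow R = B_0$. Tensoring the fiber sequence with $\sF_n$ and using $\sF_n \otimes^L_{B_n} B_{n-1} \simeq \sF_{n-1} = 0$, one obtains
\[
\sF_n \simeq \sF_n \otimes^L_{B_n} \Sym^n_R(M) \simeq (\sF_n \otimes^L_{B_n} R) \otimes^L_R \Sym^n_R(M) \simeq z^*\sF \otimes^L_R \Sym^n_R(M) = 0,
\]
closing the induction.

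The main technical obstacle is justifying that a bounded-above quasi-coherent sheaf on $\widehat{\mbf V}_{\Spec R}(M)$ is determined by its compatible system of restrictions $(\sF_n)$ to the affine truncations $\mbf V^{\le n}$ — i.e., an analog of Lemma \ref{lem:functions on the convergencified stack} for $D_{qc}$ rather than just functions. The essential point is that for truncated $A$, any map $\Spec A \to \widehat{\mbf V}_{\Spec R}(M)$ factors through some $\mbf V^{\le n}$ by the convergent-colimit presentation, so that $D_{qc}(\widehat{\mbf V}_{\Spec R}(M))\big|_{\mrm{trunc}} \simeq \lim_n D(B_n)$. To descend from bounded-above sections on all rings to bounded-above sections on truncated rings, one can argue via the Postnikov tower of $\sF$: each Postnikov stage sits in a bounded window and is detected by its values on truncated test rings. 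Once this identification is in place, the joint conservativity of the family $\{j_n^*\}_n$ on bounded-above objects is clear, and the induction above completes the proof.
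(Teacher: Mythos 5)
Your proof follows essentially the same strategy as the paper's: reduce to the affine case, then run a finite filtration/induction argument on $B_n = \Sym^{\le n}_R(M)$, exploiting the key observation that $\Sym^n_R(M)$ is a square-zero $B_n$-module whose action factors through $B_0 = R$. The paper filters all at once (by $\Sym^{[n-i,n]}_R(M)$) rather than inducting on $n$, but this is cosmetically different from your fiber sequence $\Sym^n_R(M) \to B_n \to B_{n-1}$; the core mechanism is identical. Your reduction to the affine case is also correct, and agrees with the paper's move of factoring a test point $\Spec R \to \widehat{\mbf V}_\sX(\sE)$ through $\widehat{\mbf V}_{\Spec R}(f^*\sE)$.

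The genuine gap is exactly where you flag it: the identification of $D_{\mrm{qc}}(\widehat{\mbf V}_{\Spec R}(M))$ with $\lim_n D(B_n)$ on the relevant bounded subcategory. Two issues. First, you ask for ``an analog of Lemma \ref{lem:functions on the convergencified stack} for $D_{qc}$ rather than just functions'' --- but that lemma already \emph{is} a statement about $D_{\mrm{qc}}$: it gives $D_{\mrm{qc}}(\sY^{\mrm{conv}})_{>-\infty} \simeq D_{\mrm{qc}}(\sY)_{>-\infty}$, i.e.\ the equivalence for \emph{bounded-below} sheaves. Second, and more seriously, the bound you want is in the opposite direction: you work with $_{<\infty}$ (bounded above) throughout, matching the stated hypothesis, whereas the paper's own proof (and the existing Lemma \ref{lem:functions on the convergencified stack}) work with $_{>-\infty}$. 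The $_{<\infty}$ in the printed statement is almost certainly a typo: the application in Theorem \ref{thm:Nadler-Ben-Zvi} is to the relative cotangent complex $\mbb L_{\exp_\sX}$, which is bounded \emph{below} (being built from cotangent complexes of geometric stacks, which live in $D_{\mrm{qc}}(-)_{\ge -1}$), and the proof invokes the $_{>-\infty}$ version of Lemma \ref{lem:functions on the convergencified stack} verbatim.

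Your proposed Postnikov-tower patch does not close the gap for bounded-above objects. If $\sF$ is bounded above but unbounded below, writing $\sF = \colim_m \tau_{\ge -m}\sF$ and appealing to the bounded case requires knowing $z^*\tau_{\ge -m}\sF = 0$; but $z^*$ is a derived tensor product and does not commute with the right truncations $\tau_{\ge -m}$, so $z^*\sF = 0$ does \emph{not} yield $z^*\tau_{\ge -m}\sF = 0$. A more structural obstruction: the transition functors $(-)\otimes_R \tau_{\le n}R$ (and their analogs in the $B_n$-system) fail to preserve bounded-above objects when the input is unbounded below, so there is no well-defined comparison map $D_{\mrm{qc}}(\widehat{\mbf V})_{<\infty} \to \lim_n D(B_n)_{<\infty}$ to begin with. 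For the bounded-below reading, by contrast, Lemma \ref{lem:functions on the convergencified stack} together with Proposition \ref{app:completion at 0 section} supplies the equivalence directly and no additional argument is needed --- the ``technical obstacle'' simply disappears.
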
	
\begin{proof}
Assume $\sF\in   D_{\mrm{qc}}(\widehat{\mbf V}_{\sX}(\sE))_{<\infty}$ is such that $z_{\sX}^*\sF\simeq 0$ but $\mc F\not \simeq 0$. Then there exists a map $f\colon \Spec R\ra \widehat{\mbf V}_{\sX}(\sE)$ such that $f^*\sF\not\simeq 0$. Note that $f$ factors\footnote{Indeed, $\widehat{\mbf V}_{\Spec R}(f^*\sE)$ is identified with the fiber product $\Spec R\times_\sX \widehat{\mbf V}_{\sX}(\sE)$ where the map $\Spec R \ra \sX$ is given by composition of $f$ with the projection $p_\sX\colon \widehat{\mbf V}_{\sX}(\sE) \ra \sX$ and where $\widehat{\mbf V}_{\sX}(\sE)\ra \sX$ is the same projection.} through the natural map $\tilde f\colon \widehat{\mbf V}_{\Spec R}(f^*\sE) \ra \widehat{\mbf V}_{\sX}(\sE)$ and thus $\tilde f^*\sF$ is also non-zero. Since we have a commutative diagram
$$
\begin{tikzcd}
	\widehat{\mbf V}_{\Spec R}(f^*\sE) \arrow[r,"\tilde f"]&  \widehat{\mbf V}_{\sX}(\sE)\\
	\Spec R\arrow[u,"z_{\Spec R}"]\arrow[r,"p_\sX\circ f"]& \sX \arrow[u,"z_\sX"']
\end{tikzcd}	
$$
it will be enough to show the statement of the lemma for $\sX=\Spec R$ and a connective $R$-module $M:=f^*\sE$.

By Proposition \ref{app:completion at 0 section} and Lemma \ref{lem:functions on the convergencified stack} we have an equivalence
$$
 D_{\mrm{qc}}(\widehat{\mbf V}_{\Spec R}(M) )_{>-\infty} \xrightarrow{\sim } \lim_{n\ge 0} D_{\mrm{qc}}({\mbf V}^{\le n}_{\Spec R}(M) )_{>-\infty} \simeq  \lim_{n\ge 0} D(\Sym^{\le n}_R(M))_{>-\infty}
$$
and so given a non-zero $\sF\in  D_{\mrm{qc}}(\widehat{\mbf V}_{\Spec R}(M) )_{>-\infty}$ the pull-back $z_n^*\mc F\in  D(\Sym^{\le n}_R(M))_{>-\infty}$  is also non-zero for some $n\ge 0$ (here $z_n\colon {\mbf V}^{\le n}_{\Spec R}(M) \ra \widehat{\mbf V}_{\Spec R}(M)$ is the natural map). To arrive at a contradiction we need to show that $z_0^*\mc F\simeq z_{\Spec R}^*\mc F$ is also non-zero. 

Note that $\Sym^{\le n}_R(M)$ as a module over itself has a finite filtration by $\Sym^{[n-i,n]}_R(M)\subset \Sym^{\le n}_R(M)$ for $0\le i \le n$, such that $\Sym^{\le n}_R(M)$-action on the associated graded factors through the projection to the 0-th graded component $\Sym^{\le n}_R(M)\ra R$. This gives a finite filtration on $z_n^*\mc F$ with associated graded terms of the form $z_{\Spec R}^*\sF \otimes_R \Sym^i_R(M)$, from which we see that if $z_{\Spec R}^*\mc F$ is zero then so should be $z_n^*\mc F$.

\end{proof}

\subsection{Descent for cotangent complex}

Below we will also need a descent statement for the exterior powers of the cotangent complex.

\begin{prop}\label{prop:etale-descen-cot} Let $\sX$ be a derived geometric stack over an affine derived scheme $S$. Then for any $n$ the natural map
	\begin{equation}\label{eq:cotangent complex is right Kan extended}
	\Sym^n_{\mc O_\sX}\!(\mbb L_{\sX/S}[1]) \ra \lim_{x: T \ra \sX} x_* \Sym^n_{\mc O_T}(\mbb L_{T/S}[1])
	\end{equation}
	with $T\in \mrm{dAff}_{S}$ is an equivalence.
\end{prop}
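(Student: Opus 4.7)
Plan: The strategy is smooth descent combined with a reduction to the case of affine $\sX$. Since $\sX$ is a derived geometric stack, it admits a smooth affine cover $p\colon U\to \sX$ with $U = \bigsqcup_i U_i$ a disjoint union of derived affine $S$-schemes; because $\sX$ has affine diagonal, each term $U_m$ of the \v{C}ech nerve $U_\bullet\to \sX$ is again a disjoint union of derived affine $S$-schemes, and $\sX\simeq |U_\bullet|$ as a derived stack. By quasi-coherent descent we have $\mrm{D}_{\mrm{qc}}(\sX)\simeq \lim_\Delta \mrm{D}_{\mrm{qc}}(U_\bullet)$, so the equivalence of the two sides can be verified after restriction to each $U_m$.

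I would first dispense with the right-hand side. As a functor of $\sX$, it is by construction a right Kan extension from $\mrm{dAff}_S$, so by Proposition~\ref{prop:spc} it satisfies descent for any subcanonical topology on $\mrm{dAff}_S$; in particular for the smooth topology. Evaluated on an affine $T$ the limit reduces to $\Sym^n(\mbb L_{T/S}[1])$, since the identity is terminal in the indexing category. Passing to the \v{C}ech nerve then expresses the right-hand side as $\lim_\Delta \Sym^n(\mbb L_{U_\bullet/S}[1])$, viewed as a cosimplicial object of quasi-coherent sheaves (pushed forward to $\sX$).

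For the left-hand side, one uses the standard fiber sequence
\[
p_m^*\mbb L_{\sX/S}\to \mbb L_{U_m/S}\to \mbb L_{U_m/\sX}
\]
associated to the smooth morphism $p_m\colon U_m\to \sX$, together with the fact that $\mbb L_{U_m/\sX}$ is locally free of finite rank. Shifting by $[1]$ and applying $\Sym^n$ one obtains a Koszul-type filtration on $\Sym^n(\mbb L_{U_m/S}[1])$ whose associated graded, in characteristic zero and via the d\'ecalage isomorphism $\Sym^i(M[1])\simeq \wedge^i(M)[i]$, decomposes as a sum of tensor products of exterior powers of $p_m^*\mbb L_{\sX/S}$ and $\mbb L_{U_m/\sX}$.

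The main technical obstacle is the reconciliation between the two resulting cosimplicial objects on $U_\bullet$. The comparison map on each level is \emph{not} an equivalence; it is only the inclusion of the top Koszul piece $\Sym^n(p_m^*\mbb L_{\sX/S}[1])\hookrightarrow \Sym^n(\mbb L_{U_m/S}[1])$. One must show that the ``extra'' summands, built from positive exterior powers of $\mbb L_{U_\bullet/\sX}$, assemble into cosimplicial objects whose totalizations are acyclic, and hence contribute trivially after passing to the limit over $\Delta$. This acyclicity is essentially a form of contractibility of the fibers of $p$, and the standard way to establish it is to identify the relevant cosimplicial objects with shifted global sections of suitable bar-type complexes and invoke an explicit contracting homotopy given by the simplicial structure of $U_\bullet\to\sX$. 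Once this is shown, the smooth descent for the left-hand side matches the one for the right-hand side term by term, and the proposition follows from the trivial case of affine $\sX$.
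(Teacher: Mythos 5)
Your approach is essentially the one the paper takes. Both reduce the right-hand side, via the descent property of $R^{\mrm{dAff}}$ (Proposition~\ref{prop:spc}) and the observation that the functor $(x\colon T\to\sX)\mapsto x_*\Sym^n(\mbb L_{T/S}[1])$ is a smooth sheaf on $\mrm{dAff}_{/\sX}$, to the totalization over the \v{C}ech nerve $q_\bullet\colon U_\bullet\to\sX$ of a smooth affine cover; and both then appeal to the transitivity fiber sequence $q_m^*\mbb L_{\sX/S}\to\mbb L_{U_m/S}\to\mbb L_{U_m/\sX}$ with $\mbb L_{U_m/\sX}$ finite locally free, and to the resulting Koszul filtration on $\Sym^n(\mbb L_{U_\bullet/S}[1])$. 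The paper then cites \cite[Proposition~1.1.5]{kubrak2022hodge} (replacing $\Lambda^n$ by $\Sym^n$) for the final totalization comparison, which is exactly the step you try to sketch directly.

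That last step is, however, where your argument has a genuine gap. You assert that the ``extra'' filtration pieces---cosimplicial objects of the form $q_{\bullet*}\bigl(\Sym^{n-i}(q_\bullet^*\mbb L_{\sX/S}[1])\otimes\Sym^{i}(\mbb L_{U_\bullet/\sX}[1])\bigr)$ for $i>0$---totalize to zero via ``an explicit contracting homotopy given by the simplicial structure of $U_\bullet\to\sX$''. But the simplicial structure of $U_\bullet$ does not by itself furnish a contraction: there is no section $\sX\to U$, and after the standard maneuver of pulling back along $q\colon U\to\sX$ to obtain an extra degeneracy (the diagonal section of $U\times_\sX U\to U$), the split cosimplicial object does not totalize to zero but rather to the $j$-th exterior power of the conormal $s^*\mbb L_{U\times_\sX U/U}$, which is nonzero. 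Thus the naive ``extra degeneracy kills everything'' reasoning does not apply, and one needs the more careful filtration/connectivity bookkeeping of the cited reference to conclude. Your high-level structure is sound, but as written the key acyclicity claim is unjustified, and the mechanism you propose for justifying it would not run as stated.
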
	
\begin{proof}
	Note that the functor  $(x\colon \Spec R \ra \sX) \mapsto x_* \Sym^n_{\mc O_T}(\mbb L_{T/S}[1]) $ from $\mrm{dAff}_{/\sX}$ to $ D_{\mrm{qc}}(\sX)$ satisfies \'etale and, consequently, smooth descent. Indeed, having an \'etale cover $f\colon  T'\ra T$ we have $\mbb L_{T'/T}\simeq 0$, so $f^*\mbb L_{T'/S}\simeq \mbb L_{T/S}$ and, more generally, $f^*\Sym^n_{\mc O_{T}}(\mbb L_{T/S}[1])\simeq \Sym^n_{\mc O_{T'}}(\mbb L_{T'/S}[1])$. Then for the corresponding \v Cech cover $f_\bullet\colon T_\bullet'\ra T$ the map 
	$$
	\Sym^n_{\mc O_T}(\mbb L_{T/S}[1]) \ra \lim_{[\bullet]\in \Delta} f_{\bullet*}\Sym^n_{\mc O_{T'}}(\mbb L_{T'/S}[1]) \simeq \lim_{[\bullet]\in \Delta} f_{\bullet*} f^*_\bullet\Sym^n_{\mc O_T}(\mbb L_{T/S}[1])  
	$$
	is an equivalence by flat descent for quasi-coherent sheaves. 
	
	It is easy to see that if $\sX$ is 0-Artin the map in (\ref{eq:cotangent complex is right Kan extended}) is an equivalence: namely, if $\sX=\sqcup_i U_i$ with $s_i\colon U_i \ra \sX$ being the embedding of a connected component, we have
	$$
	\Sym^n_{\mc O_\sX}\!(\mbb L_{\sX/S}[1]) \simeq \prod_{i} (s_i)_*\Sym^n_{\mc O_{U_i}}(\mbb L_{U_i/S}[1]).
	$$ For general $\sX$, the right hand side of (\ref{eq:cotangent complex is right Kan extended}) is the value on the final object of the right Kan extension along $\mrm{dAff}_{/\sX}\subset \mrm{Fun}(\mrm{dAff}_{/\sX}^{\mrm{op}},\Spc)$ of the functor $(x\colon T \ra \sX) \mapsto x_* \Sym^n_{\mc O_T}(\mbb L_{T/S}[1]) $. By smooth descent we can replace the  final object by the geometric realization of the \v Cech nerve $q_\bullet\colon U_\bullet \rightarrow \sX$ of some smooth cover $U_0:=U\twoheadrightarrow \sX$ by a 0-Artin stack $U$. It then remains to show that the map
	$$
	\Sym^n_{\mc O_\sX}\!(\mbb L_{\sX/S}[1]) \ra \lim_{[\bullet]\in \Delta} q_{\bullet *}\Sym^n_{\mc O_{U_\bullet}}(\mbb L_{U_\bullet/S}[1])
	$$
	is an equivalence. This can be done as in \cite[Proposition 1.1.5]{kubrak2022hodge} replacing $\Lambda^n$ with $\Sym^n$ in the proof.
\end{proof}	
\subsection{Formal loops} \label{sec:appendix_formal loops} Let $R\in \CAlg_{\mbb Z}^{\mrm{an}}$ be a base connective ring.
For a (not necessarily connective) derived commutative $R$-algebra $C\in \mrm{DAlg}_{\mbb Q}$ let $\Spec C\in \mrm{dPStk}_{R}$ denote the functor corepresented by $C$:
$$(\Spec C)(A):= \Maps_{\CAlg_R^{\mrm{an}}}(C,A)$$ for $A\in \CAlg^{\mrm{an}}_{R}$. For any prestack $\sX\in \mrm{dPStk}_R$ one has a natural ``affinization map"
$$a_\sX\colon \sX \ra \Spec \mc O(\sX),$$
and any map to a derived affine scheme from $\sX$ factors through $s_{\sX}$.
If $\sX=\underline{K}$ is the constant $R$-prestack given by a homotopy type $K\in\mrm{Spc}$, then $$\mc O(\underline{K})\simeq \lim_K \mc O(\Spec R)\simeq \lim_K k \simeq C^*_{\mrm{sing}}(K,R),$$ thus we get a map
$$
a_{\underline{K}}\colon \underline{K} \ra \Spec(C^*_{\mrm{sing}}(K,R)).
$$
In the case $K\simeq S^1$ and $\mbb Q\subset R$ one can further identify\footnote{The reason is that the splitting $C^*_{\mrm{sing}}(S^1,R) \simeq R\oplus R[-1]$ as $R$-modules induces an equivalence $\Sym_RR[-1]\simeq C^*_{\mrm{sing}}(S^1,R)$, if $R$ is a $\mbb Q$-algebra. Then one computes $\Maps_{\CAlg_R^{\mrm{an}}}(\Sym_RR[-1],A)\simeq \Omega^\infty (A[1])\simeq (B\mbb G_a)(A)$. } (\cite[Lemma 3.13]{Ben_Zvi_2012})
$$\Spec(C^*_{\mrm{sing}}(S^1,R))\simeq B\mbb G_{a,R}$$

\begin{constr}\label{constr:exponential map}
Let $S:=\Spec R$ and assume that $R$ is a $\mbb Q$-algebra. For an affine derived $S$-scheme $T=\Spec A$ the affinization map $$\underline{S^1}\ra B\mbb G_{a,R}$$ induces an equivalence
$$
\mc LT:=  {\underline{\Maps}}_{/S}(\underline{S^1},T) \simeq \underline{\Maps}_{/S}(B\mbb G_{a,R},T).
$$
Following\footnote{More precisely, identifying $C^*_{\mrm{sing}}(S^1,R)\in \CAlg$ with the split square zero extension $R\oplus R[-1]$, they use obstruction theory in terms of cotangent complex.} \cite[Proposition 4.4]{Ben_Zvi_2012} we can further identify 
$$\underline{\Maps}_{/S}(B\mbb G_a,T)\simeq \mbf V_{T}(\mbb L_{T/S}[1]):=\Spec (\Sym_A(\mbb L_{T/S}[1])).
$$  This gives a derived version of Hochshild-Kostant-Rosenberg isomorphism\footnote{Both sides are derived affine schemes and after passing to global functions one gets an isomorphism $HH(A)\simeq \Sym_A^\star(\mbb L_A[1])\simeq \oplus_i(\Lambda^{i}_A\mbb L_A)[i].$}:
\begin{equation}\label{eq:HKR-isomorphism}
\mbf V_T(\mbb L_{T/S}[1]) \xrightarrow{\sim} \mc L_ST.
\end{equation}

Let now $\mc X$ be a geometric stack over $S$. Let $x\colon T=\Spec A\ra \mc X$ be a map from an affine derived scheme.   For each $n$ and $m$ we consider a composition 
$$ \mbf V^{\le m}_T(\mbb L_{T/S}[1]) \ra \mbf V_{T}(\mbb L_{T/S}[1]) \simeq \mc L_ST \ra \mc L_S\mc X\xrightarrow{p} \sX,$$
where $p\colon \mc L_S\mc X = X\times_{X\times_SX} X \ra X$. 
Since $\sX$ is geometric, its diagonal is affine, and thus so is $p$, as well as the whole composition above. The direct image of the structure sheaf under it is identified with $(x)_* \Sym^{\le m}_{\mc O_T}\!(\mbb L_{T/S}[1])$. Taking a limit over all $x$ we get a natural map of algebras in $ D_{\mrm{qc}}(\sX)$
\begin{equation*}
\mc O_\sX \ra	p_*\mc O_{\sL_S\sX}\ra \lim_{x: T \ra \sX}x_* \Sym^{\le m}_{\mc O_{T}}(\mbb L_{T/S}[1]),
\end{equation*}	
where $T$ in the limit runs over all derived affine schemes over $\sX$. Recall that the natural map 
$$
\Sym^{\le m}_{\mc O_{\mc X}}(\mbb L_{\mc X/S}[1]) \ra \lim_{x: T \ra \sX} (x)_*\Sym^{\le m}_{\mc O_{T}}(\mbb L_{T/S}[1])
$$
is an equivalence by Proposition \ref{prop:etale-descen-cot}.

This way, for each $m$ we get a natural map 
$$
p_*\mc O_{\sL_S\sX}\ra \Sym^{\le m}_{\mc O_{\mc X}}(\mbb L_{\mc X/S}[1]),
$$
which, by passing to $\Spec_{\sX}(-)$ and taking colimit over $m$, gives a map 
$$
{\colim_m} \mbf V^{\le m}_\sX(\mbb L_{\mc X}[1]) \ra \mc L_S\mc X;
$$
note that here we again used that $\sX$ is geometric to identify $\mc L_S\sX\simeq \Spec_{\sX}(p_*\sO_{\sX})$.

Since $\sL_S \sX$ is convergent (Remark \ref{rem:properties of the loop stack}) this map factors through the similar colimit in convergent prestacks, giving a map
\begin{equation}\label{eq:map to loops}
\exp_\sX\colon \widehat{\mbf V}_\sX(\mbb L_{\mc X/S}[1])\simeq {\colim_m}^{\mrm{conv}} \mbf V^{\le m}_\sX(\mbb L_{\mc X/S}[1]) \ra \mc L_S\mc X
\end{equation}
with the first identification provided by Proposition \ref{app:completion at 0 section} and Remark \ref{rem:for algebraic stack pi_0 L_X shifted is finitely generated}.

The pre-composition of $\exp_\sX$ with the zero section $z\colon \sX \ra  \widehat{\mbf V}_\sX(\mbb L_{\mc X/S}[1])$ is naturally identified with the embedding of constant loops $c_{\sX}\colon \sX \ra \mc L_S\mc X$. This gives a factorization\footnote{More precisely, completion of $\mbf V^{\le m}_\sX(\mbb L_{\mc X}[1])$ at the zero section does nothing, so completing both $\mbf V^{\le m}_\sX(\mbb L_{\mc X}[1])$ and $\mc L\mc X$  along $\sX$ 
gives a natural map $\mbf V^{\le m}_\sX(\mbb L_{\mc X}[1])\ra  \widehat{\sL}\sX$ for any $m$. } of (\ref{eq:map to loops}) through the formal completion $\widehat{\mc L}_S \mc X$, producing a map 
$$
\exp_\sX\colon \widehat{\mbf V}_\sX(\mbb L_{\mc X/S}[1]) \ra \widehat{\mc L}_S\mc X.
$$
\end{constr}	

The goal of this subsection is to prove the following:

\begin{thm}\label{thm:Nadler-Ben-Zvi}
Let $\sX$ be a derived geometric stack over a derived affine $\mbb Q$-scheme $S$. Then the map
$$\exp_\sX\colon \widehat{\mbf V}_\sX(\mbb L_{\mc X/S}[1]) \ra \widehat{\mc L}_S\mc X$$
is an equivalence.
\end{thm}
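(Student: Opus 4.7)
The plan is to reduce to the case of derived affine schemes, where the theorem is essentially the derived HKR isomorphism~\eqref{eq:HKR-isomorphism}, and then to pass to the general geometric case by descent.

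First I would handle the affine case. For $\sX = T = \Spec A$, HKR gives $\mbf V_T(\mbb L_{T/S}[1]) \simeq \sL_S T$, which is exactly the uncompleted form of $\exp_T$. The classical truncations of $c_T\colon T\hookrightarrow \sL_S T$ and of the zero section $z_T\colon T\hookrightarrow \mbf V_T(\mbb L_{T/S}[1])$ are both isomorphisms: the first by Example~\ref{ex:formal loops in some examples}(1), and the second because $\mbb L_{T/S}[1]$ is $1$-connective (cf.\ Example~\ref{ex:affine completion}). Consequently $\widehat{\sL}_S T \simeq \sL_S T$, $\widehat{\mbf V}_T(\mbb L_{T/S}[1]) \simeq \mbf V_T(\mbb L_{T/S}[1])$, and $\exp_T$ is an equivalence.

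For the general case, both $\widehat{\mbf V}_\sX(\mbb L_{\sX/S}[1])$ and $\widehat{\sL}_S\sX$ are affine over $\sX$: the former by construction, the latter because $\sL_S\sX \to \sX$ is a pullback of the affine diagonal $\Delta_\sX$ and formal completions of affine morphisms remain affine. It thus suffices to check that the induced map of pushforward algebras
\[
\exp_\sX^{\ast}\colon p_\ast\mc O_{\widehat{\sL}_S\sX}\longrightarrow p_\ast\mc O_{\widehat{\mbf V}_\sX(\mbb L_{\sX/S}[1])}
\]
is an equivalence in $\mrm{D}_{\mrm{qc}}(\sX)$. By Corollary~\ref{app:functions on formal completion at 0} the target is identified with $\prod_{i\ge 0}\Sym^i_{\mc O_\sX}(\mbb L_{\sX/S}[1])$, which by Proposition~\ref{prop:etale-descen-cot} is right Kan extended from derived affine schemes over $\sX$. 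The construction of $\exp_\sX$ in Construction~\ref{constr:exponential map} was made precisely in this form: for each $x\colon T \to \sX$, HKR produces an equivalence $x_\ast\sO_{\sL_S T} \simeq x_\ast\prod_i\Sym^i(\mbb L_{T/S}[1])$, and one takes the limit over the affine site of $\sX$.

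The hard part, therefore, is to show that passing to this limit actually computes $p_\ast\mc O_{\widehat{\sL}_S\sX}$. The subtlety is that for a smooth cover $q\colon U \to \sX$ one has $U\times_\sX \sL_S\sX \neq \sL_S U$ in general (the fiber records the stabilizer of $U \to \sX$ rather than the self-loops of $U$), so descent for the loop construction is not automatic from the mapping-stack definition. My plan is to exploit the convergent colimit presentation $\widehat{\mbf V}_\sX(\mbb L_{\sX/S}[1])\simeq {\colim}^{\mrm{conv}}_m \mbf V^{\le m}_\sX(\mbb L_{\sX/S}[1])$ of Proposition~\ref{app:completion at 0 section} to reduce the comparison to a sequence of finite-order infinitesimal neighborhoods: on the vector-bundle side the $m$-th neighborhood is governed by $\Sym^{\le m}(\mbb L_{\sX/S}[1])$, which satisfies smooth descent by Proposition~\ref{prop:etale-descen-cot}; on the loop side the parallel $m$-th infinitesimal neighborhood of $c_\sX$ admits the same finite-order description, termwise equivalent via the affine HKR. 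Finally, I would invoke conservativity of pullback along the zero section (Lemma~\ref{lem:conservativity of pull-back to zero section}) to upgrade the levelwise equivalences on bounded pieces to an equivalence of the full convergent colimits, and hence of the completions.
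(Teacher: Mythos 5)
Your proposal takes a genuinely different route from the paper, and it has a real gap at exactly the step you yourself flag as the hard part.

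The reduction ``it thus suffices to check that the induced map of pushforward algebras is an equivalence'' is not justified. Formal completions like $\widehat{\sL}_S\sX$ are ind-affine over $\sX$, not affine, and the global-sections functor $\mc O(-)$ does not detect equivalences among such ind-affine prestacks without already knowing a colimit presentation by finite-order neighborhoods with controlled associated algebras. You have such a presentation for $\widehat{\mbf V}_\sX(\mbb L_{\sX/S}[1])$ from Proposition~\ref{app:completion at 0 section}, but on the loop side you do not. When you then assert that ``the parallel $m$-th infinitesimal neighborhood of $c_\sX$ admits the same finite-order description,'' this begs the question: identifying the $m$-th neighborhood of constant loops with $\Spec_\sX\Sym^{\le m}_{\mc O_\sX}(\mbb L_{\sX/S}[1])$ is essentially the content of the theorem, and it does not follow from the affine HKR alone, because as you yourself note $U\times_\sX \sL_S\sX\neq \sL_S U$ for a smooth cover $U\to\sX$, so the levelwise affine equivalences do not descend termwise. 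Finally, your invocation of the conservativity Lemma~\ref{lem:conservativity of pull-back to zero section} is misplaced: that lemma concerns quasi-coherent modules over $\widehat{\mbf V}_\sX(\sE)$ and is not a tool for upgrading a chain of comparisons of infinitesimal neighborhoods to an equivalence of completions.

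The paper's proof sidesteps all of this by working with the cotangent complex rather than with functions. It first observes that both sides admit corepresentable deformation theory, then reduces the whole theorem to showing $\mbb L_{\exp_\sX}\simeq 0$: once this vanishing is in hand, deformation theory propagates the equivalence over any bounded ring by lifting along a finite chain of square-zero extensions. To prove the vanishing, one restricts along the zero section $z_\sX$ (this \emph{is} the correct use of the Nakayama Lemma~\ref{lem:conservativity of pull-back to zero section}, as a conservativity statement for $z_\sX^*$ on bounded-above modules) and then identifies both $c_\sX^*\mbb L_{\widehat{\sL}_S\sX/\sX}$ and $z_\sX^*\mbb L_{\widehat{\mbf V}_\sX(\mbb L_{\sX/S}[1])/\sX}$ with $\mbb L_{\sX/S}[1]$, using the affine HKR levelwise together with descent for $\Sym^n$ of the shifted cotangent complex (Proposition~\ref{prop:etale-descen-cot}). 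Verifying the vanishing of a single relative cotangent complex is far more tractable than matching up an infinite tower of infinitesimal neighborhoods, and this replacement of a functions comparison by a one-shot tangent comparison plus deformation theory is precisely the idea your proposal is missing.
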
	

\begin{proof}

	Prestacks $\widehat{\mbf V}_\sX(\mbb L_{\mc X/S}[1])$ and  $ \widehat{\mc L}_S\mc X$ are completions of derived algebraic stacks $\mc L_S\mc X$ and $\mbf V_\sX(\mbb L_{\mc X/S}[1])$ and so admit corepresentable deformation theory in the sense of \cite[Chapter 1, Definition 7.1.2]{GRderalg2} (see also \cite[Chapter 1, Proposition 7.4.2]{GRderalg2} and Remark \ref{rem:relative cotangent complex of formal completion}).

	  Following Ben-Zvi--Nadler (\cite[Section 6.3.2]{Ben_Zvi_2012}), the key computation is that the relative cotangent complex $$\mbb L_{\exp_{\sX}}:=\mbb L_{\widehat{\mbf V}_\sX(\mbb L_{\mc X/S}[1])/\widehat{\mc L}_S\mc X}$$ vanishes. Let us explain how this vanishing implies the statement of the theorem. By Remark \ref{rem:properties of completion}\ref{rem:laftnes and convergence of formal completion} both sides are convergent stacks, so it is enough to show that $\exp_\sX$ induces an equivalence on $R$-points for bounded $R$. 
	  
	  Recall the description of the formal completion in Remark \ref{rem:properties of completion}\ref{item:lurie's completion}. For a nilpotent ideal $I\subset \pi_0(R)$ denote $F_{I,R}\subset \widehat{\mbf V}_\sX(\mbb L_{\mc X/S}[1])(R)$ the subspace given by those $R$-points which send $\Spec (\pi_0(R)/I)$ to $\sX\hookrightarrow {\mbf V}_\sX(\mbb L_{\mc X/S}[1])$. Similarly, let $G_{I,R}\subset \widehat{\mc L}_S\mc X(R)$ be the subspace given by $R$-points which send $\Spec (\pi_0(R)/I)$ to $\sX\hookrightarrow {\mc L}_S\mc X$. By the Definition \ref{defn:de Rham prestack and completion} of (absolute) formal completion the natural maps of spaces
	  $$
	  \left(\colim_{I\subset \pi_0(R)}F_{I,R} \right)\longrightarrow \widehat{\mbf V}_\sX(\mbb L_{\mc X/S}[1])(R), \qquad \left(\colim_{I\subset \pi_0(R)}G_{I,R} \right)\longrightarrow \widehat{\mc L}_S\mc X(R)
	  $$
	  are equivalences. Thus it would be enough to show that the natural map 
	  $
	  F_{I,R} \ra G_{I,R}
	  $
	  is an equivalence for all nilpotent ideals $I\subset \pi_0(R)$ and bounded rings $R$. By the argument in \cite[Chapter 1, Proposition 5.5.3]{GRderalg2}, any such $R$ is obtained 
	  from $\pi_0(R)/I$ via a \textit{finite} amount of square-zero extensions:
	  $$
	  R=R_n \ra R_{n-1} \ra \ldots \ra R_0=\pi_0(R)/I.
	  $$
	  We will argue by induction on $i$ in $R_i$. For $i=0$ we have a diagram of isomorphisms
	$$
	\begin{tikzcd}
		 F_{I,R_0}\arrow[rr, equal, "\alpha"]&&G_{I,R_0}\\
		 &\sX(R_0)\arrow[ur,"c"', equal]\arrow[ul,"z",equal]&
	\end{tikzcd}	
	$$
Then, assuming the map $F_{I,R_i} \ra G_{I,R_i}$ is an equivalence we need to show that so is $F_{I,R_{i+1}} \ra G_{I,R_{i+1}}$. By deformation theory \cite[Chapter 1, Lemma 10.2.6]{GRderalg2}, the fiber of the natural map
$$
F_{I,R_{i+1}} \ra F_{I,R_{i}}
$$
over an $R_i$-point $s\colon \Spec R_{i}\ra \widehat{\mbf V}_\sX(\mbb L_{\mc X/S}[1])$ (sending $\Spec (\pi_0(R)/I)\hookrightarrow \Spec R_i$ to $\sX$) is given by the space of nil-homotopies of the composition
$$
s^*\mbb L_{\widehat{\mbf V}_\sX(\mbb L_{\mc X/S}[1])} \ra \mbb L_{\Spec R_{i}/S} \ra \sI_i[1]
$$
where the second map is the one defining the square-zero extension $R_{i+1}\ra R_i$ (see \cite[Chapter 1, Section 5.1]{GRderalg2}, $\sI_i$ is the ``ideal" of the square-zero extension). Similarly, the fibers of $G_{I,R_{i+1}} \ra G_{I,R_{i}}$ are described in terms of the cotangent complex of $\widehat{\mc L}_S \mc X$. 

Now, given a point $s\in F_{I,R_i}\subset \widehat{\mbf V}_\sX(\mbb L_{\mc X}[1])(R_i)$ we have a fiber sequence
$$
(\exp_\sX\circ s)^* \mbb L_{\widehat{\sL}_S\sX} \ra s^*\mbb L_{\widehat{\mbf V}_\sX(\mbb L_{\mc X/S}[1])} \ra s^* \mbb L_{\exp_\sX}.
$$
Since the third term vanishes, by the above we get that $\exp_\sX$ induces an isomorphism between the fiber of $F_{I,R_{i+1}}$ over $s\in F_{I,R_i}$ and the fiber of $G_{I, R_{i+1}}$ over $\exp_\sX\circ s \in G_{I, R_{i}}$ correspondingly. Since this is true for any $s\in F_{I,R_i}$, we get that 
$$
\exp_\sX\colon F_{I,R_{i+1}} \ra G_{I,R_{i+1}}
$$
is an equivalence.

It remains to show that $\mbb L_{\exp_\sX}\simeq 0$. 
By Lemma \ref{lem:conservativity of pull-back to zero section} it is enough to show this after restricting to the zero section $z_{\sX}\colon \sX \ra  \widehat{\mbf V}_\sX(\mbb L_{\mc X/S}[1])$, which is equivalent to
$$ 
\exp_{\sX}^*\colon c^*_{\sX}\mbb L_{\widehat{\mc L}_S\mc X/\sX}\ra z^*_{\sX}\mbb L_{\widehat{\mbf V}_\sX(\mbb L_{\mc X/S}[1])/\sX}
$$
being a quasi-isomorphism (here we consider relative cotangent complexes with respect to projections $q_\sX\colon \widehat{\mbf V}_\sX(\mbb L_{\mc X/S}[1])\ra \sX$ and $p_{\sX}\colon \widehat{\mc L}_S\mc X\ra \sX$).

Let $x\colon T\ra \sX$ be a map where $T$ is a derived affine scheme and let $c_{\sX}\colon \sX \ra \sL_S\sX$ and $c_T\colon T\ra \sL_ST$ denote the embeddings of constant loops. We have a commutative diagram
$$
\begin{tikzcd}
	\sL_S\sX \arrow[d, "p_\sX", shift left =1]& \sL_ST\arrow[l, "\sL(x)"'] \arrow[d, "p_T", shift left =1]\\
	\sX\arrow[u,"c_{\sX}", shift left =1] & T\arrow[u, "c_T", shift left =1]\arrow[l,"x"']
\end{tikzcd}	
$$
which induces a natural map 
$$
\sL(x)^*\colon c^*_\sX \mbb L_{\sL_S\sX/\sX}\ra x_* c_T^* \mbb L_{\sL_ST/T}.
$$
By the argument of \cite[Lemma 6.7]{Ben_Zvi_2012} $c^*_\sX \mbb L_{\sL_S\sX/\sX}$ is naturally identified with $\mbb L_{\sX/S}[1]$, further identifying the map above with the pull-back map
$$
\mbb L_{\sX/S}[1]\ra x_*\mbb L_{T/S}[1].
$$
Similarly, we have a commutative square
$$
\begin{tikzcd}
	\widehat{\mbf V}_{\sX}(\mbb L_{\sX/S}[1]) \arrow[d, "q_\sX", shift left =1]& \widehat{\mbf V}_{T}(\mbb L_{T/S}[1]) \arrow[l, "\widehat{\mbf V} (x)"'] \arrow[d, "q_T", shift left =1]\\
	\sX\arrow[u,"z_{\sX}", shift left =1]& T\arrow[u, "z_T", shift left =1]\arrow[l,"x"']
\end{tikzcd}	
$$
where $\widehat{\mbf V} (x)$ is induced by the map $\mbb L_{T/S}[1] \ra  x^*\mbb L_{\sX/S}[1]$.  It induces the corresponding map
$$
\widehat{\mbf V} (x)^*\colon z^*_\sX \mbb L_{\widehat{\mbf V}_{\sX}(\mbb L_\sX[1])/\sX}\ra x_* z_T^* \mbb L_{\widehat{\mbf V}_{T}(\mbb L_{T}[1])/T},
$$
which via Remark \ref{rem:cotangent complex of a vector bundle along the zero section} can also be naturally identified with the pull-back map 
$$
\mbb L_{\sX/S}[1]\ra x_*\mbb L_{T/S}[1].
$$
Finally, we also have a commutative square
$$
\begin{tikzcd}
	\widehat{\mbf V}_{\sX}(\mbb L_{\sX/S}[1]) \arrow[d,"\exp_{\sX}"']&  \widehat{\mbf V}_{T}(\mbb L_{T/S}[1]) \arrow[l]\arrow[d, "\exp_{T}"]\\
\sL_S\sX&\sL_ST\arrow[l] 
\end{tikzcd}
$$
 lying under and above $\sX\leftarrow T$. Recall that essentially by construction, $\exp_{T}$ is an equivalence. Together with the maps above, and taking a limit over all $x\colon T\ra \sX$ we get a commutative square
 $$
 \begin{tikzcd}
 	c^*_\sX \mbb L_{\sL_S\sX/\sX}\arrow[d, "\exp_\sX^*"']\arrow[rr, "\lim \sL(x)^*"] &&\underset{x:T\ra \sX}{\lim}(x_* c_T^* \mbb L_{\sL_ST/T}) \arrow[d, "\lim \exp_T^*"]\\
 	z^*_\sX \mbb L_{\widehat{\mbf V}_{\sX}(\mbb L_{\sX/S}[1])/\sX} \arrow[rr, "\lim \widehat{\mbf V}_{\sX}(x)^*"] && \underset{x:T\ra \sX}{\lim} (x_* z_T^* \mbb L_{\widehat{\mbf V}_{T}(\mbb L_{T/S}[1])/T})
 \end{tikzcd}	
 $$
where the right vertical arrow is an isomorphism since $\exp_T$ is. Moreover, by the above discussion both horizontal arrows are identified with the natural map
$$
\mbb L_{\sX/S}[1]\ra  \underset{x:T\ra \sX}{\lim}  x_*\mbb L_{T/S}[1],
$$
which is an equivalence by Proposition \ref{prop:etale-descen-cot}. Thus $\exp_\sX^*$ is also an equivalence.
\end{proof}

\begin{rem}\label{rem:relative cotangent complex of formal completion}Let $f\colon \sX \ra \sY$ be a map of convergent derived prestacks and assume that $\sY$ admits corepresentable deformation theory in the sense of \cite[Chapter 1, Definition 7.1.2]{GRderalg2}. Then the derived completion $\sY^\wedge_f$ also admits corepresentable deformation theory. Indeed, $\sY^\wedge_f$ is also convergent, and the conditions on admitting cotangent complex (\cite[Chapter 1, Definitions 4.1.4]{GRderalg2}) and being infinitesimally cohesive (\cite[Chapter 1, Definition 6.1.2]{GRderalg2}) reduce to the ones for $\sY$. Moreover, the relative cotangent complex $\mbb L_{\sY^\wedge_f/\sY}$ for the natural map $\sY^\wedge_f\ra \sY$ is 0; in particular,
	$$
	f^*\mbb L_{\sY^\wedge_f} \simeq f^*\mbb L_{\sY}
	$$
(since the cofiber of this map is given by $f^*\mbb L_{\sY^\wedge_f/\sY}$).
\end{rem}	

\begin{rem}\label{rem:cotangent complex of a vector bundle along the zero section}Let $\sX$ be a derived prestack admitting a cotangent complex, let $\sE\in  D_{\mrm{qc}}(\sX)_{\ge 0}$ and consider the corresponding vector bundle $\mbf V_{\sX}(\sE)$, with the zero section and projection maps $z\colon \sX \ra \mbf V_{\sX}(\sE)$ and $p\colon \mbf V_{\sX}(\sE) \ra \sX$. We claim that there is a natural identification
	$$z^*\mbb L_{\mbf V_{\sX}(\sE)/\sX}\simeq \sE .$$
	Indeed, for any map $s\colon \Spec R\ra \sX$ and any $M\in  D(R)_{\ge 0}$ the space of maps $\Spec (R \oplus M) \ra \mbf V_{\sX}(\sE)$ extending $s$ is identified with the space of $R$-algebra maps $\Sym_R (s^*\sE) \ra R\oplus M$ which commute with the natural projections to $R$ on both sides. By the universal property of $\Sym_R$ the latter is naturally identified with $\Hom_R(s^*\sE, M)$, and so $z^*\mbb L_{\mbf V_{\sX}(\sE)}$ and $\sE$ corepresent the same functor. By Remark \ref{rem:relative cotangent complex of formal completion} we also get a formula for the relative cotangent complex of the formal completion
	$$
	z^*\mbb L_{\widehat{\mbf V}_{\sX}(\sE)/\sX}\simeq \sE.
	$$
\end{rem}	

\begin{lem}\label{lem:two completions of loops are the same}
	Let $\sX$ be a derived algebraic stack over an affine $\mbb Q$-scheme $S$. Let $\widehat{\sL}_S\sX$ and $\sL^{\! \curlywedge}_{S}\sX$ be the absolute and thin formal completions of the relative loop stack $\sL_S\sX$ along constant loops. Then the natural map 
	$$
	\widehat{\sL}_S\sX \ra \sL^{\! \curlywedge}_{S}\sX
	$$
	is an isomorphism.
\end{lem}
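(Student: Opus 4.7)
The plan is to apply Lemma~\ref{lem:two completions coincide}: it suffices to verify that the classical truncation $c_\sX^{\mrm{cl}}\colon \sX^{\mrm{cl}} \to (\sL_S\sX)^{\mrm{cl}}$ is locally of finite presentation. Since restriction to classical rings commutes with finite limits, the classical truncation of $\sL_S\sX = \sX \times_{\sX\times_S \sX} \sX$ identifies with the classical inertia stack $\sI(\sX^{\mrm{cl}}/S^{\mrm{cl}}) := \sX^{\mrm{cl}} \times_{\sX^{\mrm{cl}} \times_{S^{\mrm{cl}}} \sX^{\mrm{cl}}} \sX^{\mrm{cl}}$, and $c_\sX^{\mrm{cl}}$ with its unit section.

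I would first show that the diagonal $\Delta_{\sX^{\mrm{cl}}}\colon \sX^{\mrm{cl}} \to \sX^{\mrm{cl}} \times_{S^{\mrm{cl}}} \sX^{\mrm{cl}}$ is locally of finite presentation. By Definition~\ref{def:alg-geom}(5), $\sX$ admits a smooth surjective cover $u\colon U\twoheadrightarrow \sX$ from a $0$-Artin stack, and after passing to classical truncations one obtains a smooth surjection $u^{\mrm{cl}}\colon U^{\mrm{cl}} \twoheadrightarrow \sX^{\mrm{cl}}$. Pulling $\Delta_{\sX^{\mrm{cl}}}$ back along $u^{\mrm{cl}} \times u^{\mrm{cl}}$ yields the morphism $U^{\mrm{cl}}\times_{\sX^{\mrm{cl}}} U^{\mrm{cl}} \to U^{\mrm{cl}}\times_{S^{\mrm{cl}}} U^{\mrm{cl}}$ of derived algebraic spaces, whose source is smooth over each factor $U^{\mrm{cl}}$ (as the base change of $u$), and hence locally of finite presentation. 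Combining this with smooth descent for locally of finite presentation (a standard analogue of Lemma~\ref{lem:etale sheafification preserves locally finite presentation}), we conclude $\Delta_{\sX^{\mrm{cl}}}$ is locally of finite presentation; base-changing, so is the projection $\pi\colon \sI(\sX^{\mrm{cl}})\to \sX^{\mrm{cl}}$.

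It remains to check that $c_\sX^{\mrm{cl}}$ itself is locally of finite presentation. By Remark~\ref{rem:lfp is enough to check over all points} this may be verified fibre-by-fibre over points $(y,\beta)\colon \Spec B \to \sI(\sX^{\mrm{cl}})$. The fibre of $c_\sX^{\mrm{cl}}$ over such a point is the equalizer, within the stabilizer group algebraic space $\underline{\mrm{Aut}}(y)\to \Spec B$, of the two sections $\beta$ and the identity $e$. Since $\pi$ is locally of finite presentation, so is its pullback $\underline{\mrm{Aut}}(y)\to \Spec B$; moreover, under the paper's convention derived algebraic spaces have affine diagonal, so $\underline{\mrm{Aut}}(y)$ is separated over $\Spec B$ with relative diagonal a closed immersion of finite presentation. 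Pulling this relative diagonal back along $(\beta, e)\colon \Spec B \to \underline{\mrm{Aut}}(y)\times_B \underline{\mrm{Aut}}(y)$ realises the equalizer as a finitely presented closed subscheme of $\Spec B$, completing the argument.

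The main obstacle is the smooth-descent step used to establish local finite presentation for $\Delta_{\sX^{\mrm{cl}}}$, which requires a careful unpacking of the definition through smooth atlases; once that is in place, the remainder of the argument reduces to a concrete fibrewise statement saying that the identity section of each stabilizer group space is cut out by a finitely generated ideal.
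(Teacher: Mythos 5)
The central step of your argument is the claim that the diagonal $\Delta_{\sX^{\mrm{cl}}}\colon \sX^{\mrm{cl}} \to \sX^{\mrm{cl}}\times_{S^{\mrm{cl}}}\sX^{\mrm{cl}}$ is locally of finite presentation. This is false in general. Take $S = \Spec\mathbb{Q}$ and $\sX = \Spec A$ with $A = \mathbb{Q}[x_1, x_2, \ldots]$ a polynomial ring in countably many variables: the diagonal corresponds to the multiplication map $A\otimes_\mathbb{Q} A \twoheadrightarrow A$, whose kernel $(x_i\otimes 1 - 1\otimes x_i)_{i\ge 1}$ is \emph{not} finitely generated, so the diagonal is a closed immersion which is locally of finite \emph{type} but not of finite presentation. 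Your justification for this claim is also a non-sequitur: from the fact that $U^{\mrm{cl}}\times_{\sX^{\mrm{cl}}}U^{\mrm{cl}}$ is smooth (hence lfp) over each projection to $U^{\mrm{cl}}$, you cannot deduce that its map to $U^{\mrm{cl}}\times_{S^{\mrm{cl}}}U^{\mrm{cl}}$ is lfp. The relevant cancellation lemma (lfp composite and locally finite type second map implies lfp first map) would require $U^{\mrm{cl}}\times_{S^{\mrm{cl}}}U^{\mrm{cl}}\to U^{\mrm{cl}}$, hence $U^{\mrm{cl}}\to S^{\mrm{cl}}$, to be locally of finite type, which is not assumed. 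It only yields that the diagonal is locally of finite \emph{type}.

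The paper's proof makes precisely this observation and then applies a standard but different fact: the map $c_\sX$ is the \emph{second} diagonal $\Delta_{\Delta_{\sX/S}}$, and the diagonal of any locally-of-finite-type morphism is automatically locally of finite presentation. Thus one only needs the (always valid, cf.\ \cite[Tag 04XS, Tag 0CMG]{stacks}) finite-type property of the first diagonal, after which Lemma~\ref{lem:two completions coincide} applies directly. Your subsequent fibrewise argument is in effect re-deriving this second-diagonal fact by hand inside the stabilizer groups $\underline{\mathrm{Aut}}(y)$. Even if you replaced lfp by lft for the first diagonal, you would still need the relative diagonal of $\underline{\mathrm{Aut}}(y)\to\Spec B$ to be a \emph{closed} immersion; you attribute this to the affine-diagonal convention on derived algebraic spaces, but affineness of the diagonal does not imply it is a closed immersion (consider $\Spec k[x,x^{-1}]\hookrightarrow\Spec k[x]$, which is affine, a monomorphism, but open). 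So a further argument — e.g.\ separatedness of group algebraic spaces — would be needed; the paper's approach does not require this digression.
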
	

\begin{proof}
	This follows from Lemma \ref{lem:two completions coincide}. Indeed, by \cite[tag 04XS]{stacks} the map of classical stacks $\sX^{\mrm{cl}} \ra (\sX\times_S\sX)^{\mrm{cl}}$ is of locally of finite type, and so by \cite[tag 0CMG]{stacks} the second diagonal map $c_\sX^{\mrm{cl}}\colon \sX^{\mrm{cl}} \ra (\sL_S\sX)^{\mrm{cl}}$ is locally of finite presentation.
\end{proof}	

The following lemma shows that the completion at constant loops passes along representable maps.
\begin{lem}[{\cite[Proposition 2.1.20]{Chen_2020}}]\label{lem:completion passes along representable maps} Let $S$ be a derived affine scheme. Let $f\colon \sX\ra \sY$ be a map of derived algebraic $S$-stacks representable in derived algebraic spaces. Then the commutative square
$$
	\begin{tikzcd}
	\widehat{\sL}_S\sX \arrow[r]\arrow[d] &	\sL_S\sX  \arrow[d] \\
\widehat{\sL}_S\sY\arrow[r]& \sL_S\sY
	\end{tikzcd}
$$	
is a fiber square.
\end{lem}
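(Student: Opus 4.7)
The plan is to reduce the claim directly to Remark~\ref{rem:pull-back of a formal completion}, which asserts that formal completion of $\sZ$ along a map $g\colon \sW\to\sZ$ pulls back from a completion $\sY^\wedge_f$ of a base $\sY$, provided the corresponding square of classical truncations is Cartesian. Applying this with $\sW=\sX$, $\sZ=\sL_S\sX$, and the base square being $c_\sY\colon \sY\to \sL_S\sY$, one obtains an equivalence
\[
\widehat{\sL}_S\sX \;=\;(\sL_S\sX)^\wedge_{c_\sX}\;\simeq\;\sL_S\sX\times_{\sL_S\sY}(\sL_S\sY)^\wedge_{c_\sY}\;=\;\sL_S\sX\times_{\sL_S\sY}\widehat{\sL}_S\sY,
\]
which is exactly the fiber square in the statement. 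So the entire proof reduces to verifying that the square
\[
\begin{tikzcd}
\sX^{\mrm{cl}} \arrow[r,"c_\sX^{\mrm{cl}}"] \arrow[d,"f^{\mrm{cl}}"'] & (\sL_S\sX)^{\mrm{cl}} \arrow[d,"(\sL_S f)^{\mrm{cl}}"] \\
\sY^{\mrm{cl}} \arrow[r,"c_\sY^{\mrm{cl}}"'] & (\sL_S\sY)^{\mrm{cl}}
\end{tikzcd}
\]
is a pullback in classical stacks.

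To verify this, I would identify $(\sL_S\sX)^{\mrm{cl}}$ with the classical inertia stack $\sI(\sX^{\mrm{cl}})$ (and similarly for $\sY$), using that classical truncation preserves finite limits and that $\sL_S\sX\simeq \sX\times_{\sX\times_S\sX}\sX$. Under this identification $c_\sX^{\mrm{cl}}$ becomes the natural section $x\mapsto (x,\mathrm{id}_x)$ of $\sI(\sX^{\mrm{cl}})\to \sX^{\mrm{cl}}$. A classical point of the prospective fiber product $\sI(\sX^{\mrm{cl}})\times_{\sI(\sY^{\mrm{cl}})}\sY^{\mrm{cl}}$ is then a triple $(x,\alpha,y)$ with $f^{\mrm{cl}}(x)=y$ and $f_*(\alpha)=\mathrm{id}_y$ in $\mathrm{Aut}(y)$. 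Since $f$ is representable in derived algebraic spaces, the induced map on stabilizer groups $f_*\colon \mathrm{Aut}(x)\to\mathrm{Aut}(f(x))$ is a monomorphism at every classical point, forcing $\alpha=\mathrm{id}_x$. The natural map $\sX^{\mrm{cl}}\to \sI(\sX^{\mrm{cl}})\times_{\sI(\sY^{\mrm{cl}})}\sY^{\mrm{cl}}$ is therefore an equivalence, as desired.

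The one delicate point is the identification $(\sL_S\sX)^{\mrm{cl}}\simeq \sI(\sX^{\mrm{cl}})$; since $\sX$ has affine diagonal, the loop-stack defining fiber square makes sense and classical truncation commutes with it, so this is routine. Once this identification is in place the Cartesianness of the classical square is automatic from the definition of ``representable in derived algebraic spaces'', and the result follows by applying Remark~\ref{rem:pull-back of a formal completion}.
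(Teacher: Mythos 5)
Your proof is correct and follows essentially the same route as the paper: both reduce to showing that the square of classical truncations is Cartesian, identify $(\sL_S\sX)^{\mrm{cl}}$ with the (relative) inertia stack $\sI(\sX^{\mrm{cl}})$, and conclude because representability in algebraic spaces forces the relative inertia to be trivial (\cite[Tag 04YY]{stacks}). The only cosmetic difference is in how Remark~\ref{rem:pull-back of a formal completion} is invoked: you apply it once to the constant-loops square, while the paper first applies it to the tautological Cartesian square defining $\sL_S\sX\times_{\sL_S\sY}\sY$ and then uses the fact that absolute formal completion only depends on the classical truncation of the map, but these reduce to the same verification.
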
	

\begin{proof} To simplify notation we let $\sL\sX:=\sL_S\sX$, and similary for the completion.
By Remark \ref{rem:pull-back of a formal completion} fiber product $\sL \sX\times_{\sL\sY}\widehat{\sL}\sY $ is identified with the formal completion of $\sL\sX$ along the map $\sL \sX\times_{\sL\sY} \sY \ra \sL\sX$. Thus by Remark \ref{rem:properties of completion}\ref{item:lurie's completion} it will be enough to show that the natural map
$$
\sX \ra  \sL \sX\times_{\sL\sY} \sY
$$
is an equivalence when restricted to classical rings. For this we need to check that the map of classical stacks
\begin{equation}\label{eq:map of inertia stacks}
\sX^{\mrm{cl}} \ra \mc I({\sX^{\mrm{cl}}}) \times_{\mc I({\sY^{\mrm{cl}}})} \sY^{\mrm{cl}}
\end{equation}
is an isomorphism. Here, for a classical $S^{\mrm{cl}}$-stack $\sZ$, $\sI\sZ:= \sZ \times_{\sZ\times_{S^{\mrm{cl}}}\sZ}\sZ$ denotes the relative inertia stack of $\sX$. 
The projection $\mc I({\sX^{\mrm{cl}}})\ra \sX^{\mrm{cl}}$ induces a projection $\mc I({\sX^{\mrm{cl}}}) \times_{\mc I({\sY^{\mrm{cl}}})} \sY^{\mrm{cl}} \ra \sX^{\mrm{cl}}$ whose composition with (\ref{eq:map of inertia stacks}) is the identity. It is enough to show that (\ref{eq:map of inertia stacks}) induces an equivalence on fibers over all maps $x\colon \Spec R \ra \sX^{\mrm{cl}}$ from (classical) affine schemes. The map between fibers is identified with the map of algebraic spaces
$$
\Spec R \ra \mrm{Aut}_{\sX^{\mrm{cl}}}(x)\times_{\mrm{Aut}_{\sY^{\mrm{cl}}}(f^{\mrm{cl}}(x)} \Spec R.
$$
where $\mrm{Aut}_{\sX^{\mrm{cl}}}(x):= \Spec R\times_{\sX^{\mrm{cl}}} \Spec R$ (with maps given by $x$) is the automorphism group (algebraic space) of $x$, the map $\Spec R\ra \mrm{Aut}_{\sY^{\mrm{cl}}}(f^{\mrm{cl}}(x))$ is given by the unit element, and the map to the fiber product is induced by the analogous map $\Spec R \ra \mrm{Aut}_{\sX^{\mrm{cl}}}(x)$ and the identity on $\Spec R$.
Note that $f^{\mrm{cl}}\colon \sX^{\mrm{cl}}\ra \sY^{\mrm{cl}}$ is represented by classical algebraic spaces, and so by \cite[tag 04YY]{stacks} the kernel of the map $\mrm{Aut}_{\sX^{\mrm{cl}}}(x)\ra {\mrm{Aut}_{\sY^{\mrm{cl}}}(f^{\mrm{cl}}(x))}$ is trivial. In particular, the preimage of the identity map is just the identity map itself, so the map above is an isomorphism.
\end{proof}	
\section{Derived Deshmukh's theorem}\label{sec:deshmukh}

The goal of this appendix is to recall (and extend to the derived setting) the theorem of Deshmukh.

\begin{thm}[{\cite[Theorem 1.2(1)]{deshmukh}}]\label{thm:deshmukh}
	Let $\sX$ be a quasi-compact quasi-separated derived algebraic stack with separated diagonal. Then there is a smooth-Nisnevich surjection $Y\ra \sX$, with $Y$ being a derived affine scheme.
\end{thm}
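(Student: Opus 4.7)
My plan is to bootstrap from the classical Deshmukh--Hall theorem via a standard derived-deformation argument, exploiting the fact that smooth morphisms into a derived stack are equivalent to smooth morphisms into its classical truncation.

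First I would apply the original (classical) theorem of Deshmukh--Hall, \cite[Theorem 1.2(1)]{deshmukh}, to the classical truncation $\sX^{\mrm{cl}}$. Since $\sX$ is qcqs with separated diagonal, the classical stack $\sX^{\mrm{cl}}$ inherits all three properties, so we obtain a smooth-Nisnevich surjection $Y_0 \to \sX^{\mrm{cl}}$ with $Y_0$ a classical affine scheme. Recall that ``smooth-Nisnevich'' means a smooth, representable morphism which is a Nisnevich surjection, so in particular it is an affine smooth morphism of finite presentation (once one restricts to a single affine chart of $Y_0$).

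Next, I would promote this classical cover to a derived cover. The key input is the standard fact that for any derived stack $\sX$, the functor $(-)^{\mrm{cl}}$ induces an equivalence between the $\infty$-category of smooth representable morphisms $Y \to \sX$ and the $1$-category of smooth representable morphisms $Y^{\mrm{cl}} \to \sX^{\mrm{cl}}$ (see e.g.\ \cite[Chapter 2, Proposition 3.1.4]{GRderalg1} and the discussion of smoothness via the cotangent complex, where the obstruction theory forces perfect relative cotangent complexes and no higher obstructions). Under this equivalence, affineness is preserved because a smooth representable map is affine if and only if its classical truncation is. Thus $Y_0 \to \sX^{\mrm{cl}}$ lifts uniquely to a smooth affine map $Y \to \sX$ with $Y^{\mrm{cl}} \simeq Y_0$.

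Finally, I would verify that $Y \to \sX$ remains a Nisnevich surjection. This is automatic: being completely decomposed (Definition \ref{def:cd}) is a condition on residual gerbes at field-valued points, all of which factor through the classical truncations by definition; hence a representable \'etale/smooth morphism $Y \to \sX$ is Nisnevich surjective if and only if $Y^{\mrm{cl}} \to \sX^{\mrm{cl}}$ is, which we have arranged. This gives the desired smooth-Nisnevich surjection $Y \to \sX$ from a derived affine scheme.

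The main obstacle is simply having the classical Deshmukh--Hall theorem available in the required generality (qcqs with separated diagonal), which is the content of \cite{deshmukh}; everything on the derived side is formal, requiring only the standard correspondence between smooth morphisms to $\sX$ and to $\sX^{\mrm{cl}}$ and the stability of affineness and Nisnevich surjectivity under this correspondence. We note in passing that if one wanted to remove the separated-diagonal hypothesis, one would need a derived analogue of the Hilbert-stack-based improvement discussed in Remark \ref{rem:geostk}, which is outside our current scope.
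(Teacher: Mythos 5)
Your approach is genuinely different from the paper's. The paper re-proves the Deshmukh theorem from scratch in the derived setting: it constructs the section stacks $\mrm{Sec}_n(U/\sX)$, the stacks $\mrm{Et}_n(U/\sX)$ of degree-$n$ finite \'etale subschemes, and the auxiliary $Z_n = [(\mrm{Sec}_n(U/\sX)\times GL_n)/\Sigma_n]$, and then checks directly that $\coprod_n Z_n \to \sX$ is a smooth-Nisnevich covering by (a countable union of) qcqs derived algebraic spaces, which Knutson--Lurie refines to affines. You instead propose to reduce the derived statement to the classical one by deformation theory. This is a reasonable idea and would, if made rigorous, give a cleaner bootstrap.

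However, the pivotal claim you invoke is false as stated. The functor $(-)^{\mrm{cl}}$ on smooth representable morphisms over $\sX$ is \emph{not} an equivalence onto smooth morphisms over $\sX^{\mrm{cl}}$: it fails to be fully faithful. Already for $\sX = \Spec A$ affine, with $Y = \mathbb{A}^1_A$ and $Y' = \Spec A$, the mapping space $\Maps_A(A[x],A) \simeq \Omega^\infty A$ has nontrivial higher homotopy whenever $A$ is not classical, while $\Maps_{\pi_0 A}(\pi_0 A[x],\pi_0 A) \cong \pi_0 A$ is a set, so the comparison map is not an equivalence. (This kind of equivalence is a feature of the \'etale site --- topological invariance --- not the smooth one.) What is true, and what your argument actually requires, is only \emph{essential surjectivity}: every smooth affine $Y_0 \to \sX^{\mrm{cl}}$ admits a (highly non-unique) lift to a smooth affine $Y \to \sX$. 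That statement is correct, but it is not established by the reference you cite, and you would need to prove it: the standard route is to write $\sX$ as the limit of its Postnikov tower, observe that each successive stage is a square-zero extension, and then lift step by step using that the obstruction at each stage lives in an $\Ext^2$ (equivalently $H^2$) of a connective quasi-coherent sheaf on the affine scheme $Y_0$, which vanishes. The uniqueness you assert (``lifts uniquely'') does not hold and is also not needed. Your final verification --- that Nisnevich surjectivity is a condition on field-valued points, which factor through classical truncations --- is correct. In summary: the strategy is viable and distinct from the paper's, but the key lifting lemma as you have stated it is wrong, and the correct (weaker) statement still requires a deformation-theoretic argument that your proof omits.
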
	

In fact the epimorphism in the construction will be a \textit{smooth-Nisnevich covering} in the following sense:
\begin{defn}[{\cite[Definition 1.1]{deshmukh}}]
	We say that a representable map of derived algebraic stacks  $\sX \ra \sY$ is a \textbf{smooth-Nisnevich covering}, if  \begin{itemize}
		\item it is smooth;
		\item the map 
		$$
		\pi_0(\sX(K)) \ra \pi_0(\sY(K))
		$$
		is surjective for any field $K$.
	\end{itemize}	

\noindent It is clear that smooth-Nisnevich coverings are closed under composition.
\end{defn}	
We have the following lemma:
\begin{lem} Smooth-Nisnevich coverings of derived algebraic stacks are Nisnevich surjections.
\end{lem}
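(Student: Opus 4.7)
The plan is to verify the Nisnevich-surjection property directly on $R$-points. Unwinding the definitions, I need to show that for every animated ring $R$ and every map $y\colon \Spec R \to \sY$, there exists a Nisnevich cover $\{\Spec R_i \to \Spec R\}_{i \in I}$ and lifts $\Spec R_i \to \sX$ of $y|_{\Spec R_i}$ along $f$. Since $f$ is representable, the pullback $X := \sX \times_{\sY} \Spec R$ is a derived algebraic space, and since $f$ is smooth, the projection $p\colon X \to \Spec R$ is a smooth morphism; a lift of $y|_{\Spec R_i}$ to $\sX$ is the same datum as a section of $p$ over $\Spec R_i$. Hence it suffices to prove that a smooth morphism $p\colon X \to \Spec R$ of derived algebraic spaces admits Nisnevich-local sections once every $K$-point of $\Spec R$ lifts to $X$. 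The hypothesis on field points of $p$ is inherited from $f$: a field point $\Spec K \to \Spec R$ composes to a $K$-point of $\sY$, lifts by the smooth-Nisnevich condition to a $K$-point of $\sX$, and assembles via the universal property into a $K$-point of $X$ over $\Spec R$ (trivially satisfying the residual-gerbe condition, since $X$ is an algebraic space).

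Next, I would apply the Knutson--Lurie Theorem~\ref{thm:cover} to obtain a Nisnevich cover $q\colon U \to X$ with $U$ a finite disjoint union of derived affine schemes. The composite $\pi := p \circ q\colon U \to \Spec R$ is smooth, and because $q$ is itself a Nisnevich cover, every field point of $\Spec R$ still lifts through $\pi$ with trivial residue extension. We are thus reduced to the purely scheme-theoretic assertion: a smooth morphism $\pi\colon U \to \Spec R$ of derived affine schemes with trivial-residue lifts at every field point of the base admits Nisnevich-local sections.

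Finally, I would reduce to the classical case by passing to classical truncations. The Nisnevich topology on derived affine schemes depends only on the classical truncation (étaleness is rigid, and the completely decomposed condition concerns only field points), so it suffices to produce a Nisnevich cover of $\Spec \pi_0(R)$ over which $\tau^{\mrm{cl}}\pi$ admits sections; this is exactly the content of the classical Morel--Voevodsky result (see \cite[Chapter 3, Lemma~4.4]{mv99}, or EGA~IV 17.16.3). Étale rigidity lifts the resulting Nisnevich cover $\{\Spec (\pi_0 R)_i \to \Spec \pi_0(R)\}$ uniquely to a Nisnevich cover $\{\Spec R_i \to \Spec R\}$, and formal smoothness of $\pi$ lets me extend the classical sections $\Spec (\pi_0 R)_i \to \tau^{\mrm{cl}}U$ to derived sections $\Spec R_i \to U$ over $\Spec R_i \to \Spec R$. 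Composing with $q$ and the projection $X \to \sX$ yields the desired Nisnevich-local lifts of $y$.

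The main technical obstacle is the very last step: extending a section of $\tau^{\mrm{cl}}\pi$ to a section of $\pi$ along the nilpotent (resp. pro-nilpotent) thickening $\Spec \pi_0(R_i) \hookrightarrow \Spec R_i$. For truncated $R_i$ this is a finite induction along the Postnikov tower of $R_i$, each step being controlled by obstructions valued in $s^*\mathbb{L}_{U/\Spec R}$-cohomology on the affine $\Spec \pi_0(R_i)$, which vanish by smoothness and affineness; the general case follows by convergence of the derived algebraic space $U$.
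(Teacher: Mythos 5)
Your outline relies on the same underlying classical content as the paper's proof — Hensel's lemma for smooth morphisms over Henselian local rings, spreading out via local finite presentation, and unobstructed deformation theory along the Postnikov tower — but there is a genuine gap in the reduction step, and it is not cosmetic.

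You invoke the Knutson--Lurie theorem (Theorem~\ref{thm:cover}) for $X := \sX \times_\sY \Spec R$ to get a \emph{finite} Nisnevich cover by derived affine schemes. But Theorem~\ref{thm:cover} applies only to \emph{quasi-compact} quasi-separated algebraic spaces, and a smooth-Nisnevich covering in the sense of this appendix is not required to have quasi-compact fibers. In fact, in the very application for which this lemma exists, namely Theorem~\ref{thm:deshmukh}, the smooth-Nisnevich covering produced is $\sqcup_n Z_n \to \sX$ — a countably infinite disjoint union of qcqs algebraic spaces — so the pullback $X = (\sqcup_n Z_n) \times_{\sX} \Spec R$ is typically not quasi-compact and admits no finite Nisnevich cover by affines. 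Your reduction to ``a smooth morphism $\pi\colon U \to \Spec R$ of derived affine schemes'' therefore does not go through as stated.

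The gap is fixable by localizing rather than globalizing: for each point $p$ of $\Spec R$, the residue-field point lifts into some qcqs constituent (or affine chart) of $X$, to which Knutson--Lurie and the Henselian-spreading-out argument apply, producing an \'etale neighborhood of $p$ with a section; quasi-compactness of $\Spec R$ alone then lets you extract a finite Nisnevich refinement. This is precisely what the paper's appeal to \cite[Corollary~2.7]{deshmukh} accomplishes in one stroke: that statement is formulated over a single Henselian local ring $\sO$ and requires no global quasi-compactness of $\sX$ over $\sY$, so local finite presentation of $f$ (which smoothness gives for free) suffices to spread out. The remaining steps of your outline — reduction to classical truncations via flatness, \'etale rigidity to lift the Nisnevich cover, and the unobstructed Postnikov induction using that $\mathbb{L}_{U/\Spec R}$ is finite projective concentrated in degree $0$ — are correct and agree with the second half of the paper's proof.
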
	
\begin{proof}
	Let $f\colon \sX \ra \sY$ be a smooth-Nisnevich covering. We need to show that for an $R$-point $y\in \sY(R)$ for $R\in \CAlg^{\mrm{an}}$ there exists a Nisnevich cover $R\ra R'$ and a point $x\in \sX(R')$ lifting $y$. Since both $\sX$ and $\sY$ are convergent it is enough to assume that $R\in \CAlg^{\mrm{an},[0,\infty)}$.
	
	 First, let us assume $R\in \CAlg^{\mrm{cl}}$. Note that \cite[Corollary 2.7]{deshmukh} shows that the map 
	$$
	\sX^{\mrm{cl}}(\sO)\ra \sY^{\mrm{cl}}(\sO)
	$$
	is a surjection on connected components if $\sO$ is a Henselian ring. Since $f$ is smooth it is, in particular, locally finitely presented. Setting $\mc O$ to be the henselizations of points of $\Spec R$, it follows that for any point $y\in \sY^{\mrm{cl}}(R)$ there exists a Nisnevich covering $R\ra R'$ and a point $x\in \sX^{\mrm{cl}}(R')$ lifting $y$.
	
	Now, given $A\in \CAlg^{\mrm{an},[0,\infty)}$ with $\pi_0(A)\simeq R$ and an $A$-point $y\in \sY(A)$ we can lift the corresponding point $\pi_0(y)\in \sY(R)$ to a point $x\in \sX(R')$. Since $R\ra R'$ is \'etale we can uniquely extend it to an \'etale map $A \ra A'$, which is then also a Nisnevich cover. Finally, since $f$ is smooth, the deformation theory for lifting a map along $f$ is unobstructed\footnote{Here we implicitly use that $A'$ (being flat over $A$) also belongs to $\CAlg^{[0,\infty)}$, and that it can be obtained from $R'\simeq \pi_0(A')$ by a finite set of square-zero extensions, as in \cite[Proposition 5.5.3]{GRderalg2}.}, and so $x\in \sX(R')$ extends to an $A'$-point $x'\in \sX(A')$ lifting $y\in \sY(A)$.
\end{proof}	

To construct the required smooth-Nisnevich covering we will need the following:
\begin{constr}
	Let $f\colon \sX\ra \sY$ be a representable morphism of derived algebraic stacks whose diagonal is separated.
	
	Consider the $n$-fold fiber product $(\sX/\sY)^n:=\sX\times_\sY\ldots\times_\sY \sX$. The $S=\Spec R$-point of $(\sX/\sY)^n$ is given by an $S$-point $S\ra \sY$ of $\sY$ and $n$-sections $x_1,\ldots,x_n$ of the induced map $\sX\times_\sY S \ra S$. One defines the \textit{stack of $n$-section} $\mrm{Sec}_n(\sX/\sY)$ as the substack of $(\sX/\sY)^n$ where the classical loci of points $x_1,\ldots,x_n\colon S\ra \sX\times_\sY S$ do not intersect; since the diagonal of $f$ is separated this is in fact a Zariski open substack. The symmetric group $\Sigma_n$ acts naturally on $(\sX/\sY)^n$, as well as $\mrm{Sec}_n(\sX/\sY)\subset (\sX/\sY)^n$. One defines the \textit{stack of finite \'etale substacks of degree $n$} as the quotient 
	$$
	\mrm{Et}_n(\sX/\sY):=[\mrm{Sec}_n(\sX/\sY)/\Sigma_n]
	$$ 
	in \'etale topology. The natural projection $\mrm{Et}_n(\sX/\sY)\ra \sY$ is representable in derived algebraic spaces: indeed, it is clear for $\mrm{Sec}_n(\sX/\sY)\ra \sY$, so $\mrm{Et}_n(\sX/\sY)\ra \sY$ is at least represented in derived Deligne-Mumford stacks. But then the action of $\Sigma_n$ on the fibers of the map $\mrm{Sec}_n(\sX/\sY)^{\mrm{cl}}\ra \sY^\mrm{cl}$ is free, which shows that the quotient by $\Sigma_n$ is in fact represented in derived algebraic spaces. It is also clear that the map $\mrm{Et}_n(\sX/\sY)\ra \sY$ is smooth if $f$ is. 
	
	If $\sX$ is a derived algebraic space, then so is $\mrm{Sec}_n(\sX/\sY)$; consequently $\mrm{Et}_n(\sX/\sY)$ is a derived Deligne-Mumford stack. We warn the reader that the action of $\Sigma_n$ is free on $\mrm{Sec}_n(\sX/\sY)$ only relative to $\sY$, and not globally, so $\mrm{Et}_n(\sX/\sY)$ is not necessarily an algebraic space.\footnote{As an example consider the map $f\colon [\mbb A^1/\mbb G_m] \ra B\mbb G_m$. Then $\mrm{Sec}_2(f)$ is identified with $[(\mbb A^2\setminus \Delta(\mbb A^1))/\mbb G_m]$. The $\Sigma_2$-action on the latter sends $(x,y)$ to $(y,x)$, but note that if $x=-y$, then these points are identified via multiplication by $-1\in \mbb G_m$, and so define the same point in the quotient.} If $\sX$ and $\sY$ are qcqs then both $\mrm{Sec}_n(\sX/\sY)$ and $\mrm{Et}_n(\sX/\sY)$ are also qcqs.
\end{constr}

\textit{Proof of Theorem \ref{thm:deshmukh}:} We will construct a smooth-Nisnevich covering $Y\ra \sX$ by a countable union of qcqs algebraic spaces, by Knutson-Lurie theorem (Theorem \ref{thm:cover}) we then can refine this to a covering by countable union of derived affine schemes. Now let $p\colon U\ra \sX$ be a smooth atlas where $U$ is affine and consider the map $\sqcup_n \mrm{Et}_n(U/\sX) \ra \sX$. Since $p$ is smooth, it is an \'etale surjection, so any map $x\colon \Spec k\ra \sX$ lifts to a map
$$
\begin{tikzcd}
	\Spec \ell \arrow[d]\arrow[r,dashrightarrow]&U\arrow[d]\\
	\Spec k \arrow[r]&\sX
\end{tikzcd}	
$$
from a finite extension $\ell/k$. But a map $\Spec \ell \ra U$ lifting $x$ gives a section $\Spec k \ra \mrm{Et}_n(U/\sX)$ with $n:=\deg(\ell/k)$ lying over $x$. This way we get that $\sqcup_n \mrm{Et}_n(U/\sX) \ra \sX$ is a smooth-Nisnevich surjection. Unfortunately, $ \mrm{Et}_n(U/\sX)$ are not necessarily derived algebraic spaces, but could possibly be derived Deligne-Mumford stacks. However, this is easy to fix, following the argument in \cite[Th\'{e}or\`{e}me 6.1]{LMB} for classical stacks; namely for each $n$ consider the natural embedding $\Sigma_n\hookrightarrow GL_n$ and define $Z_n:=[(\mrm{Sec}_n(U/\sX)\times GL_n)/\Sigma_n]$ where the action of $\Sigma_n$ is diagonal. Each $Z_n$ is a derived algebraic space (since the action of $\Sigma_n$ is now free), and there is a natural map $Z_n\ra \sqcup_n \mrm{Et}_n(U/\sX)$ which is a $GL_n$-torsor. Since $GL_n$-torsors over fields are trivial, the latter map is a smooth-Nisnevich covering. The resulting map 
$$
\sqcup_n Z_n \ra \sX
$$
is then the desired smooth-Nisnevich covering by a countable union of qcqs algebraic spaces.
\section{Motivic filtrations on algebraic spaces}\label{app:mot-algpsc}

\newcommand{\Fil}{\mathrm{Fil}}
In this appendix, we record the following tiny extension of motivic filtrations on the $K$-theory of qcqs derived algebraic spaces. Most of these results are direct consequences of the Knutson-Lurie Theorem~\ref{thm:cover}. We briefly recall our conventions regarding filtrations which follows \cite{e-morrow}. For $\sC$ a stable $\infty$-category, $\infty$-categories of \textbf{filtered objects} is given by $\sC^{\Z^{\op}} := \Fun((\bbZ, \geq)^{\op}, \sC)$, where $(\bbZ, \geq)$ denotes the totally ordered set of the integers and $\bbZ^{\delta}$ is the discrete category of the integers. If $\sC = \Spt$, then we denote this by $\mrm{FiltSpt}$. We write filtered objects of $\sC$ as $\Fil^\star M$, where $M$ is an object of $\sC$ which implicitly means that there is a morphism $\Fil^{-\infty}M:=\colim_{j\to\-\infty}\Fil^jM\to M$ in $\sC$. The filtration is said to be \textbf{exhaustive} when the latter morphism is an equivalence; more strongly we say that it is {\em $\bbN$-indexed} when $\Fil^jM\to M$ is an equivalence for all $j\le 0$. The filtration is said to be \textbf{complete} if $\lim_{j\to\infty}\Fil^jM=0$.

If $\sC$ is presentably symmetric monoida,  then $\sC^{\Z^{\op}}$ admit canonical symmetric monoidal structures given by Day convolution. In particular, we have the $\infty$-category of \emph{filtered $\bbE_{\infty}$-algebras} $\CAlg(\sC^{\Z^{\op}})$. A filtered object with underlying $\bbE_{\infty}$-algebra object $M$ which is compatibly a $\bbE{\infty}$-algebra in filtered objects is called a \textbf{multiplicative filtration}.

\begin{thm}[Motivic filtrations on derived algebraic spaces]\label{thm:deralgspc} There exists a finitary, $\mathbb{N}$-indexed, multiplicative Nisnevich sheaf of filtered spectra
\begin{equation}\label{eq:filt-mot}
\mrm{Fil}^{\star}_{\mrm{mot}}K: (\mrm{dAlgSpc}^{\mrm{qcqs}})^{\op} \rightarrow \mrm{CAlg}(\mrm{FilSpt}).
\end{equation}
with the following properties:
\begin{enumerate}
\item for any $X \in \mrm{dAlgSpc}^{\mrm{qcqs}}$, its graded pieces are given by 
\[
\mrm{gr}^j_{\mrm{mot}}K(X) \simeq R^{\mrm{dAff}}\mathbb{Z}(j)^{\mrm{mot}}(X)[2j]
\]
We set for any $X \in \mrm{dAlgSpc}^{\mrm{qcqs}}$, $H^i_{\mrm{mot}}(X; \mathbb{Z}(j)) := H^i(R^{\mrm{dAff}}\mathbb{Z}(j)^{\mrm{mot}}(X))$, and obtain an Atiyah-Hirzebruch spectral sequence
\[
E_2^{i,j} := H^{i-j}_{\mrm{mot}}(X; \mathbb{Z}(-j)) \Rightarrow K_{-i-j}(X).
\]
\item If $X$ admits a Nisnevich cover by derived affine schemes $\{ X_i \rightarrow X\}$ where the classical locus $X_i^{\mrm{cl}}$ has finite valuative dimension, then the filtration is complete and the Atiyah-Hirzebruch spectral sequence is convergent.
\item the Atiyah-Hirzebruch spectral sequence degenerates rationally and
\[
K_n(X) \otimes_{\mathbb{Z}} \mathbb{Q} \cong \bigoplus_{j \geq 0} H_{\mot}^{2j-n}(X; \mathbb{Z}(j)) \otimes_{\mathbb{Z}} \mathbb{Q}. 
\]
\end{enumerate}
\end{thm}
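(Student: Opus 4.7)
The plan is to obtain $\mrm{Fil}^{\star}_{\mot}K$ on $\mrm{dAlgSpc}^{\mrm{qcqs}}$ by right Kan extending the motivic filtration from derived affine schemes. More precisely, I would begin with the motivic filtration $\mrm{Fil}^{\star}_{\mot}K$ of \cite{e-morrow} on $\mrm{dAff}^{\mrm{qcqs}}$, which is a finitary, $\bbN$-indexed, multiplicative Nisnevich sheaf in $\CAlg(\mrm{FilSpt})$ with graded pieces $\bbZ(j)^{\mot}[2j]$ and underlying object $K$-theory. I then define the extension \eqref{eq:filt-mot} by setting
\[
\mrm{Fil}^{\star}_{\mot}K(X) := R^{\mrm{dAff}}\mrm{Fil}^{\star}_{\mot}K(X) = \lim_{\Spec A \to X} \mrm{Fil}^{\star}_{\mot}K(A),
\]
the limit being computed in $\CAlg(\mrm{FilSpt})$, so that the resulting functor inherits the structure of filtered $\bbE_\infty$-algebras.

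The Nisnevich sheaf property on $\mrm{dAlgSpc}^{\mrm{qcqs}}$ follows from the universal property recorded in \eqref{eq:nis-general}: right Kan extending from derived affine schemes a Nisnevich sheaf produces a Nisnevich sheaf on qcqs derived algebraic spaces. That the underlying spectrum is identified with usual $K$-theory on $\mrm{dAlgSpc}^{\mrm{qcqs}}$ is precisely Proposition~\ref{prop:localizing-nis}. The $\bbN$-indexed and multiplicative properties pass through the right Kan extension, since both are expressed as pointwise equivalences of filtered $\bbE_\infty$-algebras and the right Kan extension is formed in $\CAlg(\mrm{FilSpt})$ itself. Finitariness follows from Theorem~\ref{thm:cover}: any qcqs derived algebraic space admits a \emph{finite} affine Nisnevich cover, so $\mrm{Fil}^{\star}_{\mot}K(X)$ is computed by a finite limit of values on affine schemes, and a finite limit of finitary functors is finitary. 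The identification of the graded pieces in (1) comes from the fact that taking graded pieces is a finite colimit (a cofiber) in the stable $\infty$-category $\mrm{FilSpt}$, and hence commutes with right Kan extension pointwise, yielding
\[
\mrm{gr}^j_{\mot} K(X) \simeq R^{\mrm{dAff}}\mrm{gr}^j_{\mot}K(X) \simeq R^{\mrm{dAff}}\bbZ(j)^{\mot}(X)[2j].
\]
The Atiyah-Hirzebruch spectral sequence is then the usual spectral sequence associated to a filtered spectrum.

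For part (2), completeness of the motivic filtration on classical qcqs schemes of finite valuative dimension is established in \cite{e-morrow}, and the extension to derived affine schemes with the same hypothesis is immediate from nilinvariance of motivic cohomology and its finiteness in rows. If $X$ admits a Nisnevich cover $\{\Spec A_i \to X\}$ with each $A_i^{\mrm{cl}}$ of finite valuative dimension, Nisnevich descent (combined with Theorem~\ref{thm:cover}) computes $\mrm{Fil}^{\star}_{\mot}K(X)$ as a finite limit of filtered spectra $\mrm{Fil}^{\star}_{\mot}K\bigl(A_{i_0}\otimes\cdots\otimes A_{i_n}\bigr)$, each of which is complete. Since completeness of filtered spectra is preserved under limits, $\mrm{Fil}^{\star}_{\mot}K(X)$ is complete, and the Atiyah-Hirzebruch spectral sequence converges.

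Finally, for part (3), the rational splitting $K(A)_{\bbQ} \simeq \bigoplus_{j\ge 0} \bbQ(j)(A)^{\mot}[2j]$ on derived affine schemes is known to be functorial by \cite[Appendix B]{e-morrow}. Right Kan extending termwise and using that rationalization commutes with the finite Nisnevich-local limits described above yields the desired functorial splitting on $\mrm{dAlgSpc}^{\mrm{qcqs}}$. The main obstacle in carrying this plan through is ensuring that the motivic filtration is well-defined on derived affine schemes with all the desired sheaf-theoretic and multiplicative properties, so that right Kan extension genuinely lands in $\CAlg(\mrm{FilSpt})$; once this is in hand, the argument reduces to routine bookkeeping enabled by the finite Nisnevich covers guaranteed by Theorem~\ref{thm:cover}.
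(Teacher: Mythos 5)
Your proposal takes essentially the same route as the paper: both define $\mrm{Fil}^{\star}_{\mot}K$ on $\mrm{dAlgSpc}^{\mrm{qcqs}}$ as the right Kan extension of the motivic filtration of \cite{e-morrow,bouis-mixed} from derived affine schemes, identify the underlying spectrum via Proposition~\ref{prop:localizing-nis}, treat (1), (3), and the structural properties as formal consequences of the equivalence \eqref{eq:nis-general}, and for (2) use Nisnevich descent plus the finite-valuative-dimension completeness result on affines together with the fact that limits of complete filtered spectra are complete. The paper leaves the ``formal consequences'' step even terser than you do, so your write-up is, if anything, more explicit.

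One small imprecision worth flagging in your argument for finitariness: a finite affine Nisnevich cover does \emph{not} immediately present $\mrm{Fil}^{\star}_{\mot}K(X)$ as a \emph{finite} limit of values on affines, since the \v{C}ech nerve is an infinite cosimplicial diagram. To make ``finite limit'' honest you would instead invoke the cd-structure presentation of Nisnevich descent (Proposition~\ref{prop:cd-descent}), which shows the value on $X$ is computed by finitely many iterated pullbacks along Nisnevich/Zariski squares; a finite limit of finitary functors is then indeed finitary. Similarly, in part (2), the terms of the \v{C}ech nerve of an affine Nisnevich cover should be written as fiber products $X_{i_0}\times_X\cdots\times_X X_{i_n}$ (affine because our algebraic spaces have affine diagonal), not tensor products, but this is cosmetic and the argument goes through.
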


\begin{proof} Thanks to Proposition~\ref{prop:localizing-nis}, the formation of algebraic $K$-theory on qcqs derived algebraic spaces is right Kan extended from derived affine schemes. Hence, the filtration~\eqref{eq:filt-mot} is defined by taking the right Kan extension of the motivic filtrations built in \cite{bouis-mixed, e-morrow} from derived affine schemes; in particular the underlying object of the filtration remains algebraic $K$-theory because the latter is right Kan extended from derived affine schemes. The claimed properties are then formal consequences of this definition and the equivalence~\eqref{eq:nis-general}.

The only thing to explain concerns the completeness claim of (2). If $X$ is a derived algebraic space, we note that $\mrm{Fil}^{\star}_{\mot}K(X)$ is complete whenever $\lim_{\star \rightarrow \infty} \mrm{Fil}^{\star}_{\mot}K(X) \simeq 0$. By~\eqref{eq:nis-general} we can write $\mrm{Fil}^{\star}_{\mot}K(X)$ as the totalization of $\mrm{Fil}^{\star}_{\mot}K(Y^{\bullet})$ where each $Y^{n}$ has the property that $\lim \mrm{Fil}^{\star}_{\mot}K(Y^{n}) \simeq 0$; see \cite[Proposition 4.50]{bouis-mixed} for the latter completeness result. The claim then follows from the fact that limits commute. 

\end{proof}

\bibliographystyle{alphamod}

\let\mathbb=\mathbf

{\small
\bibliography{references}

\newcommand{\etalchar}[1]{$^{#1}$}
\providecommand{\bysame}{\leavevmode\hbox to3em{\hrulefill}\thinspace}
\providecommand{\MR}{\relax\ifhmode\unskip\space\fi MR }
\providecommand{\MRhref}[2]{%
  \href{http://www.ams.org/mathscinet-getitem?mr=#1}{#2}
}
\providecommand{\href}[2]{#2}
\begin{thebibliography}{AHLHR22}
\providecommand{\url}[1]{\href{#1}{{\def~{\textasciitilde}\tt #1}}}

\bibitem[AH18]{antieau-heller}
B.~Antieau and J.~Heller, \emph{Some remarks on topological {$K$}-theory of dg
  categories}, Proc. Amer. Math. Soc. \textbf{146} (2018), no.~10,
  pp.~4211--4219,  \url{https://doi.org/10.1090/proc/14128}

\bibitem[AHI23]{annala-hoyois-iwasa}
T.~Annala, M.~Hoyois, and R.~Iwasa, \emph{Algebraic cobordism and a
  Conner-Floyd isomorphism for algebraic K-theory}, 2023

\bibitem[AHLHR22]{alper2022artin}
J.~Alper, D.~Halpern-Leistner, J.~Hall, and D.~Rydh, \emph{Artin algebraization
  for pairs with applications to the local structure of stacks and Ferrand
  pushouts}, 2022

\bibitem[AHR19]{alper2019tale}
J.~Alper, J.~Hall, and D.~Rydh, \emph{The étale local structure of algebraic
  stacks}, 2019

\bibitem[AI22]{Annala_2022}
T.~Annala and R.~Iwasa, \emph{Cohomology of the moduli stack of algebraic
  vector bundles}, Advances in Mathematics \textbf{409} (2022), p.~108638,
  \url{http://dx.doi.org/10.1016/j.aim.2022.108638}

\bibitem[Alp13]{alper-good}
J.~Alper, \emph{Good moduli spaces for {A}rtin stacks}, Ann. Inst. Fourier
  (Grenoble) \textbf{63} (2013), no.~6, pp.~2349--2402,
  \url{http://aif.cedram.org/item?id=AIF_2013__63_6_2349_0}

\bibitem[AS69]{atiyah-segal}
M.~F. Atiyah and G.~B. Segal, \emph{Equivariant {$K$}-theory and completion},
  J. Differential Geometry \textbf{3} (1969)

\bibitem[Ati61]{atiyah-characters}
M.~F. Atiyah, \emph{Characters and cohomology of finite groups}, Inst. Hautes
  \'{E}tudes Sci. Publ. Math. (1961), no.~9, pp.~23--64,
  \url{http://www.numdam.org/item?id=PMIHES_1961__9__23_0}

\bibitem[Bar17]{BarwickMackey}
C.~Barwick, \emph{Spectral Mackey functors and equivariant algebraic $K$-theory
  (I)}, Adv. Math. \textbf{304} (2017), no.~2, pp.~646--727

\bibitem[BB73]{bb-groups}
A.~Bia{\l}ynicki-Birula, \emph{Some theorems on actions of algebraic groups},
  Ann. of Math. (2) \textbf{98} (1973), pp.~480--497,
  \url{https://doi.org/10.2307/1970915}

\bibitem[Bei78]{beilinson-decomp}
A.~A. Beilinson, \emph{Coherent sheaves on {${\bf P}\sp{n}$} and problems in
  linear algebra}, Funktsional. Anal. i Prilozhen. \textbf{12} (1978), no.~3,
  pp.~68--69

\bibitem[BGT13]{blumberg2013universal}
A.~J. Blumberg, D.~Gepner, and G.~Tabuada, \emph{A universal characterization
  of higher algebraic $K$-theory}, Geom. Topol. \textbf{17} (2013), no.~2,
  pp.~733--838

\bibitem[BKRS20]{BKRS}
T.~Bachmann, A.~A. Khan, C.~Ravi, and V.~Sosnilo, \emph{Weibel's conjecture for
  stacks}, preprint \href{http://arxiv.org/abs/arXiv:2011.04355}{{\sf
  arXiv:2011.04355}}

\bibitem[Bla16]{blanc}
A.~Blanc, \emph{Topological {K}-theory of complex noncommutative spaces},
  Compos. Math. \textbf{152} (2016), no.~3, pp.~489--555,
  \url{https://doi.org/10.1112/S0010437X15007617}

\bibitem[BMS19]{BMS2}
B.~Bhatt, M.~Morrow, and P.~Scholze, \emph{Topological {H}ochschild homology
  and integral {$p$}-adic {H}odge theory}, Publ. Math. Inst. Hautes \'{E}tudes
  Sci. \textbf{129} (2019), pp.~199--310,
  \url{https://doi.org/10.1007/s10240-019-00106-9}

\bibitem[Bou24]{bouis-mixed}
T.~Bouis, \emph{Motivic cohomology of mixed characteristic schemes}, 2024,
  \href{http://arxiv.org/abs/arXiv:2412.06635}{{\sf arXiv:2412.06635}}

\bibitem[BV24]{bresciani-vistoli}
G.~Bresciani and A.~Vistoli, \emph{An arithmetic valuative criterion for proper
  maps of tame algebraic stacks}, Manuscripta Math. \textbf{173} (2024),
  no.~3-4, pp.~1061--1071,  \url{https://doi.org/10.1007/s00229-023-01491-6}

\bibitem[BZFN10]{BFN}
D.~Ben-Zvi, J.~Francis, and D.~Nadler, \emph{Integral transforms and Drinfeld
  centers in derived algebraic geometry}, J. Amer. Math. Soc. \textbf{23}
  (2010), no.~4, pp.~909--966

\bibitem[BZN12]{Ben_Zvi_2012}
D.~Ben-Zvi and D.~Nadler, \emph{Loop spaces and connections}, Journal of
  Topology \textbf{5} (2012), no.~2, p.~377–430,
  \url{http://dx.doi.org/10.1112/jtopol/jts007}

\bibitem[CDH{\etalchar{+}}20]{GW-II}
B.~Calm\'es, E.~Dotto, Y.~Harpaz, F.~Hebestreit, M.~Land, K.~Moi, D.~Nardin,
  T.~Nikolaus, and W.~Steimle, \emph{Hermitian K-theory for stable
  $\infty$-categories II: cobordism categories and additivity}, 2020,
  \href{http://arxiv.org/abs/arXiv:2009.07223}{{\sf arXiv:2009.07223}}

\bibitem[Che20]{Chen_2020}
H.~Chen, \emph{Equivariant localization and completion in cyclic homology and
  derived loop spaces}, Adv. Math. \textbf{364} (2020), pp.~107005, 56,
  \url{https://doi.org/10.1016/j.aim.2020.107005}

\bibitem[Cho21]{chowdhury}
C.~Chowdhury, \emph{Motivic Homotopy Theory of Algebraic Stacks}, 2021,
  \href{http://arxiv.org/abs/arXiv:2112.15097}{{\sf arXiv:2112.15097}}

\bibitem[CHSW08]{CHSW}
G.~Corti{\~n}as, C.~Haesemeyer, M.~Schlichting, and C.~A. Weibel, \emph{Cyclic
  homology, cdh-cohomology and negative K-theory}, Ann. Math. \textbf{167}
  (2008), no.~2, pp.~549--573

\bibitem[Cis13]{Cisinski}
D.-C. Cisinski, \emph{Descente par {\'e}clatements en $K$-th{\'e}orie
  invariante par homotopie}, Ann. Math. \textbf{177} (2013), no.~2,
  pp.~425--448

\bibitem[CJ23]{carlsson-joshua}
G.~Carlsson and R.~Joshua, \emph{Equivariant algebraic {K}-theory, {G}-theory
  and derived completions}, Adv. Math. \textbf{430} (2023), pp.~Paper No.
  109194, 72,  \url{https://doi.org/10.1016/j.aim.2023.109194}

\bibitem[CMNN20]{CMNN}
D.~Clausen, A.~Mathew, N.~Naumann, and J.~Noel, \emph{Descent in algebraic
  {$K$}-theory and a conjecture of {A}usoni-{R}ognes}, J. Eur. Math. Soc.
  (JEMS) \textbf{22} (2020), no.~4, pp.~1149--1200,
  \url{https://doi.org/10.4171/JEMS/942}

\bibitem[Cn98]{cortinas}
G.~Corti\~{n}as, \emph{Infinitesimal {$K$}-theory}, J. Reine Angew. Math.
  \textbf{503} (1998), pp.~129--160,
  \url{https://doi.org/10.1515/crll.1998.094}

\bibitem[Des23]{deshmukh}
N.~Deshmukh, \emph{On the Motivic Homotopy Type of Algebraic Stacks}, 2023,
  \href{http://arxiv.org/abs/arXiv:2304.10631}{{\sf arXiv:2304.10631}}

\bibitem[EG00]{edidin-graham-rr}
D.~Edidin and W.~Graham, \emph{Riemann-{R}och for equivariant {C}how groups},
  Duke Math. J. \textbf{102} (2000), no.~3, pp.~567--594,
  \url{https://doi.org/10.1215/S0012-7094-00-10239-6}

\bibitem[EG08]{edidin-graham}
\bysame, \emph{Algebraic cycles and completions of equivariant {$K$}-theory},
  Duke Math. J. \textbf{144} (2008), no.~3, pp.~489--524,
  \url{https://doi.org/10.1215/00127094-2008-042}

\bibitem[EHIK20]{EHIK}
E.~Elmanto, M.~Hoyois, R.~Iwasa, and S.~Kelly, \emph{cdh descent, cdarc descent
  and Milnor excision}, To appear in Math. Ann., 2020

\bibitem[EM23]{e-morrow}
E.~Elmanto and M.~Morrow, \emph{The motivic cohomology of schemes}, 2023,
  \href{http://arxiv.org/abs/2309.08463}{{\sf 2309.08463}}

\bibitem[ES21]{dgm-elden-vova}
E.~Elmanto and V.~Sosnilo, \emph{{On nilpotent extensions of
  $\infty$-categories and the cyclotomic trace}}, International Mathematics
  Research Notices (2021), preprint
  \href{http://arxiv.org/abs/https://academic.oup.com/imrn/advance-article-pdf/doi/10.1093/imrn/rnab179/39312665/rnab179.pdf}{{\sf
  https://academic.oup.com/imrn/advance-article-pdf/doi/10.1093/imrn/rnab179/39312665/rnab179.pdf}},
  rnab179

\bibitem[Goo85a]{goodwillie-hp}
T.~G. Goodwillie, \emph{Cyclic homology, derivations, and the free loopspace},
  Topology \textbf{24} (1985), no.~2, pp.~187--215,
  \url{https://doi.org/10.1016/0040-9383(85)90055-2}

\bibitem[Goo85b]{Goodwillie_derivations}
T.~G. Goodwillie, \emph{Cyclic homology, derivations, and the free loopspace},
  Topology \textbf{24} (1985), no.~2, pp.~187--215,
  \url{https://www.sciencedirect.com/science/article/pii/0040938385900552}

\bibitem[Goo86]{goodwillie-theorem}
\bysame, \emph{Relative Algebraic K-Theory and Cyclic Homology}, Annals of
  Mathematics \textbf{124} (1986), no.~2, pp.~347--402,
  \url{http://www.jstor.org/stable/1971283}

\bibitem[GR14]{Gaitsgory2014}
D.~Gaitsgory and N.~Rozenblyum, \emph{DG indschemes}, 2014, p.~139–251

\bibitem[GR17a]{GRderalg1}
\bysame, \emph{A study in derived algebraic geometry. {V}ol. {I}.
  {C}orrespondences and duality}, Mathematical Surveys and Monographs, vol.
  221, American Mathematical Society, Providence, RI, 2017,
  \url{https://doi.org/10.1090/surv/221.1}

\bibitem[GR17b]{GRderalg2}
\bysame, \emph{A study in derived algebraic geometry. {V}ol. {II}.
  {D}eformations, {L}ie theory and formal geometry}, Mathematical Surveys and
  Monographs, vol. 221, American Mathematical Society, Providence, RI, 2017,
  \url{https://doi.org/10.1090/surv/221.2}

\bibitem[GW89]{farfromA1_weibel}
S.~Geller and C.~Weibel, \emph{Hochschild and Cyclic Homology are Far From
  Being Homotopy Functors}, Proceedings of the American Mathematical Society
  \textbf{106} (1989), no.~1, pp.~49--57,
  \url{http://www.jstor.org/stable/2047373}

\bibitem[Hae04]{Haesemeyer}
C.~Haesemeyer, \emph{Descent properties of homotopy $K$-theory}, Duke Math. J.
  \textbf{125} (2004), no.~3, pp.~589--620

\bibitem[Hes81]{hesselink}
W.~H. Hesselink, \emph{Concentration under actions of algebraic groups}, Paul
  {D}ubreil and {M}arie-{P}aule {M}alliavin {A}lgebra {S}eminar, 33rd {Y}ear
  ({P}aris, 1980), Lecture Notes in Math., vol. 867, Springer, Berlin, 1981,
  pp.~55--89

\bibitem[HK19]{hoyois-krishna}
M.~Hoyois and A.~Krishna, \emph{Vanishing theorems for the negative
  {$K$}-theory of stacks}, Ann. K-Theory \textbf{4} (2019), no.~3,
  pp.~439--472,  \url{https://doi.org/10.2140/akt.2019.4.439}

\bibitem[HL20]{DHL-theta}
D.~Halpern-Leistner, \emph{Derived $\Theta$-stratifications and the
  $D$-equivalence conjecture}, 2020,
  \href{http://arxiv.org/abs/arXiv:2010.01127}{{\sf arXiv:2010.01127}}

\bibitem[HLP23]{Halpern_Leistner_2023}
D.~Halpern-Leistner and A.~Preygel, \emph{Mapping stacks and categorical
  notions of properness}, Compositio Mathematica \textbf{159} (2023), no.~3,
  p.~530–589,  \url{http://dx.doi.org/10.1112/S0010437X22007667}

\bibitem[HNR19]{one-pos-two-neg}
J.~Hall, A.~Neeman, and D.~Rydh, \emph{One positive and two negative results
  for derived categories of algebraic stacks}, J. Inst. Math. Jussieu
  \textbf{18} (2019), no.~5, pp.~1087--1111,
  \url{https://doi.org/10.1017/s1474748017000366}

\bibitem[Hoy17]{hoyois-sixops}
M.~Hoyois, \emph{The six operations in equivariant motivic homotopy theory},
  Adv. Math. \textbf{305} (2017), pp.~197--279

\bibitem[HP24]{Hesselholt-Pstragowski}
L.~Hesselholt and P.~Pstragowski, \emph{Dirac geometry {II}: coherent
  cohomology}, Forum Math. Sigma \textbf{12} (2024), pp.~Paper No. e27, 93,
  \url{https://doi.org/10.1017/fms.2024.2}

\bibitem[HR15]{HallRydh2}
J.~Hall and D.~Rydh, \emph{Algebraic groups and compact generation of their
  derived categories of representations}, Indiana Univ. Math. J. \textbf{64}
  (2015), no.~6, pp.~1903--1923

\bibitem[HR18]{addendum}
J.~Hall and D.~Rydh, \emph{Addendum to ``\'{E}tale d\'{e}vissage, descent and
  pushouts of stacks'' [{J}. {A}lgebra 331 (1) (2011) 194--223] [
  {MR}2774654]}, J. Algebra \textbf{498} (2018), pp.~398--412,
  \url{https://doi.org/10.1016/j.jalgebra.2017.11.027}

\bibitem[HSS17]{HSS}
M.~Hoyois, S.~Scherotzke, and N.~Sibilla, \emph{Higher traces, noncommutative
  motives, and the categorified Chern character}, Adv. Math. \textbf{309}
  (2017), pp.~97--154

\bibitem[Jan03]{jantzen}
J.~C. Jantzen, \emph{Representations of algebraic groups}, Second ed.,
  Mathematical Surveys and Monographs, vol. 107, American Mathematical Society,
  Providence, RI, 2003

\bibitem[Kha22]{khan-kg}
A.~A. Khan, \emph{K-theory and {G}-theory of derived algebraic stacks}, Jpn. J.
  Math. \textbf{17} (2022), no.~1, pp.~1--61,
  \url{https://doi.org/10.1007/s11537-021-2110-9}

\bibitem[KN25]{krishna-nath}
A.~Krishna and R.~Nath, \emph{Thomason's completion for K-theory and cyclic
  homology of quotient stacks}, 2025,
  \href{http://arxiv.org/abs/arXiv:2502.09462}{{\sf arXiv:2502.09462}}

\bibitem[Knu71]{Knutson1971}
D.~Knutson, \emph{Algebraic spaces}, Lecture Notes in Mathematics, vol. Vol.
  203, Springer-Verlag, Berlin-New York, 1971

\bibitem[K{\O}12]{KO}
A.~Krishna and P.~A. {\O}stv{\ae}r, \emph{Nisnevich descent for $K$-theory of
  Deligne--Mumford stacks}, J. K-theory \textbf{9} (2012), no.~2, pp.~291--331

\bibitem[Kol07]{kollar-resolution}
J.~Kollár, \emph{Lectures on Resolution of Singularities (AM-166)}, Princeton
  University Press, 2007,  \url{http://www.jstor.org/stable/j.ctt7rptq}

\bibitem[Kon21]{konavalov}
A.~Konovalov, \emph{Nilpotent invariance of semi-topological K-theory of
  dg-algebras and the lattice conjecture}, 2021,
  \href{http://arxiv.org/abs/arXiv:2102.01566}{{\sf arXiv:2102.01566}}

\bibitem[KP21]{kubrak-prikhodko}
D.~Kubrak and A.~Prikhodko, \emph{$p$-adic Hodge theory for Artin stacks},
  preprint \href{http://arxiv.org/abs/arXiv:2105.05319}{{\sf arXiv:2105.05319}}

\bibitem[KP22]{kubrak2022hodge}
D.~Kubrak and A.~Prikhodko, \emph{Hodge-to-de Rham degeneration for stacks},
  International Mathematics Research Notices \textbf{2022} (2022), no.~17,
  pp.~12852--12939

\bibitem[KR21]{khan2021generalized}
A.~A. Khan and C.~Ravi, \emph{Generalized cohomology theories for algebraic
  stacks}, 2021

\bibitem[KR22]{khan-ravi-2}
\bysame, \emph{Equivariant generalized cohomology via stacks}, 2022,
  \href{http://arxiv.org/abs/arXiv:2209.07801}{{\sf arXiv:2209.07801}}

\bibitem[Kri18]{krishna-completion}
A.~Krishna, \emph{The completion problem for equivariant {$K$}-theory}, J.
  Reine Angew. Math. \textbf{740} (2018), pp.~275--317,
  \url{https://doi.org/10.1515/crelle-2015-0063}

\bibitem[KS24]{kelly-saito}
S.~Kelly and S.~Saito, \emph{A procdh topology}, 2024,
  \href{http://arxiv.org/abs/arXiv:2401.02699}{{\sf arXiv:2401.02699}}

\bibitem[KST17]{KST}
M.~Kerz, F.~Strunk, and G.~Tamme, \emph{Algebraic $K$-theory and descent for
  blow-ups}, Invent. Math. \textbf{211} (2017), no.~2

\bibitem[Lev22]{levy2022algebraic}
I.~Levy, \emph{The algebraic K-theory of the K(1)-local sphere via TC}, 2022

\bibitem[LMB00]{LMB}
\emph{Champs alg\'{e}briques}, pp.~xii+208

\bibitem[LMMT24]{lmmt}
M.~Land, A.~Mathew, L.~Meier, and G.~Tamme, \emph{Purity in chromatically
  localized algebraic {$K$}-theory}, J. Amer. Math. Soc. \textbf{37} (2024),
  no.~4, pp.~1011--1040,  \url{https://doi.org/10.1090/jams/1043}

\bibitem[LT19]{land-tamme}
M.~Land and G.~Tamme, \emph{On the {$K$}-theory of pullbacks}, Ann. of Math.
  (2) \textbf{190} (2019), no.~3, pp.~877--930,
  \url{https://doi.org/10.4007/annals.2019.190.3.4}

\bibitem[Lur11]{dagVIII}
J.~Lurie, \emph{Quasicoherent sheaves and Tannaka duality Theorems}, 2011

\bibitem[Lur17]{HTT}
\bysame, \emph{Higher Topos Theory}, April 2017,
  \url{http://www.math.harvard.edu/~lurie/papers/HTT.pdf}

\bibitem[Lur18]{SAG}
\bysame, \emph{Spectral Algebraic Geometry}, February 2018,
  \url{http://www.math.harvard.edu/~lurie/papers/SAG-rootfile.pdf}

\bibitem[LZ14]{LiuZheng2}
Y.~Liu and W.~Zheng, \emph{Enhanced six operations and base change theorem for
  Artin stacks}, 2014, \href{http://arxiv.org/abs/1211.5948v2}{{\sf
  arXiv:1211.5948v2}}

\bibitem[Mor]{mv99}
F.~More, \emph{${\bf A}^{1}$-homotopy theory of schemes}, Inst. Hautes
  \'{E}tudes Sci. Publ. Math., no.~90, pp.~45--143

\bibitem[NS18]{nikolaus-scholze}
T.~Nikolaus and P.~Scholze, \emph{On topological cyclic homology}, Acta Math.
  \textbf{221} (2018), no.~2, pp.~203--409,
  \url{https://doi.org/10.4310/ACTA.2018.v221.n2.a1}

\bibitem[Ram]{ramzi-hp}
M.~Ramzi, \emph{The nil and $\mathbb{A}^1$-invariance of rational periodic
  cyclic homology},
  \url{https://sites.google.com/view/maxime-ramzi-en/notes/hp-nilinv}

\bibitem[RB]{brav-rozenblyum}
N.~Rozenblyum and C.~Brav, \emph{Non-commutative calculus, connections, and
  loop spaces}, To appear

\bibitem[Roz11]{nick-thesis}
N.~Rozenblyum, \emph{Connections on {C}onformal {B}locks}, 2011, p.~(no
  paging), Thesis (Ph.D.)--Massachusetts Institute of Technology

\bibitem[Ryd11]{rydh-devissage}
D.~Rydh, \emph{\'{E}tale d\'{e}vissage, descent and pushouts of stacks}, J.
  Algebra \textbf{331} (2011), pp.~194--223,
  \url{https://doi.org/10.1016/j.jalgebra.2011.01.006}

\bibitem[Stacks]{stacks}
{The Stacks Project Authors}, \emph{The Stacks Project}, 2017,
  \url{http://stacks.math.columbia.edu}

\bibitem[TdB20]{tabuada2020motivic}
G.~Tabuada and M.~V. den Bergh, \emph{Motivic Atiyah-Segal completion theorem},
  2020

\bibitem[Tho87]{thomason-group-actions}
R.~W. Thomason, \emph{Algebraic {$K$}-theory of group scheme actions},
  Algebraic topology and algebraic {$K$}-theory ({P}rinceton, {N}.{J}., 1983),
  Ann. of Math. Stud., vol. 113, Princeton Univ. Press, Princeton, NJ, 1987,
  pp.~539--563

\bibitem[Tho88]{thomason-topological}
\bysame, \emph{Equivariant algebraic vs. topological {$K$}-homology
  {A}tiyah-{S}egal-style}, Duke Math. J. \textbf{56} (1988), no.~3,
  pp.~589--636,  \url{https://doi.org/10.1215/S0012-7094-88-05624-4}

\bibitem[Tit71]{Tits1971}
J.~Tits, \emph{Représentations linéaires irréductibles d'un groupe réductif
  sur un corps quelconque.}, Journal für die reine und angewandte Mathematik
  \textbf{247} (1971), pp.~196--220,  \url{http://eudml.org/doc/151104}

\bibitem[To{\"e}06]{Ton2006}
B.~To{\"e}n, \emph{Champs affines}, Selecta Mathematica \textbf{12} (2006),
  no.~1, p.~39–134,  \url{http://dx.doi.org/10.1007/s00029-006-0019-z}

\bibitem[Tot16]{totaro-motive}
B.~Totaro, \emph{The motive of a classifying space}, Geom. Topol. \textbf{20}
  (2016), no.~4, pp.~2079--2133,  \url{https://doi.org/10.2140/gt.2016.20.2079}

\bibitem[TT90]{TT}
R.~W. Thomason and T.~Trobaugh, \emph{Higher algebraic {K}-theory of schemes
  and of derived categories}, The Grothendieck Festschrift III, Progress in
  Mathematics, vol.~88, Birkh{\"a}user, 1990, pp.~247--435

\bibitem[TV08]{toen-vezzosi}
B.~To\"{e}n and G.~Vezzosi, \emph{Homotopical algebraic geometry. {II}.
  {G}eometric stacks and applications}, Mem. Amer. Math. Soc. \textbf{193}
  (2008), no.~902, pp.~x+224,  \url{https://doi.org/10.1090/memo/0902}

\bibitem[TVdB18]{orbifolds}
G.~Tabuada and M.~Van~den Bergh, \emph{Additive invariants of orbifolds}, Geom.
  Topol. \textbf{22} (2018), no.~5, pp.~3003--3048,
  \url{https://doi.org/10.2140/gt.2018.22.3003}

\bibitem[vdK86]{vdk-descent}
W.~van~der Kallen, \emph{Descent for the {$K$}-theory of polynomial rings},
  Math. Z. \textbf{191} (1986), no.~3, pp.~405--415,
  \url{https://doi.org/10.1007/BF01162716}

\end{thebibliography}
}

\parskip 0pt

\end{document}